\newtheorem{lemma}{Lemma}[section]
\newtheorem{theorem}[lemma]{Theorem}
\newtheorem{corollary}[lemma]{Corollary}
\newtheorem{proposition}[lemma]{Proposition}
\newtheorem{definition}[lemma]{Definition}
\newtheorem{example}[lemma]{Example}
\newtheorem{remark}[lemma]{Remark}
\begin{document}

\title{Estimates for the number of eigenvalues of two dimensional Schr\"odinger operators lying below the essential spectrum}
\vspace{2cm}
\vspace {1cm}
\author{\\Martin Karuhanga \vspace{2cm}\\
 A thesis submitted for the degree of Doctor of Philosophy \\of King's College London \vspace{5.1cm}\\}

\date{September, 2016}
\maketitle
\cleardoublepage
 \thispagestyle{empty}
 \setcounter{page}{2}
\chapter*{Acknowledgments}
My PhD studies have been supported by the Commonwealth Scholarship Commission in the UK funded by the UK government. I am exceptionally appreciative of this support. \\\\I am extremely grateful to my supervisor, Eugene Shargorodsky for the direction and guidance he has provided during my studies. Suffice  to say, this thesis would not have been produced without his superb guidance.\\\\
Finally, I would like to extend my heartfelt thanks to my family for their constant encouragement throughout this endeavour.
\chapter*{Declaration}
I hereby declare that the work
in this thesis is my own, where I have used materials from other sources,
references have been made and that this work has never been
submitted to any University or College for any award.\\\\\\\\\\
Signature\\\\
...................................\\\\
Date ..............................\\\\\\

 \chapter*{Abstract}
The celebrated Cwikel-Lieb-Rozenblum inequality gives an upper estimate for the number of negative eigenvalues of Schr\"odinger operators in  dimension three and higher. The situation is much more difficult in the two dimensional case. There has been significant progress in obtaining  upper estimates for the number of negative eigenvalues of two dimensional Schr\"odinger operators on the whole plane. In this thesis, we present upper estimates of the Cwikel-Lieb-Rozenblum type for the number of eigenvalues (counted with multiplicities) of two dimensional Schr\"odinger operators lying below the essential spectrum in terms of the norms of the potential. The problem is considered on the whole plane with different supports of the potential (in particular, sets of dimension $\alpha \in (0, 2]$ and on a strip with  various boundary conditions. In both cases, the estimates involve weighted $L^1$ norms and Orlicz norms of the potential.
\thispagestyle{plain} \markboth{Contents}{Contents}
\maketitle
\tableofcontents

\chapter{Introduction}\markboth{Chapter \ref{Introduction}.
Introduction}{}\label{Introduction}
\section{Background}\label{background}
\markright{\ref{Introsub}. Introduction}
Given a non-negative $L^1_{\textrm{loc}}$ function $V$ on $\mathbb{R}^n$, consider the Schr\"odinger type operator on $L^2(\mathbb{R}^n)$
\begin{equation}\label{1}
H_V := -\Delta - V, \;\;\;\;\;\;\;\; V \geq 0,
\end{equation}where $\Delta := \sum^n_{k = 1}\frac{\partial^2}{\partial x^2_k}$. Precisely, one can define $H_V$ via the quadratic form
\begin{eqnarray*}
\mathcal{E}_{V, \mathbb{R}^n}[u] &=& \int_{\mathbb{R}^n}|\nabla u(x)|^2\,dx - \int_{\mathbb{R}^n}V(x)|u(x)|^2\,dx ,\\ \textrm{Dom}(\mathcal{E}_{V, \mathbb{R}^n}) &=& \left\{u\in W^1_2(\mathbb{R}^n)\cap L^2(\mathbb{R}^n, V(x)dx)\right\}.
\end{eqnarray*}
Under certain assumptions about $V$, $H_V$ is well defined and a self-adjoint operator on $L^2(\mathbb{R}^n)$ and its essential spectrum  is $[0, \infty)$ (see e.g.,  \cite[$\S$ 6.4]{Teschl}). The negative spectrum of $H_V$ consists of eigenvalues of finite multiplicity with zero as the only possible accumulation point. A general problem arising from Physics is to estimate the number of these negative eigenvalues (counted with multiplicities) in terms of the norms of $V$ which is the subject of this thesis for $n = 2$. Denote by $N_-(\mathcal{E}_{V, \mathbb{R}^n})$ the number of negative eigenvalues of $H_V$ counted with multiplicity.\\\\  For $n\geq 3$, according to the celebrated Cwikel (1977)-Lieb (1976)-Rozenblum (1972) inequality, $N_-(\mathcal{E}_{V, \mathbb{R}^n})$ is estimated above by
\begin{equation}\label{CLR}
N_-(\mathcal{E}_{V, \mathbb{R}^n})\le C_n\int_{\mathbb{R}^n}V(x)^{n/2}\,dx
\end{equation}(see, e.g., \cite{BE}, \cite{BEL}, \cite{Roz} and the references therein).
In this case ($n\geq 3$), $N_-(\mathcal{E}_{V, \mathbb{R}^n})$ is zero provided the integral in the right hand side of \eqref{CLR} is small enough.

Often one inserts a parameter $\alpha > 0$ called the coupling constant and studies the behaviour of $N_-(\mathcal{E}_{\alpha V, \mathbb{R}^n})$ as $\alpha \longrightarrow +\infty$. For nice potentials, e.g $V\in C_0^{\infty}({\mathbb{R}^n})$, the Weyl-asymptotic formula
\begin{equation}\label{Weyl1}
\underset{\alpha \to +\infty}\lim\alpha^{-n/2} N_-(\mathcal{E}_{\alpha V, \mathbb{R}^n}) = \frac{\textrm{Vol}\{x\in\mathbb{R}^n : |x| \le 1\}}{(2\pi)^n}\int_{\mathbb{R}^n}V(x)^{n/2}dx
\end{equation}holds (see, e.g., \cite[Theorem 5.1]{BirLap}). We say that an estimate is semi-classical, if it yields
\begin{equation}\label{order}
N_-(\mathcal{E}_{\alpha V, \mathbb{R}^n}) = O\left(\alpha^{n/2}\right) \;\;\textrm{as} \;\; \alpha \to +\infty.
\end{equation} The conditions guaranteeing \eqref{Weyl1} and \eqref{order} depend on the dimension.
If $V\in L^{n/2}(\mathbb{R}^n), \; n\geq 3,$ \eqref{CLR} is optimal, that is, $V\in L^{n/2}(\mathbb{R}^n)$ is a necessary and sufficient condition for \eqref{order} and \eqref{Weyl1} to hold (see, e.g., \cite{Roz}). This shows that $N_-(\mathcal{E}_{\alpha V, \mathbb{R}^n})$ is estimated through its own asymptotics.\\\\
For $n = 1$, there is no analogue of \eqref{CLR}. It does only exist for potentials that are monotone on $\mathbb{R}_+$ and $\mathbb{R}_-$. This was obtained simultaneously and independently in 1965 by F. Calogero \cite{Calo} and J.H.E. Cohn \cite{Cohn}. Finitiness of the right-hand side of \eqref{Weyl1} for $n= 1$ is only sufficient for $N_-(\mathcal{E}_{\alpha V, \mathbb{R}^1}) = O\left(\sqrt{\alpha}\right)$. The necessary and sufficient condition for the latter is given in terms of the ``weak $l_1$ -space'' (see, e.g., \cite{Sol2}). For any non-trivial potential $N_-(\mathcal{E}_{ V, \mathbb{R}^1}) \geq 1$.\\\\

Consider the following integral
\begin{equation}\label{int}
\int_{\mathbb{R}^n} V(x)|u(x)|^2\,dx\,,\,\,\,\,\,u \in W^1_2(\mathbb{R}^n).
\end{equation}
It follows from the Sobolev embedding theorem (see, e.g., \cite[Theorem 5.4]{Ad}) that $W^1_2(\mathbb{R}^n)\hookrightarrow L^q(\mathbb{R}^n)$ if $ 2 \le q < \infty$ and $1 - \frac{n}{2} + \frac{n}{q} \ge 0$. Let $$1 - \frac{n}{2} + \frac{n}{q} = 0. \,\,\mbox{ Then } q = \frac{2n}{n -2}\,,\,\,\,n \ge 3.$$ Thus $u \in W^1_2(\mathbb{R}^n)$ implies $u \in L^{\frac{2n}{n-2}}(\mathbb{R}^n)$ and $|u|^2 \in L^{\frac{n}{n -2}}(\mathbb{R}^n)$. Let $p = \frac{n}{n - 2}$\,. Then the H\"older inequality implies that \eqref{int} is finite if $V \in L^{p'}(\mathbb{R}^n)$, where $p'$ is the conjugate exponent  of $p$ i.e., $\frac{1}{p} + \frac{1}{p'} = 1$. We have $p' = \frac{n}{2}$ implying that \eqref{int} is finite if $V \in L^{\frac{n}{2}}(\mathbb{R}^n)$, which explains why the right hand side of \eqref{CLR} might work.\\\\

 Putting $n = 2$ in the above argument formally, one gets $q = \infty, \, p = \infty $, \\$p' = 1$ and $V \in L^1(\mathbb{R}^2)$. Unfortunately, $W^1_2(\mathbb{R}^2)$ is not embedded in $L^{\infty}(\mathbb{R}^2)$, $V \in L^1(\mathbb{R}^2)$ does not guarantee that \eqref{int} is finite, and \eqref{CLR} fails. However, $W^1_2(\mathbb{R}^n)\hookrightarrow L^q(\mathbb{R}^n),\, \forall q \in [2, +\infty)$ and there are estimates for $N_-(\mathcal{E}_{ V, \mathbb{R}^2})$ involving $\int_{\mathbb{R}^2} |V(x)|^r\,dx ,\,\forall r > 1$ (see e.g.,\cite{BirLap}, \cite{BS}, \cite{Grig}, \cite{LapSolo}). More precisely, $W^1_2(\mathbb{R}^2)$ is embedded in a space of exponentially integrable functions which sits between $L^1(\mathbb{R}^2)$ and $L^{p}(\mathbb{R}^2),\, p > 1$. This gives rise to estimates for $N_-(\mathcal{E}_{ V, \mathbb{R}^2})$ involving a norm of $V$ weaker than $\|V\|_{L^r}\,,r > 1$, namely, the Orlciz $L\log L$ norm (see $\S$ \ref{Orliczspaces}).

In addition, $L^1$-integrability doesn't provide enough decay of $V$ at infinity and thus one needs stronger weighted $L^1$ norms of $V$ with weights growing as $|x|\longrightarrow \infty$. Logarithmic growth of the weights is enough in $\mathbb{R}^2$, and $|x_1|$ is sufficient in case of the strip. Most known upper estimates for $N_-(\mathcal{E}_{V, \mathbb{R}^2})$ have terms of these two types i.e., weighted $L^1$ and $L^r\,, r > 1$ norms of $V$, see for example the estimates obtained by A. Laptev  and M. Solomyak \cite{LapSolo} and M. Solomyak \cite{Sol}. In this thesis, we present estimates of a similar structure.\\\\
For any non-zero potential $V \ge 0$ and $n =2$, \eqref{1} has at least one negative eigenvalue  as in the one-dimensional case and therefore \eqref{CLR} cannot hold. More importantly, no estimate of the type

$$
N_- (\mathcal{E}_{V,\mathbb{R}^2}) \le \mbox{const} + \int_{\mathbb{R}^2} V(x)  W(x)\, dx
$$
can hold, provided the weight function $W$ is bounded in a neighborhood of at least
one point (see, e.g., \cite[Proposition 2.1]{Grig}). On the other hand,
$$
N_- (\mathcal{E}_{V, \mathbb{R}^2}) \ge  \mbox{const}\, \int_{\mathbb{R}^2} V(x)\, dx.
$$This result is due to A. Grigor'yan, Yu. Netrusov  and S.-T. Yau
\cite{GNY} in 2004.  It is well known that the lowest possible (semi-classical) rate of growth of $N_- (\mathcal{E}_{\alpha V,\mathbb{R}^2})$ is
\begin{equation}\label{order1}
N_-(\mathcal{E}_{\alpha V, \mathbb{R}^2}) = O\left(\alpha\right) \;\;\textrm{as} \;\; \alpha \to +\infty
\end{equation}( see e.g., \cite{BS},  \cite{LapSolo}, \cite{Sol}).\\This agrees with the Weyl-asymptotic formula
\begin{equation}\label{Weyl2}
\underset{\alpha \to +\infty}\lim\alpha^{-1} N_-(\mathcal{E}_{\alpha V, \mathbb{R}^2}) = \frac{1}{4\pi}\int_{\mathbb{R}^2}V(x)dx
\end{equation}that is satisfied if the potential is nice. However, unlike the case $n \geq 3$, \eqref{order1} doesn't guarantee the existence of $\underset{\alpha \to +\infty}\lim\alpha^{-1} N_-(\mathcal{E}_{\alpha V, \mathbb{R}^2})$ and even if the limit exists, it may be different from the right hand side of \eqref{Weyl2}. The exhaustive description of the classes of potentials on $\mathbb{R}^2$ such that \eqref{order1} or \eqref{Weyl2} is satisfied, is unknown till now.  More upper estimates for $N_-(\mathcal{E}_{V, \mathbb{R}^2})$ can be found in (\cite{GNY}, \cite{Eugene}, \cite{Sol}, \cite{Sol2}) and the references therein. Estimates for the number of negative eigenvalues of two dimensional magnetic Schr\"odinger operators can be found for example in (\cite{BE}, \cite{BEL}, \cite{Bal}, \cite{Kov}) and the references therein. However, such type of operators are not considered in this thesis.\\\\

E. Shargorodsky \cite{Eugene1}, in his paper on an estimate for the  Morse index of a Stokes wave obtained an estimate for $N_-(\mathcal{E}_{V, \mathbb{R}^2)}$ with $V$  supported by a bounded Lipschitz curve. Initially, an aim of this thesis was to extend this result to unbounded Lipschitz curves which is by no means trivial since it involves decay at infinity. However, we managed to do this in a more general setting that covers potentials locally integrable on $\mathbb{R}^2$, potentials supported by curves and sets of fractional dimension $\alpha \in (0, 2]$. This is the work of Chapter 3.\\\\We also consider the problem on a strip. Previously, A. Grigor'yan and N. Nadirashvili \cite{Grig} considered this problem on strip with Neumann boundary conditions and obtained estimates in terms of weighted $L^1$ and $L^p, p > 1$ norms of $V$. We consider the case of Robin boundary conditions (including Dirichlet and Dirichlet-Neumann) and obtain stronger estimates, in particular, estimates involving $L\log L$ norms of $V$ including norms of $V$ supported by sets of fractional dimension. This is the work of Chapter 4.

 \section{Structure of the thesis}
In Chapter 1, we give a brief background to the problem and  discuss the variational approach introduced by M. Sh. Birmann and M. Z. Solomyak \cite{BS} that we use in obtaining our estimates. We review the theory and results on Orclicz spaces that we use in the sequel. We also review the basic facts in spectral theory and discuss in detail the spectrum of the Laplacian on a strip with various boundary conditions.   \\\\
In Chapter 2, we give a review of some of the known results and present the necessary auxiliary results. We extend the estimates for the number of negative eigenvalues of one-dimensional Schr\"odinger operators with potentials locally integrable on $\mathbb{R}$ (see, e.g., \cite{Sol2}) to a general class of measures including measures with atoms.    \\\\
In Chapter 3, we obtain estimates for the number of negative eigenvalues of two-dimensional Schr\"odinger operators with potentials generated by Ahlfors regular measures of dimension $\alpha \in (0, 2]$. Our estimates involve weighted $L^1$ norms and Orlicz norms of the potential.\\\\
In Chapter 4, we first consider the problem on a strip  with Neumann boundary conditions. Later, we consider the case of Robin boundary conditions and derive Neumann-Robin and Dirichlet-Robin conditions as particular cases. We present upper estimates for the number of eigenvalues (not all of them necessarily negative) of the operator \eqref{1} lying below the bottom of the essential spectrum. In both cases the estimates involve  weighted $L^1$ and $L\log L$ norms of the potential.

\section{Orlicz spaces}\label{Orliczspaces}
Let $(\Omega, \Sigma, \mu)$ be a measure space and let $\Psi : [0, +\infty) \rightarrow [0, +\infty)$ be a non-decreasing function. The Orlicz class $K_{\Psi}(\Omega)$ is the set of all (equivalence classes modulo equality a.e. in $\Omega$ of) measurable functions $f : \Omega \rightarrow \mathbb{C}\;( \textrm{or}\;\mathbb{R})$ such that
\begin{equation}\label{orliczeqn}
\int_{\Omega}\Psi(|f(x)|)d\mu(x) < \infty\,.
\end{equation} If $\Psi(t) = t^p,\; 1\le p < \infty$, this is just the $L^p(\Omega)$ space. The difficulty here is that the set of all functions satisfying \eqref{orliczeqn} is not necessarily a linear space. If $\Psi$ is rapidly increasing, e.g exponentially increasing, \eqref{orliczeqn} doesn't imply that the same integral for $2f$ is finite.
\begin{definition}
{\rm A continuous non-decreasing convex function $\Psi : [0, +\infty) \rightarrow [0, +\infty)$ is called an $N$-function if
$$
\underset{t \rightarrow 0+}\lim\frac{\Psi (t)}{t} = 0 \;\;\; \textrm{and }\;\;\;\underset{t \rightarrow \infty}\lim\frac{\Psi (t)}{t} = \infty.
$$ The function $\Phi : [0, +\infty) \rightarrow [0, +\infty)$ defined by
$$
\Phi(t) := \underset{s\geq 0}\sup\left(st - \Psi(s)\right)
$$ is called complementary to $\Psi$.
}
\end{definition}Examples of complementary functions include:
\begin{eqnarray*}
&&\Psi(t) = \frac{t^p}{p},\;\;1 < p < \infty,\;\;\;\;\Phi(t) = \frac{t^q}{q}, \;\;\frac{1}{p} + \frac{1}{q} = 1,\\&&\mathcal{A}(s) = e^{|s|} - 1 - |s| , \ \ \ \mathcal{B}(s) = (1 + |s|) \ln(1 + |s|) - |s| , \ \ \ s \in \mathbb{R} .
\end{eqnarray*}
\begin{definition}
{\rm An $N$-function $\Psi$ is said to satisfy a global $\Delta_2$-condition if there exists a positive constant $k$ such that for every $t \geq 0$,
\begin{equation}\label{global}
\Psi(2t)\le k\Psi(t).
\end{equation}
Similarly $\Psi$ is said to satisfy a $\Delta_2$-condition near infinity if there exists $t_0 > 0$ such that \eqref{global} holds for all $t \geq t_0$.}
\end{definition}
\begin{definition}
{\rm We call the pair $(\Psi, \Omega)\;\Delta$-regular if either $\Psi$ satisfies a global $\Delta_2$-condition, or $\Psi$ satisfies a $\Delta_2$-condition near infinity and $\mu(\Omega) < \infty$.}
\end{definition}
\begin{lemma}{\rm (\cite[Lemma 8.8]{Ad})}
$K_{\Psi}(\Omega)$ is a vector space if and only if $(\Psi, \Omega)$ is $\Delta$-regular.
\end{lemma}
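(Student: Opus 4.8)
The plan is to reduce the lemma to a single scaling property of $K_\Psi(\Omega)$ and then establish the two implications separately. Since $\Psi$ is convex with $\Psi(0)=0$, one has $\Psi(\theta s)\le\theta\Psi(s)$ for $0\le\theta\le 1$, so $K_\Psi(\Omega)$ is automatically balanced (closed under multiplication by scalars of modulus $\le 1$); and since $\Psi$ is non-decreasing and convex, $\Psi(|f+g|)\le\Psi(|f|+|g|)\le\tfrac12\Psi(2|f|)+\tfrac12\Psi(2|g|)$. From these two facts, $K_\Psi(\Omega)$ is a vector space if and only if it is closed under the map $f\mapsto 2f$: closure under addition follows from the inequality just written, and closure under an arbitrary scalar $\lambda$ by choosing $n$ with $2^{n}\ge|\lambda|$, noting $2^{n}f\in K_\Psi(\Omega)$ by iteration, and then writing $\lambda f=(2^{-n}\lambda)(2^{n}f)$ with $|2^{-n}\lambda|\le 1$. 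Thus everything reduces to proving: $2f\in K_\Psi(\Omega)$ for every $f\in K_\Psi(\Omega)$ if and only if $(\Psi,\Omega)$ is $\Delta$-regular.

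For the ``if'' direction I would argue directly from the definition. If $\Psi$ satisfies a global $\Delta_2$-condition with constant $k$, then $\int_\Omega\Psi(2|f|)\,d\mu\le k\int_\Omega\Psi(|f|)\,d\mu<\infty$. If instead $\Psi$ satisfies $\Delta_2$ only near infinity, with threshold $t_0$ and constant $k$, and $\mu(\Omega)<\infty$, I would split $\Omega$ into $\{|f|\ge t_0\}$ and its complement: on the first set $\Psi(2|f|)\le k\Psi(|f|)$ is integrable, while on the complement $\Psi(2|f|)\le\Psi(2t_0)$ and its integral is at most $\Psi(2t_0)\,\mu(\Omega)<\infty$. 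Either way $2f\in K_\Psi(\Omega)$.

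The substantive direction is ``only if'', which I would prove by contraposition: assuming $(\Psi,\Omega)$ is not $\Delta$-regular, I construct $f\in K_\Psi(\Omega)$ with $2f\notin K_\Psi(\Omega)$. Not being $\Delta$-regular means $\Psi$ fails the global $\Delta_2$-condition and, in addition, either $\Psi$ fails $\Delta_2$ near infinity, or $\mu(\Omega)=\infty$. In the first case there is a sequence $t_n\to\infty$ with $\Psi(2t_n)\ge 2^{n}\Psi(t_n)$. In the second case $\Psi$ does satisfy $\Delta_2$ near infinity, so a Bolzano--Weierstrass argument using the continuity of $\Psi$ shows that the failure of the global condition must occur near the origin, yielding a sequence $t_n\to 0$ with $\Psi(2t_n)\ge 2^{n}\Psi(t_n)$. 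In both cases $\Psi(t_n)>0$ because $\Psi$ is an $N$-function. Now take pairwise disjoint measurable sets $E_n\subseteq\Omega$ with $\mu(E_n)=\varepsilon\,2^{-n}/\Psi(t_n)$, where $\varepsilon>0$ is chosen small enough that the $E_n$ fit inside $\Omega$ (in the second case $\sum_n\mu(E_n)$ may be infinite, which is precisely why $\mu(\Omega)=\infty$ is required there), and put $f:=\sum_n t_n\chi_{E_n}$. Then $\int_\Omega\Psi(|f|)\,d\mu=\sum_n\Psi(t_n)\mu(E_n)=\varepsilon\sum_n 2^{-n}<\infty$, so $f\in K_\Psi(\Omega)$, whereas $\int_\Omega\Psi(2|f|)\,d\mu=\sum_n\Psi(2t_n)\mu(E_n)\ge\sum_n\varepsilon=\infty$, so $2f\notin K_\Psi(\Omega)$.

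I expect the main obstacle to be this last construction: correctly locating the failure of the $\Delta_2$-condition (at infinity versus at the origin) and extracting the witnessing sequence $(t_n)$, together with the measure-theoretic step of producing disjoint sets of prescribed---possibly very small, or in the $\mu(\Omega)=\infty$ case unbounded---measures. The latter needs the underlying measure to have no atoms of the wrong size, which holds in every setting relevant to this thesis (Lebesgue measure on a domain, or an Ahlfors-regular measure on a set of dimension $\alpha$); the degenerate case $\mu(\Omega)=0$, in which $K_\Psi(\Omega)=\{0\}$ trivially, is excluded by the standing conventions.
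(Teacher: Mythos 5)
The paper gives no proof of its own here, simply citing \cite[Lemma 8.8]{Ad}. Your argument is correct and matches the standard proof (which is also the route Adams takes): reduce to closure under $f\mapsto 2f$ via convexity of $\Psi$, verify the easy ``if'' direction by the near-infinity/finite-measure split, and for the contrapositive build the witness $f=\sum_n t_n\chi_{E_n}$ from a sequence $(t_n)$ along which $\Psi(2t)/\Psi(t)$ blows up (to $\infty$ in $t$ in Case A, to $0^+$ in Case B) --- the caveats you flag at the end about nonatomicity of $\mu$ and the degenerate case $\mu(\Omega)=0$ are genuine, but are harmless under the Lebesgue and Ahlfors-regular standing hypotheses used throughout the thesis and are likewise implicit in Adams' setting.
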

\begin{definition}
{\rm The \textit{Orlicz space} $ L_{\Psi}(\Omega)$ is the linear span of the Orlicz class $K_{\Psi}(\Omega)$, that is, the smallest vector space containing $K_{\Psi}(\Omega)$.}
\end{definition}Consequently, $K_{\Psi}(\Omega) = L_{\Psi}(\Omega)$ if and only if $(\Psi, \Omega)$ is $\Delta$-regular.\\\\
Let $\Phi$ and $\Psi$ be mutually complementary $N$-functions, and let $L_\Phi(\Omega)$,
$L_\Psi(\Omega)$ be the corresponding Orlicz spaces. (These spaces are denoted by
$L^*_\Phi(\Omega)$,  $L^*_\Psi(\Omega)$ in \cite{KR}, where $\Omega$ is assumed to be a closed
bounded subset of $\mathbb{R}^n$
equipped with the standard Lebesgue measure.) We will use the following
norms on $L_\Psi(\Omega)$
\begin{equation}\label{Orlicz}
\|f\|_{\Psi} = \|f\|_{\Psi, \Omega} = \sup\left\{\left|\int_\Omega f g d\mu\right| : \
\int_\Omega \Phi(|g|) d\mu \le 1\right\}
\end{equation}
and
\begin{equation}\label{Luxemburg}
\|f\|_{(\Psi)} = \|f\|_{(\Psi, \Omega)} = \inf\left\{\kappa > 0 : \
\int_\Omega \Psi\left(\frac{|f|}{\kappa}\right) d\mu \le 1\right\} .
\end{equation}
These two norms are equivalent
\begin{equation}\label{Luxemburgequiv}
\|f\|_{(\Psi)} \le \|f\|_{\Psi} \le 2 \|f\|_{(\Psi)}\, , \ \ \ \forall f \in L_\Psi(\Omega),
\end{equation}(see \cite{Ad}).\\
Note that
\begin{equation}\label{LuxNormImpl}
\int_\Omega \Psi\left(\frac{|f|}{\kappa_0}\right) d\mu \le C_0, \ \ C_0 \ge 1  \ \ \Longrightarrow \ \
\|f\|_{(\Psi)} \le C_0 \kappa_0 .
\end{equation}
Indeed, since $\Psi$ is  convex and increasing on
$[0, +\infty)$, and $\Psi(0) = 0$, we get for any $\kappa \ge C_0 \kappa_0$,
\begin{equation}\label{LuxProof}
\int_{\Omega} \Psi\left(\frac{|f|}{\kappa}\right) d\mu \le
\int_{\Omega} \Psi\left(\frac{|f|}{C_0 \kappa_0}\right) d\mu \le
\frac{1}{C_0} \int_{\Omega} \Psi\left(\frac{|f|}{\kappa_0}\right) d\mu \le 1 .
\end{equation}
It follows from \eqref{LuxNormImpl} with $\kappa_0 = 1$ that
\begin{equation}\label{LuxNormPre}
\|f\|_{(\Psi)} \le \max\left\{1, \int_{\Omega} \Psi(|f|) d\mu\right\} .
\end{equation}
We will need the following
equivalent norm on $L_\Psi(\Omega)$ with $\mu(\Omega) < \infty$ which was introduced in
\cite{Sol}:
\begin{equation}\label{OrlAverage}
\|f\|^{\rm (av)}_{\Psi} = \|f\|^{\rm (av)}_{\Psi, \Omega} = \sup\left\{\left|\int_\Omega f g d\mu\right| : \
\int_\Omega \Phi(|g|) d\mu \le \mu(\Omega)\right\} .
\end{equation}

\begin{proposition}{\rm \cite[Formula (16)]{Ad}}
{\rm For any $f\in L_{\Psi}(\Omega)$ and $g\in L_{\Phi}(\Omega)$
\begin{equation}\label{Holder}
\left|\int_{\Omega}fg\,d\mu\right| \le \|f\|_{\Psi,\Omega}\|g\|_{\Phi,\Omega}.
\end{equation}
In particular, $fg\in L^1(\Omega)$. This is the H\"older inequality for Orlicz spaces.}
\end{proposition}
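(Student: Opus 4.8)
The plan is to read the inequality off directly from the definition \eqref{Orlicz} of the Orlicz norm, using the Luxemburg norm \eqref{Luxemburg} of $g$ (taken with respect to the complementary function $\Phi$) to manufacture an admissible test function. First I would dispose of the trivial case: if $\|g\|_{(\Phi)} = 0$ then $g = 0$ $\mu$-a.e. and both sides of \eqref{Holder} vanish, and by \eqref{Luxemburgequiv} the same holds if $\|g\|_{\Phi} = 0$; so I may assume $0 < \|g\|_{(\Phi)} < \infty$, the finiteness because $g \in L_\Phi(\Omega)$. Since $\Phi$ is non-decreasing, the set over which the infimum in \eqref{Luxemburg} is taken is an upper ray, so for every $\kappa > \|g\|_{(\Phi)}$ one has $\int_\Omega \Phi(|g|/\kappa)\, d\mu \le 1$; that is, $h := g/\kappa$ is admissible in the supremum \eqref{Orlicz} defining $\|f\|_{\Psi}$.

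Next, replacing a test function by its modulus (and adjusting its phase to align with that of $f$) does not change the constraint $\int_\Omega \Phi(|g'|)\, d\mu \le 1$, so the supremum \eqref{Orlicz} also equals $\sup\{\int_\Omega |f||g'|\, d\mu : \int_\Omega \Phi(|g'|)\, d\mu \le 1\}$. Applying this with $g' = h = g/\kappa$ gives $\frac{1}{\kappa}\int_\Omega |f||g|\, d\mu \le \|f\|_{\Psi}$. Since $f \in L_\Psi(\Omega)$ we have $\|f\|_{\Psi} < \infty$ (this follows from \eqref{LuxNormPre} and \eqref{Luxemburgequiv}), so this bound already yields $fg \in L^1(\Omega)$ — the ``in particular'' assertion — together with $\left|\int_\Omega fg\, d\mu\right| \le \int_\Omega |f||g|\, d\mu \le \kappa \|f\|_{\Psi}$. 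Letting $\kappa \downarrow \|g\|_{(\Phi)}$ gives $\left|\int_\Omega fg\, d\mu\right| \le \|g\|_{(\Phi)} \|f\|_{\Psi}$, and finally the left-hand inequality in \eqref{Luxemburgequiv}, namely $\|g\|_{(\Phi)} \le \|g\|_{\Phi}$, upgrades this to \eqref{Holder}.

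There is no genuinely hard step here; the only points that need a word of justification are (i) that the admissible set in \eqref{Luxemburg} is an upper ray, which uses only monotonicity of $\Phi$, and (ii) the passage from ``one admissible test function'' to the numerical bound, which is immediate because the supremum \eqref{Orlicz} ranges over all admissible $g'$ and in particular contains $|g|/\kappa$. An alternative route would avoid the norm definitions altogether and argue from Young's inequality $st \le \Psi(s) + \Phi(t)$ for mutually complementary $N$-functions, integrating $|f||g|/(\|f\|_{(\Psi)}\|g\|_{(\Phi)})$ after normalising; that argument is conceptually cleaner but delivers an extra factor $2$ (again via \eqref{Luxemburgequiv}), so I would keep the supremum-based proof above in order to land the stated constant $1$ exactly.
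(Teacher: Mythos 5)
The paper states this proposition only as a citation to \cite[Formula (16)]{Ad} and supplies no proof of its own, so there is no in-paper argument to compare against. Your supremum-based proof is correct and is essentially the argument in the cited source: you observe that for any $\kappa>\|g\|_{(\Phi)}$ the function $g/\kappa$ is admissible in the supremum \eqref{Orlicz} (an elementary consequence of the monotonicity of $\Phi$ and the definition of the infimum in \eqref{Luxemburg}), deduce $\int_\Omega|f||g|\,d\mu\le\kappa\|f\|_{\Psi}$, let $\kappa\downarrow\|g\|_{(\Phi)}$, and finish with $\|g\|_{(\Phi)}\le\|g\|_{\Phi}$ from \eqref{Luxemburgequiv}. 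A nice feature worth making explicit is that en route you actually prove the \emph{strengthened} H\"older inequality \eqref{h1}, $\left|\int_\Omega fg\,d\mu\right|\le\|f\|_{\Psi,\Omega}\|g\|_{(\Phi,\Omega)}$, and only at the last line weaken to \eqref{Holder}; your remark that the Young-inequality route costs a factor $2$ and therefore does not give \eqref{Holder} with constant $1$ is also accurate.

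One small patch: you invoke \eqref{LuxNormPre} (together with \eqref{Luxemburgequiv}) to get $\|f\|_{\Psi}<\infty$ for $f\in L_{\Psi}(\Omega)$, but \eqref{LuxNormPre} as stated only yields a finite bound when $\int_\Omega\Psi(|f|)\,d\mu<\infty$, i.e.\ when $f\in K_{\Psi}(\Omega)$, whereas $L_{\Psi}(\Omega)$ is defined as the linear span of $K_{\Psi}(\Omega)$. For general $f\in L_{\Psi}(\Omega)$ one first uses convexity of $\Psi$ to produce $\kappa_0>0$ with $\int_\Omega\Psi(|f|/\kappa_0)\,d\mu<\infty$ and then applies \eqref{LuxNormImpl} with that $\kappa_0$. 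This is a cosmetic fix and does not affect the substance of your argument.
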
The following are referred to as strengthened H\"older inequalities:
\begin{equation}\label{h1}
\left|\int_{\Omega}fg\,d\mu\right| \le \|f\|_{\Psi,\Omega}\|g\|_{(\Phi,\Omega)}
\end{equation}
and
\begin{equation}\label{h2}
\left|\int_{\Omega}fg\,d\mu\right| \le \|f\|_{(\Psi,\Omega)}\|g\|_{\Phi,\Omega}\,,
\end{equation}
for all $f\in L_{\Psi}(\Omega)$ and $g\in L_{\Phi}(\Omega)$ (see (9.26) and (9.27) respectively in \cite{KR}).
\begin{lemma}{\rm \cite[Lemma 1]{Sol}}
{\rm Let $\xi$ be an affine transformation of $\mathbb{R}^n, \; \Omega_{\xi} = \xi(\Omega)$ and $\mu$ be the Lebesgue measure. Then for any N-function $\Psi$ and for any
$f\in L_{\Psi}(\Omega_{\xi})$
\begin{equation}\label{scaling}
\mu(\Omega)^{-1}\|f\circ \xi\|^{(\textrm{av})}_{\Psi, \Omega} = \mu(\Omega_{\xi})^{-1}\|f\|^{(\textrm{av})}_{\Psi, \Omega_{\xi}}.
\end{equation}
That is, \eqref{OrlAverage} is invariant with respect to scaling.
}
\end{lemma}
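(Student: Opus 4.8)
The plan is to reduce the identity to the elementary change-of-variables rule for the Lebesgue measure under an affine map. Write $\xi(x) = Ax + b$ with $A$ an invertible linear map and $b \in \mathbb{R}^n$ (for the scaling application $\xi$ is a bijection, so this costs nothing). Then the Jacobian of $\xi$ is the \emph{constant} $|\det A|$, whence $\mu(\Omega_\xi) = |\det A|\,\mu(\Omega)$ and, for every integrable $h$ on $\Omega_\xi$, $\int_{\Omega_\xi} h\,d\mu = |\det A|\int_\Omega (h\circ\xi)\,d\mu$. Everything in the proof is obtained by applying this single formula to suitable choices of $h$.

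The first step is to set up the bijection between admissible test functions. Given a measurable $g$ on $\Omega$, put $\tilde g := g\circ\xi^{-1}$ on $\Omega_\xi$; the map $g \mapsto \tilde g$ is a bijection of measurable functions on $\Omega$ onto those on $\Omega_\xi$. Applying the change of variables to $h = f\tilde g$ and to $h = \Phi(|\tilde g|)$ respectively gives
$$\int_\Omega (f\circ\xi)\,g\,d\mu = |\det A|^{-1}\int_{\Omega_\xi} f\,\tilde g\,d\mu, \qquad \int_{\Omega_\xi}\Phi(|\tilde g|)\,d\mu = |\det A|\int_\Omega \Phi(|g|)\,d\mu .$$
Hence the constraint $\int_\Omega \Phi(|g|)\,d\mu \le \mu(\Omega)$ is equivalent to $\int_{\Omega_\xi}\Phi(|\tilde g|)\,d\mu \le |\det A|\,\mu(\Omega) = \mu(\Omega_\xi)$; that is, $g$ is admissible in the supremum \eqref{OrlAverage} defining $\|f\circ\xi\|^{(\textrm{av})}_{\Psi,\Omega}$ if and only if $\tilde g$ is admissible in the supremum defining $\|f\|^{(\textrm{av})}_{\Psi,\Omega_\xi}$.

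The second step is simply to take suprema. For every admissible $g$ the first display yields $\bigl|\int_\Omega (f\circ\xi)g\,d\mu\bigr| = |\det A|^{-1}\bigl|\int_{\Omega_\xi} f\tilde g\,d\mu\bigr|$, and since $g\mapsto\tilde g$ sweeps out exactly the admissible functions on $\Omega_\xi$, taking the supremum on both sides (both inequalities, thanks to the bijection) gives $\|f\circ\xi\|^{(\textrm{av})}_{\Psi,\Omega} = |\det A|^{-1}\,\|f\|^{(\textrm{av})}_{\Psi,\Omega_\xi}$. Substituting $|\det A|^{-1} = \mu(\Omega)/\mu(\Omega_\xi)$ and rearranging produces $\mu(\Omega)^{-1}\|f\circ\xi\|^{(\textrm{av})}_{\Psi,\Omega} = \mu(\Omega_\xi)^{-1}\|f\|^{(\textrm{av})}_{\Psi,\Omega_\xi}$, which is the claim.

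There is no genuine obstacle here: the statement is bookkeeping with the Jacobian, the only force of the ``average'' normalization being that the extra factor $|\det A|$ in the constraint is precisely absorbed by the change from $\mu(\Omega)$ to $\mu(\Omega_\xi)$ on the right-hand side of the constraint, which is what makes the two suprema match without any loss of constant. The points that do require (minor) care are: that $\xi$ be non-degenerate, so that $|\det A|$ is a nonzero constant and $\mu(\Omega_\xi)\neq 0$; that $g\mapsto g\circ\xi^{-1}$ be a true bijection of the admissible sets, so the supremum is transported exactly rather than in one direction only; and that $\mu(\Omega)<\infty$ (implicit in the statement, since $\|\cdot\|^{(\textrm{av})}$ is only defined in that case), ensuring the suprema are over non-empty sets and the identity is between finite quantities.
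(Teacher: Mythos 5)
Your proof is correct, and the paper itself gives no proof of this lemma (it is cited from Solomyak's 1994 paper), but the argument you give — transporting test functions via $g\mapsto g\circ\xi^{-1}$, tracking the Jacobian $|\det A|$ in both the integral $\int f g\,d\mu$ and the constraint $\int\Phi(|g|)\,d\mu\le\mu(\Omega)$, and observing that the factor $|\det A|$ converts $\mu(\Omega)$ into $\mu(\Omega_\xi)$ exactly — is the standard (and essentially the only) argument, and is what the cited source does. Your remarks at the end about non-degeneracy of $\xi$ and finiteness of $\mu(\Omega)$ are appropriate and correctly identify the implicit hypotheses.
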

\begin{lemma}\label{lemma7}{\rm  \cite[Lemma 3]{Sol}}
 For any finite collection of pairwise disjoint subsets $\Omega_k$ of $\Omega$
\begin{equation}\label{bsr1}
\sum_k\|f\|^{(av)}_{\Psi,\Omega_k} \le \|f\|^{(av)}_{\Psi,\Omega}.
\end{equation}
\end{lemma}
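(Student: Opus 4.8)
The plan is to establish the superadditivity \eqref{bsr1} by a gluing argument: one takes near-optimal test functions for the averaged norm on each piece $\Omega_k$, normalises their phases so the pairings with $f$ add up constructively, and assembles them into a single admissible test function on $\Omega$.

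In detail, fix $\varepsilon > 0$ and write the (finite) collection as $\Omega_1, \dots, \Omega_N$. By the definition \eqref{OrlAverage} of $\|\cdot\|^{(av)}_{\Psi}$, for each $k$ I choose a measurable function $g_k$ on $\Omega_k$ with
\[ \int_{\Omega_k}\Phi(|g_k|)\,d\mu \le \mu(\Omega_k) \quad\text{and}\quad \left|\int_{\Omega_k} f g_k\,d\mu\right| \ge \|f\|^{(av)}_{\Psi,\Omega_k} - \frac{\varepsilon}{N} \]
(the supremum in \eqref{OrlAverage} need not be attained, hence the approximation). Replacing $g_k$ by $c_k g_k$ for a suitable constant $c_k$ with $|c_k| = 1$ changes neither $|g_k|$ nor the left-hand constraint, so I may also assume $\int_{\Omega_k} f g_k\,d\mu = \left|\int_{\Omega_k} f g_k\,d\mu\right| \ge 0$.

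Now extend each $g_k$ by zero to all of $\Omega$ and put $g := \sum_{k=1}^N g_k$. Since the $\Omega_k$ are pairwise disjoint, the extended functions have disjoint supports, so using $\Phi(0) = 0$,
\[ \int_\Omega \Phi(|g|)\,d\mu = \sum_{k=1}^N \int_{\Omega_k}\Phi(|g_k|)\,d\mu \le \sum_{k=1}^N \mu(\Omega_k) = \mu\!\left(\bigcup_{k=1}^N \Omega_k\right) \le \mu(\Omega). \]
Hence $g$ is admissible in \eqref{OrlAverage} for $\Omega$, and therefore
\[ \|f\|^{(av)}_{\Psi,\Omega} \ge \left|\int_\Omega f g\,d\mu\right| = \sum_{k=1}^N \int_{\Omega_k} f g_k\,d\mu = \sum_{k=1}^N \left|\int_{\Omega_k} f g_k\,d\mu\right| \ge \sum_{k=1}^N \|f\|^{(av)}_{\Psi,\Omega_k} - \varepsilon. \]
Letting $\varepsilon \to 0^+$ yields \eqref{bsr1}.

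There is no deep obstacle here; the points that need care are that the constraint $\int_\Omega \Phi(|g|)\,d\mu$ splits exactly into the sum of the constraints on the $\Omega_k$ (which is why the total ``budget'' $\mu(\Omega)$ is respected, since $\sum_k \mu(\Omega_k) \le \mu(\Omega)$ by disjointness), and the phase normalisation ensuring the terms $\int_{\Omega_k} f g_k\,d\mu$ add up rather than cancel. The $\varepsilon/N$-approximation is needed only because the supremum defining each $\|f\|^{(av)}_{\Psi,\Omega_k}$ may fail to be attained; if one is willing to assume attainment, it can be dropped.
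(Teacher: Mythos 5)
Your proof is correct and uses essentially the same gluing argument as the paper, which proves this statement as the special case $a_k = \mu(\Omega_k)$, $a = \mu(\Omega)$, $\kappa = 1$ of the more general Lemma~\ref{a}. If anything you are a shade more careful than the paper's exposition: the explicit $\varepsilon/N$-approximation handles the fact that the supremum in \eqref{OrlAverage} may not be attained, and the explicit unimodular phase normalisation ensures the terms $\int_{\Omega_k} f g_k\,d\mu$ add constructively --- both points the paper's phrase ``there exists a $G$ such that \ldots'' passes over silently.
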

Below we provide a proof of the above Lemma in a more general form.
For $a >0$, let
\begin{equation}\label{norma}
\|f\|^{(a)}_{\Psi, \Omega} := \sup\left\{\left|\int_{\Omega}fg\,d\mu\right| : \int_{\Omega}\Phi(|g|)\,d\mu \le a\right\}.
\end{equation}
\begin{lemma}\label{a}
 Suppose $\Omega_1, \Omega_2, ... , \Omega_N \subseteq \Omega$ are such that $\Omega_j\cap\Omega_k = \emptyset,\;j\not= k$. Suppose there exists a sequence $a_1, a_2, ... , a_N \ge 0$ and $\kappa \ge 1$ such that
\begin{equation}\label{M1}
\sum_{k = 1}^Na_k \le \kappa a\,.
\end{equation} Then
\begin{equation}\label{M2}
\sum_{k=1}^N\|f\|^{(a_k)}_{\Psi, \Omega_k} \le \kappa \|f\|^{(a)}_{\Psi, \Omega}\,.
\end{equation}

\end{lemma}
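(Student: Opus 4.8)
\emph{Proof sketch.} The plan is to bound $\|f\|^{(a)}_{\Psi,\Omega}$ from below by testing the supremum in \eqref{norma} against a single competitor $g$ assembled from near-optimal competitors on the individual pieces $\Omega_k$, after rescaling $g$ so that the global constraint $\int_\Omega \Phi(|g|)\,d\mu \le a$ is restored. Fix $\varepsilon > 0$. For each $k$ choose $g_k : \Omega_k \to \mathbb{C}$ with $\int_{\Omega_k}\Phi(|g_k|)\,d\mu \le a_k$ and $\left|\int_{\Omega_k} f g_k\,d\mu\right| \ge \|f\|^{(a_k)}_{\Psi,\Omega_k} - \varepsilon/N$; this is possible by the definition of the supremum, and in the degenerate case $a_k = 0$ the only admissible $g_k$ is $0$, while $\|f\|^{(a_k)}_{\Psi,\Omega_k} = 0$ because $\Phi(t) > 0$ for $t > 0$, so the inequality still holds. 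Replacing $g_k$ by $\theta_k g_k$ with a suitable constant $\theta_k$ of modulus $1$ — which alters neither $|g_k|$ nor the constraint — I may assume $\int_{\Omega_k} f g_k\,d\mu$ is real and nonnegative. Set $g := \sum_{k=1}^N g_k$, extended by $0$ outside $\bigcup_k \Omega_k$. Since the $\Omega_k$ are pairwise disjoint and $\Phi(0) = 0$,
\[
\int_\Omega \Phi(|g|)\,d\mu = \sum_{k=1}^N \int_{\Omega_k}\Phi(|g_k|)\,d\mu \le \sum_{k=1}^N a_k \le \kappa a .
\]

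The crux is the rescaling. The function $g$ need not itself be admissible for $\|f\|^{(a)}_{\Psi,\Omega}$, but $g/\kappa$ is: by the convexity computation already carried out in \eqref{LuxProof}, applied to $\Phi$ in place of $\Psi$ and $g$ in place of $f$, one has $\int_\Omega \Phi(|g|/\kappa)\,d\mu \le \kappa^{-1}\int_\Omega \Phi(|g|)\,d\mu \le a$ for $\kappa \ge 1$. Hence, using the disjointness of the $\Omega_k$, the vanishing of $g$ off their union, and the fact that each $\int_{\Omega_k} f g_k\,d\mu \ge 0$,
\[
\|f\|^{(a)}_{\Psi,\Omega} \ge \left|\int_\Omega f\,\frac{g}{\kappa}\,d\mu\right| = \frac{1}{\kappa}\sum_{k=1}^N \int_{\Omega_k} f g_k\,d\mu \ge \frac{1}{\kappa}\left(\sum_{k=1}^N \|f\|^{(a_k)}_{\Psi,\Omega_k} - \varepsilon\right).
\]
Letting $\varepsilon \to 0$ yields \eqref{M2}.

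I expect no real obstacle: the one step that is more than bookkeeping is the passage from $g$ to $g/\kappa$, and it is precisely the fact that the complementary function $\Phi$ is an $N$-function — convex with $\Phi(0) = 0$ — that keeps the loss down to the sharp factor $1/\kappa$; the approximate attainment of the suprema, the alignment of phases, and the additivity of $\int \Phi(|g|)$ over disjoint sets are all routine. Finally I would record that this statement contains Lemma \ref{lemma7}: taking $\kappa = 1$, $a_k = \mu(\Omega_k)$ and $a = \mu(\Omega)$, the hypothesis $\sum_k \mu(\Omega_k) \le \mu(\Omega)$ is automatic for disjoint subsets, and the norms $\|f\|^{(a_k)}_{\Psi,\Omega_k}$, $\|f\|^{(a)}_{\Psi,\Omega}$ reduce to the averaged norms \eqref{OrlAverage}. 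If some $\|f\|^{(a_k)}_{\Psi,\Omega_k}$ equals $+\infty$ the inequality is trivial, since the same gluing (with the remaining $g_j$ set to $0$) then forces $\|f\|^{(a)}_{\Psi,\Omega} = +\infty$; so one may assume all terms finite.
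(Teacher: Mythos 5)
Your proof is correct and follows essentially the same route as the paper's: glue near-optimal competitors $g_k$ on the disjoint pieces into a single $G$, observe $\int_\Omega\Phi(|G|)\,d\mu\le\kappa a$, and rescale to $G/\kappa$ using the convexity of $\Phi$ together with $\Phi(0)=0$ to restore admissibility. Your version is in fact written more carefully than the paper's — you make the $\varepsilon$-approximation of the suprema, the phase alignment, and the degenerate cases $a_k=0$ and $\|f\|^{(a_k)}_{\Psi,\Omega_k}=+\infty$ explicit, whereas the paper elides these — but the underlying idea is identical.
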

\begin{proof}
Let $g = g_k$ on $\Omega_k$ and $0$ on $\Omega\backslash\cup_{k = 1}^N\Omega_k$. Then
$$
\sum_{k=1}^N\|f\|^{(a_k)}_{\Psi, \Omega_k} = \sum_{k =1}^N\sup\left\{\left|\int_{\Omega_k}fg_k\,d\mu\right| : \int_{\Omega_k}\Phi(|g_k|)\,d\mu \le a_k\right\}.
$$
Let $G = \sum_{k=1}^Ng_k$. Then
\begin{eqnarray*}
\int_{\Omega}\Phi(|G|)d\mu &=& \sum_{k=1}^N\int_{\Omega_k}\Phi(|G|)d\mu \\ &=&\sum_{k=1}^N\int_{\Omega_k}\Phi(|g_k|)d\mu\\ &\le& \sum_{k=1}^N a_k \le \kappa a\,.
\end{eqnarray*} There exists a $G$ such that
$$
\sum_{k=1}^N\|f\|^{(a_k)}_{\Psi, \Omega_k} \le \sup\left\{\left|\int_{\Omega}fG\,d\mu\right| : \int_{\Delta}\Phi(|G|)\,d\mu \le \kappa a\right\}.
$$
Let $h = \frac{1}{\kappa}G$. Since $\Phi$ is convex and $\Phi(0) = 0$, then
\begin{eqnarray*}
\int_{\Omega}\Phi(|h|)\,d\mu &=& \int_{\Omega}\Phi\left(\frac{1}{\kappa} |G|\right)d\mu\\ &\le& \frac{1}{\kappa}\int_{\Omega}\Phi(|G|)\,d\mu \\&\le&\frac{1}{\kappa}.\kappa a = a\,.
\end{eqnarray*}
Hence
\begin{eqnarray*}
\sum_{k=1}^N\|f\|^{(a_k)}_{\Psi, \Omega_k} &\le& \sup\left\{\left|\int_{\Omega}fG\,d\mu\right| : \int_{\Omega}\Phi(|G|)\,d\mu \le \kappa a\right\}\\
&\le&  \sup\left\{\left|\int_{\Omega}f|\kappa h|\,d\mu\right| : \int_{\Omega}\Phi(|h|)\,d\mu \le  a\right\} \\ &=&\kappa \sup\left\{\left|\int_{\Omega}f h\,d\mu\right| : \int_{\Omega}\Phi(|h|)\,d\mu \le  a\right\}\\ &=&\kappa \|f\|^{(a)}_{\Psi, \Omega}\,.
\end{eqnarray*}
\end{proof}

Let
\begin{equation}\label{OrlAverage*}
\|f\|^{\rm (av),\tau}_{\Psi,\Omega}  = \sup\left\{\left|\int_\Omega f \varphi d\mu\right| : \
\int_\Omega \Phi(|\varphi|) d\mu \le \tau\mu(\Omega)\right\},\;\;\;\tau > 0 .
\end{equation}
\begin{lemma}\label{lemma8}For any $\tau_1,\;\tau_2 > 0$
\begin{equation}\label{bsr2}
\min\left\{1, \frac{\tau_2}{\tau_1}\right\} \|f\|^{\rm (av),\tau_1}_{\Psi,\Omega} \le \|f\|^{\rm (av),\tau_2}_{\Psi,\Omega} \le \max\left\{1, \frac{\tau_2}{\tau_1}\right\} \|f\|^{\rm (av),\tau_1}_{\Psi,\Omega}.
\end{equation}
\end{lemma}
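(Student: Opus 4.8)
The plan is to reduce everything to the elementary observation that, since $\Phi$ is convex with $\Phi(0)=0$, one has $\Phi(\lambda t)\le\lambda\,\Phi(t)$ for all $t\ge 0$ and all $\lambda\in[0,1]$. Rescaling the admissible test functions $\varphi$ in \eqref{OrlAverage*} then converts the constraint $\int_\Omega\Phi(|\varphi|)\,d\mu\le\tau_2\mu(\Omega)$ into $\int_\Omega\Phi(|\varphi|)\,d\mu\le\tau_1\mu(\Omega)$ whenever $\tau_1\le\tau_2$, at the cost of a factor $\tau_1/\tau_2$ in $\bigl|\int_\Omega f\varphi\,d\mu\bigr|$.

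More precisely, I would first establish the single inequality
$$
\|f\|^{\rm (av),\tau_2}_{\Psi,\Omega}\le\max\left\{1,\tfrac{\tau_2}{\tau_1}\right\}\|f\|^{\rm (av),\tau_1}_{\Psi,\Omega}
$$
for arbitrary $\tau_1,\tau_2>0$. If $\tau_2\le\tau_1$ this is immediate, because then $\max\{1,\tau_2/\tau_1\}=1$ and the admissibility set $\{\varphi:\int_\Omega\Phi(|\varphi|)\,d\mu\le\tau_2\mu(\Omega)\}$ is contained in $\{\varphi:\int_\Omega\Phi(|\varphi|)\,d\mu\le\tau_1\mu(\Omega)\}$, so the supremum defining the left-hand side is taken over a smaller set. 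If $\tau_2>\tau_1$, then given any $\varphi$ with $\int_\Omega\Phi(|\varphi|)\,d\mu\le\tau_2\mu(\Omega)$ put $\psi:=(\tau_1/\tau_2)\varphi$; since $\tau_1/\tau_2\in(0,1)$, the convexity estimate gives $\int_\Omega\Phi(|\psi|)\,d\mu\le(\tau_1/\tau_2)\int_\Omega\Phi(|\varphi|)\,d\mu\le\tau_1\mu(\Omega)$, so $\psi$ is admissible for $\|\cdot\|^{\rm (av),\tau_1}_{\Psi,\Omega}$ and
$$
\left|\int_\Omega f\varphi\,d\mu\right|=\frac{\tau_2}{\tau_1}\left|\int_\Omega f\psi\,d\mu\right|\le\frac{\tau_2}{\tau_1}\,\|f\|^{\rm (av),\tau_1}_{\Psi,\Omega}.
$$
Taking the supremum over all such $\varphi$ yields the claimed inequality.

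Finally I would interchange the roles of $\tau_1$ and $\tau_2$ in the inequality just proved, obtaining $\|f\|^{\rm (av),\tau_1}_{\Psi,\Omega}\le\max\{1,\tau_1/\tau_2\}\|f\|^{\rm (av),\tau_2}_{\Psi,\Omega}$, which after dividing through reads $\min\{1,\tau_2/\tau_1\}\|f\|^{\rm (av),\tau_1}_{\Psi,\Omega}\le\|f\|^{\rm (av),\tau_2}_{\Psi,\Omega}$; together with the previous step this is precisely \eqref{bsr2}. (Alternatively, the same conclusion follows at once from Lemma~\ref{a} applied with $N=1$, $\Omega_1=\Omega$, $a_1=\tau_1\mu(\Omega)$, $a=\tau_2\mu(\Omega)$ and $\kappa=\max\{1,\tau_1/\tau_2\}\ge 1$, and again with $\tau_1,\tau_2$ swapped.) There is no genuine obstacle here; the only point requiring care is the bookkeeping of the two cases $\tau_2\le\tau_1$ and $\tau_2>\tau_1$, so that the $\max$ and $\min$ land on the correct sides.
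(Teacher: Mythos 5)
Your proof is correct and uses essentially the same argument as the paper: rescaling the test function by $\tau_1/\tau_2$ and invoking convexity of $\Phi$ together with $\Phi(0)=0$, then handling the two cases to place $\max$ and $\min$ correctly. The only cosmetic difference is that you prove one inequality uniformly in $\tau_1,\tau_2$ and then swap their roles, whereas the paper splits directly into the cases $\tau_1\le\tau_2$ and $\tau_1\ge\tau_2$.
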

\begin{proof}
Let
$$
X_1:=\left\{\varphi\; :\;\int_{\Omega}\Phi(|\varphi|)d\mu \le \tau_1\mu(\Omega)\right\},\;\;\;X_2:= \left\{\varphi\; :\;\int_{\Omega}\Phi(|\varphi|)d\mu \le \tau_2\mu(\Omega)\right\}.
$$
Suppose that $\tau_1\le \tau_2$. Then, it is clear that $\|f\|^{\rm (av),\tau_1}_{\Psi,\Omega}\le \|f\|^{\rm (av),\tau_2}_{\Psi,\Omega}$. Now, since $\Phi$ is convex and $\Phi(0) = 0$, then
$$
\varphi\in X_2 \Rightarrow\;\;\frac{\tau_1}{\tau_2}\varphi\in X_1\,, \;\;\;(\textrm{cf}.\,\eqref{LuxNormImpl}).
$$ Hence,
$$
\|f\|^{\rm (av),\tau_2}_{\Psi,\Omega} = \underset{\varphi\in X_2}\sup\left|\int_{\Omega}f\varphi d\mu\right| \le \underset{\phi\in X_1}\sup\left|\int_{\Omega}f.\left(\frac{\tau_2}{\tau_1}\phi\right)d\mu\right| = \frac{\tau_2}{\tau_1}\|f\|^{\rm (av),\tau_1}_{\Psi,\Omega}.
$$
On the other hand, suppose that $\tau_1 \geq \tau_2$. Then
$$
\|f\|^{\rm (av),\tau_2}_{\Psi,\Omega}\le \|f\|^{\rm (av),\tau_1}_{\Psi,\Omega} \le \frac{\tau_1}{\tau_2}\|f\|^{\rm (av),\tau_2}_{\Psi,\Omega}.
$$Hence,
$$
\min\left\{1, \frac{\tau_2}{\tau_1}\right\} \|f\|^{\rm (av),\tau_1}_{\Psi,\Omega} \le \|f\|^{\rm (av),\tau_2}_{\Psi,\Omega}
$$ and
$$
\|f\|^{\rm (av),\tau_2}_{\Psi,\Omega} \le \max\left\{1, \frac{\tau_2}{\tau_1}\right\} \|f\|^{\rm (av),\tau_1}_{\Psi,\Omega}.
$$
\end{proof}
As a result of the above Lemma, we have the following Corollary.
\begin{corollary}\label{avequiv}{\rm (see \cite[Lemma 2.1]{Eugene})}
$$
\min\{1, \mu(\Omega)\}\, \|f\|_{\Psi, \Omega} \le \|f\|^{\rm (av)}_{\Psi, \Omega}
\le \max\{1, \mu(\Omega)\}\, \|f\|_{\Psi, \Omega}.
$$
\end{corollary}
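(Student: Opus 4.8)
The plan is to recognize both norms appearing in the statement as special cases of the family $\|f\|^{\rm (av),\tau}_{\Psi,\Omega}$ introduced in \eqref{OrlAverage*}, and then simply invoke Lemma \ref{lemma8}. Indeed, taking $\tau = 1$ in \eqref{OrlAverage*} recovers the averaged norm $\|f\|^{\rm (av)}_{\Psi,\Omega}$ of \eqref{OrlAverage}, since the constraint becomes $\int_\Omega \Phi(|\varphi|)\,d\mu \le \mu(\Omega)$; and, assuming $0 < \mu(\Omega) < \infty$, taking $\tau = 1/\mu(\Omega)$ turns the constraint into $\int_\Omega \Phi(|\varphi|)\,d\mu \le 1$, so that $\|f\|^{\rm (av),1/\mu(\Omega)}_{\Psi,\Omega} = \|f\|_{\Psi,\Omega}$ by \eqref{Orlicz}.

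With these identifications, I would apply Lemma \ref{lemma8} with $\tau_1 = 1$ and $\tau_2 = 1/\mu(\Omega)$. This yields
$$
\min\left\{1, \frac{1}{\mu(\Omega)}\right\} \|f\|^{\rm (av)}_{\Psi,\Omega} \le \|f\|_{\Psi,\Omega} \le \max\left\{1, \frac{1}{\mu(\Omega)}\right\} \|f\|^{\rm (av)}_{\Psi,\Omega}.
$$
It then remains to observe the elementary identities $\min\{1, 1/\mu(\Omega)\} = 1/\max\{1,\mu(\Omega)\}$ and $\max\{1, 1/\mu(\Omega)\} = 1/\min\{1,\mu(\Omega)\}$, and to rearrange: the left-hand inequality gives $\|f\|^{\rm (av)}_{\Psi,\Omega} \le \max\{1,\mu(\Omega)\}\,\|f\|_{\Psi,\Omega}$, while the right-hand one gives $\min\{1,\mu(\Omega)\}\,\|f\|_{\Psi,\Omega} \le \|f\|^{\rm (av)}_{\Psi,\Omega}$, which together are exactly the claimed two-sided bound.

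There is no serious obstacle here; the proof is essentially a matter of bookkeeping. The only points requiring a little care are (i) the standing assumption that $\mu(\Omega)$ is finite (needed for the averaged norm to be meaningful) and strictly positive (the case $\mu(\Omega) = 0$ being trivial, with both sides zero), so that the substitution $\tau_2 = 1/\mu(\Omega)$ is legitimate, and (ii) correctly matching each norm to the corresponding value of $\tau$ before quoting Lemma \ref{lemma8}, since the $\min$ and $\max$ factors invert when one passes from $\tau$ to $1/\tau$. Alternatively, one could bypass Lemma \ref{lemma8} and argue directly from the definitions \eqref{Orlicz} and \eqref{OrlAverage} using convexity of $\Phi$ together with $\Phi(0) = 0$, rescaling test functions $g$ by $1/\mu(\Omega)$ or by $\mu(\Omega)$ as appropriate (exactly as in the proof of Lemma \ref{lemma8}, and as in the implication \eqref{LuxNormImpl}); but routing through Lemma \ref{lemma8} is the shortest path.
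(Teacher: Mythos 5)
Your proof is correct and is exactly the route the paper intends: the Corollary is stated as an immediate consequence of Lemma \ref{lemma8}, and you apply that lemma after identifying $\|f\|^{\rm (av)}_{\Psi,\Omega}$ and $\|f\|_{\Psi,\Omega}$ with $\|f\|^{\rm (av),\tau}_{\Psi,\Omega}$ at $\tau = 1$ and $\tau = 1/\mu(\Omega)$ respectively. The only cosmetic remark is that choosing $\tau_1 = 1/\mu(\Omega)$ and $\tau_2 = 1$ (rather than the reverse) yields the stated inequality directly, without the final inversion of the $\min/\max$ factors.
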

\begin{lemma}\label{lemma9}
Let $\Omega_k,\; k = 1, 2,...,n$  be pairwise disjoint subsets of $\Omega\subset\mathbb{R}^n$ such that $ \Omega = \bigcup^{n}_{k = 1}\Omega_k $ and set
$$
M:= \underset{k = 1, 2, ..., n}\max\;\frac{\mu(\Omega)}{\mu(\Omega_k)}.
$$
 Then
\begin{equation}\label{bsr3}
\sum_{k = 1}^n\|f\|^{\rm(av)}_{\Psi,\Omega_k} \le \|f\|^{\rm(av)}_{\Psi,\Omega} \le M\sum_{k = 1}^n\|f\|^{\rm (av)}_{\Psi,\Omega_k}.
\end{equation}
\end{lemma}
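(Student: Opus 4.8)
The plan is to prove the two inequalities in \eqref{bsr3} separately, in each case reducing to the lemmas already established for the scaled average norms \eqref{norma} and \eqref{OrlAverage*}.

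For the left-hand inequality, the key observation is that, since the $\Omega_k$ are pairwise disjoint and $\Omega=\bigcup_{k=1}^n\Omega_k$, one has $\sum_{k=1}^n\mu(\Omega_k)=\mu(\Omega)$. Hence Lemma~\ref{a} applies with $a_k=\mu(\Omega_k)$, $a=\mu(\Omega)$ and $\kappa=1$, and \eqref{M2} gives exactly
\begin{equation*}
\sum_{k=1}^n\|f\|^{(\mu(\Omega_k))}_{\Psi,\Omega_k}\le\|f\|^{(\mu(\Omega))}_{\Psi,\Omega},
\end{equation*}
which is the first inequality in \eqref{bsr3} once one recalls that $\|\cdot\|^{(\mu(\Omega))}_{\Psi,\Omega}=\|\cdot\|^{\rm(av)}_{\Psi,\Omega}$ (this is of course also just Lemma~\ref{lemma7}).

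For the right-hand inequality, I would first note that $M\ge 1$, since $\Omega_k\subseteq\Omega$ forces $\mu(\Omega_k)\le\mu(\Omega)$ for every $k$. Now take any admissible test function $g$ for $\|f\|^{\rm(av)}_{\Psi,\Omega}$, i.e. $\int_\Omega\Phi(|g|)\,d\mu\le\mu(\Omega)$, and split the integral over the partition:
\begin{equation*}
\left|\int_\Omega fg\,d\mu\right|\le\sum_{k=1}^n\left|\int_{\Omega_k}fg\,d\mu\right|.
\end{equation*}
For each $k$, since $\Phi\ge 0$ and $\int_{\Omega_k}\Phi(|g|)\,d\mu\le\int_\Omega\Phi(|g|)\,d\mu\le\mu(\Omega)\le M\mu(\Omega_k)$, the restriction of $g$ to $\Omega_k$ is admissible for the norm $\|f\|^{(M\mu(\Omega_k))}_{\Psi,\Omega_k}=\|f\|^{\rm(av),M}_{\Psi,\Omega_k}$ in the notation \eqref{OrlAverage*}. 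Therefore $\left|\int_{\Omega_k}fg\,d\mu\right|\le\|f\|^{\rm(av),M}_{\Psi,\Omega_k}\le M\|f\|^{\rm(av)}_{\Psi,\Omega_k}$, where the last step is Lemma~\ref{lemma8} (inequality \eqref{bsr2} with $\tau_2=M\ge 1=\tau_1$, using $\|f\|^{\rm(av),1}_{\Psi,\Omega_k}=\|f\|^{\rm(av)}_{\Psi,\Omega_k}$). Summing over $k$ and then taking the supremum over all admissible $g$ yields $\|f\|^{\rm(av)}_{\Psi,\Omega}\le M\sum_{k=1}^n\|f\|^{\rm(av)}_{\Psi,\Omega_k}$.

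There is no serious obstacle: the whole argument is bookkeeping with the definitions. The only point that needs a little care is the step on each $\Omega_k$ where one passes from ``the $\Phi$-integral of $g$ over $\Omega_k$ is at most $\mu(\Omega)$'' to ``it is at most $M\mu(\Omega_k)$'' — this is precisely where the constant $M$ enters — together with the realization that this is exactly the right form to feed into the comparison of scaled average norms in Lemma~\ref{lemma8}, rather than trying to compare directly with the unscaled average norm on $\Omega_k$. One should also keep in mind the trivial but essential remark that $M\ge 1$, so that $\max\{1,M\}=M$ in \eqref{bsr2}.
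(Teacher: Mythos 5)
Your proof is correct and follows essentially the same route as the paper: the first inequality via Lemma \ref{lemma7} (you additionally observe it is the case $\kappa=1$ of Lemma \ref{a}, which is a nice touch), and the second by restricting an admissible test function to each $\Omega_k$, noting $\int_{\Omega_k}\Phi(|g|)\,d\mu\le\mu(\Omega)\le M\mu(\Omega_k)$, and then invoking Lemma \ref{lemma8}. The only difference is cosmetic — you spell out the restriction of $g$ and the fact that $M\ge 1$ a bit more explicitly than the paper does.
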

\begin{proof}
$\sum_{k = 1}^n\|f\|^{\rm (av)}_{\Psi,\Omega_k} \le \|f\|^{\rm (av)}_{\Psi,\Omega}$ follows from Lemma \ref{lemma7}.
\begin{eqnarray*}
\|f\|^{\rm (av)}_{\Psi,\Omega} &=& \underset{\varphi}\sup\left\{\left|\int_{\Omega}f\varphi d\mu\right|\;:\;\int_{\Omega}\Phi(|\varphi|)\,d\mu\le \mu(\Omega)\right\}\\&=&\underset{\varphi}\sup\left\{\left|\int_{\Omega_k}f\varphi\sum_{k = 1}^n\chi_{\Omega_k} d\mu\right|\;:\;\int_{\Omega}\Phi(|\varphi|)\,d\mu\le \mu(\Omega)\right\}\\&\le&\sum_{k =1}^n\underset{\varphi}\sup\left\{\left|\int_{\Omega_k}f\varphi d\mu\right|\;:\;\int_{\Omega_k}\Phi(|\varphi|)\,d\mu\le M\mu(\Omega_k)\right\}\\
&\le& \sum_{k = 1}^n\|f\|^{\rm (av),M}_{\Psi,\Omega_k}\\&\le&M\sum_{k = 1}^n\|f\|^{\rm (av)}_{\Psi,\Omega_k}
\end{eqnarray*}
(by Lemma \ref{lemma8}).
\end{proof}
\begin{lemma}
{\rm
\begin{equation}\label{c5}
\|1\|^{(\textrm{av})}_{\Psi, \Omega} = \Phi^{-1}(1)\mu(\Omega)\,.
\end{equation}
}
\end{lemma}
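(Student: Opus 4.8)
The plan is to compute the supremum in \eqref{OrlAverage} directly with $f\equiv 1$, pairing an upper bound obtained from Jensen's inequality with a matching lower bound produced by one explicit test function. Throughout I assume, as is implicit in the use of the averaged norm, that $\mu(\Omega)<\infty$ (the case $\mu(\Omega)=0$ being trivial since both sides vanish), and I read $\Phi^{-1}(1)$ as $\sup\{s\ge 0:\Phi(s)\le 1\}$, which is finite because $\Phi(t)/t\to\infty$, and which satisfies $\Phi(\Phi^{-1}(1))=1$ by continuity of $\Phi$.

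For the upper bound I would take an arbitrary admissible $g$, that is, one with $\int_\Omega\Phi(|g|)\,d\mu\le\mu(\Omega)$, and apply Jensen's inequality to the convex function $\Phi$ against the probability measure $d\mu/\mu(\Omega)$, obtaining $\Phi\!\left(\mu(\Omega)^{-1}\int_\Omega|g|\,d\mu\right)\le\mu(\Omega)^{-1}\int_\Omega\Phi(|g|)\,d\mu\le 1$. Since $\Phi$ is non-decreasing, this forces $\mu(\Omega)^{-1}\int_\Omega|g|\,d\mu\le\Phi^{-1}(1)$, hence $\left|\int_\Omega g\,d\mu\right|\le\int_\Omega|g|\,d\mu\le\Phi^{-1}(1)\mu(\Omega)$. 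Taking the supremum over all admissible $g$ gives $\|1\|^{(\textrm{av})}_{\Psi,\Omega}\le\Phi^{-1}(1)\mu(\Omega)$.

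For the reverse inequality I would test with the constant function $g_0\equiv\Phi^{-1}(1)$: then $\int_\Omega\Phi(|g_0|)\,d\mu=\Phi(\Phi^{-1}(1))\,\mu(\Omega)=\mu(\Omega)$, so $g_0$ is admissible in \eqref{OrlAverage}, while $\int_\Omega g_0\,d\mu=\Phi^{-1}(1)\mu(\Omega)$, whence $\|1\|^{(\textrm{av})}_{\Psi,\Omega}\ge\Phi^{-1}(1)\mu(\Omega)$. Combining the two bounds yields the stated identity. I do not expect a genuine obstacle here; the only point needing a line of care is the behaviour of $\Phi^{-1}$ when $\Phi$ is not strictly increasing, which is why I fix the convention $\Phi^{-1}(1)=\sup\{s\ge 0:\Phi(s)\le 1\}$ so that both "$\Phi(x)\le 1\Rightarrow x\le\Phi^{-1}(1)$" (used in the upper bound) and "$\Phi(\Phi^{-1}(1))=1$" (used in the lower bound) are available.
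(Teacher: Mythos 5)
Your proof is correct and follows essentially the same route as the paper's: Jensen's inequality applied to the convex function $\Phi$ against the normalised measure $d\mu/\mu(\Omega)$ gives the upper bound, and testing with the constant $g_0\equiv\Phi^{-1}(1)$ gives the matching lower bound. You are a little more careful than the paper in applying Jensen to $|g|$ rather than to $g$ and in fixing a convention for $\Phi^{-1}(1)$, but the argument is the same.
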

\begin{proof}
Take any function $g$ such that
$$
\int_{\Omega}\Psi(|g|)\,d\mu \le \mu(\Omega).
$$
Then the Jensen's inequality \cite[Theorem 3.3]{Rudin} implies
\begin{eqnarray*}
\Phi\left( \frac{1}{\mu(\Omega)}\int_{\Omega} g\,d\mu\right)&\le& \frac{1}{\mu(\Omega)}\int_{\Omega}\Phi(|g|)\,d\mu \\&\le& \frac{1}{\mu(\Omega)}.\mu(\Omega) = 1.
\end{eqnarray*}This implies
$$
\int_{\Omega}g\,d\mu \le \Phi^{-1}(1)\mu(\Omega).
$$
This in turn implies
\begin{equation}\label{c3}
\|1\|^{(\textrm{av})}_{\Psi, \Omega} \le \Phi^{-1}(1)\mu(\Omega)\,.
\end{equation}
On the other hand take $g = \Phi^{-1}(1)$, then
$$
\int_{\Omega} \Phi(|g|)\,d\mu = \mu(\Omega)
$$ and
$$
\int_{\Omega} g d\mu = \Phi^{-1}(1)\mu(\Omega).
$$ This implies
\begin{equation}\label{c4}
\|1\|^{(\textrm{av})}_{\Psi, \Omega} \ge \Phi^{-1}(1)\mu(\Omega)\,.
\end{equation}
\end{proof}
We will use the following pair of pairwise complementary $N$-functions
\begin{equation}\label{thepair}
\mathcal{A}(s) = e^{|s|} - 1 - |s| , \ \ \ \mathcal{B}(s) = (1 + |s|) \ln(1 + |s|) - |s| , \ \ \ s \in \mathbb{R} .
\end{equation}
\begin{remark}
{\rm For $p > 1$, there exists a constant $C = C(p)$ such that
\begin{equation}\label{p}
\|f \|_{\mathcal{B}, \Omega} \le C(p)\|f\|_{L_P(\Omega)}\,.
\end{equation}The optimal constant $C(p)$ in \eqref{p} has the following asymptotics
$$
C(p) = \frac{1}{e(p -1)} - \frac{1}{e}\ln\frac{1}{p - 1} + O(1) \;\;\textrm{as}\;\; p\longrightarrow 1,\;\;p > 1\,
$$(see \cite[Remark 6.3]{Eugene}).
}
\end{remark}

\section{Review of basic facts in spectral theory}\label{basic facts}
In this section, we summarize basic definitions and various well known results from spectral theory that will be used in the sequel.\\\\ Let $\mathcal{H}$ be a complex Hilbert space with  an inner product $\langle \cdot, \cdot\rangle$ and the norm $\|\cdot\| = \sqrt{\langle \cdot, \cdot\rangle}$\,. Let $A: D(A)(\textrm{domain of A})\longrightarrow \mathcal{H}$ be a densely defined linear operator. The operator $\widetilde{A}$ is called an extension of $A$ (or $A$ is a restriction of $\widetilde{A}$) if $$D(A)\subset D(\widetilde{A})\;\; \textrm{and for all}\; f\in D(A),\;\;\widetilde{A}f = Af.$$ The operator $A$ is said to be symmetric if $$\langle Af, g\rangle = \langle f, Ag\rangle \;\;\forall f,g\in D(A).$$The operator $A^{*}$  called the adjoint of $A$ is defined as follows: $D(A^*)$ is the set  $g\in \mathcal{H}$ such that for some $h\in\mathcal{H}$  $$\langle Af, g\rangle = \langle f, h\rangle,\;\;\;\forall f\in D(A) .$$ If $A$ is densely defined then such $h$ is unique and we define $$h := A^*g. $$ We say that $A$ is self-adjoint if $A$ is symmetric and $D(A) = D(A^{*})$.\\\\ The \textit{spectrum} of a self-adjoint operator $A$ on $\mathcal{H}$ denoted by $\sigma(A)$ is defined as the set of all $\lambda\in\mathbb{R}$ such that $A- \lambda I$, where $I$ is the identity, is not invertible. $A - \lambda I$ is invertible  if and only if $ker(A - \lambda I) =\{0\}$ and $Ran(A - \lambda I) = \mathcal{H}$. The set $\sigma_p(A)\subseteq\sigma(A)$ of eigenvalues of $A$ is called the \textit{point spectrum} of $A$. That is,
$$
\sigma_p(A) :=\{\lambda\in\mathbb{R} :\;\exists f\in D(A),\;\; \|f\|= 1,\;\;Af = \lambda f\}.
$$If $\lambda$ is an eigenvalue of $A$, then the dimension of the kernel of $A - \lambda I$ is called the multiplicity of $\lambda$. The \textit{discrete spectrum} of $A$ denoted by $\sigma_{\textrm{disc}}$ is the set consisting of isolated eigenvalues of $A$ of finite multiplicity. The set $$\sigma_{\textrm{ess}} := \sigma(A)\setminus\sigma_{\textrm{disc}}$$ is called the \textit{essential spectrum} of $A$. It contains either accumulation points of $\sigma(A)$ or isolated eigenvalues of $A$ of infinite multiplicity.\\\\The following theorem is used to characterize the essential spectrum of self-adjoint operators {\rm (\cite[Lemma 4.1.2]{EBD})}.
\begin{theorem}{\rm [Weyl criterion]\label{weyl}}
{\rm Let $A$ be a self-adjoint operator on $\mathcal{H}$. A point $\lambda\in\mathbb{R}$ belongs to $\sigma_{\textrm{ess}}(A)$ if and only if there exists a sequence $\{f_n\}_{n\in\mathbb{N}}\subset D(A)$ such that for all $n\in\mathbb{N}$, $\|f_n\| = 1, \;\; f_n \;\textrm{converges weakly to}\; 0$ and $$\| Af_n - \lambda f_n\| \longrightarrow 0\;\; \textrm{as} \;\;n \longrightarrow\infty.$$ Moreover, $\{f_n\}$ can be chosen orthonormal.}
\end{theorem}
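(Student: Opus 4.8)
The plan is to prove the two implications separately via the spectral theorem for the (possibly unbounded) self-adjoint operator $A$. Write $E(\cdot)$ for its spectral measure, so that $A = \int_{\mathbb{R}} t\, dE(t)$, bounded Borel functions of $A$ commute with $A$ and preserve $D(A)$, and for a bounded Borel set $J$ one has $\mathrm{Ran}\,E(J) \subset D(A)$ with $\|(A - \lambda I)g\|^2 = \int_J |t-\lambda|^2\, d\langle E(t)g,g\rangle$ for $g \in \mathrm{Ran}\,E(J)$.

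\textbf{Sufficiency ($\Leftarrow$).} I would argue by contraposition: assume $\lambda \notin \sigma_{\textrm{ess}}(A)$, so that either $\lambda$ lies in the resolvent set of $A$, or $\lambda$ is an isolated eigenvalue of finite multiplicity, and show that no sequence with the stated properties can exist. If $(A-\lambda I)^{-1}$ is bounded, then $f_n = (A-\lambda I)^{-1}(A-\lambda I)f_n$ gives $\|f_n\| \le \|(A-\lambda I)^{-1}\|\,\|(A-\lambda I)f_n\| \to 0$, contradicting $\|f_n\|=1$. If $\lambda$ is an isolated eigenvalue of finite multiplicity, let $P = E(\{\lambda\})$ be the finite-rank spectral projection onto the eigenspace, and pick $\delta > 0$ with $\sigma(A)\cap(\lambda-\delta,\lambda+\delta)=\{\lambda\}$; then $E((\lambda-\delta,\lambda+\delta)) = P$, hence $I-P = E(\mathbb{R}\setminus(\lambda-\delta,\lambda+\delta))$ and $\|(A-\lambda I)g\| \ge \delta\|g\|$ for every $g \in \mathrm{Ran}(I-P)\cap D(A)$. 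Since $P$ commutes with $A$, $(A-\lambda I)(I-P)f_n = (I-P)(A-\lambda I)f_n$, so $\|(I-P)f_n\| \le \delta^{-1}\|(A-\lambda I)f_n\| \to 0$; meanwhile $Pf_n \to 0$ in norm because $P$ is finite-rank and $f_n \rightharpoonup 0$ (weak convergence is norm convergence on a finite-dimensional subspace). Therefore $\|f_n\|^2 = \|Pf_n\|^2 + \|(I-P)f_n\|^2 \to 0$, again a contradiction.

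\textbf{Necessity ($\Rightarrow$).} Suppose $\lambda \in \sigma_{\textrm{ess}}(A)$. The crucial observation is that then $\dim \mathrm{Ran}\,E((\lambda-\varepsilon,\lambda+\varepsilon)) = \infty$ for every $\varepsilon > 0$: if that range were finite-dimensional for some $\varepsilon$, then $A$ restricted to it would be a self-adjoint operator on a finite-dimensional space, so $\sigma(A)\cap(\lambda-\varepsilon,\lambda+\varepsilon)$ would be a finite set of eigenvalues of finite multiplicity, forcing $\lambda$ to be either outside $\sigma(A)$ or an isolated point of $\sigma(A)$ of finite multiplicity, contrary to $\lambda \in \sigma_{\textrm{ess}}(A)$. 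Now set $J_n = (\lambda - 1/n, \lambda+1/n)$ and build $\{f_n\}$ inductively: having chosen orthonormal $f_1,\dots,f_{n-1}$, choose a unit vector $f_n \in \mathrm{Ran}\,E(J_n)$ orthogonal to the finite-dimensional span of $f_1,\dots,f_{n-1}$, which is possible since $\mathrm{Ran}\,E(J_n)$ is infinite-dimensional. Then $f_n \in D(A)$ and $\|(A-\lambda I)f_n\|^2 = \int_{J_n}|t-\lambda|^2\,d\langle E(t)f_n,f_n\rangle \le 1/n^2 \to 0$. By construction $\{f_n\}$ is orthonormal, and any orthonormal sequence converges weakly to $0$ by Bessel's inequality, which yields the required sequence.

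\textbf{Main obstacle.} The technical core is the necessity direction, specifically the passage from the definition $\sigma_{\textrm{ess}} = \sigma(A)\setminus\sigma_{\textrm{disc}}(A)$ to the spectral-measure statement ``$E(J)$ has infinite rank for every neighbourhood $J$ of $\lambda$'', together with the bookkeeping needed to keep each constructed vector in $D(A)$ for the unbounded operator $A$. Everything else is routine once the spectral theorem is available; alternatively, necessity can be deduced from the stability of the essential spectrum under finite-rank (more generally compact) perturbations, but the spectral-theorem route is the most self-contained.
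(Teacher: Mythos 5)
The paper states this theorem as a citation from Davies (\cite{EBD}, Lemma 4.1.2) and does not supply a proof, so there is no paper argument to compare your proposal against. Your proof via the spectral theorem is correct and is the standard argument one finds in the literature, so there is nothing to fault.

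A few small remarks on points you handled correctly but could have been sloppy about. In the contrapositive direction, when $\lambda$ is an isolated eigenvalue of finite multiplicity, you rightly pass from $E\bigl(\{\lambda\}\bigr)=E\bigl((\lambda-\delta,\lambda+\delta)\bigr)$ (valid since the support of the spectral measure is $\sigma(A)$), and you use both that $P$ maps $D(A)$ into $D(A)$ (so that $(I-P)f_n$ is in the domain and the commutation $(A-\lambda I)(I-P)f_n=(I-P)(A-\lambda I)f_n$ is legitimate) and that $P$ is an orthogonal projection (so that $\|f_n\|^2=\|Pf_n\|^2+\|(I-P)f_n\|^2$). In the necessity direction, the key lemma — $\dim\mathrm{Ran}\,E(J)=\infty$ for every open neighbourhood $J$ of $\lambda$ — is exactly the content of passing from the paper's definition $\sigma_{\textrm{ess}}=\sigma(A)\setminus\sigma_{\textrm{disc}}$ to the spectral-measure formulation, and your argument for it (restrict $A$ to the finite-dimensional range, conclude $\sigma(A)\cap J$ is a finite set of finite-multiplicity eigenvalues, hence $\lambda$ is either in the resolvent set or isolated of finite multiplicity) is the right one. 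You also observe that the inductive construction delivers an orthonormal sequence outright, so the ``moreover'' clause comes for free via Bessel's inequality. The only alternative route worth mentioning is the one you note yourself: deduce necessity from invariance of $\sigma_{\textrm{ess}}$ under compact perturbations. That trades the spectral-measure bookkeeping for a prior theorem about compact perturbations and is less self-contained; your route is preferable here.
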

\begin{theorem}{\rm (see, e.g., \cite[$\S$ 10.1, Theorem 1]{Lax})}\label{weak}
{\rm Let $\{f_n\}_{n\in\mathbb{N}}$ be a bounded sequence in a Hilbert space $\mathcal{H}$. If $\langle f_n, g\rangle_{\mathcal{H}} \longrightarrow 0 \mbox{ as }n\longrightarrow\infty$ for $g$ in a dense subspace of $\mathcal{H}$, then $f_n \longrightarrow 0$ weakly in $\mathcal{H}$.}
\end{theorem}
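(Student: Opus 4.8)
The plan is to run a standard $3\varepsilon$-type argument, using the uniform bound on $\|f_n\|$ to control the error incurred when a general vector is replaced by a nearby element of the dense subspace. First I recall that, by the Riesz representation theorem, saying $f_n \longrightarrow 0$ weakly in $\mathcal{H}$ means exactly that $\langle f_n, h\rangle_{\mathcal{H}} \longrightarrow 0$ for every $h \in \mathcal{H}$; hence it is enough to verify this for an arbitrary fixed $h$. Let $C := \sup_{n} \|f_n\|_{\mathcal{H}} < \infty$ (finite by hypothesis), and let $M \subseteq \mathcal{H}$ be the dense subspace on which $\langle f_n, \cdot\rangle_{\mathcal{H}} \to 0$ is assumed.

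The key steps are as follows. Fix $h \in \mathcal{H}$ and $\varepsilon > 0$. By density of $M$, choose $g \in M$ with $\|h - g\|_{\mathcal{H}} < \varepsilon$. Using linearity of the inner product in the appropriate slot, the triangle inequality, and the Cauchy--Schwarz inequality, write
$$
|\langle f_n, h\rangle_{\mathcal{H}}| \le |\langle f_n, h - g\rangle_{\mathcal{H}}| + |\langle f_n, g\rangle_{\mathcal{H}}| \le \|f_n\|_{\mathcal{H}}\,\|h - g\|_{\mathcal{H}} + |\langle f_n, g\rangle_{\mathcal{H}}| \le C\varepsilon + |\langle f_n, g\rangle_{\mathcal{H}}|.
$$
Since $g \in M$, the hypothesis gives $|\langle f_n, g\rangle_{\mathcal{H}}| \to 0$, so there is $N \in \mathbb{N}$ with $|\langle f_n, g\rangle_{\mathcal{H}}| < \varepsilon$ for all $n \ge N$. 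Combining, $|\langle f_n, h\rangle_{\mathcal{H}}| < (C+1)\varepsilon$ for all $n \ge N$. As $\varepsilon > 0$ was arbitrary, $\langle f_n, h\rangle_{\mathcal{H}} \to 0$, and since $h$ was arbitrary, $f_n \to 0$ weakly.

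There is no real obstacle here; the only point that must not be overlooked is that the boundedness of $\{f_n\}$ is genuinely used (in the estimate $\|f_n\|_{\mathcal{H}}\|h-g\|_{\mathcal{H}} \le C\varepsilon$) and cannot be dropped, since an unbounded sequence testing to zero on a dense set need not converge weakly. I would state this as a brief remark after the proof. It is also worth noting that exactly the same argument shows, more generally, that a bounded net or sequence of bounded linear functionals that converges pointwise on a dense subset converges pointwise everywhere — the Hilbert space structure is used only through Riesz representation to identify weak convergence with convergence of all inner products.
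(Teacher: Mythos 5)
Your proof is correct and is the standard density-plus-boundedness argument; the paper does not give its own proof but simply cites Lax, and what you wrote is exactly the argument one finds there. The invocation of Riesz representation is optional (weak convergence in a Hilbert space is usually \emph{defined} via inner products), but the $3\varepsilon$ estimate using Cauchy--Schwarz and the uniform bound $C$ is precisely the expected proof, and your remark that boundedness cannot be dropped is apt.
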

\begin{definition}
 {\rm A self-adjoint operator $A$ on $\mathcal{H}$ is said to be lower \textit{semi-bounded} if there exist a real number $c$ such that
$$
\langle Af, f\rangle \geq c\|f\|^2,\;\;\;\forall f\in D(A).
$$}
\end{definition}In this case we simply write $A\geq c$. We have the following variational formula for the bottom of the spectrum of $A$ {\rm (see \cite[Sec. 4.5]{EBD})}:
\begin{theorem}\label{Ray-Litz}{\rm [Rayleigh-Ritz]}
{\rm Let $A$ be a lower semi-bounded self-adjoint operator on $\mathcal{H}$. Then
$$
\inf\sigma(A)= \underset{\underset{f\neq 0}{u\in D(A)}}\inf \frac{\langle Af, f\rangle}{\|f\|^2}.
$$}
\end{theorem}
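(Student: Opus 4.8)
The plan is to prove the two inequalities $m \le \beta$ and $\beta \le m$ separately, where $m := \inf\sigma(A)$ and $\beta := \inf\{\langle Af, f\rangle/\|f\|^2 : f \in D(A),\ f \neq 0\}$. First one should note that $m$ is a genuine real number: $\sigma(A)$ is a non-empty closed subset of $\mathbb{R}$, and the semi-boundedness estimate $\langle Af,f\rangle \ge c\|f\|^2$ forces $A-\lambda I$ to be boundedly invertible for every $\lambda<c$, so $\sigma(A)\subseteq[c,\infty)$. Throughout I would invoke the spectral theorem, writing $A = \int_{\mathbb{R}}\lambda\, dE_\lambda$ with spectral measure $E$ supported on $\sigma(A)\subseteq[m,\infty)$.

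For $m\le\beta$: for any $f\in D(A)$ the scalar measure $d\langle E_\lambda f, f\rangle$ is non-negative, has total mass $\|f\|^2$, and is supported in $[m,\infty)$, so
$$\langle Af, f\rangle = \int_{\sigma(A)} \lambda\, d\langle E_\lambda f, f\rangle \ge m\,\|f\|^2 .$$
Dividing by $\|f\|^2$ when $f\neq 0$ and taking the infimum gives $\beta\ge m$. (Equivalently, $A-mI$ is self-adjoint with spectrum contained in $[0,\infty)$, hence a non-negative operator.)

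For $\beta\le m$: since $m\in\sigma(A)$, for every $\varepsilon>0$ the spectral projection $P_\varepsilon := E([m, m+\varepsilon))$ is non-zero — this is precisely the statement that $m$ does not lie in the resolvent set. Choose any $f\in\operatorname{Ran}P_\varepsilon$ with $f\neq 0$. Because the Borel set $[m,m+\varepsilon)$ is bounded, $f\in D(A)$, and
$$\langle Af, f\rangle = \int_{[m, m+\varepsilon)} \lambda\, d\langle E_\lambda f, f\rangle \le (m+\varepsilon)\,\|f\|^2 ,$$
so $\beta\le m+\varepsilon$. Letting $\varepsilon\to 0+$ yields $\beta\le m$, and combining the two inequalities gives $\beta = m$, which is the assertion.

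The argument is essentially a direct consequence of the spectral theorem, so I do not anticipate a serious obstacle. The only points needing a little care are: (i) that vectors in the range of $E$ applied to a bounded Borel set automatically lie in $D(A)$, so the Rayleigh quotient is meaningful for the test vectors produced in the second step; and (ii) that in that second step one must use the non-degenerate interval $[m, m+\varepsilon)$ rather than the singleton $\{m\}$, since $m$ need not be an eigenvalue — $E(\{m\})$ may well be zero. If one wishes to minimise appeals to the spectral theorem, the first inequality can instead be obtained from the elementary fact that a self-adjoint operator with spectrum in $[0,\infty)$ is non-negative (via its square root), and the second from a Weyl-sequence argument at the spectral point $m$; but the spectral-measure computation above is the most economical route.
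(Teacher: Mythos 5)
The paper does not actually prove this theorem: it states it as a standard fact, citing Davies \cite[Sec.~4.5]{EBD}, so there is no in-paper argument for your proof to be compared against. Your spectral-theorem proof is the standard one and is correct in every step. Lower semi-boundedness gives $\sigma(A)\subseteq[c,\infty)$ and hence $m:=\inf\sigma(A)$ is a finite real number (self-adjoint operators have non-empty spectrum); since $\operatorname{supp}E\subseteq\sigma(A)\subseteq[m,\infty)$ and $d\langle E_\lambda f,f\rangle$ is a non-negative measure of total mass $\|f\|^2$, the inequality $\langle Af,f\rangle\ge m\|f\|^2$ follows, giving $\beta\ge m$; and since $\sigma(A)$ is closed, $m\in\sigma(A)$, so $E\left([m,m+\varepsilon)\right)\neq 0$ for every $\varepsilon>0$, and any non-zero $f$ in its range lies in $D(A)$ (the interval is bounded) with Rayleigh quotient at most $m+\varepsilon$, giving $\beta\le m$. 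Your two cautionary remarks are exactly the right points to flag: membership of $f$ in $D(A)$ must be justified, and one needs a non-degenerate interval because $E(\{m\})$ may vanish when $m$ is not an eigenvalue. The alternative you mention (non-negative square root for one direction, Weyl sequence for the other) is also fine, but the spectral-measure computation is cleaner.
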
As a consequence of Theorem \ref{Ray-Litz}, $A\geq c$ implies $\sigma(A)\subseteq [c, \infty)$. On the other hand,
$$
\inf\sigma(A) \le \frac{\langle Af, f\rangle}{\|f\|^2}
$$for any test function $f\in D(A)\setminus\{0\}$.\\\\Theorem \ref{Ray-Litz} is a special case of the mini-max principle which is used to characterize the part of the spectrum of lower semi-bounded self-adjoint operators, which is located below the bottom of the essential spectrum {\rm (cf. \cite[Lemma 4.1.2]{EBD})}.
\begin{theorem}{\rm [Mini-Max principle]}
{\rm Let $A$ be a semi-bounded self-adjoint operator on $\mathcal{H}$ and let$$ \lambda_1 \le\lambda_2\le\lambda_3\le ...$$ be the eigenvalues of $A$ below the bottom of the essential spectrum each repeated according to their multiplicity. Then
$$
\lambda_n = \underset{f_1, f_2,...,f_{n - 1}\in D(A)}\sup\;\underset{f\in D(A)\perp\{f_1, f_2,...,f_{n - 1}\}}\inf\frac{\langle Af, f\rangle}{\|f\|^2}\,.
$$  }
\end{theorem}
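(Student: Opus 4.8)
The plan is to deduce the min-max identity from the spectral theorem for the semi-bounded self-adjoint operator $A$ together with the structure of its spectrum below $\Sigma := \inf\sigma_{\mathrm{ess}}(A)$ (with the convention $\Sigma = +\infty$ when $\sigma_{\mathrm{ess}}(A) = \emptyset$). Write $\mu_n$ for the right-hand side of the claimed formula; by homogeneity of the Rayleigh quotient it suffices to restrict the inner infimum to unit vectors. Fix $n$ and assume $A$ has at least $n$ eigenvalues below $\Sigma$ (if it has fewer, the statement is read with $\lambda_n = \Sigma$ and the argument below adapts with only notational changes). Let $\psi_1,\psi_2,\dots$ be an orthonormal system of eigenvectors with $A\psi_k = \lambda_k\psi_k$, chosen so that within each eigenspace the corresponding $\psi_k$'s form an orthonormal basis; since $D(A)$ is a linear subspace, every $\psi_k$ and every finite linear combination of them lies in $D(A)$.

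First I would establish $\mu_n \le \lambda_n$. Given arbitrary $f_1,\dots,f_{n-1}\in D(A)$, consider the linear map $L:\mathrm{span}\{\psi_1,\dots,\psi_n\}\to\mathbb{C}^{n-1}$, $Lf = (\langle f,f_1\rangle,\dots,\langle f,f_{n-1}\rangle)$. Its domain has dimension $n > n-1$, so $\ker L\ne\{0\}$; pick a unit vector $f = \sum_{k=1}^n c_k\psi_k\in\ker L$. Then $f\in D(A)$, $f\perp f_1,\dots,f_{n-1}$, and
$$
\langle Af,f\rangle = \sum_{k=1}^n \lambda_k|c_k|^2 \le \lambda_n\sum_{k=1}^n|c_k|^2 = \lambda_n .
$$
Hence the inner infimum is $\le\lambda_n$ for every choice of $f_1,\dots,f_{n-1}$, and therefore $\mu_n\le\lambda_n$.

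For the reverse inequality I would make the specific choice $f_k=\psi_k$, $k=1,\dots,n-1$, and show $\langle Af,f\rangle\ge\lambda_n$ for every unit $f\in D(A)$ with $f\perp\psi_1,\dots,\psi_{n-1}$. Let $j\le n-1$ be the number of eigenvalues strictly below $\lambda_n$. Since eigenvalues below $\Sigma$ are isolated, of finite multiplicity, and can accumulate only at points of $\sigma_{\mathrm{ess}}(A)\subseteq[\Sigma,\infty)$, the spectrum of $A$ in $(-\infty,\lambda_n)$ consists of exactly the finitely many eigenvalues $\lambda_1,\dots,\lambda_j$, and consequently the spectral projection $E := E_A((-\infty,\lambda_n))$ has range $\mathrm{span}\{\psi_1,\dots,\psi_j\}$. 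As $f$ is orthogonal to all of $\psi_1,\dots,\psi_j$, we get $Ef=0$, i.e. $f\in\mathrm{Ran}\,E_A([\lambda_n,\infty))$, and the spectral theorem gives
$$
\langle Af,f\rangle = \int_{[\lambda_n,\infty)} t\, d\langle E_A(t)f,f\rangle \ge \lambda_n\|f\|^2 = \lambda_n .
$$
Thus the inner infimum for this choice of $f_1,\dots,f_{n-1}$ is $\ge\lambda_n$, whence $\mu_n\ge\lambda_n$. Combining the two bounds gives $\mu_n=\lambda_n$, and for $n=1$ this recovers the Rayleigh--Ritz formula of Theorem \ref{Ray-Litz}.

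The linear-algebra selection of $f$ and the manipulation of the spectral integral are routine; the only genuine content is the identification in the third paragraph of the low-lying spectral subspace $\mathrm{Ran}\,E_A((-\infty,\lambda_n))$ with the span of the eigenvectors $\psi_1,\dots,\psi_j$. This is exactly where semi-boundedness of $A$ and the definition of $\sigma_{\mathrm{ess}}$ — in particular the discreteness of the spectrum below $\Sigma$ — are used, and I expect it to be the step that most needs care; everything else follows the $n$-variable generalization of the Rayleigh--Ritz pattern.
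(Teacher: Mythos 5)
Your argument is correct. The paper itself does not prove this theorem---it quotes it from Davies (cited as \cite[Lemma 4.1.2]{EBD})---so there is no internal proof to compare against; what you give is the standard spectral-theorem argument: the bound $\mu_n\le\lambda_n$ by a dimension count on $\mathrm{span}\{\psi_1,\dots,\psi_n\}$, and the bound $\mu_n\ge\lambda_n$ by taking $f_k=\psi_k$ and observing that orthogonality to $\psi_1,\dots,\psi_{n-1}$ places $f$ in $\mathrm{Ran}\, E_A([\lambda_n,\infty))$. The only substantive input is the identification of $\mathrm{Ran}\, E_A((-\infty,\lambda_n))$ with $\mathrm{span}\{\psi_1,\dots,\psi_j\}$, which you correctly justify from the discreteness of the spectrum below $\inf\sigma_{\mathrm{ess}}(A)$; this is indeed precisely where semi-boundedness and the definition of the essential spectrum enter, and you were right to flag it as the crux.
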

Let $X$ and $Y$ be Banach spaces and $A: X \longrightarrow Y$ a linear operator.
\begin{definition}
{\rm The graph of $A$ is
$$
G(A) := \left\{( x, Ax) \;:\;x\in D(A)\right\},
$$ which is a subset of $X\times Y$.
}
\end{definition}
The product space $X\times Y$ is a Banach space with $\|(x, y)\| = \|x\|_X + \|y\|_Y$.
\begin{definition}
{\rm $A: X \longrightarrow Y$ is called a closed operator if the $G(A)$ is closed. Equivalently, $A$ is closed if for every sequence $x_n \in D(A)$ converging to $x\in X$ and $Ax_n = y_n \longrightarrow y$ in $Y$, we have $x\in D(A)$ and $y = Ax$.
}
\end{definition}
If the closure of the graph of $A$ is the graph of some operator $\overline{A}$, then $\overline{A}$ is called the closure of $A$. A densely defined linear operator which has a closed linear extension is said to be closable.
\begin{definition}
{\rm A core (or essential domain) of $A$ is a subset $C$ of $D(A)$ such that the closure of the restriction of $A$ to $C$ is $\overline{A}$.
}
\end{definition}

\begin{definition}
{\rm Let a linear subspace $D(q)\subset\mathcal{H}$ be fixed. A mapping $q: D(q)\times D(q)\longrightarrow\mathbb{C}$ which satisfies
$$
q[\alpha u + \beta v, w] = \alpha q[u, w] + \beta q[v, w]
$$ and
$$
q[u, \alpha v + \beta w]= \overline{\alpha}q[u, v] + \overline{\beta}q[u, w]
$$ for all $u, v, w\in D(q)$ is called a sesquilinear form. If $D(q)$ is dense in $\mathcal{H}$, then $q$ is densely defined.
}
\end{definition}The form $q$ is said to be symmetric if $$q[u, v] = \overline{q[v, u]}$$ and lower semi-bounded if $$q[u]\geq c\|u\|^2$$ where $q[u] := q[u, u]$.
\begin{definition}\label{closed}
{\rm Let $q$ be a symmetric sesquilinear form. We say that $u_n \longrightarrow_qu$ if $u_n\in D(q),\; u_n \longrightarrow u$ and $q[ u_n - u_m]\longrightarrow 0$ as $n, m\longrightarrow\infty$. If $u_n \longrightarrow_qu$ implies $u\in D(q)$ and $q[ u_n - u]\longrightarrow 0$, then we say that $q$ is closed.
}
\end{definition}
We say that a sesquilinear form $q_2$ is an extension of a sesquilinear form $q_1$ if it has a larger domain but coincides with $q_1$ on $D(q_1)$. A sesquilinear form $q$ is said to be closable if it has a closed extension, and the smallest closed extension is called its closure $\overline{q}$. A linear subspace $E$ of the domain of a closed form $q$ is called a core for $q$ if $q$ is the closure of its restriction to $E$.
\begin{theorem}\label{rep}{\rm (Second representation theorem,\cite[Theorem 5.37]{We})}
{ \rm Let $q$ be a densely defined closed, symmetric and lower semi-bounded form in a Hilbert space $\mathcal{H}$. Then there exists a unique self-adjoint operator $A$ on $\mathcal{H}$ such that $D(A)\subset D(q)$, $$q[u, v] = \langle Au, v\rangle$$ for every $u\in D(A)$ and $v\in D(q)$. If $u\in D(q)$ and $w\in\mathcal{H}$ such that $$q[u, v] = \langle w, v\rangle$$ holds for every $v$ in the core of $q$, then $u\in D(A)$ and $Au = w$.
}
\end{theorem}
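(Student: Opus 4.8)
The plan is to run the classical Kato argument: replace the form by a genuine inner product, complete it to a Hilbert space, and apply the Riesz representation theorem there. First I would reduce to the case $q[u]\ge\|u\|^2$ for all $u\in D(q)$. Since $q$ is lower semi-bounded, say $q[u]\ge c\|u\|^2$, the form $q_1[u,v]:=q[u,v]+(1-c)\langle u,v\rangle$ on the same domain $D(q)$ is still densely defined, symmetric and closed --- closedness survives because the added term $(1-c)\langle\cdot,\cdot\rangle$ is $\|\cdot\|$-continuous, so $u_n\to_{q_1}u$ iff $u_n\to_q u$ in the sense of Definition \ref{closed}, and the two forms have the same cores --- and it satisfies $q_1[u]\ge\|u\|^2$. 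A self-adjoint operator $A_1$ representing $q_1$ yields $A:=A_1-(1-c)I$ representing $q$ with the same domain, so it suffices to treat $q_1$; I relabel it $q$ and assume $q[u]\ge\|u\|^2$ from now on.

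Next I would equip $D(q)$ with the inner product $\langle u,v\rangle_q:=q[u,v]$ and norm $\|u\|_q:=q[u]^{1/2}$, obtaining a space $\mathcal H_q$. The one genuinely load-bearing step is that $\mathcal H_q$ is complete, and this is exactly where ``$q$ closed'' is used: a $\|\cdot\|_q$-Cauchy sequence is $\|\cdot\|$-Cauchy (as $\|u\|\le\|u\|_q$), hence $\|\cdot\|$-converges to some $u\in\mathcal H$; then $u_n\to_q u$, so closedness forces $u\in D(q)$ and $\|u_n-u\|_q\to0$. Thus $\mathcal H_q\hookrightarrow\mathcal H$ is a norm-one inclusion with dense range. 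For each fixed $w\in\mathcal H$ the conjugate-linear functional $v\mapsto\langle w,v\rangle$ on $\mathcal H_q$ is bounded, $|\langle w,v\rangle|\le\|w\|\,\|v\|\le\|w\|\,\|v\|_q$, so Riesz representation in $\mathcal H_q$ produces a unique $Gw\in D(q)$ with $q[Gw,v]=\langle w,v\rangle$ for all $v\in D(q)$. The resulting map $G:\mathcal H\to\mathcal H$ is bounded ($\|Gw\|_q\le\|w\|$), injective (if $Gw=0$ then $\langle w,v\rangle=0$ on the dense set $D(q)$), and self-adjoint, since $\langle Gf,h\rangle=\overline{\langle h,Gf\rangle}=\overline{q[Gh,Gf]}=q[Gf,Gh]=\langle f,Gh\rangle$; moreover $\langle Gf,f\rangle=q[Gf,Gf]\ge0$ and $\|G\|\le1$, so $0\le G\le I$.

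Then I would define $A:=G^{-1}$ on $D(A):=\operatorname{Ran}(G)$. Because $G$ is bounded, self-adjoint and injective, its range is dense (it equals $(\ker G)^{\perp}=\mathcal H$) and $G^{-1}$ is a self-adjoint operator --- the standard fact, checked directly from $\langle G^{-1}x,y\rangle=\langle a,Gb\rangle=\langle Ga,b\rangle=\langle x,G^{-1}y\rangle$ for $x=Ga$, $y=Gb$, plus the usual argument that the symmetric inverse of a bounded self-adjoint injective operator is self-adjoint --- and $A\ge I$. For $u=Gf\in D(A)$ and any $v\in D(q)$ one has $q[u,v]=q[Gf,v]=\langle f,v\rangle=\langle Au,v\rangle$, while $D(A)=\operatorname{Ran}(G)\subseteq\mathcal H_q=D(q)$; undoing the shift gives the asserted representation of the original $q$.

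Finally, for the last clause (which also delivers uniqueness): suppose $u\in D(q)$ and $q[u,v]=\langle w,v\rangle$ for every $v$ in a core $E$ of $q$. Given $v\in D(q)$, pick $v_n\in E$ with $v_n\to_q v$; both sides are $\|\cdot\|_q$-continuous in $v$, so the identity extends to all $v\in D(q)$. Subtracting $q[Gw,v]=\langle w,v\rangle$ gives $q[u-Gw,v]=0$ for all $v\in D(q)$; taking $v=u-Gw$ yields $\|u-Gw\|_q=0$, hence $u=Gw\in D(A)$ and $Au=w$. Applying this with $w=A'u$ to any other self-adjoint $A'$ having the representation property shows $A'\subseteq A$, and since a self-adjoint operator admits no proper symmetric extension, $A'=A$. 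The only spots needing care are the reduction step (the shift must be seen to preserve closedness and cores) and the core-to-$D(q)$ passage in the last clause; the completeness of $\mathcal H_q$ is the conceptual heart, and everything else is Riesz representation plus bookkeeping.
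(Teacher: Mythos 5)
The paper does not prove this theorem; it is cited verbatim from Weidmann's book without argument. Your proof is the standard one (Friedrichs/Kato: shift to make the form coercive, observe that closedness makes $(D(q),q[\cdot,\cdot])$ complete, invoke Riesz representation there to build a bounded self-adjoint $G$ with $0\le G\le I$, and set $A=G^{-1}$), and it is correct. Each of the steps you flag as delicate is handled properly: the shift does preserve $q$-convergence (hence closedness and cores) because $q_1[u_n-u_m]\ge\|u_n-u_m\|^2$ lets you recover $\|u_n-u_m\|\to0$ from $q_1[u_n-u_m]\to0$; the $\|\cdot\|_q$-continuity of both sides justifies the core-to-$D(q)$ passage; and the injective bounded self-adjoint $G$ has dense range, so $G^{-1}$ is indeed self-adjoint. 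The uniqueness argument via $A'\subseteq A$ and maximality of self-adjoint operators is also the standard and correct closing move.
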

\begin{definition}\label{closability}
{ \rm We say that a form $q'$ defined on a dense linear set $D(q')$ is closable if for any sequence $u_n\in D(q')$, $\|u_n\| \longrightarrow 0$, the property $q'[u_n - u_m] \longrightarrow 0$  as $n, m \longrightarrow \infty$ implies $q'[u_n] \longrightarrow 0$.}
\end{definition}By continuity $q'$ can be extended to a closed form.

\section{The spectrum of the Laplacian on a strip}\label{strip}
It is well known that the operator $-\Delta$ densely defined on the space $L^{2}(\mathbb{R}^{2})$
is a self-adjoint and its spectrum is equal to $[0, \infty)$,  which is absolutely continuous.  However, in the case of a strip the bottom of its essential spectrum depends on the boundary conditions. Let $a> 0$ and $S = \mathbb{R}\times I$, where $I = (0, a)$, with Neumann (N), Dirichlet (D), Dirichlet-Neumann (DN) or Robin (R) boundary conditions. Consider the following spectral problem.
\begin{equation}\label{prob}
\begin{cases}
-\Delta u = \lambda u\;\;\;\;\; & \textrm{in}\;\; S\\
B_lu = 0 & \textrm{on}\;\; \partial S,
\end{cases}
\end{equation}where $B_l$ is one of the boundary operators. The operators  on the transverse section $I$, $-\Delta^I_l$, are the usual Laplacians  on $L^2(I)$ with Dirichlet boundary conditions if $l = D$, the Neumann conditions if $l = N$, the Dirichlet at $0$ and the Neumann one at $a$ if $l = DN$ or the Robin conditions if $l = R$ (see \eqref{RP}).

$$
\textrm{Dom}(-\Delta) = \{u \in W^2_2(S)\;:\;B_l u = 0\}.
$$
Let $l \in \{D, N, DN\}$. Assume that \eqref{prob}  has a non-trivial solution of the form
$$
u(x, y) = X(x)Y(y),\;\;\;\;\;\;\;\;\; X \neq 0, Y \neq 0.
$$ Then one has
$$
-\frac{X''(x)}{X(x)} = \frac{Y''(y)}{Y(y)} + \lambda = C \;\;\;\; \textrm{in} \; S
$$for some suitable separation constant $C$. This sort of separation of variables gives rise to two independent one-dimensional spectral problems, that is the longitudinal and the transverse ones. The spectrum of the longitudinal Laplacian, ($-\Delta^{\mathbb{R}}$) on $L^2({\mathbb{R}})$ is the positive-real semi-axis, i.e.,
$$
\sigma(-\Delta^{\mathbb{R}}) = \sigma_{\textrm{ess}}(-\Delta^{\mathbb{R}}) = [0, \infty).
$$
The eigenvalues of the transverse Laplacian ($-\Delta^{I}_l$) on $L^2(I)$ are given by
\begin{equation}\label{eigval}
 \lambda^D_n := \left(\frac{\pi}{a}\right)^{2}n^2,\;\;\; \lambda^N_n := \left(\frac{\pi}{a}\right)^{2}(n - 1)^2,\;\;\; \lambda^{DN}_n := \left(\frac{\pi}{2a}\right)^{2}(2n- 1 )^2
\end{equation}where $n = 1, 2, ...$.  The corresponding normalized eigenfunctions $\{f_n\}^\infty_{n = 1}$  can be chosen as follows:
\begin{equation*}
f^l_n(y):= \sqrt{\frac{2}{a}}\sin\sqrt{\lambda^l_n}y \;\;\; \textrm{for} \;\;\; l\in\{D, DN\},
\end{equation*}
\begin{equation*}
f^N_n(y):=
\begin{cases}
\sqrt{\frac{1}{a}}\;\;\;\;\; & \textrm{if}\;\; n = 1,\\
\sqrt{\frac{2}{a}}\cos\sqrt{\lambda^N_n}y & \textrm{if}\;\; n \geq 2.
\end{cases}
\end{equation*}
Since the eigenfunctions $f^l_n$ form a complete orthonormal set in $L^2([0, a])$ by Fourier analysis, there are no other eigenvalues apart from those listed in \eqref{eigval} (see, e.g., \cite{EBD} for more details).
\begin{theorem}{\rm  \cite[Theorm 4.1.5]{EBD}}
{\rm The essential spectrum of a self-adjoint operator $H$ on a Hilbert space is empty iff there is a complete set of eigenfunctions $\{f_n\}_{n = 1}^{\infty}$ of $H$ such that the corresponding eigenvalues $\lambda_n$ converge in absolute values to $\infty$ as $n\rightarrow \infty$.}
\end{theorem}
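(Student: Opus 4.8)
\emph{Proof proposal.} The plan is to prove the two implications separately, with the Weyl criterion (Theorem~\ref{weyl}) doing the work in the ``if'' direction and the spectral theorem in the ``only if'' direction. Throughout I assume $\dim\mathcal H=\infty$; the finite-dimensional case is trivial, with a finite list of eigenfunctions for which the condition ``$|\lambda_n|\to\infty$'' should be read as vacuous.

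For the ``if'' direction, suppose $\{f_n\}_{n=1}^\infty$ is a complete system of eigenfunctions of $H$, say $Hf_n=\lambda_n f_n$, with $|\lambda_n|\to\infty$. Since eigenvectors of the self-adjoint operator $H$ attached to distinct eigenvalues are orthogonal, and since $|\lambda_n|\to\infty$ forces each eigenspace to be finite-dimensional, we may orthonormalise inside each eigenspace and assume $\{f_n\}$ is a complete orthonormal set. Assume for contradiction that some $\lambda\in\sigma_{\mathrm{ess}}(H)$. By Theorem~\ref{weyl} there is an orthonormal sequence $\{g_k\}\subset D(H)$ with $\|Hg_k-\lambda g_k\|\to 0$, and by Theorem~\ref{weak} (equivalently, Bessel's inequality) $g_k\to 0$ weakly. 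Put $c_{n,k}=\langle g_k,f_n\rangle$; using $\langle Hg_k,f_n\rangle=\langle g_k,Hf_n\rangle=\lambda_n c_{n,k}$ and Parseval, one gets $\|Hg_k-\lambda g_k\|^2=\sum_n|\lambda_n-\lambda|^2|c_{n,k}|^2$. Because $|\lambda_n-\lambda|\ge|\lambda_n|-|\lambda|\to\infty$, fix $N$ with $|\lambda_n-\lambda|\ge 1$ for $n>N$; then $\sum_{n>N}|c_{n,k}|^2\le\|Hg_k-\lambda g_k\|^2\to0$, whence $\sum_{n\le N}|c_{n,k}|^2\to 1$. But weak convergence gives $c_{n,k}\to0$ for each fixed $n$, so the finite sum $\sum_{n\le N}|c_{n,k}|^2\to0$, a contradiction. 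Hence $\sigma_{\mathrm{ess}}(H)=\emptyset$.

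For the ``only if'' direction, suppose $\sigma_{\mathrm{ess}}(H)=\emptyset$. Then $\sigma(H)=\sigma_{\mathrm{disc}}(H)$ is a set of isolated eigenvalues of finite multiplicity with no real accumulation point, hence countable; enumerate it as $\{\mu_j\}$. By the spectral theorem the spectral projection $E(\{\mu_j\})$ is the orthogonal projection onto $\ker(H-\mu_j)$, and it has finite rank, since infinite rank would make $\mu_j$ an eigenvalue of infinite multiplicity and therefore a point of $\sigma_{\mathrm{ess}}(H)$. As $\sigma(H)$ is the disjoint union of the singletons $\{\mu_j\}$ and carries the full spectral measure, $\sum_j E(\{\mu_j\})=I$ in the strong sense; concatenating orthonormal bases of the ranges of the $E(\{\mu_j\})$ yields a complete orthonormal system of eigenfunctions $\{f_n\}$. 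Finally, for every $R>0$ the set $\sigma(H)\cap[-R,R]$ is bounded and discrete, hence finite, and the corresponding eigenspaces are finite-dimensional, so only finitely many $f_n$ satisfy $|\lambda_n|\le R$; therefore $|\lambda_n|\to\infty$.

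I expect the ``only if'' half to be the main obstacle. It requires invoking the spectral theorem and arguing carefully that the absence of essential spectrum forces the spectral resolution of $H$ to be a strongly convergent sum of finite-rank eigenprojections --- equivalently, that the closed linear span of all eigenvectors is the whole space. The cleanest way to handle the latter point is to note that on the orthogonal complement of that span the restriction of $H$ is self-adjoint with spectrum contained in $\sigma(H)$, so if the complement were nonzero it would contain an eigenvector, because an isolated point of the spectrum of a self-adjoint operator is always an eigenvalue. Making the transition from ``$\sigma(H)$ discrete with finite multiplicities'' to ``$\sum_j E(\{\mu_j\})=I$'' precise, and dealing with the degenerate finite-dimensional case, are the places where the most care is needed.
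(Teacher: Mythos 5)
The paper does not prove this statement; it is quoted as a black box with the citation \cite[Theorem 4.1.5]{EBD}, so there is no proof in the paper to compare against. Your proof is correct, and it follows the standard route one finds in Davies: the ``if'' direction via the Weyl criterion combined with a Parseval estimate in the orthonormal eigenbasis (so that a singular sequence $\{g_k\}$ would have to concentrate its mass on finitely many $f_n$, contradicting weak convergence to zero), and the ``only if'' direction via the spectral theorem, noting that empty essential spectrum forces $\sigma(H)$ to be a discrete, countable set of finite-multiplicity eigenvalues with $E(\sigma(H))=\sum_j E(\{\mu_j\})=I$ by countable additivity, and that any bounded interval meets $\sigma(H)$ in only finitely many points. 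One minor caution on your closing remark: restricting $H$ to the orthogonal complement of the closed span of eigenvectors and calling the restriction self-adjoint requires showing that this subspace reduces $H$; the formal identity $\langle Hu, f_n\rangle = \lambda_n\langle u, f_n\rangle = 0$ gives invariance, but for an unbounded operator one still has to check that the projection maps $D(H)$ into itself. Your main line of argument via $E(\sigma(H))=I$ sidesteps this and is the cleaner version.
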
Thus, by the above theorem and \eqref{eigval}, $\sigma_{ess}(-\Delta^I_l) = \emptyset$.

\begin{theorem}\label{spect}{\rm(cf. \cite[Theorem 4.1]{KK})}
$\sigma(-\Delta^S_l) = \sigma_{\textrm{ess}}(-\Delta^S_l) = [\lambda^l_1, \infty)$.
\end{theorem}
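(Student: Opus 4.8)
The plan is to prove the two equalities by establishing separately that $-\Delta^S_l\ge\lambda^l_1$ (so $\sigma(-\Delta^S_l)\subseteq[\lambda^l_1,\infty)$) and that every $\mu\ge\lambda^l_1$ belongs to $\sigma_{\textrm{ess}}(-\Delta^S_l)$; since $\sigma_{\textrm{ess}}\subseteq\sigma$ always, this forces $\sigma_{\textrm{ess}}(-\Delta^S_l)=\sigma(-\Delta^S_l)=[\lambda^l_1,\infty)$ (equivalently, one notes at the end that $[\lambda^l_1,\infty)$ has no isolated points, so the discrete spectrum is empty). For the lower bound I would use the separation of variables already set up in this section: expanding $u$ in the form domain of $-\Delta^S_l$ as $u(x,y)=\sum_{n\ge1}c_n(x)f^l_n(y)$ with $c_n\in W^1_2(\mathbb{R})$ and $\{f^l_n\}$ the complete orthonormal system of eigenfunctions of $-\Delta^I_l$, the quadratic form of $-\Delta^S_l$ becomes $\sum_{n}\int_{\mathbb{R}}\bigl(|c_n'(x)|^2+\lambda^l_n|c_n(x)|^2\bigr)\,dx\ge\lambda^l_1\|u\|^2_{L^2(S)}$ because $\lambda^l_1=\min_n\lambda^l_n$ by \eqref{eigval} (the integration by parts in $y$ absorbs the Robin boundary term in the $l=R$ case into the $\lambda^R_n$). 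The Rayleigh--Ritz formula (Theorem \ref{Ray-Litz}) then gives $\sigma(-\Delta^S_l)\subseteq[\lambda^l_1,\infty)$. The same conclusion also follows from the tensor decomposition $-\Delta^S_l=(-\Delta^{\mathbb{R}})\otimes 1+1\otimes(-\Delta^I_l)$ together with the fact that the spectrum of such a separated sum is $\overline{\sigma(-\Delta^{\mathbb{R}})+\sigma(-\Delta^I_l)}=\overline{\bigcup_n[\lambda^l_n,\infty)}=[\lambda^l_1,\infty)$, which would yield both inclusions at once.

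For the inclusion $[\lambda^l_1,\infty)\subseteq\sigma_{\textrm{ess}}(-\Delta^S_l)$ I would construct Weyl singular sequences and invoke the Weyl criterion (Theorem \ref{weyl}). Writing $\mu=\lambda^l_1+k^2$ with $k\ge0$, fix $\chi\in C_0^\infty(\mathbb{R})$ with $\|\chi\|_{L^2(\mathbb{R})}=1$ and set
$$
u_m(x,y):=m^{-1/2}\,\chi\!\left(\frac{x-m^2}{m}\right)e^{ikx}f^l_1(y),\qquad m\in\mathbb{N}.
$$
Each $u_m$ lies in $\textrm{Dom}(-\Delta^S_l)$ (it belongs to $W^2_2(S)$ and satisfies $B_l u_m=0$ because $f^l_1$ does), has unit $L^2(S)$-norm, and converges weakly to $0$: its support runs off to infinity, so $\langle u_m,g\rangle\to0$ for all $g\in C_0^\infty(S)$ and Theorem \ref{weak} applies. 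A short computation using $-\partial_y^2 f^l_1=\lambda^l_1 f^l_1$ shows that $(-\Delta^S_l-\mu)u_m=-m^{-1/2}\bigl(\psi_m''+2ik\,\psi_m'\bigr)e^{ikx}f^l_1(y)$ with $\psi_m(x)=\chi((x-m^2)/m)$, and since each differentiation of the slowly varying cutoff gains a factor $m^{-1}$, the $L^2(S)$-norm of the right-hand side is $O(m^{-1})\to0$. Hence $\mu\in\sigma_{\textrm{ess}}(-\Delta^S_l)$, and combining the two steps completes the argument.

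The step I expect to be the main obstacle is treating the Robin case $l=R$ on the same footing as the others: its eigenvalues $\lambda^R_n$ and eigenfunctions $f^R_n$ are only introduced implicitly through the Robin boundary operator rather than displayed, so one must recall their explicit form, verify that $\{f^R_n\}$ is complete in $L^2(I)$ and that $f^R_1$ satisfies the Robin condition — precisely what makes $u_m$ an admissible test function — and note that $\lambda^R_1$ may be negative for some values of the Robin parameter, which is harmless here but is the reason the statement is phrased in terms of the bottom of the essential spectrum rather than negative eigenvalues. The remaining ingredients, namely the form expansion in the first step and the derivative estimate on $(-\Delta^S_l-\mu)u_m$ in the second, are routine.
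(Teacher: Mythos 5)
Your argument follows essentially the same two-step strategy as the paper: a quadratic-form lower bound giving $\sigma(-\Delta^S_l)\subseteq[\lambda^l_1,\infty)$, and Weyl singular sequences of the form (cutoff)$\cdot e^{i\sqrt{\lambda-\lambda^l_1}x}f^l_1(y)$ for the reverse inclusion. The only notable difference is in the weak-convergence step: the paper spreads the cutoff out (using $\varphi_n(x)=n^{-1/2}\varphi(x/n)$) and then has to bound $|\langle u_n,w_N\rangle|\le\|u_n\|_{L^\infty}\|w_N\|_{L^1}$ to deduce $u_n\rightharpoonup 0$, whereas your translated bump $\chi((x-m^2)/m)$ makes the supports escape to infinity, so weak convergence is immediate — a slightly cleaner variant; similarly, your Fourier-expansion (or tensor-product) derivation of the lower bound is a standard alternative to the paper's more elementary pointwise estimate $\int_0^a|u_y|^2\,dy\ge\lambda^l_1\int_0^a|u|^2\,dy$ integrated in $x$.
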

\begin{proof}Let $\mathcal{E}^l_I[u]$ and $\mathcal{E}^l_S[u]$ denote the quadratic forms of the free Laplacian on $I$ and $S$ respectively, subject to the boundary conditions $l$.
Since $\sigma(-\Delta^I_l)$ starts by $\lambda^l_1$, then for all $u\in \textrm{Dom}(-\Delta^I_l)$, we have
$$
\mathcal{E}^l_I[u] = \int_0^a|u_y|^2dy \geq \lambda^l_1\parallel u\parallel^2_{L^2(I)}\,.
$$
Now
\begin{eqnarray*}
\mathcal{E}^l_S[u] &=& \int_S\mid u_x\mid^2dxdy + \int_S\mid u_y\mid^2dxdy \\ &\geq&\int_S\mid u_y\mid^2dxdy \\ &=&\int_{\mathbb{R}}dx\left(\int^a_0\mid u_y\mid^2dy \right)\\&\geq&  \lambda^l_1\parallel u\parallel^2_{L^2(S)}
\end{eqnarray*}This implies $\sigma(-\Delta^S_l) \subseteq [\lambda^l_1, \infty)$.\\\\

On the other hand, pick $\varphi\in C^{\infty}_0(\mathbb{R})$ with $\textrm{supp}\, \varphi = [-1, 1]$ such that\\ $\|\varphi\|_{L^2(\mathbb{R})} = 1$. Let $\varphi_n(x) := n^{-\frac{1}{2}}\varphi(\frac{x}{n})$ so that $\|\varphi_n\|_{L^2(\mathbb{R})} = 1$.\\
Take $\forall \lambda \ge \lambda_1^l$ and consider a sequence $\{u_n\}_{n =1}^{\infty}\subset \textrm{Dom}(-\Delta^S_l)$ given by
$$
u_n(x, y) := \varphi_n(x) e^{i\sqrt{\lambda - \lambda^l_1}x}f_1^l(y).
$$Then
$$
\|u_n\|_{L^2(S)} = 1.
$$
Since $-(f^l_1)''(y) = \lambda^l_1 f^l_1(y)$ and
\begin{eqnarray*}
\Delta u_n(x, y) &=& \varphi''_n(x)e^{i\sqrt{\lambda - \lambda_1^l}x}f_1^l(y) \\&+& 2i\sqrt{\lambda - \lambda_1^l}\varphi'_n(x)e^{i\sqrt{\lambda - \lambda_1^l}x}f_1^l(y)- \lambda\varphi_n(x)e^{i\sqrt{\lambda - \lambda_1^l}x}f_1^l(y)
\end{eqnarray*}
then,
\begin{eqnarray*}
-\Delta u_n(x, y) - \lambda u_n(x, y) &=& -\varphi''_n(x, y)e^{i\sqrt{\lambda - \lambda_1^l}x}f_1^l(y)\\ &-& 2i\sqrt{\lambda - \lambda_1^l}\varphi'_n(x)e^{i\sqrt{\lambda - \lambda_1^l}x}f_1^l(y)
\end{eqnarray*}
implying that
\begin{eqnarray*}
\parallel(-\Delta^S_l - \lambda)u_n\parallel &\le& \parallel\varphi''_n\parallel\|f^l_1\| + 2\sqrt{\lambda - \lambda^l_1}\parallel\varphi'_n\parallel\|f^l_1\|\\&=& \frac{1}{n^2}\parallel\varphi''\parallel + 2\sqrt{\lambda - \lambda^l_1}\frac{1}{n}\parallel\varphi'\parallel \rightarrow 0 \;\;\textrm{as}\;\; n \rightarrow\infty.
\end{eqnarray*}
Now, it remains to show that $u_n \longrightarrow 0, \, n \longrightarrow \infty$ weakly in $L^2(S)$. For any  $N\in\mathbb{N}$,  let
\begin{eqnarray*}
w_N(x, y) := \left\{\begin{array}{l}
  w(x, y), \ \;\;\mbox{ if }| w(x)| \le N,   \\ \\
   0, \ \;\;\;\;\;\;\mbox{ if } |w(x)| > N
\end{array}\right.
\end{eqnarray*}
 with $w \in W^1_2(S)$.
Then $w_N \in L^1(S)\cap L^2(S)$ and $\|w - w_N\|_{L^2(S)} \longrightarrow 0$ as $N \longrightarrow \infty$. Also
$$
|\langle u_n, w_N\rangle_{L^2(S)}| \le \|u_n\|_{L^{\infty}(S)}\|w_N\|_{L^1(S)} \le \frac{\textrm{const}}{\sqrt{n}}\|w_N\|_{L^1(S)} \longrightarrow 0 \mbox{ as }n\longrightarrow \infty.
$$Since $\|u_n\|_{L^(S)} = 1$,  $u_n$ converges weakly to $0$ in $L^2(S)$ by Theorem \ref{weak}.
Thus Theorem implies \ref{weyl}, $\lambda\in\sigma_{\textrm{ess}}(-\Delta^S_l) $. Hence\\ $$[\lambda^l_1, \infty) \subseteq \sigma_{\textrm{ess}}(-\Delta^S_l)= \sigma(-\Delta^S_l).$$
\end{proof}
Theorem \ref{spect} and its proof remain true for the case of Robin boundary conditions but the quadratic form of the Laplacian involves  boundary terms (see \eqref{n}).\\\\ Next, we discuss in detail the spectrum of the Laplacian on a straight strip subject to Robin boundary conditions. We shall see that the negative part of its spectrum is not necessarily empty as opposed to the cases of Neumann, Dirichlet and Dirichlet-Neumann boundary conditions .\\\\

Let $S_0 := [0, 1]\times[0, a]$. Consider the following eigenvalue problem
\begin{equation}\label{RP}
\begin{cases}
-\Delta u = \lambda u\;\;\;\;\; & \textrm{in}\;\; S_0\\
u_x(0, y) = u_x(1, y) = 0\\
u_y(x, 0) + \alpha u(x, 0) = 0 = u_y(x, a) + \beta u(x, a)
\end{cases}
\end{equation}for  $\alpha, \beta, \lambda\in\mathbb{R}$. Assume that a solution of \eqref{RP} has the form
$$
u(x, y) = v(x)w(y).
$$Then \eqref{RP} reduces to two one dimensional problems, namely:
\begin{equation}\label{RP1}
\begin{cases}
-v''(x) = (\lambda - \tau) v(x)\;,\;\;\;\;\; 0 < x < 1\\
v'(0) = v'(1) = 0
\end{cases}
\end{equation} and
\begin{equation}\label{RP2}
\begin{cases}
-w''(y) = \tau w(y)\;,\;\;\;\;\; 0 < y < a\\
w'(0) + \alpha w(0) = w'(a) + \beta w(a) = 0,
\end{cases}
\end{equation}where $\tau\in\mathbb{R}$ is a separation constant.\\ The solution of \eqref{RP1} is given by
\begin{equation}\label{soln1}
v(x) = \cos m\pi x\;,\;\;\;\lambda = \tau + \pi^2m^2,\;\;\;m = 0, 1, 2,...
\end{equation}
To solve \eqref{RP2}, we consider the following cases:
\begin{itemize}
\item[(i)]For $\tau = 0$ the solution of the ordinary differential equation in \eqref{RP2} is of the form $w(y) = Ay + B$ for some constants $A$ and $B$. The  first boundary condition implies that
$$
w(y) = -\alpha y + 1\;\;\;\;\;\; (\textrm{take}\; B = 1)
$$
$\tau = 0$ is in the spectrum iff the following condition holds true:
\begin{equation}\label{cond1}
(1 + \beta a)\alpha = \beta.
\end{equation}
Hence when $\tau = 0$, solutions of \eqref{RP} are given by
\begin{equation}\label{rpsoln2}
u(x, y) = \cos m\pi x (1 -\alpha y )\;,\;\;\;\lambda =  \pi^2m^2,\;\;\;m = 0, 1, 2,...
\end{equation}
\item[(ii)]$\tau > 0$ gives the following general solution
$$
w(y) = A\cos\sqrt{\tau}y + B\sin\sqrt{\tau}y.
$$The boundary conditions in \eqref{RP2} yield
\begin{equation}\label{soln2}
w(y) = \cos\sqrt{\tau}y - \frac{\alpha}{\sqrt{\tau}}\sin\sqrt{\tau}y \;\;\;\;(\textrm{take} \;A = 1)
\end{equation} and
\begin{equation}\label{soln3}
\tan\sqrt{\tau}a = \frac{(\beta - \alpha)\sqrt{\tau}}{\tau + \alpha\beta}\;.
\end{equation}
Thus, we get a sequence $\tau_n = \theta^2_n,\;\; n = 1, 2, ...$ satisfying
\begin{itemize}
\item[(a)]as $\alpha, \beta \longrightarrow 0,\, \theta_n\longrightarrow\frac{n \pi}{a}$\,,
\item[(b)]as $\alpha, \beta \longrightarrow \pm\infty,\, \theta_n\longrightarrow\frac{n\pi}{a}$\,,
\item[(c)]as $\alpha\longrightarrow 0, \beta \longrightarrow \pm\infty,\, \theta_n\longrightarrow\frac{(2n +1)\pi}{2a}$\,,
\item[(d)]as $\beta\longrightarrow 0, \alpha \longrightarrow \pm\infty, \,\theta_n\longrightarrow\frac{(2n +1)\pi}{2a}$\,.
\end{itemize}

The related eigenfunctions are
\begin{equation}\label{soln4}
w_n(y)= \left\{\cos\theta_ny - \frac{\alpha}{\theta_n}\sin\theta_ny\right\}_{n = 1, 2, ...}
\end{equation} Hence  solutions of \eqref{RP} become
\begin{eqnarray}\label{rpsoln3}
u(x, y) &=& \cos m\pi x (\cos \theta_ny - \frac{\alpha}{\theta_n}\sin\theta_ny)\nonumber\\ \lambda &=& \theta_n^2 + \pi^2m^2,\;\;\;m = 0, 1, 2,...,\, n = 1, 2,... \,.
\end{eqnarray}

 If $\alpha = \beta$, then one gets  $\tau = \left(\frac{n\pi}{a}\right)^2,\;n = 0, 1, 2, ...$. Thus \eqref{rpsoln3} becomes
\begin{eqnarray}\label{rpsoln4}
u(x, y) &=& \cos m\pi x \left(\cos\frac{n\pi}{a} y - \frac{\alpha a}{n\pi}\sin\frac{n\pi}{a}y\right)\nonumber\\ \lambda &=& \left(\frac{n\pi}{a}\right)^2 + \pi^2m^2,\;\;\;m = 0, 1, 2,..., \,n = 0, 1, 2, ... \,.
\end{eqnarray}

Also, a special case of \eqref{soln3}: $\tau = -\alpha\beta$. Then $\cos\sqrt{\tau}a = 0$, i.e. \\$\tau = \left(\frac{(2n  + 1)\pi}{2a}\right)^2, \;\; n= 0, 1, 2, ...$ So, this case occurs iff  $\alpha\beta = -\left(\frac{(2n  + 1)\pi}{2a}\right)^2$ for some $n$. Hence \eqref{rpsoln3} becomes
\begin{eqnarray}\label{rpsoln5}
u(x, y) &=& \cos m\pi x \left(\cos \frac{(2n  + 1)\pi}{2a} y - \frac{2\alpha a}{(2n + 1)\pi}\sin \frac{(2n  + 1)\pi}{2a}y\right)\nonumber\\ \lambda &=& \left(\frac{(2n  + 1)\pi}{2a}\right)^2 + \pi^2m^2,\;m = 0, 1,...,\, n = 0, 1,... \,.
\end{eqnarray}

\item[(iii)]Let $\tau_1 < \tau_2$ be the smallest eigenvalues of \eqref{RP2}.  For some values of $\alpha$ and $\beta $, $\tau_1$ or $\tau_2$ might be negative.
Suppose that $\tau = - \sigma^2 (\sigma > 0)$ is a negative eigenvalue. Then \eqref{soln2} and \eqref{soln3} respectively become
\begin{equation}\label{RP2soln}
w(y) = \cosh(\sigma y) - \frac{\alpha}{\sigma}\sinh(\sigma y),
\end{equation}
\begin{equation}\label{RP2soln*}
\tanh(\sigma a) = \frac{(\beta - \alpha)\sigma}{-\sigma^2 + \alpha\beta}\,.
\end{equation}
To investigate when this happens, note that \eqref{cond1} divides the $(\alpha,\beta)$-plane into three connected components and the number of negative eigenvalues in each of them is the same since eigenvalues are continuous with respect to $\alpha$ and $\beta$. Consider the line $\beta = -\alpha$, it transects all the three regions (see Figure \ref{hyp}).
\begin{figure}
\centering
\includegraphics[height = 9cm]{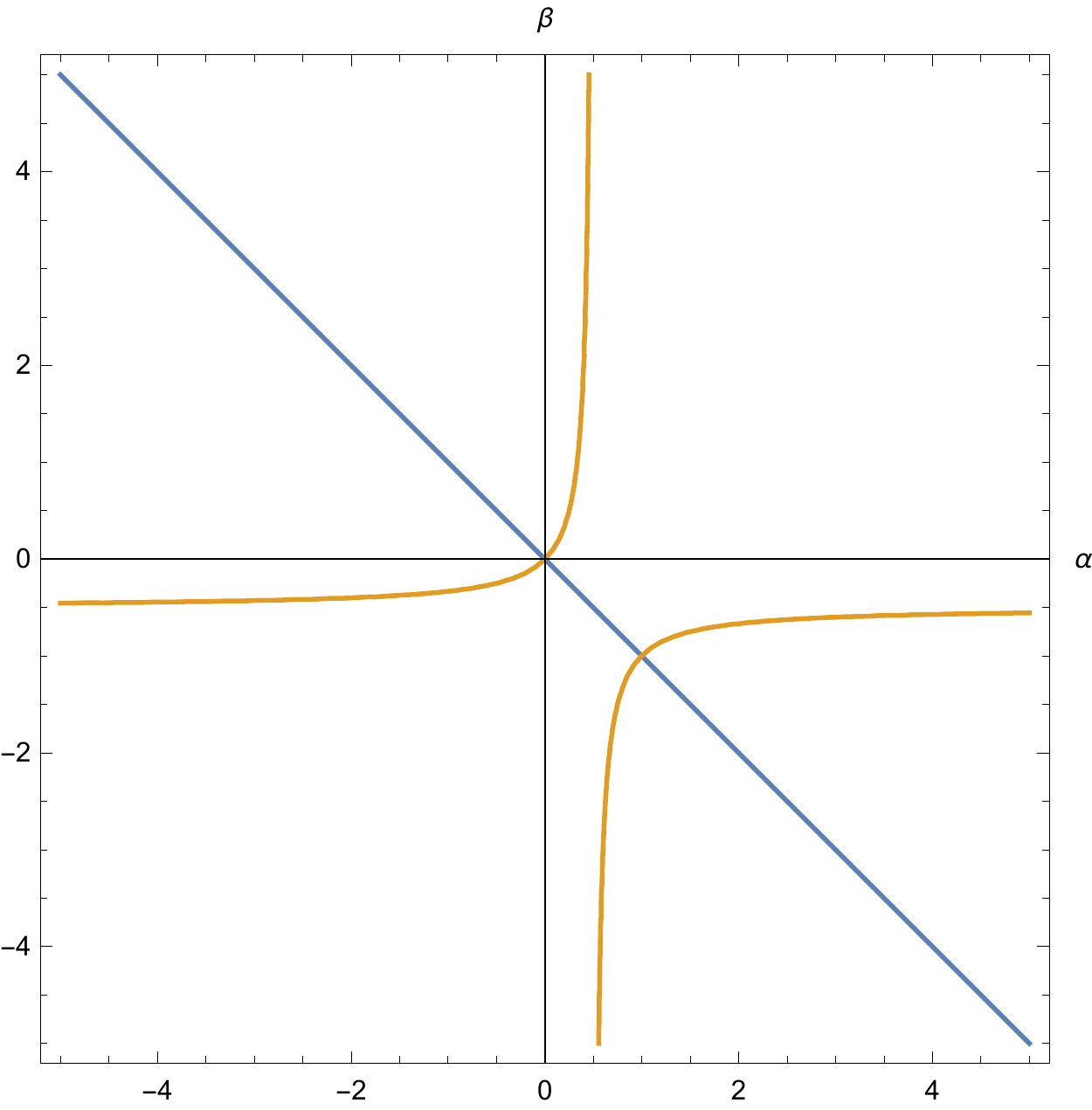}
\caption{$(1 + \beta a)\alpha = \beta,\;\beta = -\alpha$}
\label{hyp}
\end{figure}
By applying the shift $y\longmapsto y- b,\; b = \frac{a}{2}$, \eqref{RP2} becomes
\begin{equation}\label{RP3}
\begin{cases}
-w''(y) = \tau w(y)\;\;\;\;\; -b < y < b\\
w'(-b) + \alpha w(-b) = w'(b) - \alpha w(b) = 0.
\end{cases}
\end{equation}
So, if $w(y)$ is an eigenfunction, the function $Q(y) = w(-y)$ is also an eigenfunction with the same eigenvalue. Thus, we can consider separately the eigenfunctions that are even functions and those that are odd functions, described respectively by
\begin{equation}\label{RP4}
\begin{cases}
-w''(y) = \tau w(y)\;,\;\;\;\;\; 0 < y < b\\
w'(0) = w'(b) - \alpha w(b) = 0
\end{cases}
\end{equation}
and
\begin{equation}\label{RP5}
\begin{cases}
-w''(y) = \tau w(y)\;,\;\;\;\;\; 0 < y < b\\
w(0)  = w'(b) - \alpha w(b) = 0.
\end{cases}
\end{equation}
Considering $w(y) = \cosh(\sigma y)$ and $w(y) = \sinh(\sigma y)$, the boundary conditions in \eqref{RP4} and \eqref{RP5} respectively yield
\begin{equation}\label{soln5}
\alpha  = \sigma\tanh(\sigma b)
\end{equation}
and
\begin{equation}\label{soln5*}
\alpha  = \sigma \coth(\sigma b)\,.
\end{equation}
For $\sigma  >0$, both functions $\sigma\tanh(\sigma b)$ and $\sigma \coth(\sigma b)$ are monotone increasing with minima equal to 0 and $\frac{1}{b}$ respectively at $\sigma = 0$. Thus \eqref{soln5} has one solution if and only if $\alpha > 0$, and \eqref{soln5*} has one solution if and only if $\alpha b > 1$. Hence if $\alpha a > 0$ there is one negative eigenvalue with even eigenfunction and if $\alpha a > 2$, another negative eigenvalue comes from odd eigenfunction.\\\\ In general, we have the following situations. If
\begin{itemize}
\item[A)]$\alpha + \alpha\beta a - \beta < 0 $ and $\alpha < \frac{1}{a} \left( \beta > -\frac{1}{a}\right)$, then $\tau_1 > 0$,
\item[B)]$\alpha + \alpha\beta a - \beta = 0 $ and $\alpha < \frac{1}{a} \left(\beta > -\frac{1}{a}\right)$, then $\tau_1 = 0$ and $\tau_2 >0$,
\item[C)]$\alpha + \alpha\beta a - \beta > 0 $, then $\tau_1 < 0$ and $\tau_2 > 0$,
\item[D)]$\alpha + \alpha\beta a - \beta = 0 $ and $\alpha > \frac{1}{a} \left( \beta < -\frac{1}{a}\right)$, then $\tau_1 < 0$ and $\tau_2 = 0$,
\item[E)]$\alpha + \alpha\beta a - \beta < 0 $ and $\alpha > \frac{1}{a}\left( \beta < -\frac{1}{a}\right)$, then $\tau_1 < 0$ and $\tau_2 < 0$.
\end{itemize}
Cases $(C)- (E)$ produce the following solutions of \eqref{RP}
\begin{eqnarray}\label{rpsoln5}
u(x, y) &=& \cos m\pi x \left(\cosh\sigma_n y - \frac{\alpha }{\sigma_n}\sinh\sigma_ny\right)\nonumber\\ \lambda &=& -\sigma^2_n + \pi^2m^2,\;\;\;m = 0, 1, 2,...,
\end{eqnarray}
where $ n = 1$ in cases $(C)$ and $(D)$ while $n = 1, 2$ in case $(E)$.
\end{itemize}
\begin{remark}
{\rm When $\alpha = 0$, we have the Neumann condition at $0$ and the Robin conditions at $a$. By \eqref{cond1}, $\tau = 0$ if and only if $\beta = 0$. If $\beta > 0$, then $\tau_1, \tau_2 > 0$ by condition (A). If $\beta < 0$, then $\tau_1 < 0$ and $\tau_2 > 0$ by condition (C).\\\\
 When $\beta = 0$, we have the Robin condition at $0$ and the Neumann conditions at $a$. By \eqref{cond1}, $\tau = 0$ if and only if $\alpha = 0$. If $\alpha > 0$, then $\tau_1 < 0$ and  $\tau_2 > 0$ by condition (C). If $\alpha < 0$, then $\tau_1 > 0$ and $\tau_2 > 0$ by condition (A).\\\\Let $\alpha\longrightarrow\pm\infty$. Then we have the Dirichlet conditions at $0$ and the Robin conditions at $a$. Again \eqref{cond1} implies that $\tau = 0$ if and only if $1 + \beta a = 0$. \eqref{RP2soln} and \eqref{RP2soln*} respectively become
 \begin{equation}\label{dsoln1}
 w(y) = \sinh(\sigma y)
 \end{equation}
 and
 \begin{equation}\label{dsoln2}
 \tanh(\sigma a) = -\frac{\sigma}{\beta}\,.
 \end{equation} \eqref{dsoln2} implies
 \begin{equation}\label{dsoln3}
 \beta = - \sigma \coth(\sigma a)\,,
 \end{equation} (cf. \eqref{soln5*}).
 Hence $\tau_1 > 0$ if $1 + \beta a > 0$ and $\tau_1 < 0$, $\tau_2 > 0$ if $1 + \beta a < 0$. \\\\ Let $\beta\longrightarrow\pm\infty$. Then we have Robin conditions at $0$ and Dirichlet conditions at $a$. By \eqref{cond1}, $\tau = 0$ if and only if $\alpha a - 1 = 0$.  \eqref{RP2soln*} becomes
 \begin{equation}\label{rdsoln}
 \alpha = \sigma\coth(\sigma a)\,,
 \end{equation}(cf. \eqref{soln5*}). Thus $\tau_1 < 0$, $\tau_2 > 0$ if $ \alpha a -1  > 0$ and $\tau_1 > 0$ if $\alpha a -1  < 0$.
}
\end{remark}

\section{Index of quadratic forms}\label{index}
Let $\mathcal{H}$ be a Hilbert space and let $\mathbf{q}$ be a Hermitian form with a domain
$\mbox{Dom}\, (\mathbf{q}) \subseteq \mathcal{H}$. Set
\begin{equation}\label{hermitian}
N_- (\mathbf{q}) := \sup\left\{\dim \mathcal{L}\, | \  \mathbf{q}[u] < 0, \,
\forall u \in \mathcal{L}\setminus\{0\}\right\} ,
\end{equation}
where $\mathcal{L}$ denotes a linear subspace of $\mbox{Dom}\, (\mathbf{q})$.  The number $N_- (\mathbf{q})$ is called the Morse index of $\mathbf{q}$ in $\mbox{Dom}\,(\mathbf{q})$. If $\mathbf{q}$
is the quadratic form of a self-adjoint operator $A$ with no essential spectrum in $(-\infty, 0)$, then
by the variational principle,
$N_- (\mathbf{q})$ is the number of negative eigenvalues of $A$ repeated according to their
multiplicity (see, e.g., \cite[S1.3]{BerShu} or \cite[Theorem 10.2.3]{BirSol}).\\\\
Let $\Omega\subset\mathbb{R}^2$ be an arbitrary open set, $\mu$ a positive $\sigma$-finite Radon measure on $\mathbb{R}^2$. Further, let $V$ be a non-negative $\mu$-measurable real valued function such that $V\in L^1_{\textrm{loc}}(\overline{\Omega}, \mu)$. Define the following quadratic form
$$
\mathcal{E}_{V\mu,\Omega}[w] := \int_{\Omega}|\nabla w|^2\,dx - \int_{\overline{\Omega}}V|w|^2\,d\mu(x),
$$  with domain
$$
\mathcal{H} := W^1_2(\Omega)\cap L^2(\overline{\Omega}, Vd\mu).
$$Note that $\mu$ does not have to be the two dimensional Lebesgue measure, and it may well happen that $\mu(\partial\Omega) > 0$. In case $\mu$ is absolutely continuous with respect to the Lebesgue measure, we shall simply write $\mathcal{E}_{V, \Omega}$ for the above form.
\\\\
For any $V \ge 0$ and any $\sigma$-finite Radon measure $\mu$ on $\mathbb{R}^2$, $N_-(\mathcal{E}_{V\mu,\Omega})\geq 1$. Indeed, if $V\in L^1(\overline{\Omega}, \mu)$, then $1\in \textrm{Dom}(\mathcal{E}_{V\mu,\Omega})$ and $\mathcal{E}_{V\mu,\Omega}[1] < 0$, which implies $N_-(\mathcal{E}_{V\mu,\Omega})\geq 1$. If $V \notin L^1(\overline{\Omega}, \mu)$, then consider $w_k(x) = \frac{1}{k}(k - |x|)_+ $ for any positive integer $k$. Then this function belongs to $\mathcal{H}$ since it has a compact support, $0 \le w_k\le 1$, and $\int_{\Omega}|\nabla w_k|^2\,dx \le \pi$. Since $w_k \rightarrow 1$ pointwise as $k \rightarrow\infty$, then by Fatou Lemma \cite[Lemma 1.28]{Rudin}, $$\int_{\overline{\Omega}}Vw^2_k\, d\mu(x) \rightarrow \int_{\overline{\Omega}}V\, d\mu(x) = \infty.$$ Hence, for $k$ large enough, $\mathcal{E}_{V\mu,\Omega}[w_k]< 0$ and we have that $N_-(\mathcal{E}_{V\mu,\Omega})\geq 1$.

Under certain conditions, $\mathcal{E}_{V\mu, \Omega}$ generates a self-adjoint operator
$$
H_{V\mu} := -\Delta - V\mu, \;\;\; V\ge 0
$$ as follows. Suppose that
\begin{equation}\label{sadj}
\int_{\overline{\Omega}}V(x)|w(x)|^2d\mu(x) \le a \int_{\Omega}|\nabla w(x)|^2dx + b \int_{\Omega}|w(x)|^2dx
\end{equation}for all $w\in W^1_2(\Omega)\cap C(\overline{\Omega})$ and some positive constants $ a < 1$ and $b$ (see, e.g., \cite{Aize}, \cite{Her} and \cite{Herc} for examples of measures which satisfy \eqref{sadj}). The Dirichlet integral
$$
\int_{\Omega}|\nabla w(x)|^2\,dx
$$ with the domain $W^1_2(\Omega)$ is a non-negative and closed form on $L^2(\Omega)$. Thus it follows from $\eqref{sadj}$ and the Kato-Lax-Lions-Milgram-Nelson theorem \cite[Theorem X.17]{Reed} that the quadratic form $\mathcal{E}_{V\mu,\Omega}$ is lower semi-bounded and closable on $L^2(\Omega)$. Thus the operator $H_{V\mu}$ uniquely associated with $\mathcal{E}_{V\mu,\Omega}$ in the sense of the second representation theorem (see Theorem \ref{rep}) is self-adjoint.

\begin{definition}
{\rm Let $\Omega\subset\mathbb{R}^n$ be an open set. We say that a (finite or infinite) sequence $\{\Omega_k\}$ of non-empty open subsets $\Omega_k\subset\Omega$ is a partition of $\Omega$ if $\Omega_k \cap \Omega_l = \emptyset, \;k \not= l$ and $\underset{k}\cup \overline{\Omega}_k = \overline{\Omega}$.}
\end{definition}
The following result can be found, e.g., in \cite[Ch.6, $\S$2.1, Theorem 4]{Hilb} in the case $\mu$ is absolutely continuous with respect to the Lebesgue measure.
\begin{lemma}\label{part*}
{\rm If $\{\Omega_k\}$ is a partition of $\Omega$, then
\begin{equation}\label{parti}
N_-(\mathcal{E}_{V\mu, \Omega})\le \underset{k}\sum N_-(\mathcal{E}_{V\mu,\Omega_k}), \,\,\,\,\,\forall V \ge 0.
\end{equation}
}
\end{lemma}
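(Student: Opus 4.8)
The plan is to reduce the claim to the additivity of the Morse index over orthogonal-sum decompositions of the form domain, following the classical Dirichlet-bracketing argument. The key observation is that if $\{\Omega_k\}$ is a partition of $\Omega$, then every $w \in \mathrm{Dom}(\mathcal{E}_{V\mu,\Omega}) = W^1_2(\Omega) \cap L^2(\overline{\Omega}, Vd\mu)$ restricts to a function $w|_{\Omega_k} \in \mathrm{Dom}(\mathcal{E}_{V\mu,\Omega_k})$ for each $k$, and since $\mu(\partial\Omega_k)$ may be positive one must be slightly careful: because $\underset{k}\cup \overline{\Omega}_k = \overline{\Omega}$ and the $\Omega_k$ are pairwise disjoint open sets, the overlaps between the closures lie in the boundaries $\partial\Omega_k$. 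The first thing I would check is that $\int_{\overline{\Omega}} V|w|^2\, d\mu = \sum_k \int_{\overline{\Omega}_k} V|w|^2\, d\mu$ up to the measure carried by the ``seams'' $\bigcup_k \partial\Omega_k \setminus \partial\Omega$; in the generality stated (arbitrary $\sigma$-finite Radon $\mu$) this need not vanish, so the honest inequality one gets is $\mathcal{E}_{V\mu,\Omega}[w] \ge \sum_k \mathcal{E}_{V\mu,\Omega_k}[w|_{\Omega_k}]$, since the left side has the \emph{same} gradient term $\int_\Omega |\nabla w|^2 = \sum_k \int_{\Omega_k}|\nabla w|^2$ but \emph{subtracts at least as much} potential energy (the seam contributes a nonpositive extra term $-\int_{\text{seams}} V|w|^2 d\mu \le 0$ on the left). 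This inequality is exactly what drives the estimate in the desired direction.

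Next I would set up the subspace-dimension comparison. Suppose $\mathcal{L} \subseteq \mathrm{Dom}(\mathcal{E}_{V\mu,\Omega})$ is a linear subspace on which $\mathcal{E}_{V\mu,\Omega}[u] < 0$ for all $u \ne 0$; we want $\dim \mathcal{L} \le \sum_k N_-(\mathcal{E}_{V\mu,\Omega_k})$. Consider the restriction map $R: \mathcal{L} \to \bigoplus_k \mathrm{Dom}(\mathcal{E}_{V\mu,\Omega_k})$, $u \mapsto (u|_{\Omega_k})_k$. For each index $k$, composing with the projection to the $k$-th factor gives a map $R_k: \mathcal{L} \to \mathrm{Dom}(\mathcal{E}_{V\mu,\Omega_k})$. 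If $u \in \mathcal{L} \setminus \{0\}$ lies in $\ker R_k$ for \emph{every} $k$, then $u = 0$ a.e.\ on $\Omega$ (the $\Omega_k$ cover $\Omega$ up to a boundary of Lebesgue measure zero, and $u \in W^1_2(\Omega)$), contradicting the fact that $\mathcal{E}_{V\mu,\Omega}[u] < 0$ forces $u \ne 0$ — actually more simply, $\|u\|_{W^1_2(\Omega)} = 0$. Hence $\bigcap_k \ker R_k = \{0\}$, so $R$ is injective and $\dim\mathcal{L} = \dim R(\mathcal{L})$. Now I would argue that on each component the image $R_k(\mathcal{L})$ cannot be too large where the form is negative: more precisely, I would use the inequality from the first paragraph together with a standard linear-algebra lemma — if $q_0 \le \sum_k q_k$ as quadratic forms on a space $\mathcal{L}$ with $q_0 < 0$ on $\mathcal{L}\setminus\{0\}$, then $\dim\mathcal{L} \le \sum_k N_-(q_k)$. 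Indeed, writing $d_k := N_-(\mathcal{E}_{V\mu,\Omega_k})$, the set where $\mathcal{E}_{V\mu,\Omega_k}$ is $\ge 0$ has codimension at most $d_k$ inside $\mathrm{Dom}(\mathcal{E}_{V\mu,\Omega_k})$ (equivalently, any subspace on which it is negative definite has dimension $\le d_k$); so there is a subspace $\mathcal{M}_k \subseteq \mathcal{L}$ of codimension at most $d_k$ with $\mathcal{E}_{V\mu,\Omega_k}[u|_{\Omega_k}] \ge 0$ for all $u \in \mathcal{M}_k$. On $\mathcal{M} := \bigcap_k \mathcal{M}_k$, which has codimension at most $\sum_k d_k$ in $\mathcal{L}$, we get $\mathcal{E}_{V\mu,\Omega}[u] \ge \sum_k \mathcal{E}_{V\mu,\Omega_k}[u|_{\Omega_k}] \ge 0$; but $\mathcal{E}_{V\mu,\Omega} < 0$ on $\mathcal{L}\setminus\{0\}$, forcing $\mathcal{M} = \{0\}$ and hence $\dim\mathcal{L} \le \sum_k d_k$. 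Taking the supremum over all such $\mathcal{L}$ gives \eqref{parti}.

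The step I expect to be the main obstacle — or at least the one requiring the most care — is the treatment of the seam measure: verifying that restricting $w$ to the pieces $\Omega_k$ is legitimate (that $w|_{\Omega_k} \in W^1_2(\Omega_k)$, which is immediate, and that $w|_{\Omega_k} \in L^2(\overline{\Omega_k}, Vd\mu)$, which needs $\overline{\Omega_k} \subseteq \overline{\Omega}$ together with $V \in L^1_{\mathrm{loc}}(\overline{\Omega},\mu)$ and $w \in L^2(\overline{\Omega},Vd\mu)$), and confirming that the resulting comparison goes in the right direction rather than the wrong one. Because $\mu$ is allowed to charge $\partial\Omega$ and the interfaces, one does \emph{not} have equality of the potential terms, only the inequality $\int_{\overline\Omega}V|w|^2\,d\mu \ge \sum_k \int_{\overline{\Omega_k}}V|w|^2\,d\mu$ — wait, this needs thought: a point on an interface belongs to \emph{several} $\overline{\Omega_k}$, so the right side could \emph{overcount}. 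The clean fix is to note that $N_-(\mathcal{E}_{V\mu,\Omega_k})$ only increases if we enlarge the potential term, so one may instead work with the pairwise-disjoint decomposition by assigning each seam point to exactly one piece; equivalently, since the $\Omega_k$ are open and disjoint, decompose $\overline\Omega$ measure-theoretically as a disjoint union of Borel sets $E_k$ with $\Omega_k \subseteq E_k \subseteq \overline{\Omega_k}$, and use that $\mathcal{E}_{V\mu,\Omega_k}[w|_{\Omega_k}]$ with the potential integrated over $E_k$ is a lower bound for the form with potential over all of $\overline{\Omega_k}$ only if the omitted part is nonnegative — which it is, as $V \ge 0$. Then $\int_\Omega|\nabla w|^2 = \sum_k\int_{\Omega_k}|\nabla w|^2$ exactly, $\int_{\overline\Omega}V|w|^2\,d\mu = \sum_k \int_{E_k}V|w|^2\,d\mu \le \sum_k\int_{\overline{\Omega_k}}V|w|^2\,d\mu$, giving $\mathcal{E}_{V\mu,\Omega}[w] \ge \sum_k\mathcal{E}_{V\mu,\Omega_k}[w|_{\Omega_k}]$, and the subspace argument above then closes the proof.
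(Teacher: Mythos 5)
Your proof is correct and follows the same Dirichlet-bracketing approach as the paper: both rest on the form inequality $\mathcal{E}_{V\mu,\Omega}[w] \ge \sum_k \mathcal{E}_{V\mu,\Omega_k}[w|_{\Omega_k}]$ (your careful discussion of possible overcounting of the seam measure, resolved by $V\ge 0$, is exactly the right justification), after which the paper embeds $\mathrm{Dom}(\mathcal{E}_{V\mu,\Omega})$ into $\bigoplus_k \mathrm{Dom}(\mathcal{E}_{V\mu,\Omega_k})$ via $w\mapsto(w|_{\Omega_k})_k$ and cites the additivity $N_-\bigl(\bigoplus_k q_k\bigr)=\sum_k N_-(q_k)$, whereas you re-derive that additivity through the explicit codimension bookkeeping with the subspaces $\mathcal{M}_k$. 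One small slip to fix: the linear-algebra lemma you state reads ``$q_0 \le \sum_k q_k$'', but the hypothesis your argument actually uses --- and the one that holds here --- is $q_0 \ge \sum_k q_k$; the subsequent chain $\mathcal{E}_{V\mu,\Omega}[u]\ge\sum_k\mathcal{E}_{V\mu,\Omega_k}[u|_{\Omega_k}]\ge 0$ on $\mathcal{M}=\bigcap_k\mathcal{M}_k$ is correct.
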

\begin{proof}
 Let
$$
\Sigma := \oplus\{\textrm{Dom}(\mathcal{E}_{V\mu, \Omega_k}),\,k = 1, 2, ...\}.
$$
Here $\oplus$ denotes a direct sum. We consider $\sum_k\mathcal{E}_{V\mu, \Omega_k}$ as a form defined on $\Sigma$.
Let $\mathcal{J} : \textrm{Dom}(\mathcal{E}_{V\mu, \Omega}) \longrightarrow \Sigma$ be the embedding defined by
$$
w \longmapsto (w|_{\Omega_1}, w|_{\Omega_2}, ...).
$$  Let $\Gamma := \mathcal{J}(\textrm{Dom}(\mathcal{E}_{V\mu, \Omega}))$. Then $\forall w\in \textrm{Dom}(\mathcal{E}_{V\mu, \Omega})$, we have
\begin{eqnarray*}
\mathcal{E}_{V\mu, \Omega}[w]&=& \int_{\Omega}|\nabla w(x)|^2dx - \int_{\overline{\Omega}}V(x)|w(x)|^2d\mu(x)\\ &\geq& \sum_k\left(\int_{\Omega_k}|\nabla w(x)|^2dx - \int_{\overline{\Omega}_k}V(x)|w(x)|^2d\mu(x)\right) \\ &=&\sum_k \mathcal{E}_{V\mu, \Omega_k}[w|_{\Omega_k}] = \left(\sum_k \mathcal{E}_{V\mu, \Omega_k}\right)[\mathcal{J}w].
\end{eqnarray*}
Hence
$$
N_-(\mathcal{E}_{V\mu, \Omega})\le N_-\left(\left(\sum_k\mathcal{E}_{V\mu, \Omega_k}\right)\big|_{\Gamma}\right) \le  N_-\left(\sum_k\mathcal{E}_{V\mu, \Omega_k}\right) = \underset{k}\sum N_-(\mathcal{E}_{V\mu,\Omega_k}).
$$
\end{proof}
\section{The variational approach to the problem}\label{approach}
  Let $\Omega \subseteq \mathbb{R}^2$ be an unbounded domain. Split $\Omega$ into bounded domains $\Omega_n$ such that $\overline{\Omega} = \underset{n\in\mathbb{Z}}\cup \overline{\Omega_n}$. Then by \eqref{parti}
\begin{equation}\label{variat1}
N_-\left(\mathcal{E}_{V\mu,\Omega}\right) \le \sum_{n\in\mathbb{Z}}N_-\left(\mathcal{E}_{V\mu, \Omega_n}\right).
\end{equation}
If we have  Neumann boundary conditions for $\Omega_n$ and $w$ is a constant function, say $w = 1$, $\mathcal{E}_{V\mu,\Omega_n}[1] < 0$ implying that $N_-\left(\mathcal{E}_{V\mu,\Omega_n}\right) \ge 1$ and thus the right-hand side of \eqref{variat1} diverges. Therefore, we need to get rid of constant functions. We do so by working on a space of functions whose mean value over  $\Omega_k$ is equal to zero, i.e., functions that are orthogonal to $1$ over $\Omega_k$. This makes the use of the Poincar\'e inequality (see Appendix \ref{poincare}) possible. For other boundary conditions we use different orthogonality conditions to get a variant of the Poincar\'e inequality (see \S\ref{Robin}).\\\\
Now, let $\Omega = \mathbb{R}^2$. Then the problem is split into two problems as follows: Let $(r, \theta)$ denote the polar coordinates in $\mathbb{R}^2$, $r\in\mathbb{R}_+,\;\theta\in[-\pi, \pi]$ and
\begin{equation}\label{radial}
w_{\mathcal{R}}(r) := \frac{1}{2\pi}\int_{-\pi}^{\pi}w(r, \theta)d\theta,\;\;\;w_{\mathcal{N}}(r, \theta) := w(r,\theta) - w_{\mathcal{R}}(r),\;\; w\in C(\mathbb{R}^2\setminus\{0\}).
\end{equation}
Then
\begin{equation}\label{fN0}
\int_{-\pi}^\pi w_{\mathcal{N}}(r, \theta)\, d\theta = 0 , \ \ \ \forall r > 0 ,
\end{equation}
and it is easy to see that
$$
\int_{\mathbb{R}^2} w_{\mathcal{R}} v_{\mathcal{N}}\, dy = 0 , \ \ \ \forall
w, v \in C\left(\mathbb{R}^2\setminus\{0\}\right)\cap L^2\left(\mathbb{R}^2\right) .
$$
Hence
$w \mapsto Pw := w_{\mathcal{R}}$
extends to an orthogonal projection $P : L^2\left(\mathbb{R}^2\right) \to
L^2\left(\mathbb{R}^2\right)$.

Using the representation of the gradient in polar coordinates one gets
\begin{eqnarray*}
&& \int_{\mathbb{R}^2} \nabla w_{\mathcal{R}} \nabla v_{\mathcal{N}}\, dy =
\int_{\mathbb{R}^2} \left(\frac{\partial w_{\mathcal{R}}}{\partial r}
\frac{\partial v_{\mathcal{N}}}{\partial r} + \frac1{r^2}
\frac{\partial w_{\mathcal{R}}}{\partial \theta}
\frac{\partial v_{\mathcal{N}}}{\partial \theta}\right)\, dy \\
&& = \int_{\mathbb{R}^2} \frac{\partial w_{\mathcal{R}}}{\partial r}
\frac{\partial v_{\mathcal{N}}}{\partial r}\, dy =
\int_{\mathbb{R}^2} \left(\frac{\partial w}{\partial r}\right)_{\mathcal{R}}
\left(\frac{\partial v}{\partial r}\right)_{\mathcal{N}}\, dy = 0 , \ \ \
\forall w, v \in C^\infty_0\left(\mathbb{R}^2\right) .
\end{eqnarray*}
Hence $P : W^1_2\left(\mathbb{R}^2\right) \to W^1_2\left(\mathbb{R}^2\right)$ is also
an orthogonal projection.

Since
\begin{eqnarray*}
&& \int_{\mathbb{R}^2} |\nabla w|^2\, dx = \int_{\mathbb{R}^2} |\nabla w_{\mathcal{R}}|^2\, dx +
\int_{\mathbb{R}^2} |\nabla w_{\mathcal{N}}|^2\, dx , \\
&& \int_{\mathbb{R}^2} V |w|^2\, d\mu(x) \le 2 \int_{\mathbb{R}^2} V|w_{\mathcal{R}}|^2\, d\mu(x) +
2\int_{\mathbb{R}^2} V |w_{\mathcal{N}}|^2\, d\mu(x) ,
\end{eqnarray*} we have
\begin{equation}\label{meaeqn1}
N_-(\mathcal{E}_{V\mu, \mathbb{R}^2}) \le N_-(\mathcal{E}_{\mathcal{R},2V\mu}) + N_-(\mathcal{E}_{\mathcal{N},2V\mu})
\end{equation} where $\mathcal{E}_{\mathcal{R},2V\mu}$ and $\mathcal{E}_{\mathcal{N}, 2V\mu}$ are the restrictions of the form $\mathcal{E}_{2V\mu, \mathbb{R}^2}$ to  $PW^1_2(\mathbb{R}^2)$ and $(I -P)W^1_2(\mathbb{R}^2)$ respectively. Therefore to estimate $N_-\left(\mathcal{E}_{V\mu,\mathbb{R}^2}\right)$, it is sufficient to estimate $N_-\left(\mathcal{E}_{\mathcal{R},2V\mu}\right)$ and $N_-\left(\mathcal{E}_{\mathcal{N}, 2V\mu}\right)$. The estimates for $N_-\left(\mathcal{E}_{\mathcal{N}, 2V\mu}\right)$ are different in nature from those for $N_-\left(\mathcal{E}_{\mathcal{R}, 2V\mu}\right)$ and require different techniques.\\\\
On the space $PW^1_2(\mathbb{R}^2)$, a simple exponential change of variables reduces the problem to a well studied one-dimensional Schr\"odinger operator which provides an estimate for $N_-\left(\mathcal{E}_{\mathcal{R},2V\mu}\right)$ in terms of weighted $L^1$ norms of $V$ that is optimal (see, e.g., \eqref{meaeqn3}, \eqref{meaeqn5} and Theorem \ref{measthm3}). \\On the space $(I - P)W^1_2(\mathbb{R}^2)$, one gets an estimate for $N_-\left(\mathcal{E}_{\mathcal{N},2V\mu}\right)$ in terms of $L^p \,,p>1$ or Orlicz norms of $V$ (see \eqref{meaeqn6*}) instead of $L^1$ norm since  $W^1_2(\mathbb{R}^2)$ is not embedded in $ L^{\infty}(\mathbb{R}^2)$.  Let
\begin{eqnarray*}
\mathcal{E}_{\mathcal{N},2V\mu}[w] &:=& \int_{\mathbb{R}^2}|\nabla w(x)|^2\,dx - 2\int_{\mathbb{R}^2}V(x)|w(x)|^2\,d\mu(x),\\
\textrm{Dom}\;\left(\mathcal{E}_{\mathcal{N},2V\mu}\right) &=& \left\{w\in (I -P)W^1_2(\mathbb{R}^2)\cap L^2\left(\mathbb{R}^2, Vd\mu)\right)\right\}.
\end{eqnarray*} Split $\mathbb{R}^2$ into the following annuli
$$
\Omega_n := \left\{x\in\mathbb{R}^2\;:\; 2^{n -1} < |x| < 2^n\right\}, \;\;n\in\mathbb{Z}.
$$
Then
$$
\int_{\Omega_n} w(x)\,dx = 0, \;\;\;\forall w\in (I - P)W^1_2(\Omega_n)\;\;\;\;(\textrm{cf}.\, \eqref{fN0}).
$$
 The variational principle (see \eqref{parti}) implies
\begin{equation}\label{variat}
N_-\left(\mathcal{E}_{\mathcal{N},2V\mu}\right) \le \sum_{n\in\mathbb{Z}}N_-\left(\mathcal{E}_{\mathcal{N},2V\mu,\Omega_n}\right),
\end{equation}where
\begin{eqnarray*}
\mathcal{E}_{\mathcal{N}, 2V\mu,\Omega_n}[w] &:=& \int_{\Omega_n}|\nabla w(x)|^2\,dx - 2\int_{\overline{\Omega}_n}V(x)|w(x)|^2\,d\mu(x),\\
\textrm{Dom}\;\left(\mathcal{E}_{\mathcal{N},2V\mu, \Omega_n}\right) &=& \left\{w\in (I - P)W^1_2(\Omega_n)\cap L^2\left(\overline{\Omega_n}, Vd\mu\right)\right\}.
\end{eqnarray*}
Depending on the structure of $\mu$, any estimate for $N_-\left(\mathcal{E}_{\mathcal{N}, 2V\mu,\Omega_0}\right)$ that has the right scaling leads to an estimate for $N_-\left(\mathcal{E}_{\mathcal{N}, 2V\mu,\Omega_n}\right)$  for all $n$. So it is sufficient to find an estimate for $N_-\left(\mathcal{E}_{\mathcal{N}, 2V\mu,\Omega_0}\right)$. \\\\

Let  $\Omega = S := \mathbb{R}\times (0, a),\,a > 0$. Then depending on the boundary conditions at $0$ and $a$, the form $\mathcal{E}_{V\mu, S}$ will involve  boundary terms (see \eqref{S}). Split $S$ into the following rectangles $$S_n := (n , n+1)\times(0, a),\; n\in\mathbb{Z}.$$ Then it is easy to see that the variational principle \eqref{parti} remains true, i.e.,
\begin{equation}\label{varstrip}
N_-\left(\mathcal{E}_{V\mu, S}\right) \le \sum_{n\in\mathbb{Z}}N_-\left(\mathcal{E}_{V\mu, S_n}\right).
\end{equation}
Here, $\mathcal{E}_{V\mu, S_n}$ is the form that is obtained from $\mathcal{E}_{V\mu, S}$ by restricting all the integrals to the intersections of the corresponding sets with $\overline{S_n}$ (see \eqref{S_n}).
Decomposing $W^1_2(S)$ depends on the boundary conditions at $0$ and $a$.  Let $$\mathcal{H}_1 := \left\{u\in W^1_2(S) : u(x) = w(x_1)u_1(x_2)\right\}, $$ where $u_1$ is the first eigenfunction of $-\frac{d^2}{dx^2_2}$ on $[0, a]$. Then one can define an orthogonal projection $P : W^1_2(S) \longrightarrow \mathcal{H}_1$ (see \eqref{projection}).
Again, an estimate coming from $\mathcal{H}_1$  will contribute the weighted $L^1$ norms of $V$ to the estimate for $N_-\left(\mathcal{E}_{V\mu, S}\right)$ (see \eqref{radest1}). Let $\mathcal{H}_2 := (I -P)W^1_2(S)$. Then $\mathcal{H}_2$ consists of functions in $W^1_2(S)$ that are orthogonal to $u_1$ in the inner product of $L^2([0, a])$ (see $\S$\ref{Robin}). Note that if $S$ is subject to Neumann boundary conditions,  then the  mean value over $S_n $ of the functions in $\mathcal{H}_2$ is equal to zero since $u_1 = 1$ (see $\S$\ref{Neumann}).

\chapter{Review of known results and auxilliary results}\markboth{Chapter \ref{review}.
Introduction}{}\label{Introduction}

\section{Review of known results}\label{Literature}
In this section, we present some known results  starting with the work of M. Solomyak (1994) that has had considerable influence in this line of research.\\\\ Let
\begin{eqnarray*}
\Omega_0 &=& \{x\;:\;|x| \le 1\},\;\; \Omega_n = \{x\;: \; 2^{n-1} < |x| < 2^n \}, \;\;\;n\in\mathbb{N};\\
U_0 &=& \{x\;:\;|x| \le e\},\;\; U_n = \{x\;: \; e^{2^{n-1}} < |x| < e^{2^n} \}, \;\;\;n\in\mathbb{N}.
\end{eqnarray*}Note that the radius  of the unit disk $\Omega_0$ and the radii of the annuli $\Omega_n$ form a geometric series. Similarly the logarithms of the inner radii of the annuli $U_n$ form a geometric series. For a given potential $V$, let
\begin{eqnarray}
\zeta_n (V) & =& \int_{U_n}V(x)|\ln|x||\,dx, \\
\eta_n(V) &=& \|V\|^{(\textrm{av})}_{\mathcal{B}, \Omega_n}.
\end{eqnarray}
Recall that a sequence $\{a_n\}$ belongs to the ``weak $l_1$-space'' (Lorentz space) $l_{1,w}$ if the following quasinorm
\begin{equation}\label{quasi}
\|\{a_n\}\|_{1,w} = \underset{s > 0}\sup\left(s \;\textrm{card}\{n\;:\;|a_n| > s\}\right)
\end{equation} is finite. It is a quasinorm in the sense that it satisfies the weak version of the triangle inequality:
$$
\|\{a_n\} + \{b_n\}\|_{1,w} \le 2\left(\|\{a_n\}\|_{1,w} + \|\{b_n\}\|_{1,w}\right).
$$
Indeed, if $|a_n| \le \frac{s}{2}$ and $|b_n| \le \frac{s}{2}$, then $|a_n + b_n| \le s$. This implies
$$
\left\{n : \,|a_n + b_n| > s\right\} \subseteq \left\{n : \, |a_n| > \frac{s}{2}\right\} \cup \left\{n : \, |b_n| > \frac{s}{2}\right\}.
$$
So
$$
\textrm{card}\left\{n : \,|a_n + b_n| > s\right\} \le \textrm{card}\left\{n : \, |a_n| > \frac{s}{2}\right\} +\, \textrm{card} \left\{n : \, |b_n| > \frac{s}{2}\right\}.
$$ Hence
$$
\|\{a_n\} + \{b_n\}\|_{1,w} \le 2\left(\|\{a_n\}\|_{1,w} + \|\{b_n\}\|_{1,w}\right).
$$

The quasinorm \eqref{quasi} induces a topology on $l_{1,w}$ in which this space is non-separable. The closure of the set of elements $a_n$ with only finite number of non-zero terms is a separable subspace in $l_{1,w}$.
It is well known that $l_1 \subset l_{1,w}$ and
$$
\|\{a_n\}\|_{1,w} \le \|\{a_n\}\|_1
$$ (see, e.g., \cite{BirSol} for more details).
\begin{theorem}\label{MS}{\rm \cite[Theorem 3]{Sol}}\\
Let $V\in L_{\mathcal{B}, \textrm{loc}}(\mathbb{R}^2)$. If $\zeta_n(V)\in l_{1,w}$ and $\eta_n(V)\in l_{1}$, then the following semi-classical estimate holds
\begin{equation}\label{ms}
N_-\left(\mathcal{E}_{V,\mathbb{R}^2}\right) \le 1 + C\left(\|\{\zeta_n(V)\}\|_{1,w} + \|\{\eta_n(V)\}\|_1\right).
\end{equation}where $C > 0$ is a constant.
\end{theorem}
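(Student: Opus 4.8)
The plan is to carry out the splitting of the form described in Section~\ref{approach}: decompose $N_-(\mathcal{E}_{V,\mathbb{R}^2})$ into the contribution of the ``radial'' part and that of the ``non-radial'' part and estimate the two by entirely different techniques. With $P$ the orthogonal projection of $W^1_2(\mathbb{R}^2)$ onto radial functions, \eqref{meaeqn1} (taking $\mu$ to be Lebesgue measure) gives
\[
N_-(\mathcal{E}_{V,\mathbb{R}^2}) \le N_-(\mathcal{E}_{\mathcal{R},2V}) + N_-(\mathcal{E}_{\mathcal{N},2V}),
\]
so it suffices to prove $N_-(\mathcal{E}_{\mathcal{R},2V}) \le 1 + C\|\{\zeta_n(V)\}\|_{1,w}$ and $N_-(\mathcal{E}_{\mathcal{N},2V}) \le C\|\{\eta_n(V)\}\|_{1}$; adding these, with the non-radial term carrying \emph{no} additive constant, produces exactly \eqref{ms}.

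\textbf{Radial part.} On $PW^1_2(\mathbb{R}^2)$ a function depends only on $r=|x|$, and the substitution $r=e^{t}$, $t\in\mathbb{R}$, transforms $\int_{\mathbb{R}^2}|\nabla w|^2\,dx$ into a constant multiple of $\int_{\mathbb{R}}|\psi'(t)|^2\,dt$ and $\int_{\mathbb{R}^2}V|w|^2\,dx$ into the same multiple of $\int_{\mathbb{R}}\bar V(e^{t})e^{2t}|\psi(t)|^2\,dt$, where $\bar V(r)$ is the angular mean of $V$. Hence $N_-(\mathcal{E}_{\mathcal{R},2V})$ equals the number of negative eigenvalues of the one-dimensional operator $-\tfrac{d^{2}}{dt^{2}}-2\bar V(e^{t})e^{2t}$ on $L^{2}(\mathbb{R})$. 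Under this change of variable the annuli $U_n$ become intervals on which $|\ln|x||\sim 2^{n}$, and $\zeta_n(V)=\int_{U_n}V(x)|\ln|x||\,dx$ is exactly the natural weighted integral of the one-dimensional potential over the $n$-th interval. I would then invoke the sharp one-dimensional estimate of Bargmann--Calogero--Cohn type in its Naimark--Solomyak refinement, in which a dyadic (log-scale) partition of the weighted integral is measured in the quasinorm of $l_{1,w}$ rather than $l_1$; this gives $N_-(\mathcal{E}_{\mathcal{R},2V})\le 1+C\|\{\zeta_n(V)\}\|_{1,w}$, the additive $1$ being the unavoidable one-dimensional bound state.

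\textbf{Non-radial part.} Split $\mathbb{R}^2$ into the disk $\Omega_0=\{|x|\le 1\}$ and the annuli $\Omega_n=\{2^{n-1}<|x|<2^{n}\}$, $n\ge 1$. Every $w\in(I-P)W^1_2$ has zero mean over each $\Omega_n$, and the variational principle \eqref{variat} (a case of Lemma~\ref{part*}) gives $N_-(\mathcal{E}_{\mathcal{N},2V})\le\sum_{n\ge 0}N_-(\mathcal{E}_{\mathcal{N},2V,\Omega_n})$. The heart of the matter is the \emph{local} estimate $N_-(\mathcal{E}_{\mathcal{N},2V,\Omega_n}) \le C\,\|V\|^{\rm(av)}_{\mathcal{B},\Omega_n}=C\,\eta_n(V)$ with $C$ independent of $n$. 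By the dilation invariance of the Dirichlet integral and of the zero-mean constraint, together with the scaling invariance \eqref{scaling} of $\|\cdot\|^{\rm(av)}_{\mathcal{B}}$, it is enough to prove this for the single fixed annulus $\Omega_1$ (and for $\Omega_0$). There one combines the Poincar\'e inequality $\|w\|^2_{L^2(\Omega_1)}\le C_P\|\nabla w\|^2_{L^2(\Omega_1)}$ valid for zero-mean $w$; the Moser--Trudinger--Yudovich embedding $W^1_2(\Omega_1)\hookrightarrow L_{\mathcal{A}}(\Omega_1)$, i.e. $\||w|^2\|_{(\mathcal{A}),\Omega_1}\le C\|w\|^2_{W^1_2(\Omega_1)}$; and the strengthened Orlicz H\"older inequality \eqref{h1}, which gives $\int_{\Omega_1}V|w|^2\le\|V\|_{\mathcal{B},\Omega_1}\||w|^2\|_{(\mathcal{A}),\Omega_1}$. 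For $\eta_1(V)$ below a fixed threshold these inequalities already force $\mathcal{E}_{\mathcal{N},2V,\Omega_1}\ge 0$, i.e. $N_-=0$; for larger $\eta_1(V)$ one upgrades the norm bound to a counting bound via the Birman--Schwinger principle, the Birman--Schwinger operator acting on the zero-mean subspace (on which the Neumann Laplacian of $\Omega_1$ is invertible) and its number of eigenvalues exceeding $1$ being controlled, after a further subdivision of $\Omega_1$ and the subadditivity of $\|\cdot\|^{\rm(av)}_{\mathcal B}$ of Lemma~\ref{lemma9}, by $C\eta_1(V)$. Summing over $n$ gives $N_-(\mathcal{E}_{\mathcal{N},2V})\le C\sum_{n\ge 0}\eta_n(V)=C\|\{\eta_n(V)\}\|_1$.

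\textbf{Main difficulty.} The decisive point is the local estimate with the \emph{semiclassical} dependence $N_-\le C\eta_n$ and, crucially, with no additive constant per annulus --- a single ``$+1$'' on each annulus would be fatal, since infinitely many annuli occur. It is precisely the removal of the constant (radial) component that makes $N_-=0$ possible on a light annulus and hence makes the constant-free local bound plausible; pushing it through for large $\eta_n$ is where the sharp Orlicz embedding for the complementary pair $(\mathcal A,\mathcal B)$ and a careful Birman--Schwinger/covering argument must be combined. The other point requiring sharp input is obtaining the $l_{1,w}$ (rather than $l_1$) quasinorm in the radial step, which rests on the fine structure of one-dimensional spectral counting.
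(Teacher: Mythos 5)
Your proposal reproduces the radial/non-radial decomposition of \S\ref{approach} and is, in outline, exactly how the paper proves its own more general Theorem~\ref{mainthm}, and how Solomyak's original argument in \cite{Sol} proceeds; the paper itself only cites Theorem~\ref{MS} rather than re-proving it. The radial step (substitution $r=e^{t}$, reduction to a one-dimensional operator, then the dyadic/weighted $L^{1}$ estimate \eqref{Est1} followed by $\sum_{\mathcal A_n>c}\sqrt{\mathcal A_n}\le 2c^{-1/2}\|\{\mathcal A_n\}\|_{1,w}$) and the non-radial step (annular decomposition, per-annulus bound proportional to $\eta_n(V)$ with the mean-zero constraint making $N_-=0$ possible on light annuli) are the right ingredients, and you correctly identify the two critical difficulties — the absence of a per-annulus additive constant and the $l_{1,w}$ (not $l_1$) quasinorm for the radial part.

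One point worth correcting is the invocation of the Birman--Schwinger principle to ``upgrade the norm bound to a counting bound.'' The mechanism actually used (in \cite{Sol}, and in the paper's Lemma~\ref{lemma4} / Lemma~\ref{measlemma4}) is a variational covering argument, not a Birman--Schwinger trace or Schatten-norm estimate: given $n$, one constructs a cover of the domain by $n_0\le n$ rectangles using the superadditivity of $\|\cdot\|^{(\mathrm{av})}_{\mathcal B}$ (that is Lemma~\ref{lemma7}, not Lemma~\ref{lemma9} as you cite), and shows the form is nonnegative on the codimension-$\le n$ subspace of functions with zero mean on each piece, whence $N_-\le n$. Choosing $n\sim C\eta_n(V)$ then gives the bound $N_-\le C\eta_n(V)+O(1)$, and the threshold observation (that $N_-=0$ when $\eta_n(V)$ is small) removes the $O(1)$. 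Since you already list ``further subdivision'' and subadditivity as the inputs, the Birman--Schwinger language is a superfluous wrapper rather than a genuine gap; the substance of the argument is correct.
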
 Here $L_{\mathcal{B}, \textrm{loc}}(\mathbb{R}^2)$ is the space of functions in $L_{\mathcal{B}} (\mathbb{R}^2)$ locally integrable on $\mathbb{R}^2$. The conditions of the above Theorem are only sufficient for the semi-classical behaviour of $N_-\left(\mathcal{E}_{\alpha V,\mathbb{R}^2}\right)$ unlike \eqref{CLR} on $\mathbb{R}^n$ with $n > 2$. The presence of the norms $\|\{\zeta_n(V)\}\|_{1,w}$ and $\|\{\eta_n(V)\}\|_1$ which are different in nature on  the right hand side of \eqref{ms} complicates its optimality in terms of the function spaces of $V$.\\\\
For $\sigma > 1$ and $V \geq 0$, let
\begin{eqnarray*}
\eta_0(V, \sigma) &:=& \left(\int_{\Omega_0}V(x)^{\sigma}\right)^{\frac{1}{\sigma}},\\
\eta_n(V, \sigma) &:=& \left(\int_{\Omega_n}|x|^{2(\sigma - 1)}V(x)^{\sigma}\right)^{\frac{1}{\sigma}}, \;\;\;n\in\mathbb{Z}.
\end{eqnarray*}The following result is due to M.Sh.Birman and A. Laptev \cite{BirLap}.
\begin{theorem}\label{BLAP}
{\rm Suppose $\{\eta_n(|V|, \sigma)\} \in l_1$. Then for $\alpha > 0$
\begin{equation}\label{blap}
N_-\left(\mathcal{E}_{\alpha V,\mathbb{R}^2}\right) \sim N_-\left(\mathcal{E}^{(1)}_{\alpha V,\mathbb{R}^2}\right) + N_-\left(\mathcal{E}^{(2)}_{\alpha V,\mathbb{R}^2}\right),
\end{equation}
}
\end{theorem}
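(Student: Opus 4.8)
The plan is to use the orthogonal decomposition of $W^1_2(\mathbb{R}^2)$ set up in \S\ref{approach}. Write $P$ for the angular-averaging projection $w\mapsto w_{\mathcal{R}}$; it is an orthogonal projection both in $L^2(\mathbb{R}^2)$ and in $W^1_2(\mathbb{R}^2)$, and the Dirichlet integral splits exactly along it,
$$
\int_{\mathbb{R}^2}|\nabla w|^2\,dx=\int_{\mathbb{R}^2}|\nabla w_{\mathcal{R}}|^2\,dx+\int_{\mathbb{R}^2}|\nabla w_{\mathcal{N}}|^2\,dx .
$$
I read $\mathcal{E}^{(1)}_{\alpha V}$ and $\mathcal{E}^{(2)}_{\alpha V}$ as the restrictions of $\mathcal{E}_{\alpha V,\mathbb{R}^2}$ to $PW^1_2(\mathbb{R}^2)$ and to $(I-P)W^1_2(\mathbb{R}^2)$ respectively; after the substitution $r=e^{t}$ the first becomes a one-dimensional Schr\"odinger form on $\mathbb{R}$ whose potential is built from the angular integral of $V$.

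\textbf{The lower bound} is the easy half. Since $\mathcal{E}^{(j)}_{\alpha V}$ is literally $\mathcal{E}_{\alpha V,\mathbb{R}^2}$ restricted to a subspace of its domain, a subspace on which $\mathcal{E}^{(j)}_{\alpha V}<0$ is one on which $\mathcal{E}_{\alpha V,\mathbb{R}^2}<0$; hence $N_-(\mathcal{E}_{\alpha V,\mathbb{R}^2})\ge N_-(\mathcal{E}^{(j)}_{\alpha V})$ for $j=1,2$, and therefore
$$
N_-(\mathcal{E}_{\alpha V,\mathbb{R}^2})\ \ge\ \tfrac12\bigl(N_-(\mathcal{E}^{(1)}_{\alpha V})+N_-(\mathcal{E}^{(2)}_{\alpha V})\bigr),
$$
with no hypothesis on $V$ needed.

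\textbf{The upper bound.} For arbitrary $w\in W^1_2(\mathbb{R}^2)$ one has $|w|^2\le 2|w_{\mathcal{R}}|^2+2|w_{\mathcal{N}}|^2$, so combining this with the exact splitting of the Dirichlet integral gives
$$
\mathcal{E}_{\alpha V,\mathbb{R}^2}[w]\ \ge\ \mathcal{E}^{(1)}_{2\alpha V}[w_{\mathcal{R}}]+\mathcal{E}^{(2)}_{2\alpha V}[w_{\mathcal{N}}].
$$
Running the index-splitting argument from the proof of Lemma~\ref{part*}, now for the two-block decomposition $W^1_2(\mathbb{R}^2)=PW^1_2(\mathbb{R}^2)\oplus(I-P)W^1_2(\mathbb{R}^2)$ rather than a spatial partition, then yields $N_-(\mathcal{E}_{\alpha V,\mathbb{R}^2})\le N_-(\mathcal{E}^{(1)}_{2\alpha V})+N_-(\mathcal{E}^{(2)}_{2\alpha V})$. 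It remains to trade the coupling $2\alpha$ for $\alpha$ up to a constant, and this is where $\{\eta_n(|V|,\sigma)\}\in l_1$ enters: it forces both reduced problems to be semi-classical, with bounds \emph{linear} in the coupling. For $\mathcal{E}^{(2)}$ one expects a Cwikel--Rozenblum-type estimate $N_-(\mathcal{E}^{(2)}_{\beta V})\le C\beta\sum_n\eta_n(|V|,\sigma)$, proved annulus by annulus on the sets $\Omega_n$ (the Poincar\'e inequality being available since functions in $(I-P)W^1_2(\Omega_n)$ have zero angular mean); for $\mathcal{E}^{(1)}$ the exponential change of variables reduces the count to that of a one-dimensional Schr\"odinger operator, controlled linearly in $\beta$ by a weighted $L^1$ norm of $V$. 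Linearity in $\beta$ makes the doubling of the coupling cost only a universal factor once one is past the additive ``$+1$'' floor, giving $N_-(\mathcal{E}_{\alpha V,\mathbb{R}^2})\le C\bigl(1+N_-(\mathcal{E}^{(1)}_{\alpha V})+N_-(\mathcal{E}^{(2)}_{\alpha V})\bigr)$, which together with the lower bound is the asserted equivalence.

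\textbf{Main obstacle.} The orthogonal splitting of the Dirichlet integral is exact, so the difficulty is not there; it is in the potential term, where $PW^1_2$ and $(I-P)W^1_2$ fail to be orthogonal in $L^2(V\,d\mu)$ unless $V$ is radial, which is what costs the factor $2$. Controlling this off-diagonal coupling --- equivalently, ruling out super-linear growth of the non-radial part --- is exactly what the $l_1$ hypothesis buys. Consequently the semi-classical estimates for $\mathcal{E}^{(1)}$ and $\mathcal{E}^{(2)}$ (via the one-dimensional reduction and the annulus-wise Cwikel estimate) have to be proved first and only then assembled into Theorem~\ref{BLAP}; I expect making the ``$2\alpha\to\alpha$'' step rigorous uniformly in $\alpha$ to be the main technical point.
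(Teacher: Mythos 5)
The paper does not prove Theorem~\ref{BLAP}: it is quoted as a cited result of Birman and Laptev~\cite{BirLap} in the ``Review of known results'', and the objects $\mathcal{E}^{(1)}_{\alpha V,\mathbb{R}^2}$, $\mathcal{E}^{(2)}_{\alpha V,\mathbb{R}^2}$ are never formally defined in the thesis beyond the remark that they are the contributions to the asymptotics coming from the radial and non-radial parts of $V$. So there is no in-paper proof to compare against, and your attempt has to stand on its own. There it has two genuine gaps. First, you read $\mathcal{E}^{(1)}$ and $\mathcal{E}^{(2)}$ as the restrictions of the full form to $PW^1_2(\mathbb{R}^2)$ and $(I-P)W^1_2(\mathbb{R}^2)$. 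The paper's wording points instead to a decomposition $V=V^{(1)}+V^{(2)}$ of the \emph{potential} into its angular average and zero-mean part; these are not the same, since the restriction of $\mathcal{E}_{\alpha V}$ to $(I-P)W^1_2(\mathbb{R}^2)$ still sees the radial part $V^{(1)}$ through $\int V^{(1)}|w_{\mathcal{N}}|^2\,dx$, which need not vanish.

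More importantly, what you prove is strictly weaker than what \eqref{blap} asserts. Your lower bound $N_-\ge\max\bigl(N_-^{(1)},N_-^{(2)}\bigr)\ge\tfrac12\bigl(N_-^{(1)}+N_-^{(2)}\bigr)$ and your upper bound $N_-\le C\bigl(1+N_-^{(1)}+N_-^{(2)}\bigr)$ with some $C>1$ only give two-sided comparability with multiplicative constants and an additive floor. The ``$\sim$'' in Birman--Laptev is an asymptotic \emph{equality} as $\alpha\to+\infty$ --- that is what ``contributions to the asymptotics'' means, and it is exactly what lets them produce potentials for which \eqref{order} holds while \eqref{Weyl1} fails (the radial block carries asymptotics of a different nature than the Weyl term). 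To reach an honest $\sim$ one must show the off-diagonal coupling $\alpha\int V\,w_{\mathcal{R}}\overline{w_{\mathcal{N}}}\,dx$ contributes at lower order than either block as $\alpha\to\infty$; the pointwise estimate $|w|^2\le 2|w_{\mathcal{R}}|^2+2|w_{\mathcal{N}}|^2$, which is the engine of your ``$2\alpha$'' upper bound, can never yield more than a constant and hence cannot produce an asymptotic formula. Your machinery is sound for capturing the \emph{order} of $N_-(\mathcal{E}_{\alpha V,\mathbb{R}^2})$, but the asserted asymptotic decomposition needs the separate, finer argument in~\cite{BirLap}.
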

 where  $N_-\left(\mathcal{E}^{(1)}_{\alpha V,\mathbb{R}^2}\right)$ and $N_-\left(\mathcal{E}^{(2)}_{\alpha V,\mathbb{R}^2}\right)$ are contributions to the asymptotics of $N_-\left(\mathcal{E}_{\alpha V,\mathbb{R}^2}\right)$ coming from the radial and non-radial components of $V$ respectively. Potentials were constructed such that \eqref{order} holds but \eqref{Weyl1} fails.\\\\
Consider the following operator on $L^2(\mathbb{R}^2)$
$$
H_{bV} = -\Delta + b|x|^{-2} - V, \;\;\;\;x\in\mathbb{R}^2, \;\;b\in\mathbb{R}.
$$ Let
$$
\mathcal{H} = \left\{u\;\;:\;\;\int_{\mathbb{R}^2}\left(|\nabla u(x)|^2 + |u(x)|^2|x|^{-2}\right)\,dx\; <\; \infty\right\}.
$$
The following result is due to A. Laptev \cite{AL}.
\begin{theorem}\label{AL}
{ \rm Let $b > 0$ and $V(x) = V(|x|) \geq 0,\;\; V\in L^1_{\textrm{loc}}(\mathbb{R}^2)$. Then
\begin{equation}\label{al}
N_-\left(\mathcal{E}_{b V,\mathbb{R}^2}\right) \le C(b)\int_{\mathbb{R}^2}V(x)\,dx\,,
\end{equation} where
\begin{eqnarray*}
\mathcal{E}_{b V,\mathbb{R}^2}[u] &=& \int_{\mathbb{R}^2}\left(|\nabla u(x)|^2 + b|u(x)|^2|x|^{-2}\right)\,dx - \int_{\mathbb{R}^2}V(x)|u(x)|^2\,dx,\\
\textrm{Dom}\;\left(\mathcal{E}_{b V,\mathbb{R}^2}\right) &=& \mathcal{H}\cap L^2\left(\mathbb{R}^2, V(x)dx\right).
\end{eqnarray*}
}
\end{theorem}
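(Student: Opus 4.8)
The plan is to exploit the rotational symmetry of $V$ by separating variables in the angular variable, to reduce each angular channel, via the exponential substitution $r=e^t$, to a one--dimensional Schr\"odinger form on the whole line, and then to control the sum over channels by the classical one--dimensional Lieb--Thirring inequality at exponent $1/2$.

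First I would pass to polar coordinates $(r,\theta)$ and expand $u(r,\theta)=\sum_{m\in\mathbb{Z}}u_m(r)\,(2\pi)^{-1/2}e^{im\theta}$. Because $V=V(r)$, \emph{all three} terms of the form diagonalise in $m$, so that
\[
\mathcal{E}_{bV,\mathbb{R}^2}[u]=\sum_{m\in\mathbb{Z}}\mathfrak{a}_m[u_m],\qquad
\mathfrak{a}_m[f]=\int_0^\infty\!\Bigl(|f'|^2+\frac{m^2+b}{r^2}\,|f|^2-V(r)|f|^2\Bigr)r\,dr ,
\]
and the domain $\mathcal{H}\cap L^2(\mathbb{R}^2,V\,dx)$ splits as the corresponding orthogonal sum; hence, by the definition \eqref{hermitian} of the Morse index, $N_-(\mathcal{E}_{bV,\mathbb{R}^2})=\sum_{m\in\mathbb{Z}}N_-(\mathfrak{a}_m)$. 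The change of variable $r=e^t$, $g(t)=f(e^t)$, carries $\mathfrak{a}_m$ onto $\mathfrak{b}_m[g]=\int_{\mathbb{R}}\bigl(|g'|^2+(m^2+b)|g|^2-W(t)|g|^2\bigr)\,dt$ on $W^1_2(\mathbb{R})$, with $W(t):=e^{2t}V(e^t)$ and $\int_{\mathbb{R}}W\,dt=\int_0^\infty V(r)r\,dr=\frac{1}{2\pi}\int_{\mathbb{R}^2}V\,dx$; since this is a linear bijection of form domains preserving the sign of the form, $N_-(\mathfrak{a}_m)=N_-(\mathfrak{b}_m)$, and the latter is precisely the number of eigenvalues of $h:=-\frac{d^2}{dt^2}-W$ on $L^2(\mathbb{R})$ lying strictly below $-(m^2+b)$.

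Next I would sum over $m$. Writing $\mu_1\le\mu_2\le\cdots<0$ for the negative eigenvalues of $h$ and interchanging the (nonnegative) summations,
\[
N_-(\mathcal{E}_{bV,\mathbb{R}^2})=\sum_{m\in\mathbb{Z}}\#\{j:\mu_j<-(m^2+b)\}=\sum_{j}\#\{m\in\mathbb{Z}:m^2<|\mu_j|-b\}\le\sum_{j:\,|\mu_j|>b}\bigl(1+2\sqrt{|\mu_j|}\bigr),
\]
using $\#\{m\in\mathbb{Z}:m^2<c\}\le 1+2\sqrt{c}$ for $c>0$. On the range $|\mu_j|>b$ one has $1\le b^{-1/2}|\mu_j|^{1/2}$, so the right--hand side is at most $(b^{-1/2}+2)\sum_{j}|\mu_j|^{1/2}$. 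The one--dimensional Lieb--Thirring inequality at exponent $\tfrac12$, namely $\sum_{j}|\mu_j|^{1/2}\le L\int_{\mathbb{R}}W\,dt$ (one may take $L=\tfrac12$), then gives
\[
N_-(\mathcal{E}_{bV,\mathbb{R}^2})\le L\,(b^{-1/2}+2)\int_{\mathbb{R}}W\,dt=\frac{L\,(b^{-1/2}+2)}{2\pi}\int_{\mathbb{R}^2}V(x)\,dx ,
\]
which is the assertion with $C(b)=\frac{L}{2\pi}\bigl(b^{-1/2}+2\bigr)$.

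The step I expect to be the main obstacle is this last one: the crude Birman--Schwinger trace estimate $\#\{j:|\mu_j|>\kappa^2\}\le\frac{1}{2\kappa}\int W\,dt$, integrated in $\kappa$, produces a logarithmically divergent integral and only a bound quadratic in $\int W$, so genuine linearity really needs the critical ($\gamma=\tfrac12$) one--dimensional Lieb--Thirring inequality — the single nontrivial external input. Two further points must be handled with care: the precise matching of form domains under $r=e^t$ (membership of $g$ in $W^1_2(\mathbb{R})$, together with its decay as $t\to\pm\infty$, corresponds exactly to finiteness of the Dirichlet-plus-Hardy norm of $f$ near $r=0$ and $r=\infty$ in each channel), and the observation that radiality of $V$ is precisely what makes the potential term diagonal in $m$, so the argument has no chance of working for non-radial $V$. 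The positivity of $b$ enters only at the end, both to restrict the $m$--sum to the finitely many channels with $|\mu_j|>b$ and to absorb the counting terms via $1\le b^{-1/2}|\mu_j|^{1/2}$; consistently, $C(b)\to\infty$ as $b\to0^+$, as it must, since the estimate fails for the plain two--dimensional Laplacian.
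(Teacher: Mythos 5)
The paper does not give a proof of Theorem~\ref{AL}; it is quoted directly from Laptev's 2000 article, so there is no in-text argument to compare your reconstruction against. That said, your proof is correct and is, to the best of my knowledge, essentially the argument one finds in the literature for this result: you exploit radiality of $V$ to diagonalise the form in angular momentum, use the substitution $r=e^t$ (which turns the Hardy term into a constant shift $m^2+b$), interchange the sum over channels $m$ with the sum over eigenvalues $\mu_j$ of the single whole-line operator $-\tfrac{d^2}{dt^2}-W$, and then invoke the critical one-dimensional Lieb--Thirring inequality at exponent $\gamma=\tfrac12$.

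Two remarks worth flagging. First, your identity $N_-(\mathcal{E}_{bV,\mathbb{R}^2})=\sum_m N_-(\mathfrak{a}_m)$ deserves a word of justification: it holds because the angular projections $P_m$ are simultaneously orthogonal for $L^2(\mathbb{R}^2)$, the Dirichlet form, the Hardy form and (thanks to radiality of $V$) the potential form, so $\mathcal{E}_{bV,\mathbb{R}^2}$ is a genuine orthogonal direct sum of the $\mathfrak{a}_m$, and the Morse index of an orthogonal direct sum of Hermitian forms is the sum of the Morse indices — the $\le$ direction uses a subspace of finite codimension on which each $\mathfrak{a}_m\ge 0$, which exists here since $W\in L^1(\mathbb{R})$ (otherwise \eqref{al} is vacuous). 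Second, you are right that the key nontrivial external input is the endpoint Lieb--Thirring bound $\sum_j|\mu_j|^{1/2}\le\tfrac12\int_{\mathbb{R}}W\,dt$; the naive Birman--Schwinger bound $N_-\bigl(-\tfrac{d^2}{dt^2}+\kappa^2-W\bigr)\le\tfrac1{2\kappa}\int W$ summed over $m$ indeed diverges like $\sum_m (m^2+b)^{-1/2}$, so the resummation over $j$ you perform is not cosmetic but genuinely necessary. The resulting constant $C(b)=\tfrac1{4\pi}(b^{-1/2}+2)$ correctly blows up as $b\to 0^+$, consistent with the failure of a CLR-type bound for the plain two-dimensional Laplacian.
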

Note that \eqref{al} is a direct analogue of \eqref{CLR}. \\\\
Using an approach similar to that of A. Laptev, K. Chadin, N. Khuri, A. Martin, T-T. Wu  \cite{Chad} in 2003 proved that if $V(x) = V(|x|)$, then
\begin{equation}\label{chad}
N_-\left(\mathcal{E}_{V,\mathbb{R}^2}\right) \le 1 + C\int_{\mathbb{R}^2}V(x)\left(1 + |\ln|x||\right)\,dx.
\end{equation}
Let
\begin{eqnarray}\label{ringsR}
&& U_n :=  \{x \in \mathbb{R}^2 : \ e^{2^{n - 1}} < |x| < e^{2^n}\} , \ n > 0 ,  \nonumber \\
&& U_0 := \{x \in \mathbb{R}^2 : \ e^{-1} < |x| < e\}, \\
&& U_n :=  \{x \in \mathbb{R}^2 : \ e^{-2^{|n|}} < |x| < e^{-2^{|n| - 1}}\} , \ n < 0 , \nonumber\\
&& \Omega_n := \{x \in \mathbb{R}^2 : \ e^n < |x| < e^{n + 1}\}, \ n \in \mathbb{Z}, \nonumber
\end{eqnarray}
and
\begin{eqnarray*}
\zeta_n := \{\zeta_n (V)\}& =& \int_{U_n}V(x)|\ln|x||\,dx,\;\;\;n\neq 0,\;\;\;\zeta_0(V) = \int_{U_0}V(x)\,dx ,\\
\eta_n := \{\eta_n(V)\} &=& \|V\|^{(\textrm{av})}_{\mathcal{B}, \Omega_n}\,.
\end{eqnarray*} Then there exist constants $C, \;c_1,\;c_2 > 0$ such that
\begin{equation}\label{Nad}
N_-\left(\mathcal{E}_{V,\mathbb{R}^2}\right) \le 1 +C\left(\underset{\{\zeta_n > c_1,\; n\in\mathbb{Z}\}}\sum \sqrt{\zeta_n} + \underset{\{\eta_n > c_2,\; n\in\mathbb{Z}\}}\sum \eta_n \right).
\end{equation}
This result is due to E. Shargorodsky \cite{Eugene} which is an improvement of the estimate by A. Grigor'yan and N. Nadirashivili \cite[Theorem 1.1]{Grig}. Estimates with the type of the first sum in the right hand side of \eqref{Nad} (without explicit constants) were first obtained by M. Birman and M. Solomyak for Schr\"odinger-type operators of order $2\ell$ in
$\mathbb{R}^n$ with $2\ell > n$ (see \cite[\S 6]{BS}). This estimate implies \eqref{ms} (see \cite[Remark 6.2]{Eugene}).\\\\
We denote the polar coordinates in $\mathbb{R}^2$ by $(r, \vartheta)$, \ $r \in \mathbb{R}_+$, \
$\vartheta \in \mathbb{S} := (-\pi, \pi]$.
Let  $I \subseteq \mathbb{R}_+$ be a nonempty open interval and let
$$
\Omega_I := \{x \in \mathbb{R}^2 : \ |x| \in I\} .
$$
We denote by $\mathcal{L}_1\left(I, L_{\mathcal{B}}(\mathbb{S})\right)$ the space of measurable
functions $f : \Omega_I \to \mathbb{C}$ such that
\begin{equation}\label{curlyLnorm}
\|f\|_{\mathcal{L}_1\left(I, L_{\mathcal{B}}(\mathbb{S})\right)} := \int_I
\|f(r, \cdot)\|_{\mathcal{B}, \mathbb{S}}\, r dr < +\infty .
\end{equation}
Let
\begin{equation}\label{AnBnR}
A_0 := \int_{U_0} V(x)\, dx , \ \
A_n := \int_{U_n} V(x) |\ln|x||\, dx , \ n \neq 0\,,
\end{equation}(see \eqref{ringsR})
and
\begin{equation}\label{InDn}
\mathcal{I}_n := (e^n, e^{n + 1}) , \ \ \ \mathcal{D}_n :=
\|V\|_{\mathcal{L}_1\left(\mathcal{I}_n, L_{\mathcal{B}}(\mathbb{S})\right)} , \ \ \ n \in \mathbb{Z} .
\end{equation}
The following result is due to E. Shargorodsky \cite{Eugene}:
\begin{theorem}\label{LaptNetrSol}{\rm \cite[Theorm 7.1]{Eugene}} \\
There exist  constants $C > 0$ and $c > 0$ such that
\begin{equation}\label{LaptNetrSolEst}
N_-\left(\mathcal{E}_{V,\mathbb{R}^2}\right) \le 1 + 4 \sum_{A_n > 1/4}  \sqrt{A_n}
+ C \sum_{\mathcal{D}_n > c} \mathcal{D}_n\,  , \ \ \
\forall V\ge 0.
\end{equation}
\end{theorem}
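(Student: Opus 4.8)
The plan is to follow the variational scheme of \S\ref{approach}. By the radial/non-radial splitting \eqref{meaeqn1} (with $\mu$ the Lebesgue measure),
$$
N_-(\mathcal{E}_{V,\mathbb{R}^2}) \le N_-(\mathcal{E}_{\mathcal{R},2V}) + N_-(\mathcal{E}_{\mathcal{N},2V}),
$$
so it suffices to prove $N_-(\mathcal{E}_{\mathcal{R},2V}) \le 1 + 4\sum_{A_n>1/4}\sqrt{A_n}$ and $N_-(\mathcal{E}_{\mathcal{N},2V}) \le C\sum_{\mathcal{D}_n>c}\mathcal{D}_n$. The first (radial) term will be controlled via a one-dimensional Schr\"odinger operator, the second via a two-dimensional Sobolev--Orlicz estimate on a flat strip; these are the two genuinely different ingredients.

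For the radial part, functions in $PW^1_2(\mathbb{R}^2)$ depend only on $r=|x|$, and the exponential change of variable $r=e^t$ turns $\int_0^\infty|w'(r)|^2\,r\,dr$ into $\int_{\mathbb{R}}|\dot w(t)|^2\,dt$ and $\int_{\mathbb{R}^2}V|w|^2\,dx$ into $\int_{\mathbb{R}}e^{2t}\bar V(e^t)|w(t)|^2\,dt$, with $\bar V(r)=\int_{-\pi}^{\pi}V(r,\vartheta)\,d\vartheta$. Hence $N_-(\mathcal{E}_{\mathcal{R},2V})$ is the Morse index (see \eqref{hermitian}) of a one-dimensional Schr\"odinger form $\int_{\mathbb{R}}|\dot w|^2-\int_{\mathbb{R}}P|w|^2$ with $P(t)$ a constant multiple of $e^{2t}\bar V(e^t)$. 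I would then invoke the sharp one-dimensional Bargmann--Birman--Solomyak bound: adjoining a Dirichlet condition at $t=0$ (the centre of $\ln U_0$) lowers $N_-$ by at most one and splits $\mathbb{R}$ into two half-lines, on each of which the estimate attached to the geometrically growing partition $\{\ln U_n\}$ carries no further additive constant — intervals with small weighted mass contribute nothing, and the remaining ones contribute a term $\asymp\sqrt{|\ln U_n|\int_{\ln U_n}P}$. Since $|\ln|x||\asymp|\ln U_n|$ on $U_n$ we have $|\ln U_n|\int_{\ln U_n}P\asymp A_n$, and tracking the optimal one-dimensional constant yields exactly $N_-(\mathcal{E}_{\mathcal{R},2V})\le 1+4\sum_{A_n>1/4}\sqrt{A_n}$; the single ``$1$'' is the unavoidable whole-line eigenvalue.

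For the non-radial part, the variational principle \eqref{parti} applied to the annuli $\Omega_n=\{x:\ e^n<|x|<e^{n+1}\}$ gives $N_-(\mathcal{E}_{\mathcal{N},2V})\le\sum_{n}N_-(\mathcal{E}_{\mathcal{N},2V,\Omega_n})$, and every admissible $w$ on $\Omega_n$ satisfies $\int_{\mathbb{S}}w(r,\vartheta)\,d\vartheta=0$ for each $r$. The map $(r,\vartheta)\mapsto(t,\vartheta)$, $r=e^t$, is a conformal equivalence of $\Omega_n$ with the rectangle $R_n=(n,n+1)\times\mathbb{S}$ taking the Dirichlet integral to the flat one, so $N_-(\mathcal{E}_{\mathcal{N},2V,\Omega_n})$ equals the Morse index of $\int_{R_n}|\nabla w|^2-\int_{R_n}\widetilde V_n|w|^2$ on the zero-angular-mean subspace of $W^1_2(R_n)$, with $\widetilde V_n(t,\vartheta)=2e^{2t}V(e^t,\vartheta)$; translating $t\mapsto t-n$ reduces everything to the model rectangle $R=(0,1)\times\mathbb{S}$. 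What is needed there is the estimate
$$
N_-\left(\int_R|\nabla w|^2-\int_R W|w|^2\right)\le C\int_0^1\|W(t,\cdot)\|_{\mathcal{B},\mathbb{S}}\,dt
$$
on functions with vanishing angular mean. Its proof combines the Poincar\'e inequality on $\mathbb{S}$ (which gives the background form a positive spectral gap, so that no additive constant appears), the Orlicz H\"older inequality \eqref{Holder} for the complementary pair $(\mathcal{A},\mathcal{B})$ together with the Trudinger-type embedding of $W^1_2(R)$ into the exponential class $L_{\mathcal{A}}$, and the Birman--Solomyak device of peeling off a finite-rank part to absorb the boundedly many angular directions where $W$ is large; the refined mixed norm $\int_0^1\|W(t,\cdot)\|_{\mathcal{B},\mathbb{S}}\,dt$ (rather than a cruder global Orlicz norm over $R$) is precisely what that device produces, and the Orlicz lemmas~\ref{a} and~\ref{lemma7} serve to reassemble the pieces. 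Undoing the scaling (under which $\int_0^1\|W(t,\cdot)\|_{\mathcal{B},\mathbb{S}}\,dt$ is proportional to $\mathcal{D}_n$) gives $N_-(\mathcal{E}_{\mathcal{N},2V,\Omega_n})\le C\mathcal{D}_n$ for every $n$, and since $N_-$ is a non-negative integer it vanishes once $C\mathcal{D}_n<1$; hence the sum runs only over $\{\mathcal{D}_n>c\}$ with $c=1/C$, giving $N_-(\mathcal{E}_{\mathcal{N},2V})\le C\sum_{\mathcal{D}_n>c}\mathcal{D}_n$.

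The main obstacle is the model estimate on $R=(0,1)\times\mathbb{S}$ with the mixed $\mathcal{L}_1\!\left(\cdot,L_{\mathcal{B}}(\mathbb{S})\right)$-type norm on the right: a direct Orlicz--H\"older argument over $R$ produces only the coarser $\|W\|_{\mathcal{B},R}$ and does not yield the stated (stronger) bound, so one has to exploit the one-dimensionality of the cross section $\mathbb{S}$ and distribute the loss in the two-dimensional Sobolev embedding over a controlled number of ``heavy'' angular modes. A secondary technical point is pinning down the sharp constants $4$ and $1/4$ in the radial term, which comes from using the optimal constant in the one-dimensional Bargmann-type inequality rather than a cruder version of it.
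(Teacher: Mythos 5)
Your proposal is correct and follows essentially the same route as Shargorodsky's original proof, whose machinery the thesis itself lays out: the radial/non-radial splitting of \S\ref{approach}, the one-dimensional Hardy--Birman--Solomyak analysis of \S\ref{1-destimate} for the radial part (after the change of variable $r=e^t$, which converts the weight $|\ln|x||$ into the linear weight $|t|$ and the annuli $U_n$ into the dyadic intervals $I_n$), and the $L_1\bigl(L_{\mathcal{B}}\bigr)$ covering estimates of Lemmas~\ref{lemma3}--\ref{lemma5} applied on each annulus $\Omega_n$ after the log-polar map for the non-radial part. Your account of why no additive constant survives in the cylinder estimate --- $w_Q=0$ is automatic on the zero-angular-mean subspace, and once one has the multiplicative bound $N_-\le C\mathcal{D}_n$ the terms with $C\mathcal{D}_n<1$ drop out because $N_-$ is a non-negative integer --- is also the correct mechanism, so there is no genuine gap.
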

 This is the sharpest  semi-classical estimate known so far.  If $N_-(\alpha V, \mathbb{R}^2) = O\left(\alpha \right)$ as $\alpha \longrightarrow +\infty$, then it is necessary that $A_n \in l_{1,w}$ (\cite[Theorem 9.2]{Eugene}). However, $N_-(\alpha V, \mathbb{R}^2) = O\left(\alpha \right)$ as $\alpha \longrightarrow +\infty$ does not imply that the second sum in the right-hand side of \eqref{LaptNetrSolEst} is finite.  Even the finiteness of the right-hand side of \eqref{LaptNetrSolEst} doesn't necessarily imply $N_-\left(\mathcal{E}_{\alpha V,\mathbb{R}^2}\right) = O\left(\alpha\right) \;\;\textrm{as} \;\; \alpha \to +\infty$ (see \cite{Eugene}, section 9 for details). On the other hand, no estimate of the type
 $$
 N_-(\alpha V, \mathbb{R}^2) \le \textrm{const} + \int_{\mathbb{R}^2}V(x)W(x)\,dx + \textrm{const}\|V\|_{\Psi, \mathbb{R}^2}
 $$
 can hold  with an Orlciz norm $\|.\|_{\Psi, \mathbb{R}^2}$ weaker than $\|.\|_{\mathcal{B}, \mathbb{R}^2}$  provided the weight function $W$ is bounded in the neighbourhood of at least one point (see \cite[Theorem 9.4]{Eugene}).
  If $V(x) = V(|x|)$, the last term in \eqref{LaptNetrSolEst} can be dropped and one gets the result of A. Laptev and M. Solomyak \cite{LapSolo} for radial potentials through the estimate of the first sum in the right hand side of \eqref{LaptNetrSolEst} by the norm in $l_{1, w}$ (the next remark below). The result of K. Chadan, N. Khuri and A. Martin (see \eqref{chad}) is a direct consequence of the result by A. Laptev and M. Solomyak. This is due to the estimate of the $l_{1,w}$-quasinorm through the norm in $l_1$.
\begin{remark}
{\rm For any $c > 0$
$$
\sum_{A_n > c}  \sqrt{A_n} \le \frac{2}{\sqrt{c}}\|(A_n)_{n\in\mathbb{Z}}\|_{1,w}
$$
(see (49) in \cite{Eugene}).
$$
\sum_{\mathcal{D}_n > c} \mathcal{D}_n \le \sum_{n\in\mathbb{Z}} \mathcal{D}_n  = \int_{\mathbb{R}_+}\|V\|_{\mathbb{B}, \mathbb{S}}\,r\,dr.
$$
Hence \eqref{LaptNetrSolEst} implies the following estimate
\begin{equation}\label{equiv1}
N_-\left(\mathcal{E}_{V,\mathbb{R}^2}\right) \le 1 + \textrm{const}\left(\|(A_n)_{n\in\mathbb{Z}}\|_{1,w} + \int_{\mathbb{R}_+}\|V\|_{\mathbb{B}, \mathbb{S}}\,r\,dr\right),
\end{equation}
}
\end{remark}which in turn implies the result of A. Laptev and M. Solomyak \cite[Theorem 1.1]{LapSolo} since
$$
\int_{\mathbb{R}_+}\|V\|_{\mathbb{B}, \mathbb{S}}\,r\,dr \le C(p)\int_{\mathbb{R}_+}\|V\|_{L_p(\mathbb{S})}\,r\,dr\,,\;\;\;p > 1.
$$
Let $V_{\ast} : \mathbb{R}_+ \longrightarrow [0, +\infty)$ be a non-increasing spherical rearrangement of $V$, i.e. a non-increasing right continuous function such that
$$
\left|\{x\in\mathbb{R}^2\;:\;V_{\ast}(|x|) > s\}\right| = \left|\{x\in\mathbb{R}^2\;:\;V(x) > s\}\right|, \;\;\;\forall s > 0,
$$ where $|\cdot|$ denotes the two dimensional Lebesgue measure. The following estimate was conjectured in 2002 by Khuri-Martin-Wu \cite{KMW}.
\begin{equation}\label{khuri}
N_-\left(\mathcal{E}_{V,\mathbb{R}^2}\right) \le 1 + C\left(\int_{\mathbb{R}^2}V(x)\ln\left(2 + |x|\right)\,dx + \int_{|x| < 1}V_{\ast}(|x|)\ln \frac{1}{|x|}\,dx\right).
\end{equation}
Using \eqref{equiv1}, it is shown in \cite{Eugene} that \eqref{khuri} is a direct consequence of the result by M. Solomyak in Theorem \ref{MS} and that the latter is strictly sharper. This means that actually the Khuri-Martin-Wu conjecture was proved before it was stated.\\\\

Let
$$
S:= \{(x_1, x_2)\in\mathbb{R}^2 : x_1\in\mathbb{R},\, 0 < x_2 < a\}
$$ be a strip with Neumann boundary conditions.
For a given potential $V$ and for any $n\in\mathbb{Z}$, set
$$
a_n(V) = \int_{S_n}V(x)\left(1 + \mid x_1\mid\right)\,dx,
$$ where $S_n = I_n \times I, \;\;\;I = (0, a),$
\begin{eqnarray*}
I_n := [2^{n - 1}, 2^n], \ n > 0 , \ \ \ I_0 := [-1, 1] , \ \ \
I_n := [-2^{|n|}, -2^{|n| - 1}], \ n < 0 .
\end{eqnarray*}
For $p > 1$,  also set
$$
b_n(V) = \left(\int_{S\cap\{n < x_1 < n + 1\}} V^p\, dx\right)^{\frac{1}{p}}.
$$ \begin{eqnarray*}
\mathcal{E}_{V, S}[u] &:=& \int_{S}|\nabla u(x)|^2\,dx - \int_{S}V(x)|u(x)|^2\,dx,\\
\textrm{Dom}\left(\mathcal{E}_{V, S}\right) &=& W^1_2(S)\cap L^2\left(S, V(x)dx\right).
\end{eqnarray*}

\begin{theorem}{\rm \cite[Theorm 7.9]{Grig}}.
Let $V\in L^1_{\textrm{loc}}(S)$. For any $p > 1$ there exists positive constants $C$ and $c$ such that
\begin{equation}\label{Nad*}
N_-\left(\mathcal{E}_{V, S}\right) \leq 1 + C\sum_{\{n\in\mathbb{Z},\; a_n(V)> c\}}\sqrt{a_n(V)} + C\sum_{\{n\in\mathbb{Z},\; b_n(V) > c\}}b_n(V),\end{equation}
where the constants $C$ and $c$ depend only on $p$.
\end{theorem}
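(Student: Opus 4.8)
The plan is to follow the variational scheme of \S\ref{approach} specialised to the strip. I would first decompose $W^1_2(S)=\mathcal{H}_1\oplus\mathcal{H}_2$, where $\mathcal{H}_1$ is the range of the orthogonal projection $P$ onto functions of the form $w(x_1)u_1(x_2)$, with $u_1$ the first eigenfunction of $-\frac{d^2}{dx_2^2}$ on $[0,a]$; for Neumann conditions $u_1\equiv a^{-1/2}$ is constant, so $\mathcal{H}_2=(I-P)W^1_2(S)$ consists of the $u$ with $\int_0^a u(x_1,x_2)\,dx_2=0$ for a.e.\ $x_1$. Since the boundary conditions are Neumann, $\mathcal{E}_{V,S}$ carries no boundary terms (cf.\ Theorem \ref{spect}) and $P$ is an orthogonal projection on both $L^2(S)$ and $W^1_2(S)$; estimating $\int_S 2V|w|^2\le 2\int_S V|Pw|^2+2\int_S V|(I-P)w|^2$ and using the orthogonal splitting of the Dirichlet integral exactly as in \eqref{meaeqn1} yields
\begin{equation*}
N_-\!\left(\mathcal{E}_{V,S}\right)\le N_-\!\left(\mathcal{E}_{2V,S}\big|_{\mathcal{H}_1}\right)+N_-\!\left(\mathcal{E}_{2V,S}\big|_{\mathcal{H}_2}\right),
\end{equation*}
so it suffices to bound the two terms separately.

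On $\mathcal{H}_1$ an element is $u(x)=a^{-1/2}w(x_1)$ with $w\in W^1_2(\mathbb{R})$, and $\mathcal{E}_{2V,S}[u]=\int_{\mathbb{R}}|w'|^2\,dx_1-\int_{\mathbb{R}}\widetilde V|w|^2\,dx_1$ where $\widetilde V(x_1):=\frac{2}{a}\int_0^a V(x_1,x_2)\,dx_2$; hence $N_-\!\left(\mathcal{E}_{2V,S}\big|_{\mathcal{H}_1}\right)$ is the number of negative eigenvalues of the one-dimensional operator $-\frac{d^2}{dx_1^2}-\widetilde V$ on $L^2(\mathbb{R})$. I would then apply the sharp one-dimensional weighted estimate established in Chapter 2: with the intervals $I_n$ as in the statement, $N_-\!\left(-\frac{d^2}{dx_1^2}-\widetilde V\right)\le 1+C\sum_{\beta_n>c}\sqrt{\beta_n}$, where $\beta_n=\int_{I_n}(1+|x_1|)\widetilde V\,dx_1=\frac{2}{a}\,a_n(V)$. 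Since $\sqrt{\beta_n}$ and $\sqrt{a_n(V)}$ differ by a fixed factor and the threshold can be reabsorbed into the constants, this produces the $1+C\sum_{a_n(V)>c}\sqrt{a_n(V)}$ part of the estimate.

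On $\mathcal{H}_2$ I would partition $S$ into the unit cells $S_n'=(n,n+1)\times(0,a)$, each a translate of $Q:=(0,1)\times(0,a)$, and use the variational principle \eqref{parti} (valid here, cf.\ \eqref{varstrip}) to get $N_-\!\left(\mathcal{E}_{2V,S}\big|_{\mathcal{H}_2}\right)\le\sum_{n}N_-\!\left(\mathcal{E}^{\,0}_{2V,S_n'}\right)$, where the superscript $0$ denotes the restriction to functions with $\int_{S_n'}u\,dx=0$ (inherited from $\mathcal{H}_2$). On $Q$, for a zero-mean $u$ the Poincar\'e inequality, the Sobolev embedding $W^1_2(Q)\hookrightarrow L^{2p'}(Q)$ and the H\"older inequality give $\int_{S_n'}2V|u|^2\,dx\le C(a,p)\,b_n(V)\,\|\nabla u\|_{L^2(S_n')}^2$; hence $\mathcal{E}^{\,0}_{2V,S_n'}\ge 0$, so $N_-\!\left(\mathcal{E}^{\,0}_{2V,S_n'}\right)=0$, whenever $b_n(V)\le c:=C(a,p)^{-1}$. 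For the remaining indices one needs a uniform local bound $N_-\!\left(\mathcal{E}^{\,0}_{2V,S_n'}\right)\le 1+C\,b_n(V)$, a standard two-dimensional estimate on the fixed domain $Q$ for potentials in $L^p(Q)$, $p>1$ (equivalently, via \eqref{p}, in $L_{\mathcal{B}}(Q)$), with constants depending only on $a$ and $p$. Summing over the surviving indices and using $1\le b_n(V)/c$ there to absorb the constant term produces $C\sum_{b_n(V)>c}b_n(V)$.

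The hardest step is this uniform local estimate $N_-\!\left(\mathcal{E}^{\,0}_{2V,S_n'}\right)\le 1+C\,b_n(V)$, with the correct scaling in $b_n(V)$ and constants independent of $n$: it must either be quoted as a local CLR-type bound on bounded planar domains or reproved by subdividing $S_n'$ at a scale dictated by $b_n(V)$ and iterating the threshold argument together with a rank count. A secondary point to verify is that the weight $(1+|x_1|)$ and the dyadic intervals $I_n$ match the precise hypotheses under which the one-dimensional estimate is stated (so that the weighted $L^1$-data on the $I_n$ are equivalent to $a_n(V)$), and that $V\in L^1_{\mathrm{loc}}(S)$ suffices for all the forms, projections and restrictions to be well defined and closable — which is ensured by the framework of \S\ref{index}.
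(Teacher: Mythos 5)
Your decomposition $W^1_2(S)=\mathcal{H}_1\oplus\mathcal{H}_2$, the reduction of the $\mathcal{H}_1$ part to the one-dimensional weighted estimate \eqref{Est1}, the cellwise partition of $\mathcal{H}_2$, and the local CLR-type bound on each cell are exactly the ingredients the thesis uses to establish the sharper Orlicz estimate \eqref{Gest2} (Theorem \ref{thm1}), which then yields the stated $L^p$ version through the bound $\mathcal{D}_n \le 4C(p)\,b_n(V)$ in \eqref{imp}--\eqref{equivnad}. The local bound $N_-(\mathcal{E}^{0}_{2V,S_n'}) \le 1 + C\,b_n(V)$, which you correctly flag as the hard step and propose to obtain by subdividing at a scale dictated by $V$, is precisely what Lemma \ref{lemma5} (Solomyak's piecewise-polynomial approximation argument) provides after rescaling in Lemma \ref{lemma6}, so your approach matches the paper's.
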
This result is due to A. Grigor'yan and N. Nadirashvili and it is  the best known estimate for the two dimensional Schr\"odinger operator considered on a strip subject to the Neumann boundary conditions.

\section{Auxiliary results}\label{auxiliary}
Let $I_1, I_2 \subseteq \mathbb{R}$ be nonempty open intervals.
We denote by $L_1\left(I_1, L_{\mathcal{B}}(I_2)\right)$ the space of measurable functions
$f : I_1\times I_2 \to \mathbb{C}$ such that
\begin{equation}\label{L1LlogLnorm}
\|f\|_{L_1\left(I_1, L_{\mathcal{B}}(I_2)\right)} := \int_{I_1}
\|f(x, \cdot)\|^{\rm (av)}_{\mathcal{B}, I_2}\, dx < +\infty\,.
\end{equation}
\begin{lemma}\label{lemma1}
{\rm (  \cite[Lemma 7.3]{Eugene}, see also \cite[Lemma 1]{Sol} )} Consider an affine transformation
$$
\xi : \mathbb{R}^2 \to \mathbb{R}^2, \ \xi(z) := Az + z^0, \ \ \ A = \begin{pmatrix}
    R_1  &  0  \\
  0    &  R_2
\end{pmatrix} , \ R_1, R_2 > 0 , \ \ \ z^0 \in \mathbb{R}^2 .
$$
Let $I_1\times I_2 = \xi(J_1\times J_2)$. Then
\begin{eqnarray*}
\frac{1}{|I_1\times I_2|}\, \|f\|_{L_1\left(I_1, L_{\mathcal{B}}(I_2)\right)} =
\frac{1}{|J_1\times J_2|}\, \|f\circ\xi\|_{L_1\left(J_1, L_{\mathcal{B}}(J_2)\right)}, \\
\forall f \in L_1\left(I_1, L_{\mathcal{B}}(I_2)\right) .
\end{eqnarray*}
\end{lemma}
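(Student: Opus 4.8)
The plan is to reduce the claimed scaling identity to the one-variable scaling behaviour of the averaged Orlicz norm, which is already recorded in Lemma~\ref{lemma7}'s companion (the scaling identity \eqref{scaling}), and then to integrate. First I would unwind the definitions. Write $\xi(z) = Az + z^0$ with $A = \mathrm{diag}(R_1, R_2)$, so that the two coordinates decouple: $\xi(z_1, z_2) = (R_1 z_1 + z^0_1,\, R_2 z_2 + z^0_2)$. Thus $\xi$ maps $J_1\times J_2$ onto $I_1\times I_2$ with $I_1 = R_1 J_1 + z^0_1$ and $I_2 = R_2 J_2 + z^0_2$, and the Jacobian is the constant $R_1 R_2 = |I_1\times I_2|/|J_1\times J_2|$. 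For a fixed first coordinate, the map $x_2 \mapsto R_2 x_2 + z^0_2$ is an affine transformation of $\mathbb{R}$, so by \eqref{scaling} applied with $n = 1$, $\Omega = J_2$, $\Omega_\xi = I_2$,
$$
|J_2|^{-1}\,\bigl\|f(\xi_1(x_1), \xi_2(\cdot))\bigr\|^{\rm (av)}_{\mathcal{B}, J_2}
= |I_2|^{-1}\,\bigl\|f(\xi_1(x_1), \cdot)\bigr\|^{\rm (av)}_{\mathcal{B}, I_2},
$$
where $\xi_1(x_1) = R_1 x_1 + z^0_1$. Here I am using that the inner $L_{\mathcal{B}}$-norm only sees the second variable, so the first argument is merely a parameter.

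Next I would integrate this pointwise identity in $x_1$ over $J_1$. By the definition \eqref{L1LlogLnorm},
$$
\|f\circ\xi\|_{L_1(J_1, L_{\mathcal{B}}(J_2))}
= \int_{J_1} \bigl\|(f\circ\xi)(x_1, \cdot)\bigr\|^{\rm (av)}_{\mathcal{B}, J_2}\, dx_1
= |J_2|\, |I_2|^{-1} \int_{J_1} \bigl\|f(\xi_1(x_1), \cdot)\bigr\|^{\rm (av)}_{\mathcal{B}, I_2}\, dx_1,
$$
and then the substitution $t = \xi_1(x_1) = R_1 x_1 + z^0_1$, $dx_1 = R_1^{-1}\, dt = (|J_1|/|I_1|)\, dt$, turns the remaining integral into $(|J_1|/|I_1|)\int_{I_1} \|f(t, \cdot)\|^{\rm (av)}_{\mathcal{B}, I_2}\, dt = (|J_1|/|I_1|)\,\|f\|_{L_1(I_1, L_{\mathcal{B}}(I_2))}$. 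Collecting the factors gives $\|f\circ\xi\|_{L_1(J_1, L_{\mathcal{B}}(J_2))} = \frac{|J_1|\,|J_2|}{|I_1|\,|I_2|}\,\|f\|_{L_1(I_1, L_{\mathcal{B}}(I_2))}$, which is exactly the asserted identity after dividing through by $|J_1\times J_2| = |J_1|\,|J_2|$ and using $|I_1\times I_2| = |I_1|\,|I_2|$.

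I do not expect a serious obstacle here; the only point requiring a little care is the very first step, namely that \eqref{scaling} may legitimately be applied "slice-wise" with the first coordinate frozen. This is fine because the averaged norm $\|\cdot\|^{\rm (av)}_{\mathcal{B}, I_2}$ is computed purely from the restriction of the function to the slice $\{x_1\}\times I_2$ with the one-dimensional Lebesgue measure, and $x_2\mapsto R_2 x_2 + z^0_2$ is an affine transformation of $\mathbb{R}^1$ to which Lemma~\cite[Lemma 1]{Sol} (equation \eqref{scaling}) directly applies. One should also note measurability of $x_1\mapsto \|f(\xi_1(x_1),\cdot)\|^{\rm (av)}_{\mathcal{B},I_2}$, so that the integrals make sense, but this is inherited from the measurability of $f$ together with the fact that the averaged norm is a supremum over a fixed family of test functions (one can restrict to a countable dense subfamily). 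With these routine points dispatched, the computation above is the whole proof.
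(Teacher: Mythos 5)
Your argument is correct, and it is precisely the natural route: apply the one-dimensional scaling identity \eqref{scaling} slice-wise in the second variable (valid because $A$ is diagonal, so the slices decouple), then change variables $t=\xi_1(x_1)=R_1x_1+z^0_1$ in the outer integral and collect the Jacobian factors $R_1=|I_1|/|J_1|$ and $|I_2|/|J_2|$. The paper states this lemma as a cited result from \cite[Lemma 7.3]{Eugene} and \cite[Lemma 1]{Sol} without reproducing the proof, so there is no in-text proof to compare against; your derivation matches the standard one, and the remarks about the first variable being merely a parameter inside $\|\cdot\|^{\rm(av)}_{\mathcal{B},I_2}$ and about measurability of the integrand via a countable dense subfamily of test functions are exactly the points worth flagging.
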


\begin{lemma}\label{lemma2}{\rm (\cite[Lemma 7.4]{Eugene}, see also  \cite[Lemma 3]{Sol} )}
Let rectangles $I_{1, k}\times I_{2, k}$, $k = 1, \dots, n$ be pairwise disjoint subsets of
$I_1\times I_2$. Then
\begin{equation}\label{subaddest}
\sum_{k = 1}^n \|f\|_{L_1\left(I_{1, k}, L_{\mathcal{B}}(I_{2, k})\right)} \le
\|f\|_{L_1\left(I_1, L_{\mathcal{B}}(I_2)\right)} , \ \ \ \forall f \in L_1\left(I_1, L_{\mathcal{B}}(I_2)\right) .
\end{equation}
\end{lemma}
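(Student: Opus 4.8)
The plan is to reduce this two-dimensional statement to the one-dimensional sub-additivity of the averaged Orlicz norm established in Lemma \ref{lemma7}, by slicing in the first variable. The subtle point is that the projections of the rectangles $I_{1,k}\times I_{2,k}$ onto the $x_1$-axis need not be disjoint, so the variables cannot be separated directly; however, on each vertical slice the relevant fibres \emph{are} disjoint, and that is exactly what is needed.

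First I would fix $x\in I_1$ and set $K(x):=\{k:\ x\in I_{1,k}\}$, the indices of the rectangles met by the line $\{x\}\times I_2$. The key observation is that for $k,j\in K(x)$ with $k\neq j$ the intervals $I_{2,k}$ and $I_{2,j}$ are disjoint: a common point $y\in I_{2,k}\cap I_{2,j}$ would give $(x,y)\in (I_{1,k}\times I_{2,k})\cap(I_{1,j}\times I_{2,j})$, contradicting the hypothesis. Hence $\{I_{2,k}\}_{k\in K(x)}$ is a finite family of pairwise disjoint subsets of $I_2$, and applying Lemma \ref{lemma7} with $\Psi=\mathcal{B}$, $\Omega=I_2$ and $\mu$ the one-dimensional Lebesgue measure to the function $f(x,\cdot)$ yields $\sum_{k\in K(x)}\|f(x,\cdot)\|^{\rm (av)}_{\mathcal{B},I_{2,k}}\le\|f(x,\cdot)\|^{\rm (av)}_{\mathcal{B},I_2}$ for every $x\in I_1$.

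Next I would integrate this inequality over $x\in I_1$. Writing the left-hand side as $\sum_{k=1}^{n}\chi_{I_{1,k}}(x)\,\|f(x,\cdot)\|^{\rm (av)}_{\mathcal{B},I_{2,k}}$, whose terms are all non-negative, Tonelli's theorem lets me pull the finite sum outside the integral, turning it into $\sum_{k=1}^{n}\int_{I_{1,k}}\|f(x,\cdot)\|^{\rm (av)}_{\mathcal{B},I_{2,k}}\,dx=\sum_{k=1}^{n}\|f\|_{L_1(I_{1,k},L_{\mathcal{B}}(I_{2,k}))}$, while the right-hand side is $\|f\|_{L_1(I_1,L_{\mathcal{B}}(I_2))}$ by \eqref{L1LlogLnorm}; this is exactly \eqref{subaddest}. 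The only point requiring a little care — and it is mild — is measurability: one needs $x\mapsto\|f(x,\cdot)\|^{\rm (av)}_{\mathcal{B},I_{2,k}}$ to be measurable for the integrals to make sense, which is part of the definition of the spaces $L_1(\cdot,L_{\mathcal{B}}(\cdot))$ and is inherited by the restrictions of $f$ to the sub-rectangles. No scaling or $\Delta_2$-type assumption on $\mathcal{B}$ enters, so the same argument works verbatim for an arbitrary $N$-function, which is why the statement holds in the generality recorded in \cite{Sol}.
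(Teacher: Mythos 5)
Your proof is correct and takes the natural route that the cited sources also follow: fix $x\in I_1$, observe that the fibres $\{I_{2,k}: x\in I_{1,k}\}$ are pairwise disjoint subsets of $I_2$ (disjointness of the rectangles forces this), apply the one-dimensional superadditivity of Lemma \ref{lemma7} to $f(x,\cdot)$, and integrate over $x$ using Tonelli. The thesis states this lemma with a citation rather than an in-text proof, but your reduction to Lemma \ref{lemma7} is exactly the expected argument, and your remark that no $\Delta_2$ or scaling assumption is used is accurate.
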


Let $Q := (0, 1)^2$ and $\mathbb{I} := (0, 1)$.
We will also use the following notation:
$$
w_A := \frac{1}{|A|} \int_A w(z)\, dz ,
$$
where $A \subseteq \mathbb{R}^2$ is a set of a finite two dimensional
Lebesgue measure $|A|$.

\begin{lemma}\label{lemma3}{\rm ( \cite[Lemma 7.5]{Eugene}, see also \cite[Lemma 2]{Sol})}
There exists a constant $C_{1} > 0$ such that for any nonempty open intervals
$I_1, I_2 \subseteq \mathbb{R}$ of lengths $R_1$ and $R_2$ respectively, any
$w \in W^1_2(I_1\times I_2)\cap C\left(\overline{I_1\times I_2}\right)$ with
$w_{I_1\times I_2} = 0$, and
any $V \in L_1\left(I_1, L_{\mathcal{B}}(I_2)\right)$, $V \ge 0$
the following inequality holds:
\begin{eqnarray}\label{eqnl3}
&& \int_{I_1\times I_2}  V(z) |w(z)|^2\, dz \nonumber \\
&& \le C_{1} \max\left\{\frac{R_1}{R_2}, \frac{R_2}{R_1}\right\}
\|V\|_{L_1\left(I_1, L_{\mathcal{B}}(I_2)\right)} \int_{I_1\times I_2} |\nabla w(z)|^2\, dz .
\end{eqnarray}
\end{lemma}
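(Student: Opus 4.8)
The plan is to reduce to the unit square by scaling, then slice in one variable and combine a one–dimensional trace theorem with the borderline (exponential–class) Sobolev embedding. Given $I_1,I_2$ of lengths $R_1,R_2$, let $\xi$ be the affine map of Lemma~\ref{lemma1} with $A=\mathrm{diag}(R_1,R_2)$ and $\xi(Q)=I_1\times I_2$, where $Q:=(0,1)^2$ and $\mathbb I:=(0,1)$, and set $\tilde w:=w\circ\xi$, $\tilde V:=V\circ\xi$. Then $\tilde w_Q=w_{I_1\times I_2}=0$, a change of variables gives $\int_{I_1\times I_2}V|w|^2=R_1R_2\int_Q\tilde V|\tilde w|^2$, Lemma~\ref{lemma1} gives $\|\tilde V\|_{L_1(\mathbb I,L_{\mathcal B}(\mathbb I))}=(R_1R_2)^{-1}\|V\|_{L_1(I_1,L_{\mathcal B}(I_2))}$, and since $\partial_{y_j}\tilde w=R_j(\partial_{z_j}w)\circ\xi$ one gets $\int_Q|\nabla\tilde w|^2\le\max\{R_1/R_2,R_2/R_1\}\int_{I_1\times I_2}|\nabla w|^2$. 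Multiplying these, the factors $R_1R_2$ cancel and exactly the anisotropy factor is produced, so it suffices to prove, for $w_Q=0$,
$$\int_Q V|w|^2\,dz\le C_1\,\|V\|_{L_1(\mathbb I,L_{\mathcal B}(\mathbb I))}\int_Q|\nabla w|^2\,dz.$$

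On $Q$ I would slice in the first variable. For fixed $x\in\mathbb I$ the H\"older inequality \eqref{Holder} on $\mathbb I$ with the mutually complementary pair $(\mathcal B,\mathcal A)$ of \eqref{thepair} gives $\int_{\mathbb I}V(x,y)|w(x,y)|^2\,dy\le\|V(x,\cdot)\|_{\mathcal B,\mathbb I}\,\||w(x,\cdot)|^2\|_{\mathcal A,\mathbb I}$. Integrating in $x$ and pulling out the supremum,
$$\int_Q V|w|^2\le\Big(\sup_{x\in\mathbb I}\||w(x,\cdot)|^2\|_{\mathcal A,\mathbb I}\Big)\int_{\mathbb I}\|V(x,\cdot)\|_{\mathcal B,\mathbb I}\,dx,$$
and since $|\mathbb I|=1$ the averaged and ordinary $\mathcal B$-norms on $\mathbb I$ coincide (Corollary~\ref{avequiv}), so the last integral is $\|V\|_{L_1(\mathbb I,L_{\mathcal B}(\mathbb I))}$. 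Hence the lemma reduces to the uniform bound
$$\sup_{x\in\mathbb I}\||w(x,\cdot)|^2\|_{\mathcal A,\mathbb I}\le C\int_Q|\nabla w|^2\qquad(w_Q=0).$$
It is essential here that only an $L^1$-norm in $x$ is used: proving instead the right-hand side $\|V\|_{\mathcal B,Q}\int_Q|\nabla w|^2$ via a two–dimensional Moser--Trudinger inequality would be too weak, since $L_1(\mathbb I,L_{\mathcal B}(\mathbb I))$ is strictly larger than $L_{\mathcal B}(Q)$ (e.g. for $V$ independent of $y$).

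For the uniform bound I would chain three facts. First, the classical trace theorem $W^1_2(Q)\hookrightarrow C(\overline{\mathbb I};W^{1/2}_2(\mathbb I))$ gives $\|w(x,\cdot)\|_{W^{1/2}_2(\mathbb I)}\le C\|w\|_{W^1_2(Q)}$ for every $x\in\mathbb I$, with $C$ independent of $x$. Second, the Poincar\'e inequality together with $w_Q=0$ gives $\|w\|_{W^1_2(Q)}\le C\|\nabla w\|_{L^2(Q)}$. Third, the borderline Sobolev embedding in one dimension ($\tfrac12\cdot 2=1$): $W^{1/2}_2(\mathbb I)$ embeds into the exponential Orlicz class, $\int_{\mathbb I}\exp\!\big(c\,u(y)^2/\|u\|_{W^{1/2}_2(\mathbb I)}^2\big)\,dy\le C$; squaring turns the $e^{t^2}$-class into the $e^{t}$-class, which is $L_{\mathcal A}(\mathbb I)$, and by \eqref{LuxNormImpl} and \eqref{Luxemburgequiv} this yields $\||u|^2\|_{\mathcal A,\mathbb I}\le C\|u\|_{W^{1/2}_2(\mathbb I)}^2$. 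Applying this with $u=w(x,\cdot)$ and inserting the first two estimates proves the uniform bound; note that the full $W^{1/2}_2$-norm (not just the seminorm) is needed in the third step, which is precisely what the first two supply. Combining the two displayed reductions gives the lemma, with $C_1$ the product of the constants from the three facts and the H\"older inequality.

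The main obstacle is this last step: one must restrict $W^1_2(Q)$-functions to lines with a constant uniform in the line and with the correct homogeneity, and then invoke the borderline exponential integrability. This is the genuinely two-dimensional ingredient, reflecting the failure of $W^1_2\not\hookrightarrow L^\infty$ in the plane. If one prefers to avoid fractional Sobolev spaces altogether, the first and third facts can be replaced by a direct Moser--Trudinger-type computation: write $w(x,y)-w_Q$ as an integral of $\nabla w$ over $Q$ against a kernel of size $O\big(|(x,y)-(x',y')|^{-1}\big)$, and estimate, uniformly in $x$, the $L_{\mathcal A}(\mathbb I_y)$-norm of the resulting logarithmic potential.
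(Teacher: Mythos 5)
Your proposal is sound and follows the route taken in the cited sources (the paper itself only invokes \cite[Lemma 7.5]{Eugene} and \cite[Lemma 2]{Sol} rather than reproving the statement). The scaling step via Lemma~\ref{lemma1} is exactly right and isolates the claim on the unit square, the H\"older/slicing step using \eqref{Holder} and Corollary~\ref{avequiv} is correct, and your reduction to the uniform trace bound
$\sup_{x\in\mathbb I}\bigl\|\,|w(x,\cdot)|^2\,\bigr\|_{\mathcal A,\mathbb I}\le C\int_Q|\nabla w|^2$
identifies the genuine crux: an elementary slicing estimate of the form $\sup_x |w(x,y)|^2\le\int_0^1 |w|^2\,dx'+2\int_0^1|w\,\partial_1 w|\,dx'$ is \emph{not} enough, because the second term is only in $L^1(\mathbb I_y)$, which does not embed in $L_{\mathcal A}(\mathbb I_y)$. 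Your chain (mixed-norm trace $W^1_2(Q)\hookrightarrow C(\overline{\mathbb I}_x;W^{1/2}_2(\mathbb I_y))$, Poincar\'e, and the borderline embedding of $W^{1/2}_2(\mathbb I)$ into the $e^{t^2}$ Orlicz class) closes this gap correctly; the last step is a fractional (trace) Moser--Trudinger inequality, for which \cite{Cia} in the bibliography is the natural reference, and your remark that this is equivalently obtained by estimating the restriction to a line of the logarithmic Riesz potential $\int_Q|\nabla w(z')|\,|z-z'|^{-1}\,dz'$ is precisely the form used in \cite{Sol}. The observation that one must keep the $L_1$-in-$x$ structure, since $L_1\bigl(\mathbb I,L_{\mathcal B}(\mathbb I)\bigr)\supsetneq L_{\mathcal B}(Q)$, is also the right reason why a plain two-dimensional Trudinger embedding would prove a strictly weaker inequality.
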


\begin{lemma}\label{lemma4}{\rm (\cite[Lemma 7.6]{Eugene}, see also  \cite[Theorem 1]{Sol})}\\
For any $V \in L_1\left(\mathbb{I}, L_{\mathcal{B}}(\mathbb{I})\right)$, $V \ge 0$ and any
$n \in \mathbb{N}$  there exists a finite cover of $Q$ by
rectangles $A_k = I_{1, k}\times I_{2, k}$, $k = 1, \dots, n_0$  such that $n_0 \le n$ and
\begin{equation}\label{eqnl4}
\int_Q  V(z) |w(z)|^2\, dz \le
C_{2} n^{-1}\|V\|_{L_1\left(\mathbb{I}, L_{\mathcal{B}}(\mathbb{I})\right)}
\int_{Q} |\nabla w(z)|^2\, dz
\end{equation}
for all
$w \in W^1_2(Q)\cap C\left(\overline{Q}\right)$ with $w_{A_k} = 0$, $k = 1, \dots, n_0$, where
the constant $C_{2}$ does not depend on $V$.
\end{lemma}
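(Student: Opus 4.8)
The plan is to combine the local weighted inequality of Lemma~\ref{lemma3} with the subadditivity of Lemma~\ref{lemma2} to reduce \eqref{eqnl4} to a purely geometric covering statement, and then to construct the required cover by an adaptive subdivision of $Q$.

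\textbf{Reduction.} For a rectangle $R = I_1\times I_2\subseteq Q$ with side lengths $R_1,R_2$, write $\rho(R):=\max\{R_1/R_2,\,R_2/R_1\}$ and $\Phi(R):=\|V\|_{L_1(I_1,L_{\mathcal B}(I_2))}$. I claim it suffices to partition $Q$ (up to a null set) into rectangles $A_1,\dots,A_{n_0}$ with $n_0\le n$ and
\[
\rho(A_k)\,\Phi(A_k)\;\le\;\frac{C}{n}\,\|V\|_{L_1(\mathbb I,L_{\mathcal B}(\mathbb I))},\qquad k=1,\dots,n_0,
\]
for some absolute constant $C$. Indeed, given such a partition and any $w\in W^1_2(Q)\cap C(\overline Q)$ with $w_{A_k}=0$ for all $k$, Lemma~\ref{lemma3} applied on each $A_k$ gives $\int_{A_k}V|w|^2\le C_1\,\rho(A_k)\Phi(A_k)\int_{A_k}|\nabla w|^2$, and summing over $k$ (using that the $A_k$ partition $Q$ and $V\ge0$) yields \eqref{eqnl4} with $C_2=C_1C$, independent of $V$. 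So the whole problem is the construction of the partition.

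\textbf{Construction.} Set $\tau:=\frac{C}{n}\|V\|_{L_1(\mathbb I,L_{\mathcal B}(\mathbb I))}$; if $n$ is so small that $\|V\|_{L_1(\mathbb I,L_{\mathcal B}(\mathbb I))}\le\tau$, the single rectangle $Q$ already works (note $\rho(Q)=1$), so I may assume $n$ large. I would build the partition by a stopping-time subdivision starting from $\{Q\}$: as long as the current family contains a rectangle $R$ with $\rho(R)\Phi(R)>\tau$, replace $R$ by a short list of sub-rectangles, and freeze every rectangle with $\rho\cdot\Phi\le\tau$. The basic moves are: (a) if $R$ is ``balanced'' ($\rho(R)$ below a fixed absolute bound), cut it perpendicular to its longer side at the point that bisects $\Phi(R)$ — such a point exists because $\Phi$ is continuous along each coordinate direction (elementary, using the monotonicity of the Orlicz average norm under enlargement of the domain); (b) if $R$ has become too elongated, ``square it up'' by cutting its longer side into $\lceil\rho(R)\rceil$ equal parts, producing balanced pieces. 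By Lemma~\ref{lemma2} the total of the $\Phi$'s over the pieces never exceeds that of the rectangle they came from, and each cut of type (a) strictly decreases $\Phi$, so the procedure terminates.

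\textbf{Counting, and the main obstacle.} The heart of the argument — and the step I expect to be hardest — is to show that this procedure stops after at most $n$ rectangles, i.e.\ after $O(\|V\|_{L_1(\mathbb I,L_{\mathcal B}(\mathbb I))}/\tau)$ of them, since then choosing $C$ appropriately gives $n_0\le n$. Morally, the non-frozen rectangles can be tracked through a binary tree along which $\Phi$ halves, so their number is controlled by a geometric series $\sum_j 2^{-j}\Phi(Q)/\tau$; the subtlety is that cutting at a $\Phi$-mass-bisection point can create a very thin rectangle (when $V$ is concentrated near a point), which must then be squared up, and one must rule out a proliferation of such pieces. The mechanism that prevents it is exactly the $L\log L$ structure of $\Phi$: the norm $\|V\|_{L_1(I_1,L_{\mathcal B}(I_2))}$ of a thin rectangle essentially controls $\int V$ up to a logarithmic factor (rather than an $L^p$-type integral), so an over-elongated piece carries only a small share of the $\Phi$-mass and is therefore frozen after a bounded number of refinements — a thin sliver on which $V$ is under control being covered by few rectangles, the unboundedness of $\rho$ there compensated by the smallness of $\Phi$ in Lemma~\ref{lemma3}. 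Carrying out this bookkeeping carefully is precisely Solomyak's covering construction \cite[Theorem~1]{Sol} (reproduced in \cite[Lemma~7.6]{Eugene}), whose argument I would follow; once the partition is produced, the reduction above finishes the proof.
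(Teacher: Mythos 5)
The paper itself provides no proof of Lemma~\ref{lemma4}; it is stated by reference to \cite[Theorem 1]{Sol} and \cite[Lemma 7.6]{Eugene}. Your reduction step is correct and does match the structure of those proofs: given a partition of $Q$ into rectangles $A_1,\dots,A_{n_0}$ ($n_0\le n$) with $\rho(A_k)\Phi(A_k)\le (C/n)\|V\|_{L_1(\mathbb I,L_{\mathcal B}(\mathbb I))}$, applying Lemma~\ref{lemma3} on each piece and summing (using $V\ge0$ and disjointness of the $A_k$) gives \eqref{eqnl4} with a constant independent of $V$.

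The genuine gap is that the covering construction — which \emph{is} the content of the lemma — is not carried out. You sketch a stopping-time subdivision (mass-bisection on balanced pieces, squaring-up on elongated ones), correctly flag termination with $n_0\le n$ as the crux, and then write that carrying out the bookkeeping ``is precisely Solomyak's covering construction~\dots whose argument I would follow.'' That defers the essential step to the cited reference and does not constitute a proof. The termination heuristic you offer is also not rigorous as stated: after a mass-bisection spawns a very thin piece, squaring it up into $\lceil\rho\rceil$ near-squares does not distribute $\Phi$ evenly — one sub-square can inherit nearly all of $\Phi$ while having $\rho\approx1$, remain above threshold, and be re-cut, and nothing in your sketch bounds the size of the resulting tree by a quantity $\lesssim n$. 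The ingredient that makes Solomyak's count close is not ``the $L\log L$ growth of $\mathcal B$'' per se but a careful interplay between the superadditivity of the $\Phi$-function (Lemma~\ref{lemma2}) and the scale invariance of the averaged Orlicz norm (Lemma~\ref{lemma1}), built into a subdivision rule that preserves a shape invariant so that the total $\Phi$-mass generated at each level is controlled. Supplying that counting argument is exactly what the lemma requires, and it is absent here.
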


Let
\begin{eqnarray*}
& \mathcal{E}_{V, Q}[w] : = \int_{Q} |\nabla w(z)|^2 dz -
\int_{Q} V(z) |w(z)|^2 dz , & \\
& \mbox{Dom}\, (\mathcal{E}_{V, Q}) =
W^1_2\left(Q\right)\cap L^2\left(Q, V(z)dz\right) . &
\end{eqnarray*}

\begin{lemma}\label{lemma5}{\rm (\cite[Lemma 7.7]{Eugene}, see also \cite[Theorem 4]{Sol})}
$$
N_- (\mathcal{E}_{V, Q}) \le C_{2}
\|V\|_{L_1\left(\mathbb{I}, L_{\mathcal{B}}(\mathbb{I})\right)} + 1 , \ \ \ \forall V \ge 0 ,
$$
where  $C_2$ is the constant from Lemma \ref{lemma4}.
\end{lemma}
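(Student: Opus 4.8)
The plan is to combine Lemma~\ref{lemma4} with the abstract variational bound underlying the Morse index (see \S\ref{index}): if $\mathbf q$ is a Hermitian form on a linear space $\mathcal D$ and $\ell_1,\dots,\ell_d$ are linear functionals on $\mathcal D$ such that $\mathbf q[u]\ge 0$ whenever $\ell_1(u)=\dots=\ell_d(u)=0$, then $N_-(\mathbf q)\le d$ --- indeed, any subspace of dimension $>d$ meets the common kernel of the $\ell_j$ nontrivially, so $\mathbf q$ cannot be strictly negative on all of it minus the origin. Here the functionals will be the averaging maps $w\mapsto w_{A_k}$ attached to the cover produced by Lemma~\ref{lemma4}, and the whole point is to choose the parameter $n$ in Lemma~\ref{lemma4} so that the Poincar\'e-type constant there drops to $\le 1$.

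If $V\notin L_1\!\left(\mathbb I,L_{\mathcal B}(\mathbb I)\right)$ the right-hand side is infinite and there is nothing to prove, so assume the norm is finite and set
$$
n:=\max\left\{1,\ \left\lceil C_2\,\|V\|_{L_1\left(\mathbb I,L_{\mathcal B}(\mathbb I)\right)}\right\rceil\right\}\in\mathbb N ,
$$
so that $n\le C_2\,\|V\|_{L_1\left(\mathbb I,L_{\mathcal B}(\mathbb I)\right)}+1$ and $C_2\,n^{-1}\|V\|_{L_1\left(\mathbb I,L_{\mathcal B}(\mathbb I)\right)}\le 1$. Applying Lemma~\ref{lemma4} with this $n$ yields rectangles $A_k=I_{1,k}\times I_{2,k}$, $k=1,\dots,n_0$, with $n_0\le n$, covering $Q$, such that
$$
\int_Q V(z)\,|w(z)|^2\,dz\ \le\ C_2\,n^{-1}\|V\|_{L_1\left(\mathbb I,L_{\mathcal B}(\mathbb I)\right)}\int_Q|\nabla w(z)|^2\,dz\ \le\ \int_Q|\nabla w(z)|^2\,dz
$$
for every $w\in W^1_2(Q)\cap C(\overline Q)$ with $w_{A_k}=0$, $k=1,\dots,n_0$; in particular $\mathcal E_{V,Q}[w]\ge 0$ for all such $w$.

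To apply the abstract bound with $\ell_k(w):=w_{A_k}$ (each bounded on $L^2(Q)$, hence on $\mathrm{Dom}(\mathcal E_{V,Q})=W^1_2(Q)\cap L^2(Q,Vdz)$), it remains to see that $\mathcal E_{V,Q}[w]\ge0$ holds for \emph{all} $w\in\mathrm{Dom}(\mathcal E_{V,Q})$ with $w_{A_k}=0$, not merely continuous ones. We may assume the functionals $(\cdot)_{A_k}$, $k=1,\dots,n_0$, are linearly independent on $L^2(Q)$ (discarding redundant ones only lowers the codimension). Given such a $w$, I would pick $w_j\in W^1_2(Q)\cap C(\overline Q)$ with $w_j\to w$ in $W^1_2(Q)$, choose continuous $\phi_k\in W^1_2(Q)\cap C(\overline Q)$ with $(\phi_k)_{A_i}=\delta_{ki}$ (possible since $C(\overline Q)\cap W^1_2(Q)$ is dense in $L^2(Q)$ and the functionals are independent), and replace $w_j$ by $w_j-\sum_k (w_j)_{A_k}\phi_k$: this is continuous, has vanishing $A_k$-averages, and still converges to $w$ in $W^1_2(Q)$ because $(w_j)_{A_k}\to w_{A_k}=0$. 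Passing to a subsequence converging a.e.\ in $Q$, Fatou's lemma on the left and $W^1_2$-convergence on the right turn the displayed inequality into $\int_Q V|w|^2\le\int_Q|\nabla w|^2$, i.e.\ $\mathcal E_{V,Q}[w]\ge0$. The abstract bound then gives $N_-(\mathcal E_{V,Q})\le n_0\le n\le C_2\,\|V\|_{L_1\left(\mathbb I,L_{\mathcal B}(\mathbb I)\right)}+1$.

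The substantive content --- producing a cover of $Q$ by at most $n$ rectangles carrying a weighted Poincar\'e inequality with constant $\sim n^{-1}\|V\|$ --- is already packaged inside Lemma~\ref{lemma4} (which in turn rests on Lemmas~\ref{lemma2} and \ref{lemma3}). Thus the only genuinely delicate point in \emph{this} proof is the density/approximation step above, i.e.\ restoring the mean-value constraints while keeping continuity under $W^1_2$-approximation; everything else is the bookkeeping of fixing $n$ so the Poincar\'e constant becomes $1$.
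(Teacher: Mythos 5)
Your proposal is correct and takes essentially the same route as the paper: fix $n$ so that the Poincar\'e-type constant from Lemma~\ref{lemma4} drops below $1$, then conclude $N_-(\mathcal{E}_{V,Q}) \le n_0 \le n \le C_2\|V\|_{L_1(\mathbb{I},L_{\mathcal{B}}(\mathbb{I}))}+1$ from the finitely-many-linear-constraints bound on the Morse index. The only thing you add is the explicit density/approximation step extending the inequality of Lemma~\ref{lemma4} from $W^1_2(Q)\cap C(\overline{Q})$ to the full form domain, which the paper leaves implicit; your Fatou argument handles it correctly.
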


\subsection{Estimates for one-dimensional Schr\"odinger operators}\label{1-destimate}
In this subsection we present the estimates for the number of negative eigenvalues of one dimensional Schr\"odinger operators that we use in the sequel.\\\\
Let $I$ be a finite interval in $\mathbb{R}$ of length $l$. For simplicity, take $I = (0, l)$. Let $0 = t_0 < t_1 < ... < t_n = l$ be a partition of the interval $I$ into $n$ subintervals $I_k = (t_{k-1}, t_{k})$. Let $P$ stand for any such partition and $|P|$ denote the number of subintervals, i.e, $|P| = n$. Let $\nu$ be a positive Radon measure on $\mathbb{R}$ and for any real number $a > 0$, consider the following function of partitions:
\begin{equation}\label{par1}
\Theta_a(P) := \underset{k}\max \left( t_k - t_{k-1}\right)^a \nu(I_k).
\end{equation}
\begin{lemma}\label{parlemma1}
{\rm Suppose $\nu(\{x\}) = 0$ for all $x\in\mathbb{R}$. Then for any $n \in\mathbb{N}$ there exists a partition $P$ of the interval $I$ such that $|P| = n$ and \begin{equation}\label{par2}
\Theta_a(P) \le l^a n^{-1-a}\nu(I).
\end{equation}
}
\end{lemma}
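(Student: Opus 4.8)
The plan is to collapse the whole problem onto a single continuous non-decreasing function and then perform a greedy ``balanced'' subdivision. Since $\nu$ is Radon it is finite on the compact set $\overline I$, so $\beta := \nu(I) < +\infty$; if $\beta = 0$ the equidistant partition already gives $\Theta_a(P) = 0$ and there is nothing to prove, so I assume $\beta > 0$ and fix the target level $c := l^a n^{-1-a}\beta > 0$. Set $F(t) := \nu((0,t])$ for $t\in[0,l]$. The role of the hypothesis $\nu(\{x\}) = 0$ is precisely to make $F$ continuous: continuity of $\nu$ from above gives right‑continuity of $F$, while continuity from above together with non‑atomicity gives left‑continuity; also $F(0)=0$, $F(l)=\beta$, and $F$ is non‑decreasing. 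For $0\le s\le t\le l$ put $g(s,t) := (t-s)^a\,\nu((s,t]) = (t-s)^a\bigl(F(t)-F(s)\bigr)$, which for fixed $s$ is a continuous function of $t$ on $[s,l]$ vanishing at $t=s$.

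Next I would build the partition greedily, each time extending the current interval as far as the constraint ``$g\le c$'' allows: put $t_0:=0$ and, as long as $t_{k-1}<l$,
\[
t_k := \max\{\, t\in[t_{k-1},l] : g(t_{k-1},t)\le c \,\} .
\]
The set on the right is closed and contains $t_{k-1}$, so the maximum exists and $g(t_{k-1},t_k)\le c$ automatically; since $g(t_{k-1},t_{k-1})=0<c$, continuity forces $t_k>t_{k-1}$; and if $t_k<l$ then necessarily $g(t_{k-1},t_k)=c$, for otherwise continuity would let us enlarge $t_k$. Run this for $n$ steps. If some $t_m=l$ with $m\le n$, the resulting partition has $m\le n$ intervals, each with $g\le c$. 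Otherwise $0=t_0<t_1<\cdots<t_n<l$ and $g(t_{k-1},t_k)=c$, i.e. $\nu(I_k)=c\,(t_k-t_{k-1})^{-a}$, for every $k=1,\dots,n$.

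The heart of the argument is to rule out this last alternative. Using convexity of $x\mapsto x^{-a}$ on $(0,\infty)$ (here $a>0$ is essential) and Jensen's inequality,
\[
\beta \ \ge\ \sum_{k=1}^n \nu(I_k) \ =\ c\sum_{k=1}^n (t_k-t_{k-1})^{-a} \ \ge\ c\,n\Bigl(\tfrac{t_n}{n}\Bigr)^{-a} \ >\ c\,n\Bigl(\tfrac{l}{n}\Bigr)^{-a} \ =\ c\,n^{1+a}l^{-a} \ =\ \beta ,
\]
where the strict inequality comes from $t_n<l$; this contradiction shows the greedy construction does reach $l$ within $n$ steps. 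Thus we have a partition into $m\le n$ intervals with $\Theta_a=\max_k g(t_{k-1},t_k)\le c$, and it remains only to insert $n-m$ further cut points: replacing an interval by a subinterval can only decrease $g$ (again since $a>0$), so the refined partition $P$ satisfies $|P|=n$ and $\Theta_a(P)\le c = l^a n^{-1-a}\nu(I)$, as claimed.

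I expect the main obstacle to be the termination/non‑overshoot claim, and specifically making rigorous that every interval preceding the last carries the value $g=c$ exactly — this is what converts the Jensen estimate above into a contradiction, and it hinges on the continuity of $F$, i.e. on the non‑atomicity of $\nu$; without it the greedy rule need not be well posed. Everything else (the trivial cases, the convexity step, the refinement step) is routine. This is essentially the one–dimensional, single–measure prototype of the subdivision arguments of Birman–Solomyak and Solomyak underlying Lemmas~\ref{lemma3}--\ref{lemma5}.
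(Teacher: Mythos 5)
Your proof is correct, and it takes a genuinely different route from the one in the paper. The paper proceeds by induction on $n$: after rescaling to $l=1$, $\nu(I)=1$, it uses the intermediate value theorem to find a point $x$ at which the last interval $[x,1)$ exactly attains the bound $(n+1)^{-1-a}$, then applies the induction hypothesis to $(0,x)$ and closes the induction by minimizing an explicit convex function $h(x)$ (whose minimum over $(0,1)$ equals $n^{-1-a}$ at $x=n/(n+1)$). Your argument instead runs a greedy ``cut as far as the constraint allows'' construction, observes that every cut preceding the last must saturate the constraint, and then derives a contradiction from Jensen's inequality applied to the convex map $x\mapsto x^{-a}$. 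Both routes rely on the same analytic input (continuity of the cumulative distribution coming from non‑atomicity, and convexity of a power function), but yours avoids induction and the explicit optimization of $h$; it is arguably cleaner and more self-contained, though the inductive proof has the advantage of localising the continuity argument to a single intermediate-value step. One cosmetic remark: you invoke ``continuity from above'' for both one-sided continuities of $F$, whereas left-continuity of $F$ actually comes from continuity from below together with $\nu(\{t\})=0$ — the conclusion is right but the stated reason is slightly off.
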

\begin{proof}
The proof follows a similar argument as in the proof of \cite[Lemma 7.1]{Sol2} for measures absolutely continuous with respect to the Lesbegue measure.  By scaling, it is enough to prove \eqref{par2} for $l = 1$ and $\nu(I) = 1$. For $n = 1$ there is nothing to prove. Now suppose \eqref{par2} is true for some $n$, we show that this is true for $n + 1$. Since $x \longmapsto \nu([x, 1))$ is continuous, there exists a point $x\in (0, 1)$ such that
\begin{equation}\label{cts}
(1 - x)^a\nu([x, 1)) = (n + 1)^{-1-a}.
\end{equation}
Then one has
$$
\nu([x, 1)) = (n + 1)^{-1-a}(1 - x)^{-a}.
$$
By the induction assumption, there exists a partition $P_0$ of the interval $(0, x)$ into $n$ subintervals $ 0 = t_0 < t_1 < ... < t_n = x$ such that
\begin{eqnarray*}
\Theta_a(P_0) &\le& x^a n^{-1-a}\nu((0, x))\\ &=& x^a n^{-1-a}\left( 1 - (n + 1)^{-1-a}(1 - x)^{-a}\right).
\end{eqnarray*}
Then $P$ is $0 = t_0 < t_1 < ... < t_n < t_{n + 1} = 1$.
To prove \eqref{par2}, due to \eqref{cts} it is sufficient to show that $\Theta_a(P_0) \le (n + 1)^{-1-a}$. We achieve this by showing that
$$
n^{-1-a} \le (n + 1)^{-1-a}x^{-a} + n^{-1-a}(n + 1)^{-1-a}(1 -x)^{-a}.
$$
Let $h(x) = (n + 1)^{-1-a}x^{-a} + n^{-1-a}(n + 1)^{-1-a}(1 -x)^{-a}.$ Then $h$ is convex on $(0, 1)$ and solving $h'(x) = 0$ we see that $h$ attains its minimum on $(0, 1)$ at the point $x = n(n + 1)^{-1}$ and this this minimum value is $n^{-1-a}$.
\end{proof}
\begin{lemma}\label{parlemma2}
{\rm Suppose $\nu(\{t\}) = 0$ for all $t\in I$. For any $n\in\mathbb{N}$, there exists a partition $P$ of the interval $I$ such that $|P| = n$ and
\begin{equation}\label{par3}
\int_I|u(t)|^2d\nu(t) \le \frac{l}{ n^{2}}\nu(I)\int_I|u'(t)|^2\,dt,
\end{equation}
 for all $u\in \mathcal{L}_n$ and $\mathcal{L}_n$ is the subspace of $W^1_2(I)$ of co-dimension $n$ formed by the functions  satisfying $u(t_1) = ... = u(t_n) = 0$.
}
\end{lemma}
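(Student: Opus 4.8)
\emph{Proof proposal.} The plan is to deduce the statement directly from Lemma~\ref{parlemma1} with $a = 1$, combined with the elementary one-dimensional Friedrichs inequality for a function vanishing at an endpoint of a subinterval. Since $\nu$ has no atoms, Lemma~\ref{parlemma1} gives, for every $n \in \mathbb{N}$, a partition $P : 0 = t_0 < t_1 < \dots < t_n = l$ with $|P| = n$ and
\[
\Theta_1(P) = \max_k (t_k - t_{k-1})\,\nu(I_k) \le \frac{l}{n^2}\,\nu(I), \qquad I_k := (t_{k-1}, t_k).
\]
I would take this $P$ together with $\mathcal{L}_n = \{u \in W^1_2(I) : u(t_1) = \dots = u(t_n) = 0\}$. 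On the finite interval $I$ one has $W^1_2(I) \hookrightarrow C(\overline{I})$, so the point evaluations make sense; since the $n$ functionals $u \mapsto u(t_j)$ are linearly independent, $\mathcal{L}_n$ has codimension exactly $n$.

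Next I would prove the local estimate on each $I_k$: if $u \in W^1_2(I_k)$ vanishes at some endpoint $c \in \{t_{k-1}, t_k\}$ of $I_k$, then $u(t) = \int_c^t u'(s)\,ds$ for $t \in \overline{I_k}$, hence by Cauchy--Schwarz $|u(t)|^2 \le (t_k - t_{k-1})\int_{I_k}|u'(s)|^2\,ds$, and therefore
\[
\int_{I_k} |u(t)|^2\,d\nu(t) \le (t_k - t_{k-1})\,\nu(I_k)\int_{I_k}|u'(s)|^2\,ds \le \Theta_1(P)\int_{I_k}|u'(s)|^2\,ds .
\]
The point is that this applies to every subinterval when $u \in \mathcal{L}_n$: on $I_1 = (0, t_1)$ the function vanishes at the right endpoint $t_1$, and on $I_k$ for $k \ge 2$ it vanishes at $t_{k-1}$ (indeed at both endpoints). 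Summing over $k = 1, \dots, n$ and using that the partition points form a set of $\nu$-measure zero (no atoms) as well as Lebesgue measure zero — so that $\int_I = \sum_k \int_{I_k}$ for both measures — gives
\[
\int_I |u(t)|^2\,d\nu(t) \le \Theta_1(P)\int_I |u'(t)|^2\,dt \le \frac{l}{n^2}\,\nu(I)\int_I|u'(t)|^2\,dt ,
\]
which is exactly \eqref{par3}.

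I do not expect a genuine obstacle here; the lemma is essentially a repackaging of Lemma~\ref{parlemma1} with the endpoint Poincar\'e inequality. The only points that need care are: (i) checking that placing the last condition at $t_n = l$ rather than at an interior node still leaves every subinterval with a vanishing endpoint (it does, since $t_n$ is the right endpoint of $I_n$); and (ii) using the absence of atoms to move freely between open subintervals and their closures and to split $\int_I$ additively over the $I_k$. If preferred, one can first verify the identity $u(t) = \int_c^t u'(s)\,ds$ for $u \in C^1(\overline{I})$ and then pass to $W^1_2(I)$ by density, though it holds directly for $W^1_2$ functions on an interval.
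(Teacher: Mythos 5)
Your proposal is correct and follows essentially the same route as the paper: invoke Lemma~\ref{parlemma1} with $a = 1$ to get the partition, prove the pointwise Cauchy--Schwarz bound on each $I_k$ using a vanishing endpoint, and sum. The only cosmetic difference is that the paper uniformly pivots on the right endpoint $t_k$ of every $I_k$, whereas you switch to the left endpoint for $k \ge 2$; both choices are available and lead to the same estimate.
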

\begin{proof}
For any $t\in I_k$, the Cauchy-Schwartz inequality implies
\begin{eqnarray*}
|u(t)|^2 &=& |u(t) - u(t_k)|^2 = \left|\int_{t}^{t_k}u'(s)\,ds\right|^2\\
&\le& |t - t_k|\int_t^{t_k}|u'(s)|^2\,ds\\&\le& |t_{k} - t_{k-1}|\int_{t_{k-1}}^{t_k}|u'(s)|^2\,ds.
\end{eqnarray*}
Hence
\begin{eqnarray*}
\int_{I_k}|u(t)|^2\,d\nu(t) &\le& \underset{t\in I_k}\sup |u(t)|^2\nu(I_k)\\ &\le& |t_k - t_{k-1}|\nu(I_k)\int_{t_{k-1}}^{t_k}|u'(s)|^2\,ds.
\end{eqnarray*}
With $a = 1$, \eqref{par1} and Lemma \ref{parlemma1} imply
\begin{eqnarray*}
\int_{I}|u(t)|^2\,d\nu(t) &=& \sum_{k = 1}^n \int_{I_k}|u(t)|^2\,d\nu(t)\\
&\le&\sum_{k = 1}^n |t_k - t_{k-1}|\nu(I_k)\int_{t_{k-1}}^{t_k}|u'(s)|^2\,ds\\
&\le& \Theta_a(P)\sum_{k = 1}^n \int_{I_k}|u'(s)|^2\,ds\\ &\le& \frac{l}{ n^{2}}\nu(I)\int_I|u'(s)|^2\,ds.
\end{eqnarray*}
\end{proof}
The above Lemma excludes measures with atoms. However, one can show that the lemma still holds true even when $\nu$ has atoms by approximating $\nu$ by measures that are absolutely continuous with respect to the Lebesgue measure.
\begin{lemma}\label{acmeasure}
{\rm  For any $c >1 $ and any $n\in\mathbb{N}$ there exists a partition $P$ of $I$ such that $|P| = n$ and
\begin{equation}\label{aceqn}
\int_{I}|u(t)|^2d\nu(t) \le c\, \frac{l}{ n^{2}}\nu(I)\int_{I}|u'(t)|^2\,dt,
\end{equation}
for all $u\in W^1_2(I)$ such that $u(t_1) = u(t_2) = ... = u(t_n) = 0$.
}
\end{lemma}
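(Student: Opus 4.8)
The plan is to reduce the statement to the atomless case already settled in Lemma~\ref{parlemma2}, by approximating $\nu$ from inside $I$ by absolutely continuous measures that ``spread out'' its atoms. Write $I=(0,l)$ and decompose $\nu=\nu_c+\sum_j m_j\,\delta_{x_j}$, where $\nu_c$ is the atomless part and $x_j\in I$, $m_j>0$, $\sum_j m_j\le\nu(I)<\infty$. For $0<\varepsilon<l/4$, replace each $m_j\delta_{x_j}$ by the absolutely continuous measure $\frac{m_j}{|J_j|}\mathbf{1}_{J_j}\,dt$, where $J_j\subset I$ is an interval of length $\le 2\varepsilon$ containing $x_j$ (take $J_j=(x_j-\varepsilon,x_j+\varepsilon)$ when $\mathrm{dist}(x_j,\{0,l\})\ge\varepsilon$, and a one-sided interval pushed into $I$ otherwise), and set
$$
\nu_\varepsilon := \nu_c + \sum_j \frac{m_j}{|J_j|}\,\mathbf{1}_{J_j}\,dt .
$$
Then $\nu_\varepsilon$ is atomless and $\nu_\varepsilon(I)=\nu(I)$. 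Throughout one uses the one-dimensional Sobolev embedding $W^1_2(I)\hookrightarrow C(\overline I)$ so that the point values $u(x_j)$ and the partition-point values below make sense.

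Next I would apply Lemma~\ref{parlemma2} to $\nu_\varepsilon$: it produces a partition $P_\varepsilon:\ 0=t_0^\varepsilon<t_1^\varepsilon<\dots<t_n^\varepsilon=l$ with
$$
\int_I |u|^2\,d\nu_\varepsilon \le \frac{l}{n^2}\,\nu(I)\int_I |u'|^2\,dt
$$
for every $u\in W^1_2(I)$ satisfying $u(t_1^\varepsilon)=\dots=u(t_n^\varepsilon)=0$, and this $P_\varepsilon$ (for a suitable $\varepsilon$) will be the partition claimed in the lemma. For such $u$, only the atomic parts of $\nu$ and $\nu_\varepsilon$ differ, so, using $\bigl||u(x_j)|^2-|u(t)|^2\bigr|\le 2\|u\|_{L^\infty(I)}\,|u(x_j)-u(t)|$ together with the Cauchy--Schwarz bound $|u(x_j)-u(t)|\le\sqrt{2\varepsilon}\,\|u'\|_{L^2(I)}$ for $t\in J_j$ (valid since $x_j$ and $t$ lie in a subinterval of $I$ of length $\le 2\varepsilon$) and $\sum_j m_j\le\nu(I)$,
$$
\Bigl|\int_I |u|^2\,d\nu-\int_I |u|^2\,d\nu_\varepsilon\Bigr| \le \sum_j m_j\sup_{t\in J_j}\bigl||u(x_j)|^2-|u(t)|^2\bigr| \le 2\sqrt{2\varepsilon}\,\nu(I)\,\|u\|_{L^\infty(I)}\|u'\|_{L^2(I)} .
$$
Since $u$ vanishes at the points $t_k^\varepsilon$ and the intervals of $P_\varepsilon$ have length $\le l$, one also has $\|u\|_{L^\infty(I)}\le\sqrt{l}\,\|u'\|_{L^2(I)}$. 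Combining the three displays gives
$$
\int_I |u|^2\,d\nu \le \frac{l}{n^2}\,\nu(I)\,\bigl(1+2n^2\sqrt{2\varepsilon l}\,\bigr)\int_I |u'|^2\,dt .
$$
Given $c>1$, I would finally fix $\varepsilon=\min\{\,l/4,\ (c-1)^2/(8n^4 l)\,\}$, so that $1+2n^2\sqrt{2\varepsilon l}\le c$; then $|P_\varepsilon|=n$ and \eqref{aceqn} holds.

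The decomposition and the elementary inequalities above are routine; the one point that needs care is the \emph{uniform} (in $u$) control of the defect $\int_I|u|^2\,d\nu-\int_I|u|^2\,d\nu_\varepsilon$, for which it is essential that the admissible $u$ vanish at the partition points, so that $\|u\|_{L^\infty(I)}\le\sqrt{l}\,\|u'\|_{L^2(I)}$. A secondary technicality is keeping the spread-out atoms inside $I$ when atoms accumulate near an endpoint; this is dealt with by one-sided spreading, which preserves the total mass $\nu_\varepsilon(I)=\nu(I)$ and leaves every estimate unchanged.
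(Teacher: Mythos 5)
Your proof is correct and rests on the same strategy as the paper --- approximate $\nu$ by an atomless measure, apply Lemma~\ref{parlemma2}, control the defect uniformly over admissible $u$, and then take $\varepsilon$ small --- but the implementation is genuinely different. The paper mollifies the \emph{whole} measure by convolution with a bump function, which pushes the support of $\nu_\varepsilon$ out to $[-\varepsilon,\,l+\varepsilon]$; it then must introduce an affine map $\xi:I\to I_\varepsilon$ to transport the resulting partition back into $I$, and the defect estimate is carried out for the rescaled function $u_\varepsilon=u\circ\xi^{-1}$, comparing $|u(y)|^2$ with $|u_\varepsilon(\tau+y)|^2$. You instead take the Lebesgue decomposition $\nu=\nu_c+\sum_j m_j\delta_{x_j}$ and smear only the atoms, keeping $\nu_\varepsilon$ supported inside $I$, so no change of variables is needed and the defect reduces to a sum over atoms bounded directly via Cauchy--Schwarz; the price is an explicit treatment of atoms near $\partial I$ by one-sided spreading, which you supply. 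Both routes hinge on the same key point, namely $\|u\|_{L^\infty(I)}\le\sqrt{l}\,\|u'\|_{L^2(I)}$, valid precisely because $u$ vanishes at the partition nodes, and your emphasis of this uniformity issue is exactly right. One minor arithmetic slip: after factoring out $\tfrac{l}{n^2}\nu(I)$ the prefactor should read $1+2n^2\sqrt{2\varepsilon/l}$ rather than $1+2n^2\sqrt{2\varepsilon l}$, so the final choice should be $\varepsilon\le (c-1)^2 l/(8n^4)$ (with $l$ in the numerator, matching the paper's $\varepsilon\le l\bigl(\tfrac{c-1}{4n^2}\bigr)^2$); this does not affect the validity of the argument.
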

\begin{proof}
Let $\varphi \in C_0^{\infty}(\mathbb{R})$  such that $\varphi(t) = 0$ if $|t| \geq 1$ and $\int_{\mathbb{R}}\varphi(t)\,dt = 1$. For $\varepsilon > 0$, let $\varphi_{\varepsilon}(t) = \frac{1}{\varepsilon}\varphi(\frac{t}{\varepsilon})$. Then $\varphi_{\varepsilon}(t) = 0$ if $|t| \geq \varepsilon$ and $\int_{\mathbb{R}}\varphi_{\varepsilon}(t)\,dt = 1$.  Extend $\nu$ to $\mathbb{R}$ by $\nu(J) = 0$ for $J = \mathbb{R}\setminus I$.
Let $\nu_{\varepsilon} := \nu \ast \varphi_{\varepsilon}$, i.e.,
$$
d\nu_{\varepsilon}(t) = \left(\int_{\mathbb{R}}\varphi_{\varepsilon}(t - y)\,d\nu(y)\right)dt.
$$ Then $\textrm{supp}\, \nu_{\varepsilon}\subseteq I_{\varepsilon}$, where $I_{\varepsilon} := [-\varepsilon, l + \varepsilon]$.
By Lemma \ref{parlemma2}, for any $n\in\mathbb{N}$ there exists a partition $P_{\varepsilon} = \{t^{\varepsilon}_0, ... , t^{\varepsilon}_n\}$ of $I_{\varepsilon}$ such that $|P_{\varepsilon}| = n$ and
\begin{equation}\label{aceqn1}
\int_{I_{\varepsilon}}|u_{\varepsilon}(t)|^2d\nu_{\varepsilon}(t) \le  \frac{l} {n^{2}}\nu_{\varepsilon}(I_{\varepsilon})\int_{I_{\varepsilon}}|u'_{\varepsilon}(t)|^2\,dt,
\end{equation}
for all $u_{\varepsilon}\in W^1_2(I_{\varepsilon})$ such that $u(t^{\varepsilon}_1) = ... = u(t^{\varepsilon}_n) = 0$ .\\\\
Let
$$
\xi(x) := \frac{l + 2\varepsilon}{l}x - \varepsilon.
$$ Then
$$
\xi^{-1}(y) = \frac{l}{l + 2\varepsilon}(y + \varepsilon)
$$
and
$$
\xi : I \longrightarrow I_{\varepsilon} , \;\;\; \xi^{-1} : I_{\varepsilon} \longrightarrow I
.$$
Let $$t _k = \xi^{-1}\left(t^{\varepsilon}_k\right), \;\; k = 0, ... , n.$$ Take any $u \in W^1_2(I)$ such that $u(t_1) = ... = u(t_n)$. Consider $$u_{\varepsilon}(y) := u(\xi^{-1}(y)).$$ Then $u_{\varepsilon}\in W^1_2(I_{\varepsilon})$ and $u_{\varepsilon}(t^{\varepsilon}_1) = ... = u_{\varepsilon}(t^{\varepsilon}_n) = 0$, so \eqref{aceqn1} holds.\\
Now,
\begin{eqnarray}\label{aceqn2}
\nu_{\varepsilon}(I_{\varepsilon}) &=& \int_{I_{\varepsilon}}\int_{\mathbb{R}}\varphi_{\varepsilon}(t - y)d\nu(y)\,dt
=\int_{\mathbb{R}}\int_{I_{\varepsilon}}\varphi_{\varepsilon}(t - y)\,dt d\nu(y)\nonumber\\&=&\int_{I}\int_{I_{\varepsilon}}\varphi_{\varepsilon}(t - y)\,dt d\nu(y) = \int_{I}\int_{\mathbb{R}}\varphi_{\varepsilon}(t - y)\,dt d\nu(y)\nonumber\\&=& \int_{I}d\nu(y) = \nu(I),
\end{eqnarray}
\begin{eqnarray}\label{aceqn3}
\int_{I_{\varepsilon}}|u'_{\varepsilon}(t)|^2\,dt &=& \int_{I_{\varepsilon}}\left|\frac{d}{dt}u(\xi^{-1}(t))\right|^2\,dt \nonumber\\ &=& \frac{l}{l + 2\varepsilon}\int_{I}|u'(x)|^2\,dx \nonumber\\ &\le& \int_{I}|u'(x)|^2\,dx.
\end{eqnarray}
\begin{eqnarray*}
&&\left|\int_{I}|u(y)|^2\,d\nu(y) - \int_{I_{\varepsilon}}|u_{\varepsilon}(t)|^2\,d\nu_{\varepsilon}(t) \right|\\&&= \left|\int_{\mathbb{R}}|u(y)|^2\,d\nu(y) - \int_{\mathbb{R}}|u_{\varepsilon}(t)|^2\,d\nu_{\varepsilon}(t) \right|\\&&= \left|\int_{\mathbb{R}}|u(y)|^2\,d\nu(y) - \int_{\mathbb{R}}|u_{\varepsilon}(t)|^2 \int_{\mathbb{R}}\varphi_{\varepsilon}(t - y)d\nu(y)\,dt \right|\\&&=
\left|\int_{\mathbb{R}}|u(y)|^2\,d\nu(y) - \int_{\mathbb{R}}\int_{\mathbb{R}}|u_{\varepsilon}(\tau + y)|^2 \varphi_{\varepsilon}(\tau)d\tau\,d\nu(y) \right|\\&&\le \int_{\mathbb{R}}\int_{\mathbb{R}}\left| |u(y)|^2- |u_{\varepsilon}(\tau + y)|^2\right|\varphi_{\varepsilon}(\tau)d\tau d\nu(y)\\&&\le
\underset{\underset{|\tau|\le\varepsilon}{y\in I}}\max \left||u(y)|^2- |u_{\varepsilon}(\tau + y)|^2\right|\nu(I)
\end{eqnarray*}
\begin{eqnarray*}
 |u(y)|^2- |u_{\varepsilon}(\tau + y)|^2 &=& |u(y)|^2 - \left|u\left(\frac{l}{l + 2\varepsilon}(y + \tau + \varepsilon)\right)\right|^2\\&\le& |u(y)| - \left|u\left(\frac{l}{l + 2\varepsilon}(y + \tau + \varepsilon)\right)\right|\\&&\times\left(|u(y)| + \left|u\left(\frac{l}{l + 2\varepsilon}(y + \tau + \varepsilon)\right)\right|\right)\\ &\le&2\sqrt{|I|}\left|y - \frac{l}{l + 2\varepsilon}(y + \tau + \varepsilon)\right|^{\frac{1}{2}}\|u'\|^2_{L^2}\\&=&2\sqrt{l}\sqrt{\frac{1}{l + 2\varepsilon}}\underbrace{\left|2\varepsilon y - l\tau - l\varepsilon\right|^{\frac{1}{2}}}_{\le \sqrt{4l\varepsilon}} \|u'\|^2_{L^2} \\&\le& 4\sqrt{l} \sqrt{\frac{l}{l + 2\varepsilon}} \sqrt{\varepsilon}\|u'\|^2_{L^2} \\&\le&  4\sqrt{l}  \sqrt{\varepsilon}\|u'\|^2_{L^2}.
\end{eqnarray*}
Hence
$$
\left|\int_{I}|u(y)|^2\,d\nu(y) - \int_{I_{\varepsilon}}|u_{\varepsilon}(t)|^2\,d\nu_{\varepsilon}(t) \right| \le 4\sqrt{l}  \sqrt{\varepsilon}\|u'\|^2_{L^2}\nu(I).
$$
This combined with \eqref{aceqn1},\eqref{aceqn2} and \eqref{aceqn3} imply
\begin{eqnarray*}
\int_{I}|u(y)|^2\,d\nu(y) &\le& \int_{I_{\varepsilon}}|u_{\varepsilon}(t)|^2\,d\nu_{\varepsilon}(t) + 4\sqrt{l}  \sqrt{\varepsilon}\|u'\|^2_{L^2}\nu(I) \\ &\le&
ln^{-2}\nu(I)\int_I|u'(x)|^2\,dx + 4\sqrt{l}  \sqrt{\varepsilon}\nu(I)\int_{I}|u'(x)|^2\,dx\\ &=&\left(ln^{-2} + 4\sqrt{l}  \sqrt{\varepsilon}  \right)\nu(I)\int_{I}|u'(x)|^2\,dx.
\end{eqnarray*}
Now choose $\varepsilon > 0$ such that $\left(ln^{-2} + 4\sqrt{l}  \sqrt{\varepsilon}  \right)\nu(I) \le c\;ln^{-2}\nu(I)$, i.e.,
$$
\varepsilon \le \left(\frac{c- 1}{4n^2}\right)^2l.
$$
\end{proof}

Let $0 < a < b$. It follows from the embedding $W^1_2([a , b]) \hookrightarrow
C([a, b])$ that there exist constants $\alpha, \beta > 0$ such that
$$
\frac{|u(x)|^2}{|x|} \le \alpha \int_{a}^b |u'(t)|^2\, dt +  \beta \int_{a}^b \frac{|u(t)|^2}{|t|^2}\, dt ,  \ \
\forall u \in W^1_2([a, b]) \mbox{ and } \forall x\in [a, b] .
$$
 Since there are two constants involved here, it is convenient to rewrite the inequality in the following
form
\begin{equation}\label{1dHSob}
\frac{|u(x)|^2}{|x|} \le C(\kappa) \left(\int_{a}^b |u'(t)|^2\, dt +  \kappa \int_{a}^b \frac{|u(t)|^2}{|t|^2}\, dt\right) ,
\end{equation}$\forall u \in W^1_2([a, b])\mbox{ and } \forall x\in [a, b] $,
and to look for the best value of $C(\kappa)$ for a given $\kappa > 0$.
The best value of $C(\kappa) > 0$ is given by
\begin{equation}\label{1dHOpt}
C(\kappa) = \frac{1}{2\kappa}\,\left(1 + \sqrt{1 + 4\kappa}\,
\frac{b^{\sqrt{1 + 4\kappa}} + a^{\sqrt{1 + 4\kappa}}}{b^{\sqrt{1 + 4\kappa}} - a^{\sqrt{1 + 4\kappa}}}\right)\,,
\end{equation}
(see \cite[Appendix A]{Eugene}).
\begin{remark}\label{HSob01}
{\rm Suppose $u \in W^1_2([0, 1])$ and $u(0) = 0$. Then using \eqref{1dHSob},
\eqref{1dHOpt} with $b = 1$ and $a \to 0+$ one gets
\begin{eqnarray*}
\frac{|u(x)|^2}{|x|} \le \frac{1}{2\kappa}\,\left(1 + \sqrt{1 + 4\kappa}\right)
\left(\int_0^1 |u'(t)|^2\, dt +  \kappa \int_0^1 \frac{|u(t)|^2}{|t|^2}\, dt\right) ,   \\
\forall x \in (0, 1],
\end{eqnarray*}
and the right-hand side is finite due to Hardy's inequality. Note that
$$
\frac{1}{2\kappa}\,\left(1 + \sqrt{1 + 4\kappa}\right) < C(\kappa)
$$
for any $b > a > 0$ (see \eqref{1dHOpt}).}
\end{remark}

Let $\nu$ be a $\sigma$-finite positive Radon measure on $\mathbb{R}$. Consider the following operator on $L^2(\mathbb{R})$
$$
H_{\nu} := - \frac{d^2}{dx^2} - \nu\;.
$$
Define $H_{\nu}$ via its quadratic form
\begin{eqnarray*}
\mathcal{E}_{\mathbb{R}, \nu}[u] &:=& \int_{\mathbb{R}}|u'(x)|^2dx - \int_{\mathbb{R}}|u(x)|^2\, d\nu(x),\\
\textrm{Dom}(\mathcal{E}_{\mathbb{R}, \nu}) &:=& W^1_2(\mathbb{R})\cap L^2(\mathbb{R}, d\nu).
\end{eqnarray*}
Let
\begin{eqnarray*}
X: &=& W^1_2(\mathbb{R}),\\
X_0 &:=& \left\{ u\in X\; : \; u(0) = 0\right\},\\
X_1 &:=& \left\{u\in W^1_{2,\textrm{loc}}(\mathbb{R})\; :\; u(0) = 0, \; \int_\mathbb{R}\mid u'(x)\mid^2 dx < \infty\right\}.
\end{eqnarray*}
Then, $\textrm{dim}(X/X_0) = 1$ and $X_0 \subset X_1$. Let $\mathcal{E}_{X, \nu}$ ,$\mathcal{E}_{X_0, \nu}$ and $\mathcal{E}_{X_1, \nu}$ denote the forms $$\int_\mathbb{R} \mid u'(x)\mid^2 dx - \int_\mathbb{R} \mid u(x)\mid^2 d\nu(x)$$ on the domains $X \cap L^2(\mathbb{R}, d\nu),\; X_0 \cap L^2(\mathbb{R}, d\nu)$ and $X \cap L^2(\mathbb{R},  d\nu)$ respectively. Then
\begin{equation}\label{six}
N_-(\mathcal{E}_{\mathbb{R}, \nu}) =  N_-(\mathcal{E}_{X, \nu})  \leq N_-(\mathcal{E}_{X_0, \nu}) + 1 \leq N_-(\mathcal{E}_{X_1, \nu}) + 1.
\end{equation}
An estimate for the right hand of \eqref{six} is presented in \cite{Eugene} when $\nu$ is absolutely continuous with respect to the Lebesgue measure. We follow a similar argument.
It follows from Hardy's inequality (see, e.g., \cite[Theorem 327]{HLP}) that
\begin{eqnarray*}
\int_{\mathbb{R}} |u'(x)|^2\, dx + \kappa\, \int_{\mathbb{R}} \frac{|u(x)|^2}{|x|^2}\, dx &\le&
\int_{\mathbb{R}} |u'(x)|^2\, dx + 4\kappa \,\int_{\mathbb{R}} |u'(x)|^2\, dx \\&=&
(4\kappa + 1) \int_{\mathbb{R}} |u'(x)|^2\, dx ,  \ \ \ \forall u \in X_1 , \ \
\forall \kappa \ge 0 .
\end{eqnarray*}
Hence
\begin{equation}\label{kappaV}
N_- (\mathcal{E}_{X_1, \nu}) \le N_- (\mathcal{E}_{\kappa, \nu}) ,
\end{equation}
where
\begin{eqnarray*}
\mathcal{E}_{\kappa, \nu}[u] &:=& \int_{\mathbb{R}} |u'(x)|^2\, dx +
\kappa\, \int_{\mathbb{R}} \frac{|u(x)|^2}{|x|^2}\, dx -
(4\kappa + 1)\, \int_{\mathbb{R}}  |u(x)|^2\, d\nu(x) , \\
\mbox{Dom}\, (\mathcal{E}_{\kappa, \nu}) &=& X_1\cap L^2\left(\mathbb{R}, d\nu\right) .
\end{eqnarray*}
It follows from \eqref{six} and \eqref{kappaV} that
\begin{equation}\label{VkappaV}
N_- (\mathcal{E}_{\mathbb{R}, \nu}) \le N_- (\mathcal{E}_{\kappa, \nu}) + 1 .
\end{equation}
Let us  partition $\mathbb{R}$ into the intervals $I_n$ as follows:
\begin{eqnarray}\label{interval}
I_n := [2^{n - 1}, 2^n], \ n > 0 ,   \ I_0 := [-1, 1] ,  \ \
I_n := [-2^{|n|}, -2^{|n| - 1}], \ n < 0.
\end{eqnarray}
Then the variational principle (see \eqref{parti}) implies
\begin{equation}\label{VkappaVn}
N_- (\mathcal{E}_{\kappa, \nu}) \le \sum_{n \in \mathbb{Z}} N_- (\mathcal{E}_{\kappa, \nu, n}) ,
\end{equation}
where
\begin{eqnarray*}
&& \mathcal{E}_{\kappa, \nu, n}[u] := \int_{I_n} |u'(x)|^2\, dx +
\kappa\, \int_{I_n} \frac{|u(x)|^2}{|x|^2}\, dx -
(4\kappa + 1)\, \int_{I_n}  |u(x)|^2\, d\nu(x) , \\
&& \mbox{Dom}\, (\mathcal{E}_{\kappa, \nu, n}) =
W^1_2(I_n)\cap L^2\left(I_n, d\nu\right) , \ n \in \mathbb{Z}\setminus\{0\} , \\
&& \mbox{Dom}\, (\mathcal{E}_{\kappa, \nu, 0}) =
\{u \in W^1_2(I_0) : \ u(0) = 0\}\cap L^2\left(I_0, d\nu\right) .
\end{eqnarray*}

Let $n > 0$. For any $c >1$ and $N \in \mathbb{N}$, by Lemma \ref{acmeasure} there exists a subspace $\mathcal{L}_N \in
\mbox{Dom}\, (\mathcal{E}_{\kappa, \nu, n})$ of co-dimension $N$ such that
$$
\int_{I_n} |u(x)|^2\, d\nu(x) \le c\left(\frac{|I_n|}{N^2}\, \int_{I_n} \, d\nu(x)\right) \int_{I_n} |u'(x)|^2\, dx , \ \ \
\forall u \in \mathcal{L}_N.
$$
 If
$$
c(4\kappa + 1)\, \frac{|I_n|}{N^2}\, \int_{I_n} \, d\nu(x) \le 1 ,
$$
then $ \mathcal{E}_{\kappa, \nu, n}[u] \ge 0$, $\forall u \in \mathcal{L}_N$, and
$N_- (\mathcal{E}_{\kappa, \nu, n}) \le N$. Let
\begin{equation}\label{calAn}
\mathcal{A}_n := \int_{I_n} |x| \, d\nu(x) , \ n \not= 0 , \ \ \ \mathcal{A}_0 := \int_{I_0} \, d\nu(x) .
\end{equation}
Since $|I_n|\, \int_{I_n} \, d\nu(x) \le \mathcal{A}_n$, $n \not= 0$, it follows from the above that
$$
c(4\kappa + 1) \mathcal{A}_n \le N^2 \ \Longrightarrow \ N_- (\mathcal{E}_{\kappa, \nu, n}) \le N .
$$
Hence
\begin{equation}\label{ceilfn}
N_- (\mathcal{E}_{\kappa, \nu, n}) \le \left\lceil\sqrt{c(4\kappa + 1) \mathcal{A}_n}\right\rceil ,
\end{equation}
where $\lceil\cdot\rceil$ denotes the ceiling function, i.e. $\lceil a\rceil$ is the smallest integer
not less than $a$.
Suppose $\textrm{supp}\nu \cap I_n \neq \{2^{n-1}\}$, i.e., $\nu|_{I_n} \neq$ const.$\delta_{2^{n-1}}$. Then
$$
|I_n|\int_{I_n}d\nu(x) < \mathcal{A}_n\;.
$$
Take $c > 1$ such that
$$
c|I_n|\int_{I_n}d\nu(x) \le \mathcal{A}_n\,.
$$
Then applying Lemma \ref{acmeasure} with this $c$ implies
\begin{equation}\label{ceilfn*}
N_- (\mathcal{E}_{\kappa, \nu, n}) \le \left\lceil\sqrt{(4\kappa + 1) \mathcal{A}_n}\right\rceil .
\end{equation}
If $\nu|_{I_n} =$ const.$\delta_{2^{n-1}}$, then
$$
\int_{I_n} |u(x)|^2\,d\nu(x) = 0
$$ on the the subspace of co-dimension one consisting of functions $u\in W^1_2(I_n)$ such that $u(2^{n-1}) = 0$, and clearly \eqref{ceilfn*} holds.
The right-hand side of \eqref{ceilfn*} is at least 1, so one cannot feed it straight
into \eqref{VkappaVn}. One needs to find conditions under which
$N_- (\mathcal{E}_{\kappa, \nu, n}) = 0$.
 By \eqref{1dHSob},
we have that
\begin{eqnarray*}
\int_{I_n}  |u(x)|^2\, d\nu(x) &\le& C(\kappa)\int_{I_n}|x|\,d\nu(x)  \left(\int_{I_n} |u'(x)|^2\, dx +
\kappa\, \int_{I_n} \frac{|u(x)|^2}{|x|^2}\, dx\right)\\&=&\mathcal{A}_n C(\kappa) \left(\int_{I_n} |u'(x)|^2\, dx +
\kappa\, \int_{I_n} \frac{|u(x)|^2}{|x|^2}\, dx\right),
\end{eqnarray*}for all $u\in W^1_2(I_n)$.

Hence $N_- (\mathcal{E}_{\kappa, \nu, n}) = 0$, i.e. $\mathcal{E}_{\kappa, \nu, n}[u] \ge 0$, provided
$\mathcal{A}_n \le \Phi(\kappa)$, where
\begin{equation}\label{Psikappa}
\Phi(\kappa) := \frac{2\kappa}{4\kappa + 1}\,\left(1 + \sqrt{4\kappa + 1}\,
\frac{2^{\sqrt{4\kappa + 1}} + 1}{2^{\sqrt{4\kappa + 1}} - 1}\right)^{-1}\, .
\end{equation}
The above estimates for $N_- (\mathcal{E}_{\kappa, \nu, n})$ clearly hold for $n < 0$ as well, but
the case $n = 0$ requires some changes. Since $u(0) = 0$ for any
$u \in \mbox{Dom}\, (\mathcal{E}_{\kappa, \nu, 0})$, one can use the same argument as the one
leading to \eqref{ceilfn}, but with two differences: a) $\mathcal{L}_N$ can be chosen to be of
co-dimension $N - 1$, and b) $|I_0|\, \int_{I_0} \, d\nu(x) = 2\mathcal{A}_0$. This gives the following
analogue of \eqref{ceilfn}
$$
N_- (\mathcal{E}_{\kappa, \nu, 0}) \le \left\lceil\,\sqrt{2c(4\kappa + 1) \mathcal{A}_0}\right\rceil - 1 \, .
$$ for any $c > 1$.  We can choose $c > 1$ such that
\begin{equation}\label{ceilfn**}
N_- (\mathcal{E}_{\kappa, \nu, 0}) \le \,\sqrt{2(4\kappa + 1) \mathcal{A}_0}\,,
\end{equation} (see Lemma \ref{c} below).
In particular, $N_- (\mathcal{E}_{\kappa, \nu, 0}) = 0$ if
$\mathcal{A}_0 < 1/(2(4\kappa + 1))$. By Remark \ref{HSob01}, one can easily see that the
implication $\mathcal{A}_n \le \Phi(\kappa) \  \Longrightarrow \
N_- (\mathcal{E}_{\kappa, \nu, n}) = 0$ remains true for $n = 0$.
Now it follows from \eqref{VkappaV} and \eqref{VkappaVn} that
\begin{equation}\label{XGenEst}
N_- (\mathcal{E}_{\mathbb{R}, 2\nu}) \le 1 +
\sum_{\{n \in \mathbb{Z}\setminus\{0\} : \ \mathcal{A}_n > \Phi(\kappa)\}}
\left\lceil\sqrt{(4\kappa + 1) \mathcal{A}_n}\right\rceil +
\,\sqrt{2(4\kappa + 1) \mathcal{A}_0}\, ,
\end{equation}
and one can drop the last term if $\mathcal{A}_0 \le \Phi(\kappa)$. The presence of the
parameter $\kappa$ in this estimate allows a degree of flexibility. In order to decrease the number
of terms in the sum in the right-hand side, one should choose $\kappa$ in such a way that
$\Phi(\kappa)$ is close to its maximum. A Mathematica calculation shows that the maximum is
approximately $0.092$ and is achieved at $\kappa \approx 1.559$. For values of $\kappa$
close to 1.559, one has
$$
\mathcal{A}_n > \Phi(\kappa)  \  \Longrightarrow \
\sqrt{(4\kappa + 1) \mathcal{A}_n}  > \sqrt{(4\kappa + 1) \Phi(\kappa)} \approx 0.816 .
$$
Since $\lceil a\rceil \le 2a$ for $a \ge 1/2$, \eqref{XGenEst} implies
$$
N_- (\mathcal{E}_{\mathbb{R}, \nu}) \le 1 + 2\sqrt{(4\kappa + 1)}
\sum_{\mathcal{A}_n > \Phi(\kappa)}  \sqrt{\mathcal{A}_n}
$$
with $\kappa \approx 1.559$. Hence
\begin{equation}\label{Est1}
N_-(\mathcal{E}_{\mathbb{R}, \nu})\leq 1 + 5.06\sum_{\{n\in\mathbb{Z},\; \mathcal{A}_n > 0.092\}}\sqrt{\mathcal{A}_n}\,.
\end{equation}
\begin{lemma}\label{c}
 {\rm For every $y\in\mathbb{R}_+$, there exists  $c > 1$ such that
$$
\lceil cy \rceil - 1 \le y\,.
$$
}
\end{lemma}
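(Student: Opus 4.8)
The plan is to reduce the inequality $\lceil cy\rceil - 1 \le y$ to an elementary statement about $\lfloor y\rfloor$ and then exhibit an explicit admissible $c$. First I would observe that, since $\lceil cy\rceil$ is an integer, the inequality $\lceil cy\rceil \le y + 1$ is equivalent to $\lceil cy\rceil \le \lfloor y\rfloor + 1$ (using $\lfloor y + 1\rfloor = \lfloor y\rfloor + 1$), which in turn is equivalent to $cy \le \lfloor y\rfloor + 1$: the forward implication is just $cy \le \lceil cy\rceil$, and the reverse one follows because $\lfloor y\rfloor + 1$ is an integer not smaller than $cy$, hence not smaller than $\lceil cy\rceil$. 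So it suffices to produce some $c > 1$ with $cy \le \lfloor y\rfloor + 1$.

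Next, since $y > 0$ and, by the definition of the floor function, $y < \lfloor y\rfloor + 1$, the quotient $c := (\lfloor y\rfloor + 1)/y$ is well defined and strictly greater than $1$. For this choice one has $cy = \lfloor y\rfloor + 1 \in \mathbb{Z}$, hence $\lceil cy\rceil = \lfloor y\rfloor + 1 \le y + 1$, that is, $\lceil cy\rceil - 1 = \lfloor y\rfloor \le y$, which is exactly the asserted bound.

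I do not anticipate any genuine obstacle here; the only point requiring a little care is that the argument should cover both $y \in \mathbb{Z}$ and $y \notin \mathbb{Z}$, but the reduction above handles them uniformly via the single inequality $y < \lfloor y\rfloor + 1$. Should one wish to avoid the floor function altogether, the same conclusion is reached by taking $c = \lceil y\rceil / y$ when $y$ is not an integer (so that $cy = \lceil y\rceil < y + 1$ and therefore $\lceil cy\rceil - 1 = \lceil y\rceil - 1 \le y$) and $c = 1 + \tfrac{1}{2y}$ when $y$ is a positive integer (so that $cy = y + \tfrac12$ and $\lceil cy\rceil - 1 = y$).
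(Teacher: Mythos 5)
Your proof is correct and uses essentially the same elementary reasoning as the paper; the paper splits into the cases $y\notin\mathbb{Z}_+$ and $y\in\mathbb{Z}_+$ and in each case picks a $c>1$ making $\lceil cy\rceil$ land on the right integer, while your main argument avoids the explicit case split by taking $c=(\lfloor y\rfloor+1)/y$, which handles both cases at once. The alternative construction at the end of your proposal (choosing $c=\lceil y\rceil/y$ for non-integer $y$ and $c=1+\tfrac{1}{2y}$ for integer $y$) is effectively the paper's own argument.
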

\begin{proof}
Case 1: Suppose $y\in\mathbb{R}_+\backslash\mathbb{Z}_+$. Then there exists $l\in\mathbb{Z}_+$ such that
$$
l < y < l + 1\,.
$$
Take $c > 1$ such that
$$
l < cy < l + 1\,.
$$
Then
$$
\lceil cy\rceil - 1 = l + 1 - 1 = l < y\,.
$$
Case 2: Suppose $y \in\mathbb{Z}_+$. Take $c > 1$ such that
$$
cy < y + 1\,.
$$
Then
$$
\lceil cy \rceil - 1 = y+ 1 - 1  = y\,.
$$

\end{proof}

\chapter{Two dimensional Schr\"odinger Operators with potentials generated by a Radon measure}\markboth{Chapter \ref{Twostrip}\label{measure}.
Introduction}{}\label{Introduction}
\section{Introduction}\label{measure-introduc}
In this chapter, we obtain upper estimates for the number of negative eigenvalues of two dimensional Schr\"odinger operators with potentials of the form $V\mu$, where $\mu$ is a $\sigma$-finite positive Radon measure on $\mathbb{R}^2$  and $V \ge 0$ is a real valued function locally integrable on $\mathbb{R}^2$ with respect to $\mu$  as in $\S$ \ref{index}.\\\\
Consider the operator on $L^2(\mathbb{R}^2)$,
\begin{equation}\label{measure1}
H_{V\mu} := -\Delta - V\mu,\;\;\;\; V\geq 0.
\end{equation}  We define $H_{V\mu}$ via its quadratic form by
\begin{equation}\label{measure2}
\mathcal{E}_{V\mu, \mathbb{R}^2}[u] := \int_{\mathbb{R}^2}|\nabla u|^2dx - \int_{\mathbb{R}^2}V|u|^2d\mu
\end{equation}
with domain $W^1_2(\mathbb{R}^2)\cap L^2(\mathbb{R}^2, Vd\mu)$. We shall denote by $N_-\left(\mathcal{E}_{V\mu, \mathbb{R}^2}\right)$ the number of negative eigenvalues (counted with multiplicities) of the operator \eqref{measure1}.   \\\\
If one introduces the ``supporting'' term $|x|^{-2}$ to the Laplacian in \eqref{measure1}, we have
\begin{equation}\label{measure2}
T_{V\mu} := -\Delta + |x|^{-2} - V\mu,\;\;\;\; V\geq 0.
\end{equation}
Under certain restrictions on $\mu$, it was shown by A. Laptev and Yu. Nestrusov \cite{LapNest} that
\begin{equation}\label{lapnest}
N_-\left(\mathcal{E}_{T, V\mu, \mathbb{R}^2}\right) \le C\int_{\mathbb{R}^2}V(x)\,d\mu(x),
\end{equation}
where $N_-\left(\mathcal{E}_{T, V\mu, \mathbb{R}^2}\right)$ denotes the number of negative eigenvalues of $T_{V\mu}$ and $C$ is a constant independent of $V$ and $\mu$. They also proved if $\mu$ is absolutely continuous with respective to the Lebesgue measure, then
\begin{equation}\label{lapnest1}
N_-\left(\mathcal{E}_{T,V\mu, \mathbb{R}^2}\right) \le C(p)\|V\|_{L_1\left(\mathbb{R}_+, L_p(\mathbb{S})\right)}\;\;\;\;\mathbb{S}= [0, 2\pi],\;\; p > 1, \;\; V\geq 0.
\end{equation} However, \eqref{lapnest1} fails if the supporting term is dropped.\\\\  In this Chapter, we restrict ourselves to the operator \eqref{measure1},  where the measure $\mu$ is of the ``power'' type (see \eqref{Ahlfors}).\\\\
Let $\Psi$ and $\Phi$ be mutually complementary N-functions and let $L_{\Phi}(\Omega, \mu)$ and $L_{\Psi}(\Omega, \mu)$ be the respective Orlicz spaces on a set $\Omega\subseteq\mathbb{R}^2$ of finite measure $\mu(\Omega)$.
We will use the notation $\|f\|_{\Psi, \Omega, \mu}$ and $\|f\|^{\textrm{(\textrm{av})}}_{\Psi, \Omega, \mu}$ respectively  for the norms \eqref{Orlicz} and \eqref{OrlAverage} on $L_{\Psi}(\Omega, \mu)$.\\

\begin{lemma}\label{direction}
{\rm Let $\mu$ be a $\sigma$-finite Radon measure on $\mathbb{R}^2$ such that $\mu(\{x\})= 0,\;\forall x\in\mathbb{R}^2$. Let
\begin{equation}\label{sim}
\Sigma := \left\{\theta \in [0, \pi)\;:\;\exists\;l_{\theta}\mbox{ such that }\mu(l_{\theta}) > 0\right\},
\end{equation}
where $l_{\theta}$ is a line in $\mathbb{R}^2$ in the direction of the vector $(\cos\theta, \sin\theta)$. Then $\Sigma$ is at most countable.
}
\end{lemma}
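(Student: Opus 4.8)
The plan is to reduce to a setting with finite total mass and a uniform positive lower bound on the masses of the relevant lines, and then exploit the fact that two distinct lines with distinct directions meet in a single point, which has $\mu$-measure zero because $\mu$ has no atoms.

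First I would fix, for each $\theta\in\Sigma$, one line $l_\theta$ in the direction $(\cos\theta,\sin\theta)$ with $\mu(l_\theta)>0$; note that different elements of $\Sigma$ produce lines with different directions, hence pairwise distinct, pairwise non-parallel lines. Since $\mu$ is a Radon measure, $\mu(B_m)<\infty$ for the balls $B_m:=\{x\in\mathbb{R}^2:|x|<m\}$, $m\in\mathbb{N}$. Because $l_\theta=\bigcup_{m}(l_\theta\cap B_m)$ and $\mu(l_\theta)>0$, there exist $m,k\in\mathbb{N}$ with $\mu(l_\theta\cap B_m)>1/k$. Hence $\Sigma=\bigcup_{m,k\in\mathbb{N}}\Sigma_{m,k}$, where
$$
\Sigma_{m,k}:=\left\{\theta\in\Sigma:\ \mu(l_\theta\cap B_m)>\tfrac1k\right\},
$$
and it suffices to show that each $\Sigma_{m,k}$ is finite.

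Fixing $m,k$, I would take distinct $\theta_1,\dots,\theta_N\in\Sigma_{m,k}$ with corresponding lines $l_1,\dots,l_N$. For $i\neq j$ the lines $l_i,l_j$ are distinct and non-parallel, so $l_i\cap l_j$ consists of a single point, whence $\mu(l_i\cap l_j)=0$ by non-atomicity. The Bonferroni inequality then yields
$$
\mu(B_m)\ \ge\ \mu\Big(\bigcup_{i=1}^N\bigl(l_i\cap B_m\bigr)\Big)\ \ge\ \sum_{i=1}^N\mu(l_i\cap B_m)-\sum_{1\le i<j\le N}\mu(l_i\cap l_j)\ >\ \frac{N}{k},
$$
so $N<k\,\mu(B_m)$. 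Thus $\Sigma_{m,k}$ is finite, and $\Sigma$ is a countable union of finite sets, hence at most countable.

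I do not expect a genuine obstacle here: non-atomicity is used exactly once, to kill the pairwise intersections $\mu(l_i\cap l_j)$, and $\sigma$-finiteness enters only through finiteness of a Radon measure on balls, allowing the counting bound $N<k\,\mu(B_m)$ to make sense. The one point requiring care is organizing the two-parameter decomposition $\Sigma=\bigcup_{m,k}\Sigma_{m,k}$ so that finiteness of the ambient mass $\mu(B_m)$ and a uniform positive lower bound on the individual masses $\mu(l_\theta\cap B_m)$ hold simultaneously.
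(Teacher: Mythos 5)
Your proof is correct and follows essentially the same approach as the paper: decompose $\Sigma$ into countably many pieces indexed by a ball radius and a uniform lower bound on the line-masses, then use non-atomicity to annihilate pairwise intersections and local finiteness of $\mu$ on balls to bound how many lines can fit. The only stylistic difference is that you obtain the finiteness of each piece directly via the explicit bound $N<k\,\mu(B_m)$, whereas the paper argues by contradiction (assuming $\Sigma_N$ uncountable to extract an infinite $\Sigma_{N,\delta}$), but the underlying mechanism is identical.
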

\begin{proof}
Let $$
\Sigma_N := \left\{\theta \in [0, \pi)\;:\;\exists\; \l_{\theta} \mbox{ such that } \mu(l_{\theta}\cap B(0, N)) > 0\right\},
$$
where $B(0, N)$ is the ball of radius $N\in\mathbb{N}$ centred at $0$. Then
$$
\Sigma = \underset{N\in\mathbb{N}}\cup \Sigma_N.
$$It is now enough to show that $\Sigma_N$ is at most countable for $\forall N\in\mathbb{N}$. Suppose that $\Sigma_N$ is uncountable. Then there exists a $\delta > 0$ such that
$$
\Sigma_{N,\delta} := \left\{\theta \in [0, \pi)\;:\;\exists\; \l_{\theta} \mbox{ such that }\mu(l_{\theta}\cap B(0, N)) > \delta\right\}
$$ is infinite.
Otherwise, $\Sigma_N = \underset{n\in\mathbb{N}}\cup \Sigma_{N, \frac{1}{n}}$ would have been finite or countable. Now take $\theta_1,..., \theta_k,... \in\Sigma_{N, \delta}$. Then
$$
\mu\left(l_{\theta_k}\cap B(0, N)\right) > \delta,\;\;\;\forall k\in\mathbb{N}\,.
$$
Since $l_{\theta_j}\cap l_{\theta_k} ,\;j\neq k$ contains at most one point, then $$\mu\left(\underset{j \neq k}\cup (l_{\theta_j}\cap l_{\theta_k})\right) = 0.$$
Let
$$
\tilde{l}_{\theta_k}:= l_{\theta_k}\backslash\underset{j \neq k}\cup (l_{\theta_j}\cap l_{\theta_k})\,.
$$Then $\tilde{l}_{\theta_j}\cap\tilde{l}_{\theta_k} = \emptyset,\;j \neq k$ and $\tilde{l}_{\theta_k}\cap B(0, N) \subset B(0, N)$.
So
$$
\mu\left(\underset{k\in\mathbb{N}}\cup (\tilde{l}_{\theta_k}\cap B(0, N))\right) \le \mu\left(B(0, N)\right) < \infty\,.
$$ But
$$
\mu\left(\tilde{l}_{\theta_k}\cap B(0, N)\right) = \mu\left(l_{\theta_k}\cap B(0, N)\right) \ge \delta
$$
which implies
$$
\underset{k\in\mathbb{N}}\sum \mu\left( \tilde{l}_{\theta_k}\cap B(0, N)\right) \geq \underset{k\in\mathbb{N}}\sum\delta = \infty\,.
$$
This contradiction means that $\Sigma_N$ is at most countable for each $N\in\mathbb{N}$. Hence $\Sigma$ is at most countable.
\end{proof}
\begin{corollary}\label{cor-direct}
{\rm There exists $\theta_0 \in [0, \pi)$ such that $\theta_0 \notin \Sigma$ and $\theta_0 + \frac{\pi}{2} \notin \Sigma$.
}
\end{corollary}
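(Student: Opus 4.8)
The plan is to deduce this immediately from Lemma \ref{direction} together with a counting argument, the only delicate point being the bookkeeping of directions modulo $\pi$. Since a line in direction $(\cos\theta,\sin\theta)$ is the same as a line in direction $(\cos(\theta+\pi),\sin(\theta+\pi))$, it is natural to regard $\Sigma$ as a subset of the circle $\mathbb{T}:=\mathbb{R}/\pi\mathbb{Z}$; under this identification $\theta\mapsto\theta+\frac{\pi}{2}$ is a well-defined involution of $\mathbb{T}$, and "$\theta_0+\frac{\pi}{2}$" in the statement is to be read modulo $\pi$ (so that it again lies in the range $[0,\pi)$).

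First I would note that, by Lemma \ref{direction}, $\Sigma$ is at most countable. Consider the set
$$
\Sigma' := \left\{\theta\in[0,\pi)\;:\;\theta+\tfrac{\pi}{2}\!\!\pmod{\pi}\ \in\ \Sigma\right\}.
$$
Since $\Sigma'$ is the image of $\Sigma$ under the bijection $\eta\mapsto\eta-\frac{\pi}{2}\pmod\pi$ of $[0,\pi)$, it is also at most countable. Hence $\Sigma\cup\Sigma'$ is at most countable, while $[0,\pi)$ has the cardinality of the continuum; therefore there exists
$$
\theta_0\in[0,\pi)\setminus(\Sigma\cup\Sigma').
$$
By construction $\theta_0\notin\Sigma$, and $\theta_0\notin\Sigma'$ means precisely that $\theta_0+\frac{\pi}{2}\pmod\pi\notin\Sigma$, i.e. no line in the direction orthogonal to $(\cos\theta_0,\sin\theta_0)$ carries positive $\mu$-measure either.

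I do not expect any real obstacle here: the argument is a one-line application of "a countable set cannot exhaust an uncountable set," and the only thing worth stating carefully is the reduction to the circle $\mathbb{R}/\pi\mathbb{Z}$ so that the perpendicular direction $\theta_0+\frac{\pi}{2}$ is again represented by an element of $[0,\pi)$. If one prefers to avoid speaking of $\mathbb{R}/\pi\mathbb{Z}$, the same proof goes through verbatim by splitting into the cases $\theta_0\in[0,\frac{\pi}{2})$ and $\theta_0\in[\frac{\pi}{2},\pi)$ and replacing $\theta_0+\frac{\pi}{2}$ by $\theta_0-\frac{\pi}{2}$ in the latter.
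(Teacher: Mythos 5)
Your proof is correct and follows essentially the same route as the paper: the paper likewise defines the shifted set $\Sigma - \frac{\pi}{2}$, observes it is at most countable, and selects $\theta_0 \notin \Sigma \cup (\Sigma - \frac{\pi}{2})$. Your only addition is the explicit attention to arithmetic modulo $\pi$, which the paper leaves implicit.
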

\begin{proof}
The set
$$
\Sigma - \frac{\pi}{2} := \left\{ \theta - \frac{\pi}{2} \;: \theta\in \Sigma\right\}
$$ is at most countable. This implies that there exists a $\theta_0\notin \Sigma \cup (\Sigma - \frac{\pi}{2})$. Thus $\theta_0 + \frac{\pi}{2}\notin \Sigma$.
\end{proof}
Let $Q$ be an arbitrary unit square with its sides in the directions determined by $\theta_0$ and $\theta_0 + \frac{\pi}{2}$ in Corollary \ref{cor-direct}.  For a given $x\in\overline{Q}$ and $t > 0$, let $Q_x(t)$ be a square centred at $x$ with edges of length $t$ parallel to those of $Q$.
\begin{lemma}\label{measlemma2}
{\rm Suppose that $\Psi$ satisfies the $\Delta_2$-condition (see \ref{global}). Then for all $f \in L_{\Psi}(Q, \mu)$, the function $ f \longmapsto\|f\|^{\textrm{(\textrm{av})}}_{\Psi, Q_x(t), \mu}$ is continuous.}
\end{lemma}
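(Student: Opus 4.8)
The plan is to show that $x \longmapsto \|f\|^{(\mathrm{av})}_{\Psi, Q_x(t), \mu}$ is continuous on $\overline{Q}$ for each fixed $t>0$ (the argument for the dependence on $t$ is identical). Fix $x_0 \in \overline{Q}$ and a sequence $x_n \to x_0$. Since $\|f\|^{(\mathrm{av})}_{\Psi, Q_x(t), \mu}$ depends only on the restriction of $f$ to $Q_x(t)$, I may extend $f$ by zero outside $Q$; this extension still lies in $L_\Psi$ of any bounded set because $\Psi \in \Delta_2$ forces $L_\Psi(Q,\mu) = K_\Psi(Q,\mu)$, so $\int_Q \Psi(|f|)\,d\mu < \infty$. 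Fix a bounded open set $U$ containing $\overline{Q_{x_0}(t)}$ and all the $Q_{x_n}(t)$; then $\mu(U)<\infty$ and $\int_U \Psi(|f|)\,d\mu = \int_Q \Psi(|f|)\,d\mu < \infty$.

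First I would establish the measure-theoretic fact that $\mu\bigl(Q_{x_n}(t)\triangle Q_{x_0}(t)\bigr) \to 0$. Each $Q_x(t)$ is a translate of $Q_{x_0}(t)$, so if $y$ lies in the open square $Q_{x_0}(t)$ or in the open complement of its closure, then $y \notin Q_{x_n}(t)\triangle Q_{x_0}(t)$ for all large $n$; hence $\limsup_n\bigl(Q_{x_n}(t)\triangle Q_{x_0}(t)\bigr) \subseteq \partial Q_{x_0}(t)$. Now $\partial Q_{x_0}(t)$ is covered by two lines in the direction $\theta_0$ and two lines in the direction $\theta_0+\tfrac{\pi}{2}$, where $\theta_0$ is the direction supplied by Corollary \ref{cor-direct}; by the choice of $\theta_0$ no such line carries positive $\mu$-measure, so $\mu(\partial Q_{x_0}(t)) = 0$. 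Since all sets involved sit inside $U$ and $\mu(U)<\infty$, the reverse Fatou lemma gives $\limsup_n\mu\bigl(Q_{x_n}(t)\triangle Q_{x_0}(t)\bigr)\le\mu(\partial Q_{x_0}(t))=0$; in particular $\mu(Q_{x_n}(t))\to\mu(Q_{x_0}(t))$. This is the step that genuinely uses the non-atomicity of $\mu$ (through Lemma \ref{direction} and Corollary \ref{cor-direct}), and I expect it to be the main point to get right — without the special directions the boundary could carry positive mass and the lemma would fail.

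Next I would record two auxiliary facts on $U$. (a) Extending test functions by zero (and using $\Phi(0)=0$) gives the identity $\|f\|^{(\mathrm{av})}_{\Psi, Q_x(t),\mu} = \|f\chi_{Q_x(t)}\|^{(\mathrm{av}),\tau_x}_{\Psi, U, \mu}$ with $\tau_x:=\mu(Q_x(t))/\mu(U)$, where $\|\cdot\|^{(\mathrm{av}),\tau}_{\Psi,U,\mu}$ is the norm \eqref{OrlAverage*}. (b) The Orlicz norm is absolutely continuous: given $\varepsilon\in(0,1]$ and writing $k$ for the $\Delta_2$-constant of $\Psi$, monotonicity of $\Psi$ yields $\Psi(s/\varepsilon)\le k^{\lceil\log_2(1/\varepsilon)\rceil}\Psi(s)$ for all $s\ge0$, so $\int_E\Psi(|f|/\varepsilon)\,d\mu\le1$ once $\mu(E)$ is small enough (by absolute continuity of the finite integral $\int_U\Psi(|f|)\,d\mu$), whence $\|f\chi_E\|_{(\Psi)}\le\varepsilon$ by \eqref{LuxNormImpl} and $\|f\chi_E\|_{\Psi}\le2\varepsilon$ by \eqref{Luxemburgequiv}. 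Combining (b) with the first step (and $|f|\,|\chi_A-\chi_B| = |f|\chi_{A\triangle B}$), $\|f\chi_{Q_{x_n}(t)}-f\chi_{Q_{x_0}(t)}\|_{\Psi,U,\mu} = \|f\chi_{Q_{x_n}(t)\triangle Q_{x_0}(t)}\|_{\Psi,U,\mu}\to0$.

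Finally I would combine everything via the triangle inequality, writing $\tau_n:=\tau_{x_n}\to\tau_0:=\tau_{x_0}$ (by the first step):
\begin{align*}
\bigl|\,\|f\|^{(\mathrm{av})}_{\Psi, Q_{x_n}(t),\mu}-\|f\|^{(\mathrm{av})}_{\Psi, Q_{x_0}(t),\mu}\bigr|
&\le \bigl|\,\|f\chi_{Q_{x_n}(t)}\|^{(\mathrm{av}),\tau_n}_{\Psi,U,\mu}-\|f\chi_{Q_{x_0}(t)}\|^{(\mathrm{av}),\tau_n}_{\Psi,U,\mu}\bigr| \\
&\quad + \bigl|\,\|f\chi_{Q_{x_0}(t)}\|^{(\mathrm{av}),\tau_n}_{\Psi,U,\mu}-\|f\chi_{Q_{x_0}(t)}\|^{(\mathrm{av}),\tau_0}_{\Psi,U,\mu}\bigr|.
\end{align*}
The first term is at most $\|f\chi_{Q_{x_n}(t)}-f\chi_{Q_{x_0}(t)}\|^{(\mathrm{av}),\tau_n}_{\Psi,U,\mu}$, which, by the equivalence between $\|\cdot\|^{(\mathrm{av}),\tau}_{\Psi,U,\mu}$ and $\|\cdot\|_{\Psi,U,\mu}$ with constants controlled since $\sup_n\tau_n<\infty$ (cf. Lemma \ref{lemma8} and Corollary \ref{avequiv}), is bounded by a constant times $\|f\chi_{Q_{x_n}(t)}-f\chi_{Q_{x_0}(t)}\|_{\Psi,U,\mu}\to0$; the second term tends to $0$ by Lemma \ref{lemma8} since $\tau_n\to\tau_0$. (If $\mu(Q_{x_0}(t))=0$ then $\tau_n\to0$ and $\|f\|^{(\mathrm{av})}_{\Psi, Q_{x_n}(t),\mu}\le\max\{1,\mu(Q_{x_n}(t))\}\|f\chi_{Q_{x_n}(t)}\|_{\Psi,U,\mu}\to0$ by Corollary \ref{avequiv} and step (b), so this degenerate case is trivial.) Hence $\|f\|^{(\mathrm{av})}_{\Psi, Q_{x_n}(t),\mu}\to\|f\|^{(\mathrm{av})}_{\Psi, Q_{x_0}(t),\mu}$, and since $x_0$ was arbitrary the function is continuous.
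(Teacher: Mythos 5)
Your proof is correct and rests on the same two key ingredients as the paper's — the zero $\mu$-measure of the boundary $\partial Q_x(\cdot)$ ensured by Corollary \ref{cor-direct}, and the $\Delta_2$-based absolute continuity of the Orlicz norm — but it is organized along a genuinely different route. The paper treats the variable $t$ directly (note the statement's ``$f\longmapsto$'' is a typo for ``$t\longmapsto$''; the application in Lemma \ref{measlemma4} is an intermediate value argument in the side length $t$ for fixed centre $x$): it compares the two norms for $t>t_0$ by exploiting the nesting $Q_x(t_0)\subset Q_x(t)$, scales test functions by $\rho=\mu(Q_x(t_0))/\mu(Q_x(t))$ to shift the constraint set, splits the resulting difference into an integral over the annulus $Q_x(t)\setminus Q_x(t_0)$ plus a $(1-\rho)$-term, controls the annulus term via H\"older and a contradiction argument showing $\|f\|_{(\Psi,Q_x(t)\setminus Q_x(t_0),\mu)}\to 0$ (citing (9.21)--(9.22) of \cite{KR}), and then must separately handle $t<t_0$. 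You instead place all squares on a fixed ambient domain $U$ via the identity $\|f\|^{(\mathrm{av})}_{\Psi,Q_\cdot(t),\mu}=\|f\chi_{Q_\cdot(t)}\|^{(\mathrm{av}),\tau}_{\Psi,U,\mu}$, prove $\mu(Q_{x_n}(t)\triangle Q_{x_0}(t))\to 0$ by reverse Fatou and $\mu(\partial Q_{x_0}(t))=0$, and then conclude by a triangle inequality together with Lemma \ref{lemma8} and a quantitative $\Delta_2$ estimate for the absolute continuity of the Luxemburg norm. Your version is symmetric (no case split on the direction of approach), replaces the external citation by an explicit $\Delta_2$ computation, and adapts without change to either the $x$- or $t$-variable; the paper's version avoids introducing $U$ and the symmetric-difference machinery and is tied to the nested configuration. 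Both correctly hinge on the same measure-theoretic and Orlicz-space facts, so I would regard yours as a valid, somewhat more modular alternative.
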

\begin{proof}
For every interval $I\subseteq Q$ parallel to the sides of $Q$, $\mu(I) = 0$. Let $t > t_0 > 0$. Then
\begin{eqnarray*}
0 &\le& \|f\|^{\textrm{(av)}}_{\Psi, Q_x(t), \mu} - \|f\|^{\textrm{(av)}}_{\Psi, Q_x(t_0), \mu}\\&=& \textrm{sup}\left\{ \left|\int_{Q_x(t)}fg\;d\mu\right| : \int_{Q_x(t)}\Phi(|g(x)|)\,d\mu \le \mu(Q_x(t))\right\}\\ &-& \textrm{sup}\left\{ \left|\int_{Q_x(t_0)}fh\;d\mu\right| : \int_{Q_x(t_0)}\Phi(|h(x)|)\,d\mu \le \mu(Q_x(t_0))\right\}.
\end{eqnarray*}
Take $ \rho = \frac{\mu(Q_x(t_0))}{\mu(Q_x(t))} \le 1$ and $h = \rho g$. Then
\begin{eqnarray*}
\int_{Q_x(t_0)}\Phi(|\rho g|)\,d\mu &\le& \int_{Q_x(t)}\Phi(|\rho g|)\,d\mu\\ &=& \rho\int_{Q_x(t)}\Phi(| g|)\,d\mu\\ &\le& \rho\mu(Q_x(t))\\ &\le& \mu(Q_x(t_0)).
\end{eqnarray*}
So,
\begin{eqnarray*}
0 &\le& \|f\|^{\textrm{(av)}}_{\Psi, Q_x(t), \mu} - \|f\|^{\textrm{(av)}}_{\Psi, Q_x(t_0), \mu}\\&=& \textrm{sup}\left\{ \left|\int_{Q_x(t)}fg\;d\mu\right| : \int_{Q_x(t)}\Phi(|g(x)|)\,d\mu \le \mu(Q_x(t))\right\}\\ &-& \textrm{sup}\left\{ \left|\int_{Q_x(t_0)}fh\;d\mu\right| : \int_{Q_x(t_0)}\Phi(|h(x)|)\,d\mu \le \mu(Q_x(t_0))\right\}\\&\le& \textrm{sup}\left\{ \left|\int{Q_x(t)}fg\;d\mu\right| : \int_{Q_x(t)}\Phi(|g(x)|)\,d\mu \le \mu(Q_x(t))\right\}\\ &-& \textrm{sup}\left\{\rho \left|\int_{Q_x(t_0)}fg\;d\mu\right| : \int_{Q_x(t)}\Phi(|g(x)|)\,d\mu \le \mu(Q_x(t))\right\}\\&\le& \textrm{sup}\left\{\left|\int_{Q_x(t)}fg\;d\mu\right|-  \rho\left|\int_{Q_x(t_0)}fg\;d\mu\right| : \int_{Q_x(t)}\Phi(|g(x)|)\,d\mu \le \mu(Q_x(t))\right\}\\&\le&\textrm{sup}\left\{\left|\int_{Q_x(t)\setminus Q_x(t_0)}fg\;d\mu\right| : \int_{Q_x(t)}\Phi(|g(x)|)\,d\mu \le \mu(Q_x(t))\right\} \\&+& (1 - \rho)\textrm{sup}\left\{\left|\int_{Q_x(t_0)}fg\;d\mu\right| : \int_{Q_x(t)}\Phi(|g(x)|)\,d\mu \le \mu(Q_x(t))\right\}.
\end{eqnarray*}
For each $g$, $\int_{Q_x(t)\setminus Q_x(t_0)}fg\;d\mu  \longrightarrow 0$ as $t \longrightarrow t_0$ follows from the absolute continuity of the Lebesgue integral.\\ However, $\underset{g}\sup\left\{\int_{Q_x(t)\setminus Q_x(t_0)}fg\;d\mu :\int_{Q_x(t)}\Phi(|g(x)|)\,d\mu \le \mu(Q_x(t))\right\} \longrightarrow 0$ as $t \longrightarrow t_0$ is not immediate. We can estimate this term using  the H\"older inequality (see \eqref{h2})
\begin{eqnarray*}
&&\underset{g}\sup\left\{\int_{Q_x(t)\setminus Q_x(t_0)}fg\,d\mu  : \int_{Q_x(t)}\Phi(|g(x)|)\,d\mu \le \mu(Q_x(t))\right\}\\&&\le \|f\|_{\left(\Psi, Q_x(t)\setminus Q_x(t_0),\mu\right)}.\sup\left\{\|g\|_{\Phi, Q_x(t)\setminus Q_x(t_0),
\mu} : \int_{Q_x(t)}\Phi(|g(x)|)\,d\mu \le \mu(Q_x(t))\right\} \\&&\le \|f\|_{\left(\Psi,Q_x(t)\setminus Q_x(t_0),\mu\right)}.2 \max\{1, \mu(Q_x(t))\}
\end{eqnarray*}(see \eqref{Luxemburgequiv} and \eqref{LuxNormPre}).
It is clear that $\|f\|_{\left(\Psi, Q_x(t)\setminus Q_x(t_0),\mu\right)}$ is a non-increasing function of $t$. Suppose
$$
\underset{t \longrightarrow t_0}\lim \|f\|_{\left(\Psi, Q_x(t)\setminus Q_x(t_0),\mu\right)} = \kappa_0 > 0.
$$ Since $\Psi$ satisfies the $\Delta_2$ condition, then
$$
\int_{Q_x(t)\setminus Q_x(t_0)}\Psi \left(\frac{|f|}{\kappa_0}\right)\,d\mu \geq 1\,,\;\;\;\; \forall t > t_0
$$ (see (9.21) and (9.22) in \cite{KR})
which contradicts the absolute continuity of the Lebesgue integral. Hence
$$
\underset{t \longrightarrow t_0}\lim \|f\|_{\left(\Psi, Q_x(t)\setminus Q_x(t_0),\mu\right)} = 0.
$$
Now,  $\mu \left(Q_x(t)\setminus Q_x(t_0)\right) \longrightarrow \mu(\partial Q_x(t_0)) = 0$ as $t \longrightarrow t_0$. This implies
$$
\rho = \frac{\mu(Q_x(t_0))}{\mu(Q_x(t))} = 1 - \frac{\mu\left(Q_x(t)\setminus Q_x(t_0)\right)}{\mu(Q_x(t))}\longrightarrow 1 \;\;\textrm{as} \;\;t \longrightarrow t_0.
$$
Hence
$$
(1 - \rho)\textrm{sup}\left\{\left|\int_{Q_x(t_0)}fg\;d\mu\right| : \int_{Q_x(t)}\Phi(|g(x)|)\,d\mu \le \mu(Q_x(t))\right\} \longrightarrow 0
$$ as $t \longrightarrow t_0$. The case $t_0 > t > 0$ is proved similarly and the proof is complete.
\end{proof}

\begin{definition}{\rm (Ahlfors regularity)
 Let $\mu$ be a positive Radon measure on $\mathbb{R}^2$. We say the measure $\mu$ is Ahlfors regular of dimension $\alpha > 0$ if there exist positive constants $c_0$ and $c_1$ such that
\begin{equation}\label{Ahlfors}
c_0r^{\alpha} \le \mu(B(x, r)) \le c_1r^{\alpha}\;
\end{equation}for all $0< r \le$ diam(supp$\,\mu$), where $B(x, r)$ is a ball of radius $r$ centred at $x\in \textrm{supp}\,\mu$ and the constants $c_0$ and $c_1$ are independent of the balls.
}
\end{definition}
 Assume that supp$\,\mu$ is unbounded, so \eqref{Ahlfors} is satisfied for all $ r > 0$. If the measure $\mu$ is $\alpha$-dimensional Ahlfors regular, then it is equivalent to the $\alpha$-dimensional Hausdorff measure (see, e.g., \cite[Lemma 1.2]{Dav} ). For more details and examples of unbounded Ahlfors regular sets, see for example  \cite[Lemma 13.4]{Dav}, \cite{HUT} and \cite{STR} . In the sequel, unless otherwise stated we shall assume that $ 0 < \alpha \le 2$. Suppose that $\mu$ is the usual one-dimensional Lebesgue measure on a horizontal or vertical line, then \eqref{Ahlfors} holds with $\alpha = 1$. This implies $\mu(I) \neq 0$ for every nonempty subinterval $I$ of that line, hence the need of Lemma \ref{direction} and Corollary \ref{cor-direct} for the validity of Lemma \ref{measlemma2} if $\alpha = 1$ .

\section{Statement and proof of the main result}\label{mainresult}
Let
$$
J_n = [e^{2^{n - 1}}, e^{2^n}],\;\;n > 0\;\;\;J_0 := [e^{-1}, e],\;\;\;J_n = [e^{-2^{|n|}}, e^{-2^{|n|-1}}],\;\;n < 0,
$$ and
\begin{equation}\label{meaeqn4}
G_n := \int_{J_n}|\ln|x||V(x)\,d\mu(x), \;\;\; n\neq 0,\;\;\;\; G_0 := \int_{J_0}V(x)d\mu(x).
\end{equation}
Assume without loss of generality that $0\in$ supp$\,\mu$. Further, let
$$
Q_n := \left\{x\in\mathbb{R}^2 \;:\;\left(2\frac{c_1}{c_0}\right)^{\frac{n -1}{\alpha}} \le |x| \le \left(2\frac{c_1}{c_0}\right)^{\frac{n }{\alpha}}\right\}, \; n\in\mathbb{Z}
$$
 and  $$\mathcal{D}_n := \|V\|^{(\textrm{av})}_{\mathcal{B}, Q_n, \mu}\,.$$  Let
\begin{eqnarray*}
\mathcal{E}_{V\mu,\mathbb{R}^2}[w] &:=& \int_{\mathbb{R}^2}|\nabla (x)|^2\,dx - \int_{\mathbb{R}^2}V(x)|w(x)|^2\,d\mu(x)\,,\\
\textrm{Dom}(\mathcal{E}_{V\mu, \mathbb{R}^2}) &=&  W^1_2(\mathbb{R}^2)\cap L^2(\mathbb{R}^2, Vd\mu).
\end{eqnarray*}

Then we have the following theorem:
\begin{theorem}\label{mainthm}Let $\mu$ be a positive Radon  measure on $\mathbb{R}^2$ that is Ahlfors regular and $V\in L_{\mathcal{B}}(Q_n, \mu), \,V\ge 0$. Then
there exist constants $C > 0$ and $c > 0$ such that
\begin{equation}\label{maineqn}
N_-(\mathcal{E}_{V\mu, \mathbb{R}^2}) \le 1 + 4 \sum_{\{n\in\mathbb{Z},\,G_n > 0.25\}}  \sqrt{G_n} + C\underset{\{n\in\mathbb{Z},\;\mathcal{D}_n > c \}}\sum \mathcal{D}_n\,.
\end{equation}
\end{theorem}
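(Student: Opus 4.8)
The plan is to run the variational strategy of \S\ref{approach}, so that \eqref{maineqn} appears as a measure-theoretic version of Theorem \ref{LaptNetrSol}. First I would invoke the orthogonal decomposition \eqref{meaeqn1},
\[
N_-(\mathcal{E}_{V\mu,\mathbb{R}^2}) \le N_-(\mathcal{E}_{\mathcal{R},2V\mu}) + N_-(\mathcal{E}_{\mathcal{N},2V\mu}) ,
\]
noting that the projection $P$ onto radial functions is orthogonal in both $L^2(\mathbb{R}^2)$ and $W^1_2(\mathbb{R}^2)$: the computations in \S\ref{approach} involve only the Lebesgue measure and the gradient, hence are unaffected by replacing $dx$ by $d\mu$ in the potential term, and the factor $2$ comes from the pointwise bound $|w|^2\le 2|w_{\mathcal R}|^2+2|w_{\mathcal N}|^2$, which holds $\mu$-a.e. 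It then suffices to bound the two summands separately.

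For the radial part I would push the measure $V\mu$ forward under $x\mapsto\ln|x|$ to a $\sigma$-finite positive Radon measure $\nu$ on $\mathbb{R}$; such $\nu$ may carry atoms (for instance from circles that charge $\mu$ when $\alpha\le 1$), which is exactly the situation handled by the one-dimensional machinery of \S\ref{1-destimate} through Lemma \ref{acmeasure}. The substitution $|x|=e^t$ turns $\mathcal{E}_{\mathcal{R},2V\mu}$ into a constant multiple of $\mathcal{E}_{\mathbb{R},\nu}$ (up to the codimension-one correction built into \eqref{six}), while $\int_{J_n}|\ln|x||\,V\,d\mu$ becomes, up to the constant $\pi$ from the area element, $\mathcal{A}_n=\int_{I_n}|t|\,d\nu$ on the dyadic intervals $I_n$ of \eqref{interval} corresponding to the rings $J_n$. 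Feeding $\mathcal{A}_n=\tfrac{1}{\pi}G_n$ into \eqref{Est1} with the constants optimized exactly as in \cite[\S7]{Eugene} (using Remark \ref{HSob01} and Lemma \ref{c}) gives $N_-(\mathcal{E}_{\mathcal{R},2V\mu})\le 1+4\sum_{\{n:\,G_n>0.25\}}\sqrt{G_n}$, which accounts for the first two terms of \eqref{maineqn}.

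For the non-radial part I would apply the variational principle \eqref{parti} to the partition of $\mathbb{R}^2$ into the annuli $Q_n$, so $N_-(\mathcal{E}_{\mathcal{N},2V\mu})\le\sum_n N_-(\mathcal{E}_{\mathcal{N},2V\mu,Q_n})$, with every function in each form having zero Lebesgue mean over $Q_n$ (since $Pw=0$). The ratio $(2c_1/c_0)^{1/\alpha}$ of consecutive radii is chosen precisely so that Ahlfors regularity \eqref{Ahlfors} yields $c_1(2c_1/c_0)^{n-1}\le\mu(Q_n)\le(2c_1^2/c_0)(2c_1/c_0)^{n-1}$, i.e.\ $\mu(Q_n)\asymp(2c_1/c_0)^n$ with constants depending only on $c_0,c_1$; hence each $Q_n$ is a dilate of the model ring $Q_0$ on which $\mu$ behaves uniformly, and one ring suffices. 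The core estimate is a Moser--Trudinger-type inequality with respect to $\mu$: taking the coordinate directions $\theta_0$, $\theta_0+\tfrac{\pi}{2}$ of Corollary \ref{cor-direct}, one shows that for $w\in W^1_2(Q)\cap C(\overline Q)$ with $w_Q=0$,
\[
\int_{\overline Q} V|w|^2\,d\mu \le C\,\|V\|^{\mathrm{(av)}}_{\mathcal B,Q,\mu}\int_Q|\nabla w|^2\,dx ,
\]
obtained by combining the exponential embedding $W^1_2(Q)\hookrightarrow L_{\mathcal A}(Q,\mu)$ — available because $\mu$ is $\alpha$-regular with $0<\alpha\le 2$ — the H\"older inequality \eqref{Holder} for the complementary pair $\mathcal A,\mathcal B$, and the Poincar\'e inequality; together with a subdivision step in the spirit of Lemmas \ref{lemma3}--\ref{lemma5}: given $m\in\mathbb{N}$ one finds a cover of $Q$ by $\le m$ subrectangles on each of which the $\|V\|^{\mathrm{(av)}}_{\mathcal B,\cdot,\mu}$-mass is at most $Cm^{-1}\|V\|^{\mathrm{(av)}}_{\mathcal B,Q,\mu}$, the existence of such a cover being guaranteed by the continuity of $t\mapsto\|V\|^{\mathrm{(av)}}_{\mathcal B,Q_x(t),\mu}$ from Lemma \ref{measlemma2}. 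Using the scale invariance of the two-dimensional Dirichlet integral, the $\lambda^\alpha$-scaling of $\mu$, and the scaling invariance \eqref{scaling} of $\|\cdot\|^{\mathrm{(av)}}$, these give $N_-(\mathcal{E}_{\mathcal{N},2V\mu,Q_n})=0$ once $\mathcal D_n$ falls below a threshold $c$, and $N_-(\mathcal{E}_{\mathcal{N},2V\mu,Q_n})\le\lceil C\mathcal D_n\rceil\le C'\mathcal D_n$ otherwise; summing over $n$ and adding the radial bound yields \eqref{maineqn}.

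The step I expect to be the main obstacle is establishing the $\mu$-weighted Moser--Trudinger inequality with the correct scaling, uniformly over the annuli: the mixed-norm reduction of Lemmas \ref{lemma3}--\ref{lemma5} is not available because $\mu$ is not a product measure, so one needs a genuine exponential-integrability (trace-type) bound for $W^1_2$ functions tested against an $\alpha$-dimensional Ahlfors regular measure, followed by careful bookkeeping of how the $\mu$-mass, the Dirichlet energy and the Orlicz average norm rescale — which is exactly why the rings $Q_n$ are defined through the Ahlfors constants and why the scale-invariant norm \eqref{OrlAverage} is used. A secondary technical point is that when $\alpha\le 1$ the measure $\mu$ may charge whole lines, so the choice of coordinate directions via Lemma \ref{direction} and Corollary \ref{cor-direct} is precisely what makes the covering argument and the continuity Lemma \ref{measlemma2} applicable.
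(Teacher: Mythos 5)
Your proposal reproduces the paper's argument essentially verbatim: the orthogonal splitting \eqref{meaeqn1}, the push\-forward of $V\mu$ to a one\-dimensional Radon measure $\nu$ treated by Lemma~\ref{acmeasure} and \eqref{Est1}, and the annular partition with a $\mu$\-weighted Orlicz embedding, Besicovitch covering, continuity of $t\mapsto\|V\|^{(\mathrm{av})}_{\mathcal B,Q_x(t),\mu}$, and the direction choice of Corollary~\ref{cor-direct}. The ``$\mu$\-weighted Moser--Trudinger'' step you flag as the main obstacle is precisely what the paper establishes in Lemma~\ref{meascor} via Maz'ya's capacitary inequality (Theorems~\ref{measthm1}--\ref{measthm2}), after which Lemmas~\ref{measlemma3*}--\ref{measlemma6} (using Stein's extension theorem to pass from an annulus $Q_n$ to a square) give \eqref{meaeqn6*}.
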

\begin{remark}
{\rm If $\alpha = 1$, the above result gives the case when the potential $V$ is supported by a one-dimensional set in $\mathbb{R}^2$ (e.g., a curve that is Ahlfors regular and $\mu$ is the arc length measure of the curve). In fact, Theorem \ref{mainthm} extends the result by E. Shargorodsky (\cite[Theorem 3.1]{Eugene1}) to general Ahlfors regular curves. In his result, he considered the case where the potential is compactly supported on a family of Lipschitz curves in $\mathbb{R}^2$, a condition that is much stronger than Ahlfors regularity. If $\alpha = 2$, then $\mu$ is absolutely continuous with respect to the two dimensional Lebesgue measure, a case that has been widely studied (see $\S$\ref{Literature}). If $\alpha \in (0, 1)\cup(1,2)$, then $\mu$ is supported by sets of fractional dimension (e.g., $\alpha$- Hausdorff dimensional sets) in $\mathbb{R}^2$.
}
\end{remark}
In the proof of Theorem \ref{mainthm}, we will use the variational argument discussed in $\S$\ref{approach}. We will only need to find an estimate for the right-hand side of \eqref{meaeqn1}. We shall start with the first term. Let $I$ be an arbitrary interval in $\mathbb{R}_+$. Define a measure on $\mathbb{R}_+$ by
\begin{equation}\label{1dmeas}
\nu(I) := \int_{|x|\in I}V(x)\,d\mu(x).
\end{equation}
Since $w_{\mathcal{R}}\in L^2(\mathbb{R}^2)$ (see \eqref{radial}), one can approximate $w_{\mathcal{R}}$ by simple measurable functions, i.e.,
$$
w_{\mathcal{R}} = \sum_{k = 1}^Nc_k \chi_{J_k}, \;\; J_k\cap J_j = \emptyset,\;k \neq j,
$$ where $c_k$'s are constants and $\chi_{J_k}$ is the characteristic function of the subinterval $J_k\subseteq I$.
Then
\begin{eqnarray*}
\int_{\mathbb{R}^2}|w_{\mathcal{R}}(x)|^2 V(x)\,d\mu(x) &=& \underset{N\longrightarrow\infty}\lim \sum_{k = 1}^N |c_k|^2 \int_{\mathbb{R}^2}\chi_{J_k}V(x)\,d\mu(x)\\&=& \underset{N\longrightarrow\infty}\lim\sum_{k = 1}^N |c_k|^2 \int_{|x|\in J_k}\chi_{J_k}V(x)\,d\mu(x)\\&=& \underset{N\longrightarrow\infty}\lim\sum_{k = 1}^N |c_k|^2\nu(J_k)\\&=& \int_{\mathbb{R}_+}|w_R(r)|^2d\nu(r).
\end{eqnarray*}

 Let $r = e^t, w(x) = w_{\mathcal{R}}(r) = v(t)$. Then
 $$
 \int_{\mathbb{R}^2}|\nabla w(x)|^2dx = 2\pi\int_{\mathbb{R}}|v'(t)|^2dt
 $$ and
\begin{eqnarray*}
\int_{\mathbb{R}^2}V| w(x)|^2d\mu(x) &=& \int_{\mathbb{R}_+}|w_{\mathcal{R}}(r)|^2d\nu(r)= \int_{\mathbb{R}}|w_{\mathcal{R}}(e^t)|^2d\nu(e^t)\\&=&\int_{\mathbb{R}}|v(t)|^2\,d\nu(e^t).
\end{eqnarray*}
Let
\begin{equation}\label{meaeqn2}
\mathcal{G}_n := \frac{1}{2\pi}\int_{I_n}|t|\,d\nu(e^t), \;\;\; n\neq 0,\;\;\;\; \mathcal{G}_0 := \frac{1}{2\pi}\int_{I_0}d\nu(e^t)\,,
\end{equation}(see \eqref{interval}).
Then similarly to \eqref{Est1} one has
\begin{equation}\label{meaeqn3}
N_-(\mathcal{E}_{\mathcal{R}, 2\nu}) \le  1 + 7.61 \sum_{\{n\in\mathbb{Z},\,\mathcal{G}_n > 0.046\}}  \sqrt{\mathcal{G}_n}\,,
\end{equation}where
\begin{eqnarray*}
\mathcal{E}_{\mathcal{R}, 2\nu}[v] &:=& \int_{\mathbb{R}}|v'(t)|^2\,dt - 2\int_{\mathbb{R}}|v(t)|^2\,d\nu(e^t),\\
\textrm{Dom}(\mathcal{E}_{\mathcal{R}, 2\nu}) &=& W^1_2(\mathbb{R})\cap L^2(\mathbb{R}, d\nu).
\end{eqnarray*}
It follows from \eqref{meaeqn4}, \eqref{1dmeas} and \eqref{meaeqn2} that $G_n = 2\pi\mathcal{G}_n$ and thus \eqref{meaeqn3} implies
\begin{equation}\label{meaeqn5}
N_-(\mathcal{E}_{\mathcal{R}, 2V\mu}) \le  1 + 4 \sum_{\{n\in\mathbb{Z},\,G_n > 0.25\}}  \sqrt{G_n}.
\end{equation} Next, we find an estimate for the second term in the right-hand side of \eqref{meaeqn1}.\\\\
Let $\Omega$ be an arbitray subset of $\mathbb{R}^2$. To the Sobolev space $W^1_2(\Omega)$, we associate a set function called the capacity. Namely, for any compactum $e\subset \Omega$, we put
\begin{equation}\label{capacity}
\textrm{cap}(e, W^1_2(\Omega)) := \inf\left\{\|w\|^2_{W^1_2(\Omega)}\; : \;w\in C_0^{\infty}(\Omega), w \geq 1 \; \textrm{on}\; e\right\}.
\end{equation}
Let $$\mathcal{N}_t(w) = \left\{ x \;:\; |w(x)| \geq t\right\},\;\;w \in W^1_2(\Omega).$$
Then  we have the following capacitary inequality
\begin{equation}\label{capineq}
\int_0^{\infty}\textrm{cap}(\mathcal{N}_t(w), W^1_2(\Omega))t\,dt \le C_3\|w\|^2_{W^1_2(\Omega)},
\end{equation}where $ C_3 > 0$ is a constant independent of $w$ (see \cite{Maz}, (11.2.9)).
\begin{theorem}\label{measthm1}{\rm  \cite[Theorem 11.3.]{Maz}}
{\rm Let $\Psi$ and $\Phi$ be mutually complementary N-functions and let $\mu$ be a positive Radon measure on $\mathbb{R}^2$. Further, let $\Omega$ be a domain in $\mathbb{R}^2$, then there exists a constant $C_4 > 0$ (possibly infinite) such that
\begin{equation}\label{maz1}
\|w^2\|_{\Psi, \Omega, \mu} \le C_4\|w\|^2_{W^1_2(\Omega)}
\end{equation} for all $w\in W^1_2(\Omega)\cap C(\overline{\Omega})$. The best constant $C_4$ is equivalent to
\begin{equation}\label{B}
B = \sup \left\{\frac{\mu(E)\Phi^{-1}\left(\frac{1}{\mu(E)}\right)}{\textrm{cap}(E, W^1_2(\Omega))}\;: \;E\subseteq\Omega,\;\textrm{cap}(E, W^1_2(\Omega)) > 0\right\} < \infty.
\end{equation} That is, $B \le C_4 \le 2 BC_3$, where $C_3$ is the constant in \eqref{capineq}.}
\end{theorem}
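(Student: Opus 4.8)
The statement is Maz'ya's isocapacitary characterisation of the Orlicz--Sobolev embedding $w \mapsto w^2 : W^1_2(\Omega)\cap C(\overline{\Omega}) \to L_\Psi(\Omega,\mu)$, and the plan is to establish the two inequalities $B \le C_4$ and $C_4 \le 2BC_3$ separately. The only ingredient beyond elementary Orlicz-space manipulations is the capacitary strong-type inequality \eqref{capineq}, which I would take as given.

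The preliminary observation, on which both halves rest, is that for any Borel set $E\subseteq\Omega$ with $0 < \mu(E) < \infty$ one has $\|\chi_E\|_{\Psi,\Omega,\mu} = \mu(E)\,\Phi^{-1}\!\left(1/\mu(E)\right)$: the upper bound follows from Jensen's inequality applied to the averaged integral $\frac{1}{\mu(E)}\int_E |g|\,d\mu$ against a dual function $g$ with $\int_\Omega\Phi(|g|)\,d\mu\le 1$, exactly as in the proof of \eqref{c5}, and the lower bound comes from the admissible test function $g = \Phi^{-1}(1/\mu(E))\chi_E$. For the direction $B \le C_4$ I would then test \eqref{maz1} against functions admissible for the capacity of a single set: if $E\subseteq\Omega$ is a compactum with $\mathrm{cap}(E,W^1_2(\Omega)) > 0$ and $w\in C_0^\infty(\Omega)$ satisfies $w\ge 1$ on $E$, then $w^2\ge\chi_E$ pointwise, so $\mu(E)\Phi^{-1}(1/\mu(E)) = \|\chi_E\|_{\Psi,\Omega,\mu}\le\|w^2\|_{\Psi,\Omega,\mu}\le C_4\|w\|^2_{W^1_2(\Omega)}$; taking the infimum over $w$ and then the supremum over $E$ gives $B\le C_4$ (and forces $C_4=\infty$ when $B=\infty$).

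For $C_4\le 2BC_3$, assuming $B<\infty$, I would decompose $w$ dyadically along its level sets. Fixing $\lambda>1$ and putting $E_j:=\mathcal{N}_{\lambda^j}(w)$, a telescoping estimate on the ``annuli'' $E_j\setminus E_{j+1}$ gives $w^2\le(\lambda^2-1)\sum_{j\in\mathbb{Z}}\lambda^{2j}\chi_{E_j}$ pointwise; the triangle inequality for $\|\cdot\|_{\Psi,\Omega,\mu}$, the characteristic-function identity, and the definition of $B$ then yield $\|w^2\|_{\Psi,\Omega,\mu}\le(\lambda^2-1)B\sum_j\lambda^{2j}\,\mathrm{cap}(E_j,W^1_2(\Omega))$. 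Since $t\mapsto\mathrm{cap}(\mathcal{N}_t(w),W^1_2(\Omega))$ is non-increasing, comparing this sum with the integral $\int_0^\infty\mathrm{cap}(\mathcal{N}_t(w),W^1_2(\Omega))\,t\,dt$ costs only a factor $2\lambda^2/(\lambda^2-1)$, and \eqref{capineq} closes the estimate with constant $2\lambda^2 BC_3$; letting $\lambda\to 1+$ produces $C_4\le 2BC_3$.

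The main obstacle is not these two computations but the measure-theoretic set-up underneath them: the level sets $E_j$ are only relatively closed in $\Omega$, not compact, when $\Omega$ is unbounded, so one must first extend $\mathrm{cap}(\cdot,W^1_2(\Omega))$ to Borel sets by inner regularity, verify that \eqref{capineq} persists at that level of generality, and pass from $C_0^\infty$ test functions to arbitrary $w\in W^1_2(\Omega)\cap C(\overline{\Omega})$ by truncation and approximation. These are precisely the facts developed in \cite[Ch.~11]{Maz}, which I would invoke rather than reprove; with them in hand the argument above is essentially Maz'ya's.
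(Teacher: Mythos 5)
The paper does not prove this theorem; it is cited verbatim from Maz'ya and used as a black box, so there is no internal argument against which to compare. Your reconstruction is correct and is, up to notation, exactly Maz'ya's isocapacitary argument: the lower bound $B \le C_4$ by testing against a single characteristic function (using the computation $\|\chi_E\|_{\Psi,\Omega,\mu} = \mu(E)\,\Phi^{-1}(1/\mu(E))$, whose Jensen step mirrors the proof of \eqref{c5}), and the upper bound $C_4 \le 2BC_3$ by the dyadic level-set decomposition $w^2 \le (\lambda^2-1)\sum_{j}\lambda^{2j}\chi_{E_j}$, countable subadditivity of $\|\cdot\|_{\Psi,\Omega,\mu}$, the definition of $B$, and a monotonicity comparison of the resulting series with $\int_0^\infty \mathrm{cap}(\mathcal{N}_t(w),W^1_2(\Omega))\,t\,dt$, after which \eqref{capineq} finishes the job with constant $2\lambda^2 B C_3 \to 2BC_3$ as $\lambda\to 1+$. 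You are also right that the only genuinely delicate point is the extension of capacity from compacta to the (merely relatively closed) level sets $\mathcal{N}_t(w)$, together with the passage from $C_0^\infty$ test functions to $W^1_2(\Omega)\cap C(\overline{\Omega})$; deferring those points to \cite[Ch.~11]{Maz}, as the thesis itself implicitly does by citing the result, is appropriate.
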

Let $\varphi$ be a nonnegative increasing function on $[0, +\infty)$ such that $t\varphi(t^{-1})$ decreases and tends to zero as $t \longrightarrow\infty$. Further, suppose
\begin{equation}\label{maz2}
\int_u^{+\infty}t\sigma(t) dt \le cu\sigma(u),
\end{equation} for all $u > 0$, where
$$
\sigma(v) = v\varphi\left(\frac{1}{v}\right)
$$ and
$c$ is a positive constant (see (11.7.2) in \cite{Maz}).
\begin{theorem}\label{measthm2}{\rm (cf. \cite[Theorem 11.8]{Maz})}
{\rm Let $\varphi$ be the inverse function of $t \longrightarrow t\Phi^{-1}(t^{-1})$ subject to condition \eqref{maz2}. Then the best constant in \eqref{maz1} is equivalent to
\begin{equation}\label{maz3}
B_1 = \textrm{sup}\left\{|\log r| \mu(B(x, r))\Phi^{-1}\left(\frac{1}{\mu(B(x, r))}\right) : x\in\Omega,\, 0 < r < \frac{1}{2}\right\}.
\end{equation}where $B(x, r)$ is a ball of radius $r$ centred at $x$.}
\end{theorem}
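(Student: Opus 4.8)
The plan is to deduce Theorem~\ref{measthm2} from Theorem~\ref{measthm1}: the latter already gives $B\le C_4\le 2BC_3$ with $B$ as in \eqref{B}, so it suffices to prove that $B$ is equivalent to the ``balls'' quantity $B_1$ of \eqref{maz3}. Then $c^{-1}B_1\le B\le cB_1$ together with Theorem~\ref{measthm1} identifies the best constant $C_4$ in \eqref{maz1} with $B_1$ up to multiplicative constants, which is the assertion. Thus the whole problem reduces to the two inequalities $c^{-1}B_1\le B\le cB_1$, with $c$ depending only on $\Phi$ and on the constant in hypothesis \eqref{maz2}.

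The inequality $B_1\le cB$ is the elementary direction and is obtained by testing \eqref{B} on balls. For $x$ in the relevant set and $0<r<1/2$, take the standard logarithmic cut-off $w_r$: equal to $1$ on $B(x,r)$, supported in a fixed larger ball, and interpolating logarithmically in between, so that $\|w_r\|^2_{W^1_2(\mathbb{R}^2)}\le c\,|\log r|^{-1}$ (arguing in $\mathbb{R}^2$; minor modifications handle a general domain $\Omega$). Since $w_r\ge 1$ on $E=B(x,r)$ one gets $\textrm{cap}(B(x,r),W^1_2(\Omega))\le c\,|\log r|^{-1}$, hence
$$
B\;\ge\;\frac{\mu(B(x,r))\,\Phi^{-1}\!\big(1/\mu(B(x,r))\big)}{\textrm{cap}(B(x,r),W^1_2(\Omega))}\;\ge\;c^{-1}\,|\log r|\,\mu(B(x,r))\,\Phi^{-1}\!\big(1/\mu(B(x,r))\big),
$$
and taking the supremum over $x$ and $0<r<1/2$ gives $B\ge c^{-1}B_1$. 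Here one also uses that $t\mapsto t\,\Phi^{-1}(1/t)$ is non-decreasing and tends to $0$ as $t\to 0+$ (equivalently $s\mapsto s/\Phi(s)$ is non-increasing with $s/\Phi(s)\to 0$ as $s\to\infty$), which holds because $\Phi$ is convex with $\Phi(0)=0$ and $\Psi$ is an $N$-function.

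The reverse inequality $B\le cB_1$ is the core of the argument and the place where \eqref{maz2} is used. Given an arbitrary compactum $E\subset\Omega$ with $\textrm{cap}(E,W^1_2(\Omega))>0$, one must show $\mu(E)\,\Phi^{-1}(1/\mu(E))\le cB_1\,\textrm{cap}(E,W^1_2(\Omega))$, and this cannot be read off from $\textrm{diam}(E)$ alone, since in the plane points carry zero $W^1_2$-capacity. Following the scheme of \cite[\S\S\,11.6--11.8]{Maz}, one fixes $x_0\in E$, works with the dyadic shells $\Gamma_k=\{2^{-k-1}<|y-x_0|\le 2^{-k}\}$, and splits the condenser $(E,\Omega)$ into the pieces $E\cap\Gamma_k$. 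A lower bound for $\textrm{cap}(E,W^1_2(\Omega))$ is obtained from a Wiener-type series that records, shell by shell, the logarithmic cost of $E\cap\Gamma_k$; on the other side, $\mu(E)\,\Phi^{-1}(1/\mu(E))$ is dominated by a sum of the subcritical quantities $\mu(E\cap\Gamma_k)\,\Phi^{-1}(1/\mu(E\cap\Gamma_k))$, each of which is at most $B_1/|\log 2^{-k}|$ by the very definition of $B_1$. Hypothesis \eqref{maz2}, read through the function $\sigma(v)=v\varphi(1/v)$ with $\varphi$ the inverse of $t\mapsto t\,\Phi^{-1}(1/t)$, is exactly the summability statement guaranteeing that these shell contributions add up to at most a constant multiple of $B_1\,\textrm{cap}(E,W^1_2(\Omega))$; informally, it forces the extremal condenser for $B$ to be, up to a bounded factor, a single ball.

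The decisive difficulty is this second inequality: establishing the sharp two-sided comparison between the $W^1_2$-capacity of a general planar condenser and the associated shell-by-shell Wiener series in the borderline case, and then summing against $B_1$ using \eqref{maz2}. Once that is in place, the rest is bookkeeping: the statement follows from \cite[Theorem~11.8]{Maz} after verifying that, for the measures of interest in this chapter --- Ahlfors regular of dimension $\alpha\in(0,2]$, paired with $\Phi=\mathcal{B}$ --- condition \eqref{maz2} does hold, which a short computation confirms, since $\Phi^{-1}(u)\asymp u/\ln u$ yields $\sigma(v)\asymp v\,e^{-v}$ and hence $\int_u^{\infty}t\,\sigma(t)\,dt\asymp u\,\sigma(u)$ for large $u$.
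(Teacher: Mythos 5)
Your overall plan is the same as the paper's: reduce via Theorem \ref{measthm1} to proving $B\sim B_1$, then prove the two inequalities separately. The easy direction $B_1\le cB$ is fine — the paper gets it in one line by citing $\mathrm{cap}(B(x,r),W^1_2(\Omega))\sim 1/|\log r|$ (Maz'ya, (10.4.15)); your explicit logarithmic cut-off construction yields the same one-sided capacity bound and is an acceptable alternative.

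The difficulty is in how you handle the hard direction $B\le cB_1$. You describe a shell-by-shell Wiener-series argument "following the scheme of \cite[\S\S\,11.6--11.8]{Maz}", flag it as "the decisive difficulty", and then conclude that "once that is in place, the rest is bookkeeping: the statement follows from \cite[Theorem~11.8]{Maz}". This is circular: the statement you are proving is Maz'ya's Theorem 11.8 adapted to $W^1_2$-capacity, so you cannot use it as a citation in the final step, and the Wiener-series machinery you gesture at is precisely what would have to be carried out — you describe it but do not execute it or identify a precise reference that encapsulates it. The paper resolves this concretely as follows. From the definition of $B_1$ and the ball-capacity asymptotics one gets, for every $x$ and $0<r\le 1/2$,
\[
\mu(B(x,r))\,\Phi^{-1}\!\big(1/\mu(B(x,r))\big)\le k_1 B_1\,\mathrm{cap}(B(x,r),W^1_2(\Omega)),
\]
and since $\varphi$ inverts $t\mapsto t\Phi^{-1}(1/t)$ this reads $\mu(B(x,r))\le\varphi\!\big(k_1B_1\,\mathrm{cap}(B(x,r),W^1_2(\Omega))\big)$. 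The key ingredient is then Maz'ya's Corollary~11.7/2 (which is where hypothesis \eqref{maz2} on $\varphi$ enters): it promotes such a ball estimate to $\mu(E)\le k_2\varphi\!\big(k_1B_1\,\mathrm{cap}(E,W^1_2(\Omega))\big)$ for every Borel set $E$ of finite capacity, and inverting $\varphi$ (using monotonicity of $\Phi$ to drop the harmless $k_2$) gives $B\le k_1k_2B_1$. That corollary bundles exactly the dyadic-shell Wiener-series argument you allude to; citing it — rather than either re-deriving it from scratch or circularly citing Theorem 11.8 — is what makes the proof go through. Your closing paragraph about verifying \eqref{maz2} for $\Phi=\mathcal{B}$ is a true observation but belongs to the subsequent Lemma \ref{meascor}, not to this theorem, where \eqref{maz2} is a standing hypothesis.
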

\begin{proof}
According to Theorem \ref{measthm1}, it is enough to prove the equivalence $ B \sim B_1$.
For $ 0 < r \le 1$, $\textrm{cap}(B(x, r), W^1_2(\Omega))\sim \frac{1}{|\log r|}$ (see (10.4.15) in \cite{Maz}). So it follows from \eqref{B} that
\begin{eqnarray*}
|\log r|\mu(B(x, r))\Phi^{-1}\left(\frac{1}{\mu(B(x, r))}\right) &\le& |\log r| B \textrm{cap}\left(B(x, r), W^1_2(\Omega)\right)\\&\le& k_0 B
\end{eqnarray*} where $k_0 > 0$ is a constant. Hence
$$
k_0^{-1}B_1 \le B .
$$
Also \eqref{maz3} implies
$$
\mu(B(x, r))\Phi^{-1}\left(\frac{1}{\mu(B(x, r))}\right)\le B_1\frac{1}{|\ln r|} \le B_1k_1\;\textrm{cap}(B(x, r), W^1_2(\Omega)),
$$ where $k_1 >0$ is a constant.
Since $\varphi$ is the inverse function of $t \longrightarrow t\Phi^{-1}(t^{-1})$, then
$$
\mu(B(x, r)) \le \varphi\left(B_1k_1\;\textrm{cap}(B(x, r), W^1_2(\Omega))\right).
$$
Thus for any Borel set $E$ with the finite capacity $\textrm{cap}(E, W^1_2(\Omega))$, Corollary 11.7/2 in \cite{Maz} implies
$$
\mu(E) \le k_2\varphi\left(B_1k_1\;\textrm{cap}(E, W^1_2(\Omega))\right),
$$where $k_2 \geq 1$ is a constant that depends on $\varphi$. So
$$
\mu(E)\Phi^{-1}\left(\frac{k_2}{\mu(E)}\right) \le k_2 B_1 k_1\;\textrm{cap}(E, W^1_2(\Omega)).
$$Since $\Phi$ is a nondecreasing function, dropping $k_2$ in the left-hand side of the above inequality gives
$$
\frac{\mu(E)\Phi^{-1}\left(\frac{1}{\mu(E)}\right)}{\textrm{cap}(E, W^1_2(\Omega))} \le k_2B_1k_1\,.
$$ Hence
$$
B \le k_2B_1k_1
$$ (see \eqref{B}).
\end{proof}
\begin{lemma}\label{small}
{\rm Let $b > 0$ and $\Psi : [b , +\infty) \longrightarrow \mathbb{R}_+$ be a non-increasing function such that there exist $l > 0$ and $q \in (0, 1)$ for which
\begin{equation}\label{sm1}
\Psi(t + l) \le q\, \Psi(t),\,\,\forall t \in [b , +\infty).
\end{equation}
Then $\forall m \ge 0$ there exists a constant $C_5 = C_5(b, q, l, m) < +\infty$ such that
\begin{equation}\label{sm2}
\int_u^{+\infty}t^m \Psi(t)\,dt \le C_5 u^m \Psi(u)\,, \;\;\;\forall u \in [b, +\infty).
\end{equation}
}
\end{lemma}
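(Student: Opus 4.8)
The plan is to exploit the geometric decay encoded in \eqref{sm1} directly: partition the half-line $[u, +\infty)$ into intervals of length $l$ and sum the resulting geometric series, while controlling the polynomial weight $t^m$ uniformly in $u$ by means of the lower bound $u \ge b$.

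First I would fix $u \in [b, +\infty)$ and write
\[
\int_u^{+\infty} t^m \Psi(t)\, dt = \sum_{k=0}^{\infty} \int_{u + kl}^{u + (k+1)l} t^m \Psi(t)\, dt .
\]
On the $k$-th interval, monotonicity of $\Psi$ gives $\Psi(t) \le \Psi(u + kl)$, and iterating \eqref{sm1} $k$ times (each application being legitimate, since the relevant arguments $u, u+l, \dots, u+(k-1)l$ all lie in $[b,+\infty)$) yields $\Psi(u+kl) \le q^{k}\, \Psi(u)$. For the weight, $t \le u + (k+1)l$, and since $u \ge b$ we have $(k+1)l \le (k+1)l\, u/b$, hence $t^m \le u^m \bigl(1 + (k+1)l/b\bigr)^m$. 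Combining these bounds and integrating over each interval (of length $l$),
\[
\int_u^{+\infty} t^m \Psi(t)\, dt \le l\, u^m \Psi(u) \sum_{k=0}^{\infty} \left(1 + \frac{(k+1)l}{b}\right)^{m} q^{k} .
\]

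The series on the right converges because $q \in (0,1)$ beats the polynomial growth of its general term; denote its sum by $S = S(b, q, l, m) < +\infty$ (for $m = 0$ it is simply $1/(1-q)$). Setting $C_5 := l\, S$, which depends only on $b, q, l, m$, gives \eqref{sm2}. I do not anticipate a genuine obstacle here: the only point requiring any care is absorbing the polynomial factor $t^m$ into $u^m$ with a constant independent of $u$, and this is precisely what the normalization $u \ge b$ delivers; everything else is the routine geometric-series estimate above.
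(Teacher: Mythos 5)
Your proof is correct and follows essentially the same route as the paper's: both split $[u,+\infty)$ into length-$l$ blocks, iterate \eqref{sm1} to get geometric decay $\Psi(u+kl)\le q^k\Psi(u)$, and absorb the polynomial weight $t^m$ into $u^m$ using $u\ge b$, leaving a convergent series that defines $C_5$. The only (cosmetic) difference is that the paper first changes variables $t=x+kl$ before estimating, whereas you bound $\Psi$ and $t^m$ directly on each block; the resulting constant and argument are the same.
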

\begin{proof}
\begin{eqnarray*}
\int_u^{+\infty}t^m \Psi(t)\,dt &=& \sum _{k = 0}^{\infty}\int_{u + kl}^{u + (k+1)l}t^m \Psi(t)\,dt \\ &=& \sum _{k = 0}^{\infty}\int_{u }^{u + l}(x + kl)^m \Psi(x+kl)\,dx \\ &\le& \sum_{k =0}^{\infty}\left(1 + \frac{kl}{u}\right)^m q^k \int_u^{u + l}x^m \Psi(x)\,dx \\ &\le& \underbrace{\left(\sum_{k =0}^{\infty}\left(1 + \frac{kl}{b}\right)^m q^k\right)}_{< \,\infty} (u + l)^m \Psi(u)l \\ &\le& C_0(b, q, l, m)l\left( 1 + \frac{l}{u}\right)^mu^m\Psi(u)\\ &\le&C_0(b, q, l, m)l\left( 1 + \frac{l}{b}\right)^mu^m\Psi(u) \\ &=:& C_5(b, q, l, m)u^m\Psi(u)\,,\,\,\,\forall u \in [b, +\infty).
\end{eqnarray*}
\end{proof}
\begin{lemma}\label{meascor}{\rm  (cf.\;\cite[  Corollary 11.8/2]{Maz})}
{\rm Consider the complementary N-functions $\mathcal{B}(t)= (1 + t)\ln(1 + t) - t$ and  $\mathcal{A}(t)= e^t - 1 - t$. Let $Q$ be the unit square with edges chosen in any direction. Then the inequality
$$\|w^2\|_{\mathcal{A}, Q, \mu} \le C_4\|w\|^2_{W^1_2(Q)}$$ holds if for some $\alpha > 0$
\begin{equation}\label{ball}
\mu(B(x, r)) \le r^{\alpha}\;,\;\;\forall x\in\overline{Q} \;\;\; \textrm{and}\;\;\;  \forall r \in(0, 1]\,,
\end{equation}
 where $C_4$ (see \eqref{maz1}) is a constant depending only on $\alpha$ .
}
\end{lemma}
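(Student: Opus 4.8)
The plan is to derive the inequality from Maz'ya's capacitary criterion, i.e.\ from Theorems \ref{measthm1} and \ref{measthm2} applied with $\Psi = \mathcal{A}$, $\Phi = \mathcal{B}$ and $\Omega$ the interior of $Q$, together with the elementary asymptotics of $\mathcal{B}^{-1}$. Since $\mathcal{B}(t) = (1+t)\ln(1+t) - t \sim t\ln t$ as $t \to +\infty$, its inverse satisfies $\mathcal{B}^{-1}(s) \sim s/\ln s$ as $s \to +\infty$; equivalently, writing
$$
h(s) := s\,\mathcal{B}^{-1}\left(\frac{1}{s}\right), \qquad h(0) := 0 ,
$$
one has $h(s) \sim 1/|\ln s|$ as $s \to 0+$. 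Moreover $\mathcal{B}$ is strictly convex with $\mathcal{B}(0) = 0$, so $\mathcal{B}^{-1}$ is strictly concave with $\mathcal{B}^{-1}(0) = 0$; hence $u \mapsto \mathcal{B}^{-1}(u)/u$ is decreasing, and therefore $h$ is increasing on $[0, 1]$. By Theorem \ref{measthm1} the best constant $C_4$ in $\|w^2\|_{\mathcal{A}, Q, \mu} \le C_4\|w\|^2_{W^1_2(Q)}$ satisfies $C_4 \le 2 C_3 B$ with $B$ as in \eqref{B}, and by Theorem \ref{measthm2}, once condition \eqref{maz2} is checked, $B$ is equivalent to the quantity $B_1$ of \eqref{maz3}. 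Note that with $\Phi = \mathcal{B}$ the expression appearing in $B_1$ is precisely $h$, since $\mu(B(x,r))\,\mathcal{B}^{-1}(1/\mu(B(x,r))) = h(\mu(B(x,r)))$. So it suffices to verify \eqref{maz2} and then bound $B_1$ by a constant depending only on $\alpha$.

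First I would verify \eqref{maz2} for $\varphi$, the inverse of $t \mapsto t\,\mathcal{B}^{-1}(1/t) = h(t)$. From $h(t) \sim 1/|\ln t|$ as $t \to 0+$ it follows that $\varphi(s) = h^{-1}(s)$ decays like $e^{-\gamma/s}$ as $s \to 0+$ for a suitable $\gamma > 0$, so that $\sigma(v) := v\,\varphi(1/v)$ decays like $v\,e^{-\gamma v}$ as $v \to +\infty$ (up to the usual slowly varying corrections). Such a $\sigma$ is eventually non-increasing and satisfies $\sigma(v + l) \le q\,\sigma(v)$ for all sufficiently large $v$, for a suitable $l > 0$ and $q \in (0, 1)$; Lemma \ref{small} with $m = 1$ then gives $\int_u^{+\infty} t\,\sigma(t)\,dt \le C_5\,u\,\sigma(u)$, which is exactly \eqref{maz2}. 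This is the step that takes the most care, since it hinges on the precise behaviour of the inverse function $\varphi$; alternatively it is the content of the computation behind \cite[Corollary~11.8/2]{Maz}. With \eqref{maz2} in hand, Theorem \ref{measthm2} yields that the best constant in $\|w^2\|_{\mathcal{A}, Q, \mu} \le C_4\|w\|^2_{W^1_2(Q)}$ is equivalent to $B_1$.

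It then remains to bound $B_1$. Fix $x \in \overline{Q}$ and $r \in (0, 1/2)$. By hypothesis $\mu(B(x,r)) \le r^\alpha < 1$, and since $h$ is increasing on $[0,1]$,
$$
\mu(B(x,r))\,\mathcal{B}^{-1}\left(\frac{1}{\mu(B(x,r))}\right) = h\left(\mu(B(x,r))\right) \le h(r^\alpha) = r^\alpha\,\mathcal{B}^{-1}\left(r^{-\alpha}\right) .
$$
Because $\mathcal{B}^{-1}(s) \le C_6\, s/\ln(e + s)$ for all $s \ge 1$ with an absolute constant $C_6$ (a consequence of $\mathcal{B}^{-1}(s) \sim s/\ln s$ together with continuity), and because $\ln(e + r^{-\alpha}) \ge \ln(r^{-\alpha}) = \alpha|\log r|$, we obtain for $r \in (0, 1/2)$
$$
r^\alpha\,\mathcal{B}^{-1}\left(r^{-\alpha}\right) \le \frac{C_6}{\ln(e + r^{-\alpha})} \le \frac{C_6}{\alpha\,|\log r|} ,
$$
hence $|\log r|\,\mu(B(x,r))\,\mathcal{B}^{-1}(1/\mu(B(x,r))) \le C_6/\alpha$. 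Taking the supremum over $x \in \overline{Q}$ and $r \in (0, 1/2)$ gives $B_1 \le C_6/\alpha < +\infty$, and by the equivalence just established the inequality holds with a constant $C_4 = C(\alpha) < +\infty$ depending only on $\alpha$ (and on the absolute constants $C_3$, $C_6$). The orientation of the edges of $Q$ is irrelevant, since the estimate only involves the rotation-invariant balls $B(x,r)$. The sole genuinely delicate point of the argument is the verification of \eqref{maz2}.
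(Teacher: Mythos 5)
Your proposal follows the same route as the paper: apply Theorems \ref{measthm1} and \ref{measthm2} with $\Psi = \mathcal{A}$, $\Phi = \mathcal{B}$, derive the asymptotics $h(s) = s\,\mathcal{B}^{-1}(1/s) \sim 1/|\ln s|$ as $s \to 0+$, verify condition \eqref{maz2} via Lemma \ref{small}, and then bound $B_1$ by $\mathrm{const}/\alpha$ using the hypothesis $\mu(B(x,r)) \le r^\alpha$ together with monotonicity of $h$. The only thing to note is that the paper also checks \eqref{maz2} for small $u$ (where $t\sigma(t)\to 1/2$ and both sides stay bounded), a case you only gesture at; your explicit bound $\mathcal{B}^{-1}(s) \le C_6\,s/\ln(e+s)$ for $s \ge 1$ is a mild cleanup of the paper's $(1+o(1))$ estimate for $B_1$, but the argument is otherwise the same.
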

\begin{proof}
 First note that $\mathcal{B}'(t)$ and $\mathcal{A}'(t)$ are direct inverses of each other. Let $\varrho(t) = t\mathcal{B}^{-1}\left(\frac{1}{t}\right)$ and $\frac{1}{t} = \mathcal{B}(s)$. Then $\varrho(t) = \frac{s}{\mathcal{B}(s)}$. Since $\frac{d}{ds}(\frac{\mathcal{B}(s)}{s}) = -\frac{1}{s^2}\ln(1 + s) + \frac{1}{s}> 0$, for $s > 0$, then $\frac{s}{\mathcal{B}(s)}$ is a decreasing function of $s$. It is also clear that $\frac{s}{\mathcal{B}(s)} \longrightarrow 0$ as $s \longrightarrow\infty$. Hence $\varrho(t)$ is an increasing function of $t$ and $\varrho(t) \longrightarrow 0$ as $t \longrightarrow 0+$. So
 \begin{equation}\label{*}
 \varrho(t) = t\mathcal{B}^{-1}\left(\frac{1}{t}\right) = \sqrt{2t}\left(1 + o(1)\right)\;\;\textrm{as}\;\;t\longrightarrow \infty
 \end{equation} and
 \begin{equation}\label{**}
   \varrho(t) = t\mathcal{B}^{-1}\left(\frac{1}{t}\right) = \frac{1}{\ln\frac{1}{t}}\left(1 + o(1)\right)\;\;\textrm{as}\;\;t\longrightarrow 0
 \end{equation}
 (see Appendix \ref{proof}). Let $\varphi(\tau) = \varrho^{-1}(\tau)$. Then
 for small values of $\tau$
\begin{equation}\label{***}
\varphi(\tau)= \tau e^{-\frac{1}{\tau}}e^{O(1)}
\end{equation}(see \eqref{large} in Appendix \ref{proof})
and for large values of $\tau$
$$
\varphi(\tau) = \frac{\tau^2}{2}(1 + o(1)).
$$
 By \eqref{maz2}, for large values of $u$ (small values of $t$) we have
 \begin{eqnarray*}
 \int_u^{+\infty}t \sigma(t)\,dt &=&\int_u^{+\infty}t^2\varphi\left(\frac{1}{t}\right)\,dt\\ &=& \int_u^{+\infty} te^{-t}e^{O(1)}\,dt.
 \end{eqnarray*} By Lemma \ref{small},
 $$
 \int_u^{+\infty} te^{-t}e^{O(1)}\,dt  \le C ue^{-u} e^{O(1)}\;\;\;\textrm{as}\;\;\;u\longrightarrow +\infty.
 $$
 Hence
 \begin{eqnarray*}
 \int_u^{+\infty}t \sigma(t)\,dt &\le& C u e^{-u}e^{O(1)}\\&=& C u^2\varphi\left(\frac{1}{u}\right)e^{O(1)}\\&=&
 C u\sigma(u)e^{O(1)}\;\;\;\textrm{as}\;\;\;u\longrightarrow +\infty.
 \end{eqnarray*}
 For small values of $v$ (large values of $ t$), the function $v\varphi(\frac{1}{v}) = \frac{1}{2v}\left(1 + o(1)\right)$ is not integrable at zero. However,
 $$
 t\sigma(t)= t^2\varphi\left(\frac{1}{t}\right) = \frac{1}{2}\left(1 + o(1)\right)\;\;\;\textrm{as}\;\;\;t \longrightarrow 0+\;.
 $$
 Hence
 $$
 \int_u^{+\infty} t \sigma(t)\,dt \longrightarrow\;\textrm{constant} \;\;\;\textrm{as}\;\;\; u\longrightarrow 0+
 $$
 and
 $$
 u\sigma (u) \;\longrightarrow \frac{1}{2} \;\;\;\textrm{as}\;\;\;u\longrightarrow 0+\;.
 $$
 Thus $\varphi(\tau)$ satisfies  condition \eqref{maz2} for all values of $u$.\\ It now remains to establish the finiteness of  the constant $B_1$ in theorem \ref{measthm2}.  For $0 < r \le \frac{1}{2}$
 \begin{eqnarray*}
  B_1 = \sup\left\{|\ln r|\mu(B(x, r))\mathcal{B}^{-1}\left(\frac{1}{\mu(B(x, r))}\right)\right\}&=& \sup\frac{|\ln r|}{|\ln \mu(B(x, r))|}\left( 1 + o(1)\right)\\&\le& \textrm{const}\sup\frac{|\ln r|}{|\ln r^{\alpha}|}\\ &=& \textrm{const}\sup\frac{|\ln r|}{|\alpha\ln r|}\\&=& \frac{\textrm{const}}{\alpha}.
 \end{eqnarray*} Thus one can take $C_4 \sim \frac{1}{\alpha}$.
 \end{proof}
  We will also use the following notation:
\begin{equation}\label{ave}
w_E := \frac{1}{|E|}\int_E w(x)\,dx\,,
\end{equation}where $E\subset\mathbb{R}^2$ is a set with finite Lebesgue measure $|E|$.
\begin{lemma}\label{measlemma3*}
{\rm Let $\mu$ be a positive  Radon measure satisfying \eqref{ball}. Then for any $V\in L_{\mathcal{B}}(Q, \mu), \;V\geq 0$ there is a constant $C_6 > 0$  such that
\begin{equation}\label{maz6}
\int_{\overline{Q}} V|w(x)|^2d\mu(x) \le C_6\|V\|_{\mathcal{B}, Q, \mu}\int_{Q}|\nabla w |^2dx,
\end{equation} for all $w\in W^1_2(Q)\cap C(\overline{Q})$ with $w_Q = 0$.
}
\end{lemma}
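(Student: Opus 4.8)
The plan is to obtain \eqref{maz6} by splicing together three facts already available: the Hölder inequality for Orlicz spaces with the mutually complementary pair $(\mathcal{A},\mathcal{B})$ from \eqref{thepair}, the embedding-type estimate of Lemma~\ref{meascor}, and the Poincaré inequality on the unit square. First I would apply \eqref{Holder} on the measure space $(\overline{Q},\mu)$ to the functions $V\in L_{\mathcal{B}}(Q,\mu)$ and $|w|^2$. Since $w\in W^1_2(Q)\cap C(\overline{Q})$, Lemma~\ref{meascor} guarantees that $|w|^2\in L_{\mathcal{A}}(\overline{Q},\mu)$, so the product $V|w|^2$ is $\mu$-integrable and
$$
\int_{\overline{Q}} V|w(x)|^2\, d\mu(x) \le \|V\|_{\mathcal{B},Q,\mu}\,\big\||w|^2\big\|_{\mathcal{A},Q,\mu}\,.
$$

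Next I would invoke Lemma~\ref{meascor} directly: because $\mu$ satisfies \eqref{ball}, there is a constant $C_4$ depending only on $\alpha$ with
$$
\big\||w|^2\big\|_{\mathcal{A},Q,\mu} \le C_4\|w\|_{W^1_2(Q)}^2 = C_4\left(\int_Q |w(x)|^2\,dx + \int_Q|\nabla w(x)|^2\,dx\right).
$$
Finally, since $w_Q=0$, the Poincaré inequality on the unit square (Appendix~\ref{poincare}) gives $\int_Q|w(x)|^2\,dx \le C_P\int_Q|\nabla w(x)|^2\,dx$ for an absolute constant $C_P$. Combining the three displays yields \eqref{maz6} with $C_6 := C_4(1+C_P)$, which is finite and depends only on $\alpha$.

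The only point requiring care is the applicability of the Orlicz Hölder inequality: one must check that $|w|^2$ genuinely lies in the \emph{Orlicz space} $L_{\mathcal{A}}(\overline{Q},\mu)$ (so that \eqref{Holder} applies and the integral on the left is finite), which is precisely the content of Lemma~\ref{meascor}. One should also note that $\mu$ may charge $\partial Q$, so all Orlicz norms of $|w|^2$ and the left-hand integral are taken over $\overline{Q}$; this is harmless since $w\in C(\overline{Q})$. No genuine obstacle arises here — the lemma is a routine consequence of the three cited results, and the content of the chapter has been concentrated in the earlier Lemma~\ref{meascor} and in the verification of its hypothesis \eqref{ball} for Ahlfors regular $\mu$.
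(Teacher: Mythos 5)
Your proof is correct and is exactly the argument the paper intends: the paper's proof of this lemma is a one-line reference to precisely the three ingredients you use — the Orlicz–Hölder inequality \eqref{Holder}, Lemma~\ref{meascor}, and the Poincaré inequality — and your write-up simply spells out how they chain together, with the constant $C_6=C_4(1+C_P)$.
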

\begin{proof}
The proof of this Lemma follows from the H\"older inequality for Orlicz spaces (see \eqref{Holder}), Lemma \ref{meascor} and the Poincar\'e inequality (see \cite[1.1.11]{Maz}, see also Appendix \ref{poincare}).
\end{proof}
 \begin{definition}
 {\rm Let $(X_1, \Sigma_1)$ and $(X_2, \Sigma_2)$ be a pair of measurable spaces. Given a measure $\mu_1$ on $(X_1, \Sigma_1)$ and a measurable function $\xi$ on $(X_1, \Sigma_1)$ into $(X_2, \Sigma_2)$, we define the pushforward measure or image $\mu_2 := \mu_1\circ \xi^{-1}$ of $\mu_1$ under $\xi$ by $\mu_2(E) = \mu_1(\xi^{-1}(E))$ for $E\in \Sigma_2$. $\mu_2$ is a measure on $(X_2, \Sigma_2)$.
 }
 \end{definition}
 \begin{lemma}\label{pushlemma}{\rm \cite[Lemma 5.0.1]{strok}
 For every non-negative measurable function $\varphi$ on $(X_2, \Sigma_2)$
 \begin{equation}\label{push}
 \int_{X_2}\varphi \,d\mu_2 = \int_{X_1}\varphi\circ \xi\,d\mu_1\;,
 \end{equation} where $\mu_2 = \mu_1 \circ \xi^{-1}$.
 Moreover, $\varphi \in L^1(X_2, \Sigma_2, \mu_2)$ if and only if $\varphi \circ \xi\in L^1(X_1, \Sigma_1, \mu_1)$ and \eqref{push} holds for all $\varphi\in L^1(X_2, \Sigma_2, \mu_2)$.
 }
 \end{lemma}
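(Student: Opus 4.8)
The plan is to use the standard approximation-by-simple-functions argument (the ``standard machine'' of integration theory), building the identity \eqref{push} up from indicator functions.

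First I would check \eqref{push} for $\varphi = \chi_E$, the characteristic function of an arbitrary $E \in \Sigma_2$. Since $\chi_E \circ \xi = \chi_{\xi^{-1}(E)}$ and $\xi^{-1}(E) \in \Sigma_1$ (because $\xi$ is measurable, so the right-hand integral makes sense), we get
\[
\int_{X_2} \chi_E \, d\mu_2 = \mu_2(E) = \mu_1\bigl(\xi^{-1}(E)\bigr) = \int_{X_1} \chi_{\xi^{-1}(E)} \, d\mu_1 = \int_{X_1} \chi_E \circ \xi \, d\mu_1 ,
\]
the middle equality being exactly the definition of the pushforward $\mu_2 = \mu_1 \circ \xi^{-1}$. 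By linearity of the integral this extends immediately to every non-negative simple function $s = \sum_{k=1}^m a_k \chi_{E_k}$, $a_k \ge 0$, $E_k \in \Sigma_2$, since $s \circ \xi = \sum_{k=1}^m a_k \chi_{E_k} \circ \xi$.

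Next, for a general non-negative $\Sigma_2$-measurable $\varphi$, I would choose non-negative simple functions $s_n$ with $s_n \uparrow \varphi$ pointwise on $X_2$ (the usual construction). Then $s_n \circ \xi \uparrow \varphi \circ \xi$ pointwise on $X_1$, and two applications of the Monotone Convergence Theorem give
\[
\int_{X_2} \varphi \, d\mu_2 = \lim_{n \to \infty} \int_{X_2} s_n \, d\mu_2 = \lim_{n \to \infty} \int_{X_1} s_n \circ \xi \, d\mu_1 = \int_{X_1} \varphi \circ \xi \, d\mu_1 ,
\]
which is \eqref{push} in the non-negative case.

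Finally, for the integrability statement, apply the case just established to $|\varphi|$: since $|\varphi| \circ \xi = |\varphi \circ \xi|$, one obtains $\int_{X_2} |\varphi| \, d\mu_2 = \int_{X_1} |\varphi \circ \xi| \, d\mu_1$, so $\varphi \in L^1(X_2, \Sigma_2, \mu_2)$ precisely when $\varphi \circ \xi \in L^1(X_1, \Sigma_1, \mu_1)$. When this common value is finite, decompose $\varphi$ into its positive and negative parts (and, in the complex-valued case, first into real and imaginary parts), apply \eqref{push} to each of the four resulting non-negative measurable functions, and recombine by linearity. There is no genuine obstacle here: the only points requiring a word of care are the measurability of $\varphi \circ \xi$ and the legitimacy of the limit passage, both routine, and the whole content of the lemma is concentrated in the first displayed line, where \eqref{push} for indicators is nothing but the definition of $\mu_2$.
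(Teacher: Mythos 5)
Your proof is correct and is the standard ``build from indicators'' argument, which is precisely the textbook proof the paper is citing (Stroock, Lemma 5.0.1); the paper itself supplies no independent proof. All the steps are sound: \eqref{push} for indicators is literally the definition of $\mu_2 = \mu_1\circ\xi^{-1}$, linearity carries it to simple functions, the monotone convergence theorem (applied on both sides to $s_n \uparrow \varphi$ and $s_n\circ\xi \uparrow \varphi\circ\xi$) carries it to non-negative measurable $\varphi$, and the $L^1$ equivalence plus the final identity follow from applying the non-negative case to $|\varphi|$ and then to $\varphi^{\pm}$ (and real/imaginary parts). Nothing is missing.
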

Let $\xi(x) = \begin{pmatrix}
R_1 & 0\\0 & R_2
\end{pmatrix}x + x^0 , \;\;x, x^0\in\mathbb{R}^2,\;\; R_1, R_2 > 0$, be an affine transformation. Let $l_1$ and $l_2$ be any two perpendicular lines in $\mathbb{R}^2$ chosen in directions that satisfy Corollary \ref{cor-direct} and $Q$ the unit square with sides in directions also satisfying Corollary \ref{cor-direct}. Let $I_1$ and  $I_2$  be respective segments of $l_1$ and $l_2$ such that $I_1\times I_2$ is a rectangle and $\xi(Q) = I_1\times I_2$. Further, let  $\tilde{\mu} := \mu\circ \xi$ and $\tilde{V} := V\circ \xi$. Then for any $c > 0$, one gets  using \eqref{OrlAverage} and \eqref{push}
\begin{eqnarray}\label{scale}
\|V\|^{(\textrm{av})}_{\mathcal{B}, I_1\times I_2, \mu} &=&\textrm{sup}\left\{\left|\int_{\overline{I_1\times I_2}}V f\,d\mu\right| \;:\;\int_{\overline{I_1\times I_2}}\mathcal{A}(|f|)\,d\mu \le \mu(I_1\times I_2)\right\}\nonumber\\ &=& \textrm{sup}\left\{\frac{1}{c}\left|\int_{\overline{Q}}\tilde{V} g\,d(c\tilde{\mu})\right| \;:\;\int_{\overline{Q}}\mathcal{A}(|g|)\,d(c\tilde{\mu}) \le c\tilde{\mu}(Q)\right\}\nonumber\\
&=& \frac{1}{c}\|\tilde{V}\|^{(\textrm{av})}_{\mathcal{B}, Q, c\tilde{\mu}}\;,
\end{eqnarray}
where $g := f\circ \xi$.  Hence by Corollary \ref{avequiv} we have
\begin{eqnarray}\label{maz7}
\|\tilde{V}\|_{\mathcal{B}, Q, c\tilde{\mu}} &\le& \frac{1}{\min\{1, c\tilde{\mu}(Q)\}} \|\tilde{V}\|^{(\textrm{av})}_{\mathcal{B}, Q, c\tilde{\mu}}\nonumber\\ &\le& \frac{c}{\min\{1, c\tilde{\mu}(Q)\}}\|V\|^{\textrm{(\textrm{av})}}_{\mathcal{B}, I_1\times I_2, \mu}\;.
\end{eqnarray}
\begin{lemma}\label{measlemma3}
{\rm Let $I_1$ and $I_2$ be defined as above. Suppose $\mu$ satisfies \eqref{Ahlfors} and the rectangle $I_1\times I_2$ is centred in the support of the measure $\mu$. Then for any $V\in L_{\mathcal{B}}(I_1\times I_2, \mu), \; V \geq 0$
\begin{eqnarray}\label{maz8}
&&\int_{\overline{I_1\times I_2}}V(y)|w(y)|^2d\mu(y)\nonumber\\&& \le C_6\frac{c_1}{c_0}2^{\alpha}\max\left\{\frac{R_1}{R_2}, \frac{R_2}{R_1}\right\}^{\alpha + 1}\|V\|^{(\textrm{av})}_{ \mathcal{B},I_1\times I_2, \mu}\int_{I_1\times I_2}|\nabla w(y)|^2 dy
\end{eqnarray}for all $w\in W^1_2(I_1\times I_2)\cap C(\overline{I_1\times I_2})$ with $w_{I_1\times I_2} = 0$ (see \eqref{ave}), where $C_6$ is the constant in Lemma \ref{measlemma3*}.
}
\end{lemma}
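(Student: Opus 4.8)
The strategy is to transport the inequality to the unit square $Q$ by means of $\xi$ and then to quote Lemma~\ref{measlemma3*}. Put $\tilde\mu:=\mu\circ\xi$, $\tilde V:=V\circ\xi$, $\tilde w:=w\circ\xi$. Since $\xi$ is affine with diagonal linear part $\mathrm{diag}(R_1,R_2)$ and constant Jacobian $R_1R_2$, one has $\tilde w\in W^1_2(Q)\cap C(\overline Q)$, the mean value is preserved so $\tilde w_Q=w_{I_1\times I_2}=0$, and by Lemma~\ref{pushlemma} the substitution $y=\xi(x)$ gives $\int_{\overline{I_1\times I_2}}V|w|^2\,d\mu=\int_{\overline Q}\tilde V|\tilde w|^2\,d\tilde\mu$ together with
\[
\int_{I_1\times I_2}|\nabla w(y)|^2\,dy=\int_{Q}\Bigl(\tfrac{R_2}{R_1}|\partial_{x_1}\tilde w|^2+\tfrac{R_1}{R_2}|\partial_{x_2}\tilde w|^2\Bigr)dx\ \ge\ \Bigl(\max\Bigl\{\tfrac{R_1}{R_2},\tfrac{R_2}{R_1}\Bigr\}\Bigr)^{-1}\int_{Q}|\nabla\tilde w|^2\,dx .
\]

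\emph{Verifying the Ahlfors hypothesis on $Q$.} I would apply Lemma~\ref{measlemma3*} to the rescaled measure $c\tilde\mu$ with $c:=\bigl(c_1(\max\{R_1,R_2\})^{\alpha}\bigr)^{-1}$. For $x\in\overline Q$ and $0<r\le1$ the image $\xi(B(x,r))$ is an ellipse with semi-axes $R_1r,R_2r$, hence is contained in $B(\xi(x),\max\{R_1,R_2\}r)$; since $\mathrm{supp}\,\mu$ is unbounded, \eqref{Ahlfors} then gives $\tilde\mu(B(x,r))=\mu\bigl(\xi(B(x,r))\bigr)\le c_1(\max\{R_1,R_2\})^{\alpha}r^{\alpha}$ whenever $\xi(x)\in\mathrm{supp}\,\mu$, and for the remaining $x$ the ball is either $\tilde\mu$-null or can be re-centred at a point of $\mathrm{supp}\,\mu$ — which, as one checks from the proof of Lemma~\ref{meascor} through the quantity $B_1$ of Theorem~\ref{measthm2}, does not spoil the constant. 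Thus $c\tilde\mu$ satisfies \eqref{ball} on $Q$, and Lemma~\ref{measlemma3*} yields, with $C_6$ its constant,
\[
\int_{\overline Q}\tilde V|\tilde w|^2\,d(c\tilde\mu)\le C_6\,\|\tilde V\|_{\mathcal B,Q,c\tilde\mu}\int_{Q}|\nabla\tilde w|^2\,dx .
\]

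\emph{Transporting back.} Dividing by $c$ and invoking the pushforward identity turns the left side into $\int_{\overline{I_1\times I_2}}V|w|^2\,d\mu$; for the right side I would use the scaling identity \eqref{scale} and the estimate \eqref{maz7}, which give $c^{-1}\|\tilde V\|_{\mathcal B,Q,c\tilde\mu}\le\bigl(\min\{1,c\tilde\mu(Q)\}\bigr)^{-1}\|V\|^{(\mathrm{av})}_{\mathcal B,I_1\times I_2,\mu}$. The remaining factor $\min\{1,c\tilde\mu(Q)\}$ is bounded below using the hypothesis that $I_1\times I_2$ is centred in $\mathrm{supp}\,\mu$: if $x^{\ast}\in\mathrm{supp}\,\mu$ is its centre then $I_1\times I_2\supseteq B\bigl(x^{\ast},\tfrac12\min\{R_1,R_2\}\bigr)$, so by the lower Ahlfors bound $c\tilde\mu(Q)=c\mu(I_1\times I_2)\ge \dfrac{c_0}{c_1\,2^{\alpha}}\bigl(\max\{R_1/R_2,R_2/R_1\}\bigr)^{-\alpha}$, whence $\bigl(\min\{1,c\tilde\mu(Q)\}\bigr)^{-1}\le\dfrac{c_1}{c_0}\,2^{\alpha}\bigl(\max\{R_1/R_2,R_2/R_1\}\bigr)^{\alpha}$. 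Multiplying by the gradient factor $\max\{R_1/R_2,R_2/R_1\}$ and by $C_6$ and collecting the displays gives \eqref{maz8}.

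\emph{Main obstacle.} The whole argument is a bookkept change of variables; the one genuinely delicate step is checking that $c\tilde\mu$ obeys \eqref{ball} literally — i.e. handling the points of $\overline Q$ outside $\mathrm{supp}\,\mu$ — so that the constant $C_6$ of Lemma~\ref{measlemma3*} may be re-used verbatim; closely tied to this is the precise control of $\min\{1,c\tilde\mu(Q)\}$, which is where the centring hypothesis, the exact exponent $\alpha+1$ of $\max\{R_1/R_2,R_2/R_1\}$, and the factor $2^{\alpha}$ enter. The transformation of the Dirichlet integral, the pushforward formula and the Orlicz-norm scaling are routine and already available in the text.
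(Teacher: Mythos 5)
Your proposal follows essentially the same route as the paper: transport to the unit square via $\xi$, verify the Ahlfors condition \eqref{ball} for $c\tilde\mu$ with $c=(c_1\max\{R_1,R_2\}^{\alpha})^{-1}$, apply Lemma~\ref{measlemma3*}, and then use the scaling identity \eqref{scale}/\eqref{maz7} together with the centring hypothesis to bound $\min\{1,c\tilde\mu(Q)\}^{-1}$ and collect constants. The one point you flag — handling $x$ with $\xi(x)\notin\operatorname{supp}\mu$ when checking \eqref{ball} — is glossed over in the paper's proof as well, and your remark that it only costs a re-centring (absorbed into the equivalence constant for $B_1$ in Theorem~\ref{measthm2}) is a fair way to close it.
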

\begin{proof}
Take $\forall x \in \overline{Q}$ such that $\xi(x) \in$ supp$\,\mu$. For any disk $B(x, r)$ the image $\xi(B(x,r))$ contains the disk of radius $\min\{R_1, R_2\}r$ and is contained in the disk of radius $\max\{R_1, R_2\}r$ both centred at $\xi(x)$. Hence, \eqref{Ahlfors} implies\\
$$
c_0\left(\min\{R_1, R_2\}r\right)^{\alpha} \le \tilde{\mu}(B(x, r)) = \mu\left(\xi(B(x, r))\right) \le c_1\left(\max\{R_1, R_2\}r\right)^{\alpha}.
$$
Let
$$
c := \frac{1}{c_1\max \{R_1, R_2\}^{\alpha}}.
$$
Then Lemma \ref{measlemma3*} applies to the measure $c\tilde{\mu}$. Using \eqref{maz7} and the inequality
$$
\int_Q|\nabla (w \circ\xi)(x)|^2 dx \le \textrm{max}\left\{\frac{R_1}{R_2}, \frac{R_2}{R_1}\right\}\int_{I_1\times I_2}|\nabla w(y)|^2 dy
$$
(see the proof of \cite[Lemma 2]{Sol}), we get

\begin{eqnarray}\label{disk}
&&\int_{\overline{I_1\times I_2}}V(y)|w(y)|^2d\mu(y) = \frac{1}{c}\int_{\overline{Q}}V(\xi(x))|w(\xi(x))|^2d(c\mu(\xi(x)))\nonumber\\&&= \frac{1}{c}\int_{\overline{Q}}\widetilde{V}(x)|(w\circ\xi)(x)|^2d(c\tilde{\mu}(x))\nonumber\\&&\le \frac{1}{c} C_6\|\widetilde{V}\|_{\mathcal{B}, Q, c\tilde{\mu}}\int_Q|\nabla (w \circ\xi)(x)|^2 dx\nonumber \\&&\le \frac{1}{c} C_6\;\textrm{max}\left\{\frac{R_1}{R_2}, \frac{R_2}{R_1}\right\} \frac{c}{\min\{1, c\tilde{\mu}(Q)\}}\|V\|^{(\textrm{av})}_{\mathcal{B}, I_1\times I_2, \mu}\int_{I_1\times I_2}|\nabla w(y)|^2 dy\nonumber\\&&
\end{eqnarray} But
\begin{eqnarray}\label{disk1}
\frac{1}{\min\{1, c\tilde{\mu}(Q)\}} &=& \max \left\{1, \frac{1}{c\tilde{\mu}(Q)}\right\}\nonumber\\ &=& \max \left\{1, \frac{c_1 \max\{R_1, R_2\}^{\alpha}}{\mu(I_1\times I_2)}\right\}\nonumber\\
&\le& \max\left\{ 1, \frac{c_1 \{R_1, R_2\}^{\alpha}}{c_0\left(\frac{\min\{R_1, R_2\}}{2}\right)^{\alpha}}\right\}\nonumber\\&=&
\frac{c_1}{c_0}2^{\alpha}\max\left\{\frac{R_1}{R_2}, \frac{R_2}{R_1}\right\}^{\alpha}.
\end{eqnarray}In the last inequality we use the fact $I_1\times I_2$ contains a disk of radius $\frac{\min\{R_1, R_2\}}{2}$ centred in the support of $\mu$. Hence \eqref{disk} and \eqref{disk1} yield \eqref{maz8}.
\end{proof}
Below we construct an example for which Lemma \ref{measlemma3} fails if $I_1\times I_2$ is not centred in the support of $\mu$.
\begin{example}\label{counter}
{\rm Let $I_1\times I_2 = Q$ (the unit square). Let $x_0$ be the lower left corner of $Q$ and suppose that supp$\,\mu \cap Q \subseteq B(x_0, r)$ for small $r$. Let
\begin{eqnarray*}
w(x) := \left\{\begin{array}{l}
  1, \ \;\;\;\;\;\;\;\;\;\;\;\;\;\;\; |x - x_0| \le r,   \\ \\
   \frac{\ln\left(|x - x_0|\sqrt{2}\right)}{\ln(r\sqrt{2})}\,, \  r < |x - x_0| < \frac{1}{\sqrt{2}}, \\ \\
   0,\ \;\;\;\;\;\;\;\;\;\;\;\;\;\;\; |x - x_0| \ge  \frac{1}{\sqrt{2}}\,.
\end{array}\right.
\end{eqnarray*}
Then introducing polar coordinates centred at $x_0$, $x - x_0 = (\rho\cos \theta, \rho\sin\theta)$, one has
\begin{eqnarray}\label{c1}
\int_Q |\nabla w(x)|^2\,dx &=& \int_0^{\frac{\pi}{2}}\int_r^{\frac{1}{\sqrt{2}}}\left|\frac{d}{d\rho}\left(\frac{\ln\left(\rho\sqrt{2}\right)}{\ln(r\sqrt{2})}\right)\right|^2\rho d\rho d\theta\nonumber \\ &=& \frac{\pi}{2}\left(\frac{1}{\ln(r\sqrt{2})}\right)^2\int_r^{\frac{1}{\sqrt{2}}}\frac{1}{\rho}\,d\rho\nonumber\\ &=& \frac{\pi}{2}\left(\frac{1}{\ln(r\sqrt{2})}\right)^2(-\ln(r\sqrt{2}))\nonumber \\&=& \frac{\pi}{2}\left(\frac{1}{-\ln(r\sqrt{2})}\right).
\end{eqnarray}
Let $V \equiv 1$. Then the left-hand side of \eqref{maz8} becomes
\begin{equation}\label{c2}
\int_{\overline{Q}} V(x)|w(x)|^2\,d\mu(x) = \int_{\overline{Q}}d\mu(x) = \mu(Q).
\end{equation}
By \eqref{c1},\eqref{c2} and \eqref{c5}, \eqref{maz8} becomes
$$
\mu(Q) \le C_6 \frac{c_1}{c_0}2^{\alpha}\Phi^{-1}(1)\mu(Q) \frac{\pi}{2}\left(\frac{1}{-\ln(r\sqrt{2})}\right)
$$ which does not hold for small values of $r$.
}
\end{example}
However, one can prove an analogue of Lemma \ref{measlemma3} with a different norm in the right-hand side of \eqref{maz8} if $I_1\times I_2$ is not centred in the support of $\mu$ (see Appendix \ref{analogue}).\\\\
Let $G\subset\mathbb{R}^2$ be a bounded set with Lipschitz boundary such that \\$0 < \mu(G) < \infty$. Let $G^*$ be a square with the same centre and  edges of length $3$ times the length of the smallest square containing $G$ chosen in the directions $\theta_0$ and $\theta_0 + \frac{\pi}{2}$ from Corollary \ref{cor-direct}. Let
$$
\kappa_0 (G) := \frac{\mu(G^*)}{\mu(G)}\,.
$$

Further, let
  \begin{eqnarray*}
V_{*}(x) := \left\{\begin{array}{l}
  V(x), \ \;\;\mbox{ if } x\in G,   \\ \\
   0, \ \;\;\mbox{ if } x\notin G.
\end{array}\right.
\end{eqnarray*}
Then
\begin{equation}\label{exteqn}
\|V_{*}\|^{(av)}_{\mathcal{B}, G^*, \mu} = \|V\|^{(av), \kappa_0}_{\mathcal{B}, G, \mu} \le \kappa_0\|V\|^{(av)}_{\mathcal{B}, G, \mu}\,,
\end{equation}(see Lemma \ref{lemma8}).
\begin{theorem}\label{ext}{\rm \cite[Ch.VI, Theorem 5]{Stein}}
{\rm There exists a bounded linear operator $$T: W^1_2(G) \longrightarrow W^1_2(\mathbb{R}^2)$$  which satisfies
$$
Tw|_G = w, \;\;\;\forall w \in W^1_2(G)\,.
$$
}
\end{theorem}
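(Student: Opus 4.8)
The plan is to prove this by the standard localization-and-reflection construction; for the first-order space $W^1_2$ this avoids the full regularized-distance machinery of the cited source. Since $G$ is bounded with Lipschitz boundary, $\partial G$ is compact, so I would first fix a finite open cover $U_1,\dots,U_N$ of $\partial G$ together with an open set $U_0\Subset G$ such that $\overline G\subset\bigcup_{j=0}^N U_j$, chosen so that for each $j\ge 1$, after a rigid motion of $\mathbb{R}^2$, one has $G\cap U_j=\{(x_1,x_2):x_2>\varphi_j(x_1)\}\cap U_j$ with $\varphi_j$ Lipschitz of constant $\le L$. Then I would take a smooth partition of unity $\{\zeta_j\}_{j=0}^N$, $\zeta_j\in C_0^\infty(U_j)$, $\sum_j\zeta_j\equiv 1$ near $\overline G$, and, given $w\in W^1_2(G)$, treat each piece $\zeta_j w$ separately.

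The interior piece $\zeta_0 w$ has compact support in $G$, so I extend it by zero; this is bounded into $W^1_2(\mathbb{R}^2)$. For $j\ge 1$ I would straighten the boundary with the bi-Lipschitz map $\Lambda_j(x_1,x_2):=(x_1,x_2-\varphi_j(x_1))$, whose a.e.\ Jacobian has determinant $1$ and bounded matrix norm, so it induces a bounded isomorphism on the relevant $W^1_2$ spaces with constant depending only on $L$; it carries $G\cap U_j$ into the half-plane $\{y_2>0\}$. Next I extend the transported function $v_j=(\zeta_j w)\circ\Lambda_j^{-1}$ to $\mathbb{R}^2$ by even reflection $v_j(y_1,|y_2|)$; since the traces from the two sides of $\{y_2=0\}$ agree, the reflection lies in $W^1_2(\mathbb{R}^2)$, its $\partial_2$ being the odd reflection of $\partial_2 v_j$, and the norm at most doubles. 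Pulling back by $\Lambda_j$, undoing the rigid motion, and multiplying by a cutoff $\widetilde\zeta_j\in C_0^\infty(U_j)$ equal to $1$ on $\operatorname{supp}\zeta_j$ yields $T_j w\in W^1_2(\mathbb{R}^2)$ with $T_j w|_G=\zeta_j w$ and a linear bound $\|T_j w\|_{W^1_2(\mathbb{R}^2)}\le C_j\|w\|_{W^1_2(G)}$.

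Finally I would set $Tw$ equal to the sum of the extension-by-zero of $\zeta_0 w$ and the $T_j w$, $j=1,\dots,N$. Linearity is built in; on $G$ one has $Tw=\sum_j\zeta_j w=w$; and $\|Tw\|_{W^1_2(\mathbb{R}^2)}\le\bigl(\sum_j C_j\bigr)\|w\|_{W^1_2(G)}$, giving boundedness. I expect the only genuine obstacle to be the claim that composition with the bi-Lipschitz straightening maps $\Lambda_j^{\pm1}$ preserves $W^1_2$ with uniformly controlled norms: this requires Rademacher's theorem to differentiate $\varphi_j$ a.e., the change-of-variables/chain rule for Sobolev functions under bi-Lipschitz maps, and two-sided Jacobian bounds coming from the common constant $L$ and the finiteness of the cover (which in turn uses that $\overline G$ is compact, i.e.\ that $G$ is bounded). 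Everything else is routine bookkeeping. For Sobolev spaces of order $\ge 2$ the naive even reflection fails, which is precisely why the cited reference works with the more elaborate regularized-distance operator; since only $W^1_2$ is needed here, the simpler argument suffices.
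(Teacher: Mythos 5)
Your argument is a correct and self-contained proof, but it is not the one the paper leans on: the paper simply cites Stein's extension theorem, whose proof (in the cited source) constructs a single \emph{universal} extension operator via the regularized distance function, working for every $W^k_p(G)$ simultaneously on a Lipschitz domain. You instead use the classical localization--straightening--even-reflection construction (Calder\'on/Lions style), which is simpler but order-specific: even reflection $v(y_1,|y_2|)$ preserves $W^1_2$ because the traces from the two sides match and $\partial_2$ becomes the odd reflection, but it does not preserve $W^2_2$, which is exactly why Stein resorts to the heavier machinery. Since the paper only ever needs the $W^1_2$ case, your route is perfectly adequate here; what Stein's construction buys (and you correctly forgo) is independence of the Sobolev order. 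The one technical point you rightly flag as the crux---that composition with the bi-Lipschitz straightening maps $\Lambda_j^{\pm1}$ is bounded on $W^1_2$ with constants controlled by the common Lipschitz constant $L$---does go through via Rademacher's theorem and the change-of-variables formula for Sobolev functions under bi-Lipschitz maps, and the finiteness of the cover (from compactness of $\overline{G}$) then gives a uniform bound. The remaining steps (partition of unity, zero-extension of the interior piece, cutoff $\widetilde\zeta_j$ to keep each $T_jw$ supported in $U_j$, and summing) are routine and correctly handled.
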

This together with the Poincar\'e inequality (see Appendix \ref{poincare}) imply
\begin{eqnarray}\label{ext1}
\int_{G^*}|\nabla (Tw)(x)|^2\,dx &\le& \int_{\mathbb{R}^2}|\nabla (Tw)(x)|^2\,dx \nonumber \\&\le& \|T\|^2\|w\|^2_{W^1_2(G)}\nonumber  \\ &\le& \|T\|^2 (1 + C^2)\int_G|\nabla w(x)|^2\,dx
\end{eqnarray}
for all $w\in W^1_2(G)$ with $w_G = 0$.
\begin{lemma}\label{measlemma4}
{\rm Let $\mu$ be a positive  Radon measure on $\mathbb{R}^2$ that is Ahlfors regular and $G$ be defined as above. Choose and fix a direction satisfying Corollary \ref{cor-direct}. Further, for all $x\in\overline{G}$ and for all $r > 0$, let $Q_x(r)$ be a square with edges of length $r$ in the chosen direction centred at $x\in \textrm{supp}\,\mu \cap \overline{G}$. Then for any $V\in L_{\mathcal{B}}(G, \mu), \; V\geq 0$ and any $n\in \mathbb{N}$ there exists a finite cover of $\textrm{supp}\,\mu\cap \overline{G}$  by squares $Q_{x_k}(r_{x_k}), r_{x_k} > 0, k = 1, 2, ..., n_0$,  such that $n_0\le n$  and
\begin{equation}\label{maz9}
\int_{\overline{G}} V(x)|w(x)|^2d\mu(x) \le C_7n^{-1}\|V\|^{(av)}_{\mathcal{B}, G, \mu}\int_G|\nabla w(x)|^2\,dx
\end{equation}for all $w\in W^1_2(G)\cap C(\overline{G})$ with $(Tw)_{Q_{x_k}(r_{x_k})} = 0, k = 1,..., n_0$ and $w_G = 0$, where the constant $C_7$ is independent of $V$ and $n$.
}
\end{lemma}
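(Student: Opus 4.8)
The plan is to establish Lemma~\ref{measlemma4} as the measure-theoretic counterpart of the partition theorem Lemma~\ref{lemma4} (Solomyak's construction), following the same covering scheme but with the content $\|\cdot\|_{L_1(\mathbb{I},L_{\mathcal{B}}(\mathbb{I}))}$ replaced by the averaged Orlicz content $\|\cdot\|^{(\textrm{av})}_{\mathcal{B},\cdot,\mu}$ and with Lemma~\ref{lemma3} replaced by Lemma~\ref{measlemma3}. First I would reduce to a square. With $V_*$ the extension of $V$ by zero, Theorem~\ref{ext}, the Poincar\'e inequality, \eqref{ext1} and \eqref{exteqn} give $\int_{\overline{G}}V|w|^2\,d\mu=\int_{\overline{G^*}}V_*|Tw|^2\,d\mu$, $\int_{\mathbb{R}^2}|\nabla Tw|^2\,dx\le\textrm{const}(G)\int_G|\nabla w|^2\,dx$ for $w_G=0$, and $\|V_*\|^{(\textrm{av})}_{\mathcal{B},G^*,\mu}\le\kappa_0(G)\|V\|^{(\textrm{av})}_{\mathcal{B},G,\mu}$. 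Hence it suffices to cover $\textrm{supp}\,\mu\cap\overline{G^*}$ by $n_0\le n$ squares $Q_{x_k}(r_{x_k})$ centred in $\textrm{supp}\,\mu$ so that $\int_{\overline{G^*}}V_*|u|^2\,d\mu\le C\,n^{-1}\|V_*\|^{(\textrm{av})}_{\mathcal{B},G^*,\mu}\int_{\mathbb{R}^2}|\nabla u|^2\,dx$ for every continuous $u\in W^1_2(\mathbb{R}^2)$ with $u_{Q_{x_k}(r_{x_k})}=0$; the lemma then follows by taking $u=Tw$.

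For the covering, fix a threshold $\tau$ of size $\asymp n^{-1}\|V_*\|^{(\textrm{av})}_{\mathcal{B},G^*,\mu}$. By Lemma~\ref{measlemma2} --- applicable since $\mathcal{B}$ obeys the $\Delta_2$-condition and, by Corollary~\ref{cor-direct}, lines parallel to the sides of the squares are $\mu$-null --- the function $r\mapsto\|V_*\|^{(\textrm{av})}_{\mathcal{B},Q_x(r),\mu}$ is continuous; its proof shows it is also non-decreasing, and one checks that it runs from $0$ (as $r\to0+$) to $+\infty$ (as $r\to+\infty$, using $V_*\in L^1(\mu)$ and $\textrm{supp}\,\mu$ unbounded). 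Hence for each $x\in\textrm{supp}\,\mu\cap\overline{G^*}$ there is $r(x)>0$ with $\|V_*\|^{(\textrm{av})}_{\mathcal{B},Q_x(r(x)),\mu}=\tau$, and the squares $\{Q_x(r(x))\}$ cover $\textrm{supp}\,\mu\cap\overline{G^*}$. A Besicovitch-type covering lemma extracts a subcover $\{Q_{x_k}(r(x_k))\}_{k\le n_0}$ that is the union of a bounded number $N$ of subfamilies of pairwise disjoint squares. Since a square substantially larger than $G$ centred in $\overline{G^*}$ already contains $\overline G$ and hence, by Lemma~\ref{lemma8}, has content $\gtrsim\|V_*\|^{(\textrm{av})}_{\mathcal{B},G^*,\mu}$, the radii $r(x)$ are bounded by $\textrm{const}(G)$ once $n$ exceeds a threshold depending on $G$; the subadditivity Lemma~\ref{lemma7}, applied within each disjoint subfamily, then bounds $n_0$ by $N\|V_*\|^{(\textrm{av})}_{\mathcal{B},\tilde G,\mu}/\tau\le n$ for a suitable numerical factor in $\tau$ (here $\tilde G$ is a fixed bounded set containing all the $Q_{x_k}$; the finitely many small values of $n$ are handled by a single square enclosing $\overline{G^*}$).

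For the estimate, apply Lemma~\ref{measlemma3} on each $Q_{x_k}(r(x_k))$: being a square centred in $\textrm{supp}\,\mu$, the factor $\max\{R_1/R_2,R_2/R_1\}^{\alpha+1}$ in \eqref{maz8} equals $1$, so $\int_{\overline{Q_{x_k}}}V_*|u|^2\,d\mu\le C_6\tfrac{c_1}{c_0}2^{\alpha}\,\tau\int_{Q_{x_k}}|\nabla u|^2\,dx$ whenever $u_{Q_{x_k}}=0$. Summing over $k$, bounding $\int_{\overline{G^*}}V_*|u|^2\,d\mu$ by $\sum_k\int_{\overline{Q_{x_k}}}V_*|u|^2\,d\mu$ (positivity, since the $Q_{x_k}$ cover $\textrm{supp}\,\mu\cap\overline{G^*}$) and $\sum_k\int_{Q_{x_k}}|\nabla u|^2$ by $N\int_{\mathbb{R}^2}|\nabla u|^2$ (overlap multiplicity $\le N$), and recalling $\tau\asymp n^{-1}\|V_*\|^{(\textrm{av})}_{\mathcal{B},G^*,\mu}$, yields the target inequality; pulling it back through $u=Tw$, \eqref{ext1} and \eqref{exteqn} completes the proof, with $C_7$ depending on $G$ only through $\kappa_0(G)$, $\|T\|$ and the Besicovitch constant $N$.

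The main obstacle is the bookkeeping in the covering step: producing covering elements that are at once squares and centred in $\textrm{supp}\,\mu$ --- so that Lemma~\ref{measlemma3} applies with a constant independent of the element --- while keeping their number down to $n$ via the subadditivity of the averaged content. Reconciling these demands requires both the uniform boundedness of the stopping radii $r(x)$ and the Besicovitch splitting into boundedly many disjoint subfamilies, combined with a correct calibration of $\tau$; verifying the $r\to0+$ and $r\to+\infty$ behaviour of $r\mapsto\|V_*\|^{(\textrm{av})}_{\mathcal{B},Q_x(r),\mu}$ and tracking the $G$-dependence of all constants throughout are the technical points that need care.
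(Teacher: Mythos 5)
Your proposal is correct and follows essentially the same route as the paper's own proof: reduce to $G^*$ via the extension operator $T$ and the Poincar\'e inequality, use the continuity from Lemma \ref{measlemma2} to build a ``stopping-time'' cover by squares of constant averaged content, thin it with Besicovitch, control the cardinality by the subadditivity of $\|\cdot\|^{(\textrm{av})}$, and estimate each piece by Lemma \ref{measlemma3}. The only cosmetic differences are the normalisation of the threshold $\tau$ (the paper uses $\kappa_0 N n^{-1}\|V\|^{(\textrm{av})}_{\mathcal{B},G,\mu}$, which forces $Q_x(r_x)\subseteq G^*$, whereas you allow squares in a slightly larger $\tilde G$ and compensate by Lemma \ref{lemma8}) and the treatment of small $n$ (the paper takes $n_0=0$ and absorbs the loss into $C_7$, where you use a single enclosing square).
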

\begin{proof}
Let $N\in\mathbb{N}$ be the bound in the Besicovitch covering Lemma (see Appendix \ref{besico}). If $n \le \kappa_0N$, one can take $n_0 = 0$ and then using \eqref{maz8}, \eqref{exteqn} and \eqref{ext1}, we have
\begin{eqnarray*}
\int_{\overline{G}} V(x)|w(x)|^2\,d\mu(x) &=& \int_{\overline{G^*}} V_*(x)|w(x)|^2\,d\mu(x)\\ &\le& C_6 \frac{c_1}{c_0}2^{\alpha}\kappa_0N n^{-1}\|V_*\|^{(\textrm{av})}_{\mathcal{B}, G^*, \mu}\int_{G^*}|\nabla (Tw)(x)|^2\,dx\\  &\le& C_6 \frac{c_1}{c_0}2^{\alpha} \kappa_0N n^{-1}\kappa_0\|V\|^{(\textrm{av})}_{\mathcal{B}, G, \mu}\|T\|^2(1 + C^2)\int_{G}|\nabla (w)(x)|^2\,dx \\&=& C_7n^{-1}\|V\|^{(\textrm{av})}_{\mathcal{B}, G, \mu}\int_{G}|\nabla (w)(x)|^2\,dx\,,
\end{eqnarray*}
where $C_7 := \frac{ C_6 c_1 2^{\alpha}\kappa^2_0N\|T\|^2(1 + C^2)}{c_0}$.\\
Now assume that $n > \kappa_0N$, then for any $x\in \textrm{supp}\,\mu\cap\overline{G}$, Lemma \ref{measlemma2} implies there is a closed square $Q_x(r_x)$ centred at $x$ such that
\begin{equation}\label{use}
 \|V_{*}\|^{(\textrm{av})}_{\mathcal{B}, Q_x(r_x), \mu} =  \kappa_0N n^{-1}\|V\|^{(av)}_{\mathcal{B}, G, \mu}.
\end{equation}
Since $\kappa_0N n^{-1} < 1$, it is not difficult  to see that $Q_x(r_x) \subseteq G^*$.
Consider the covering $\Xi = \{Q_x(r_x)\}$ of $\textrm{supp}\,\mu\cap\overline{G}$. Then according to the Besicovitch covering Lemma, $\Xi$ has a countable or finite subcover $\Xi'$ that can be split into $N$ subsets $\Xi'_j,\,j = 1, ..., N$ in such  a way that the closed squares in each subset are pairwise disjoint.
 Applying Lemma \ref{lemma7} and  \eqref{exteqn}  one has
\begin{eqnarray*}
\kappa_0Nn^{-1}\|V\|^{(av)}_{\mathcal{B}, G, \mu} \textrm{card}\,\Xi'_j &=&  \underset{Q_{x}(r_{x})\in\Xi'_j}\sum \|V_{*}\|^{(\textrm{av})}_{\mathcal{B}, Q_{x}(r_{x}), \mu}\le \|V_{*}\|^{(\textrm{av})}_{\mathcal{B}, G^{*}, \mu}\\ &\le& \kappa_0\|V\|^{(\textrm{av})}_{\mathcal{B}, G, \mu}\,.
\end{eqnarray*}
Hence $\textrm{card}\,\Xi'_j \le nN^{-1}$ and
$$
n_0 := \textrm{card}\,\Xi' = \sum_{j= 1}^N \textrm{card}\,\Xi'_j \le  n.
$$
Again, using \eqref{maz8}, \eqref{ext1} and \eqref{use}, we have
\begin{eqnarray*}
\int_{\overline{G}} V(x)|w(x)|^2d\mu(x) &=& \int_{\textrm{supp}\,\mu \cap \overline{G}} V(x)|w(x)|^2d\mu(x) \\ &\le& \sum_{k = 1}^{n_0}\int_{Q_{x_k}(r_{x_k})} V_{*}(x)|(Tw)(x)|^2\,d\mu(x)\\&\le& C_6 \frac{c_1}{c_0}2^{\alpha}\sum_{k = 1}^{n_0} \|V_{*}\|^{(\textrm{av})}_{\mathcal{B}, Q_{x_k}(r_{x_k}), \mu}\int_{Q_{x_k}(r_{x_k})}|\nabla (Tw)(x)|^2dx\\ &=& C_6 \frac{c_1}{c_0}2^{\alpha} n^{-1}\kappa_0N\|V\|^{(\textrm{av})}_{\mathcal{B}, G, \mu}\sum_{k = 1}^{n_0}\int_{Q_{x_k}(r_{x_k})}|\nabla (Tw)(x)|^2\,dx\\ &=&
C_6 \frac{c_1}{c_0}2^{\alpha}\kappa_0 n^{-1}N\|V\|^{(\textrm{av})}_{\mathcal{B}, G, \mu}\sum_{j = 1}^{N}\underset{Q_{x_k}(r_{x_k})\in\Xi'_j}\sum\int_{Q_{x_k}(r_{x_k})}|\nabla (Tw)(x)|^2\,dx\\ &\le& C_6 \frac{c_1}{c_0}2^{\alpha}\kappa_0 n^{-1}N\|V\|^{(\textrm{av})}_{\mathcal{B}, G, \mu}\sum_{j = 1}^{N}\int_{G^*}|\nabla (Tw)(x)|^2\,dx\\&\le& C_6 \frac{c_1}{c_0}2^{\alpha}\kappa_0 n^{-1}N^2\|T\|^2(1 + C^2)\|V\|^{(\textrm{av})}_{\mathcal{B}, G, \mu}\int_{G}|\nabla w(x)|^2\,dx \\&=& C_7n^{-1}\|V\|^{(\textrm{av})}_{\mathcal{B}, G, \mu}\int_{G}|\nabla w(x)|^2\,dx\;,
\end{eqnarray*} where $C_7 := \frac{ C_6 c_1 2^{\alpha}\kappa_0N^2\|T\|^2(1 + C^2)}{c_0}$.
\end{proof}

Let $G$ in Lemma \ref{measlemma4} be the unit square $Q$ centred in the support of $\mu$.   Then
$$
\mu(Q) \geq c_0\left(\frac{1}{2}\right)^{\alpha} \;\;\;\textrm{and} \;\;\; \mu(Q^{*}) \le c_1\left(\frac{3}{\sqrt{2}}\right)^{\alpha}.
$$ Hence
\begin{equation}\label{disk2}
\mu(Q^{*}) \le \frac{c_1}{c_0}\left(3\sqrt{2}\right)^{\alpha}\mu(Q) = \kappa_0\mu(Q)\,,
\end{equation} where
$$
\kappa_0 := \frac{c_1}{c_0}\left(3\sqrt{2}\right)^{\alpha}.
$$

Let
\begin{eqnarray*}
& \mathcal{E}_{2V\mu, Q}[w] : = \int_{Q} |\nabla w(x)|^2 dx - 2\int_{\overline{Q}} V(x) |w(x)|^2 d\mu(x) , & \\
& \mbox{Dom}\, (\mathcal{E}_{2V\mu, Q}) =
W^1_2\left(Q\right)\cap L^2\left(Q, Vd\mu\right) . &
\end{eqnarray*}

\begin{lemma}\label{measlemma5}{\rm [ cf. Lemma \ref{lemma5}]
\begin{equation}\label{meain1}
N_- (\mathcal{E}_{2V\mu, Q}) \le C_8
\|V\|^{(\textrm{av})}_{\mathcal{B}, Q, \mu} + 2 , \ \ \ \forall V \ge 0 ,
\end{equation}
where $C_8 := 2C_7$ and $C_7$ is the constant in Lemma \ref{measlemma4}.
}
\end{lemma}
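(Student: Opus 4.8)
The plan is to follow the proof of Lemma \ref{lemma5}, using the measure-theoretic covering result Lemma \ref{measlemma4} in place of Lemma \ref{lemma4}. First I would dispose of the case $V \notin L_{\mathcal{B}}(Q, \mu)$: then $\|V\|^{(\textrm{av})}_{\mathcal{B}, Q, \mu} = +\infty$ and the inequality is trivial, so we may assume $V \in L_{\mathcal{B}}(Q, \mu)$. Next I would fix the integer
\[
n := \max\left\{1, \left\lceil 2 C_7 \|V\|^{(\textrm{av})}_{\mathcal{B}, Q, \mu}\right\rceil\right\},
\]
which satisfies $2 C_7 n^{-1} \|V\|^{(\textrm{av})}_{\mathcal{B}, Q, \mu} \le 1$, and also $n \le 2 C_7 \|V\|^{(\textrm{av})}_{\mathcal{B}, Q, \mu} + 1$ whenever $2 C_7 \|V\|^{(\textrm{av})}_{\mathcal{B}, Q, \mu} \ge 1$.

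Applying Lemma \ref{measlemma4} with $G = Q$ and this $n$ produces a finite cover of $\textrm{supp}\,\mu \cap \overline{Q}$ by squares $Q_{x_k}(r_{x_k})$, $k = 1, \dots, n_0$, with $n_0 \le n$, such that
\[
\int_{\overline{Q}} V |w|^2\, d\mu \le C_7 n^{-1} \|V\|^{(\textrm{av})}_{\mathcal{B}, Q, \mu} \int_Q |\nabla w|^2\, dx
\]
for all $w \in W^1_2(Q) \cap C(\overline{Q})$ with $w_Q = 0$ and $(Tw)_{Q_{x_k}(r_{x_k})} = 0$, $k = 1, \dots, n_0$, where $T$ is the extension operator of Theorem \ref{ext}. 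I would then let $\mathcal{L}$ be the subspace of $\textrm{Dom}(\mathcal{E}_{2V\mu, Q})$ cut out by the $n_0 + 1$ conditions $w_Q = 0$ and $(Tw)_{Q_{x_k}(r_{x_k})} = 0$. Since $w \mapsto w_Q$ and $w \mapsto (Tw)_{Q_{x_k}(r_{x_k})}$ are bounded linear functionals on $W^1_2(Q)$ (averages over sets of positive finite Lebesgue measure, preceded by the bounded operator $T$), the codimension of $\mathcal{L}$ in $\textrm{Dom}(\mathcal{E}_{2V\mu, Q})$ is at most $n_0 + 1 \le n + 1$.

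On $\mathcal{L} \cap C(\overline{Q})$ the displayed inequality together with the choice of $n$ gives
\[
\mathcal{E}_{2V\mu, Q}[w] = \int_Q |\nabla w|^2\, dx - 2 \int_{\overline{Q}} V |w|^2\, d\mu \ge \left(1 - 2 C_7 n^{-1} \|V\|^{(\textrm{av})}_{\mathcal{B}, Q, \mu}\right) \int_Q |\nabla w|^2\, dx \ge 0 .
\]
After extending this to all of $\mathcal{L}$ (see below), it follows that no nonzero element of $\mathcal{L}$ can make the form negative, so any subspace on which $\mathcal{E}_{2V\mu, Q}$ is negative definite meets $\mathcal{L}$ trivially; hence $N_-(\mathcal{E}_{2V\mu, Q}) \le \textrm{codim}\,\mathcal{L} \le n + 1$. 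Plugging in the bound on $n$ (and noting $N_-(\mathcal{E}_{2V\mu,Q}) \le 2$ in the remaining cases $2 C_7 \|V\|^{(\textrm{av})}_{\mathcal{B}, Q, \mu} < 1$) yields $N_-(\mathcal{E}_{2V\mu, Q}) \le 2 C_7 \|V\|^{(\textrm{av})}_{\mathcal{B}, Q, \mu} + 2 = C_8 \|V\|^{(\textrm{av})}_{\mathcal{B}, Q, \mu} + 2$.

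I expect the main obstacle to be the passage from continuous test functions to the whole form domain: the estimate of Lemma \ref{measlemma4} is stated only for $w \in W^1_2(Q) \cap C(\overline{Q})$, and when $\mu$ is singular with respect to Lebesgue measure one must show that such functions, adjusted by a fixed finite-dimensional correction so as to stay in $\mathcal{L}$, are dense in $\mathcal{L}$ with respect to the form norm $\|w\|_{W^1_2(Q)}^2 + \int_{\overline{Q}} V|w|^2\,d\mu$, using the closability of $\mathcal{E}_{V\mu,\Omega}$ established under \eqref{sadj} and a mollification argument in the spirit of Lemma \ref{acmeasure}. Granting this, the continuity of $\mathcal{E}_{2V\mu, Q}$ in the form norm closes the gap, and the remaining bookkeeping with $C_8 = 2C_7$ is routine.
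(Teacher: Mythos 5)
Your proposal is correct and takes essentially the same route as the paper: choose $n$ on the order of $2C_7\|V\|^{(\textrm{av})}_{\mathcal{B},Q,\mu}+1$ (the paper uses $n=[C_8\|V\|^{(\textrm{av})}_{\mathcal{B},Q,\mu}]+1$, you use a ceiling; the bookkeeping is equivalent), invoke Lemma \ref{measlemma4} to produce $n_0\le n$ mean-zero constraints, and count codimension. The density point you raise at the end (passing from $w\in W^1_2(Q)\cap C(\overline{Q})$ to the full form domain) is a real subtlety, but it is left implicit in the paper's own proof as well, so you are simply being more explicit than the source.
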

\begin{proof}
Let $n = \left[C_8\|V\|^{(\textrm{av})}_{\mathcal{B}, Q, \mu}\right]  + 1$
in Lemma \ref{measlemma4}, where $[a]$ denotes the largest integer not greater than $a$. Take any
linear subspace $\mathcal{L} \subset \mbox{Dom}\, (\mathcal{E}_{2V\mu, Q})$
such that
$$
\dim \mathcal{L} > \left[C_8 \|V\|^{(\textrm{av})}_{\mathcal{B}, Q, \mu}\right] + 2 .
$$
Since $n_0 \le n$, there exists $w \in \mathcal{L}\setminus\{0\}$ such that
$w_{Q_{x_k}(r_{x_k})} = 0$, $k = 1, \dots, n_0$ and $w_{Q} = 0$. Then
\begin{eqnarray*}
\mathcal{E}_{2V\mu, Q}[w]  &=& \int_{Q} |\nabla w( x)|^2dx - 2\int_{\overline{Q}} V( x) |w( x)|^2 d\mu(x) \\
&\ge& \int_{Q} |\nabla w(x)|^2 dx -
\frac{C_8 \|V\|^{(\textrm{av})}_{\mathcal{B}, Q, \mu}}{\left[C_8
\|V\|^{(\textrm{av})}_{\mathcal{B}, Q, \mu}\right] + 1}\,
\int_{Q} |\nabla w(x)|^2 dx \\
&\ge& \int_{Q} |\nabla w(x)|^2 dx - \int_{Q} |\nabla w(x)|^2 dx = 0 .
 \end{eqnarray*}
Hence
$$
N_- (\mathcal{E}_{2V\mu, Q}) \le \left[C_8
\|V\|^{(\textrm{av})}_{\mathcal{B}, Q, \mu}\right] + 2 \le
C_8 \|V\|^{(\textrm{av})}_{\mathcal{B}, Q, \mu} + 2.
$$
\end{proof}
\begin{lemma}\label{measlemma5*}
{\rm There is a constant $C_9 > 0$ such that
\begin{equation}\label{meain2}
N_- (\mathcal{E}_{2V\mu, Q}) \le C_9 \|V\|^{(\textrm{av})}_{\mathcal{B}, Q, \mu} \;\;\;\;\forall V\geq 0.
\end{equation}
}
\end{lemma}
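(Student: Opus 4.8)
The plan is to remove the additive constant $2$ from the estimate of Lemma \ref{measlemma5} by splitting into the regimes where $\|V\|^{(\mathrm{av})}_{\mathcal{B}, Q, \mu}$ is large and where it is small, the cut-off being chosen so that in the small regime the form has no (or essentially no) negative directions beyond the one coming from constants, which is accounted for separately in the decomposition of $\S\ref{approach}$.

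First I would fix a threshold. Decompose $W^1_2(Q)$ as the orthogonal sum of the constants and the codimension-one subspace $W := \{w \in W^1_2(Q) : w_Q = 0\}$. On $W$ one combines the form-boundedness estimate of Lemma \ref{measlemma3*} with the norm comparison $\|V\|_{\mathcal{B}, Q, \mu} \le \frac{1}{\min\{1, \mu(Q)\}}\,\|V\|^{(\mathrm{av})}_{\mathcal{B}, Q, \mu}$ from Corollary \ref{avequiv} to obtain, for every $w \in W \cap C(\overline{Q})$,
\[
2\int_{\overline{Q}} V\,|w|^2\, d\mu \;\le\; \frac{2 C_6}{\min\{1, \mu(Q)\}}\,\|V\|^{(\mathrm{av})}_{\mathcal{B}, Q, \mu}\int_Q |\nabla w|^2\, dx ,
\]
and then extends this to all of $W\cap L^2(Q, V d\mu)$ by the usual density/Fatou argument (which uses $\|V\|_{\mathcal{B},Q,\mu}<\infty$). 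Consequently, setting $\delta_0 := \min\{1, \mu(Q)\}/(2 C_6)$, if $\|V\|^{(\mathrm{av})}_{\mathcal{B}, Q, \mu} \le \delta_0$ then $\mathcal{E}_{2V\mu, Q}[w] \ge 0$ for all $w \in W$, so $\mathcal{E}_{2V\mu, Q}$ is negative only on a subspace of dimension at most $1$; in particular its restriction to the mean-zero space $W$, which is what enters the variational splitting of $\S\ref{approach}$, has vanishing Morse index in this regime.

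Second, for $\|V\|^{(\mathrm{av})}_{\mathcal{B}, Q, \mu} > \delta_0$ I would simply invoke Lemma \ref{measlemma5}:
\[
N_-(\mathcal{E}_{2V\mu, Q}) \;\le\; C_8\,\|V\|^{(\mathrm{av})}_{\mathcal{B}, Q, \mu} + 2 \;\le\; \Bigl(C_8 + \tfrac{2}{\delta_0}\Bigr)\|V\|^{(\mathrm{av})}_{\mathcal{B}, Q, \mu} ,
\]
using $2 < (2/\delta_0)\|V\|^{(\mathrm{av})}_{\mathcal{B},Q,\mu}$. Taking $C_9 := C_8 + 2/\delta_0$ and combining the two cases yields the asserted bound.

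The one genuine obstacle is the absorption of the additive constant: on the full space $W^1_2(Q)$ the constant test function always gives $\mathcal{E}_{2V\mu, Q}[1] = -2\int_{\overline{Q}} V\, d\mu < 0$ whenever $V \not\equiv 0$, so the strengthened bound is meaningful precisely because that single direction is handled separately (as the radial part in $\S\ref{approach}$), and everything else is controlled once $\|V\|^{(\mathrm{av})}_{\mathcal{B}, Q, \mu}$ is below $\delta_0$. The quantitative input that makes the small-norm case work is exactly Lemma \ref{measlemma3*} together with the equivalence $\|\cdot\|_{\mathcal{B},Q,\mu}\sim\|\cdot\|^{(\mathrm{av})}_{\mathcal{B},Q,\mu}$; no estimate beyond those already established is required.
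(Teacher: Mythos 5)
Your dichotomy (small vs.\ large $\|V\|^{(\mathrm{av})}_{\mathcal{B},Q,\mu}$, with the additive $2$ absorbed into the norm term in the large regime) is exactly the paper's strategy, and your observation about the constant test function is correct and worth making explicit: the assertion that $N_-(\mathcal{E}_{2V\mu,Q})=0$ in the small-norm regime is only literally true modulo the one-dimensional space of constants, i.e.\ on the mean-zero subspace (or for the form $\mathcal{E}^1_{2V\mu,Q}$ of Remark~\ref{nopoincare}, which is the version actually fed into Lemma~\ref{measlemma6}).

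One step, however, does not hold as written. You apply Lemma~\ref{measlemma3*} directly to $\mu$, but that lemma assumes hypothesis \eqref{ball}, namely $\mu(B(x,r))\le r^\alpha$, not the two-sided Ahlfors bound \eqref{Ahlfors}; when $c_1>1$ that hypothesis fails. The passage from \eqref{ball} to a general Ahlfors regular measure is done by rescaling $\mu$ to $\mu/c_1$ and tracking the effect of this rescaling on the Orlicz norm, and that is precisely the content of Lemma~\ref{measlemma3} (taken with $R_1=R_2=1$): it yields
\[
2\int_{\overline{Q}}V|w|^2\,d\mu \;\le\; C_{10}\,\|V\|^{(\mathrm{av})}_{\mathcal{B},Q,\mu}\int_Q|\nabla w|^2\,dx ,
\qquad C_{10}=C_6\,\frac{c_1}{c_0}\,2^{\alpha+1},
\]
which is what the paper invokes, and it has the additional merit that the resulting threshold $1/C_{10}$ depends only on $c_0$, $c_1$ and $\alpha$ rather than on $\min\{1,\mu(Q)\}$. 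Replacing your joint use of Lemma~\ref{measlemma3*} and Corollary~\ref{avequiv} by a single appeal to Lemma~\ref{measlemma3} closes the gap; everything else in your proposal is sound.
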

\begin{proof}
By Lemma \ref{measlemma3} there is a constant $C_{10} $ such that
$$
2\int_{\overline{Q}}V(x)|w(x)|^2d\mu(x) \le C_{10}\|V\|^{\textrm{(av)}}_{\mathcal{B}, Q, \mu}\int_{Q}|\nabla w(x)|^2dx
$$for all $w\in W^1_2(Q)\cap C(\overline{Q}) \mbox{ and } w_{Q} = 0$, where $C_{10} := C_6\frac{c_1}{c_0}2^{\alpha + 1}$.\\
If $\|V\|^{(\textrm{av})}_{\mathcal{B}, Q, \mu} \le \frac{1}{C_10}$, then $$N_- (\mathcal{E}_{2V\mu, Q}) = 0.$$ If $\|V\|^{(\textrm{av})}_{\mathcal{B}, Q, \mu} > \frac{1}{C_10}$, then Lemma \ref{measlemma5}  implies
\begin{eqnarray}\label{meaeqn7}
N_- (\mathcal{E}_{2V\mu, Q}) &\le& 2\left(\frac{1}{2} C_8
\|V\|^{(\textrm{av})}_{\mathcal{B}, Q, \mu} + 1\right)\nonumber\\
&\le&  C_9\|V\|^{\textrm{(av)}}_{\mathcal{B}, Q_, \mu},
\end{eqnarray} where $C_9 := C_8 + 2C_10$.
\end{proof}
\begin{remark}\label{nopoincare}
{\rm Let
\begin{eqnarray*}
& \mathcal{E}^1_{2V\mu, Q}[w] : = \|w\|^2_{W^1_2(Q)} - 2\int_{\overline{Q}} V(x) |w(x)|^2 d\mu(x) , & \\
& \mbox{Dom}\, (\mathcal{E}^1_{2V\mu, Q}) =
W^1_2\left(Q\right)\cap L^2\left(Q, Vd\mu\right) . &
\end{eqnarray*}
Then there is no need for the Poincar\'e inequality and the condition that $w_Q = 0$, and one can obtain as above
\begin{equation}\label{noeqn}
N_- (\mathcal{E}^1_{2V, Q, \mu}) \le C_9 \|V\|^{(\textrm{av})}_{\mathcal{B}, Q, \mu} \;\;\;\;\forall V\geq 0.
\end{equation}
Let $\Delta_1$ be a square of side of length $2 \left(2\frac{c_1}{c_0}\right)^{\frac{1}{\alpha}}$ centred at $0$. Then it is easy to see that a similar estimate holds with perhaps a different constant $C'_9$ in place of $C_9$.
}
\end{remark}
Assume without loss of generality that $0 \in$ supp$\,\mu$. Let
$$
Q_n := \left\{x\in\mathbb{R}^2 \;:\;\left(2\frac{c_1}{c_0}\right)^{\frac{n -1}{\alpha}} \le |x| \le \left(2\frac{c_1}{c_0}\right)^{\frac{n }{\alpha}}\right\}, \; n\in\mathbb{Z}.
$$   Let
\begin{eqnarray*}
\mathcal{E}_{\mathcal{N}, 2V\mu, Q_n}[w] = \int_{Q_n}|\nabla w(x)|^2dx - 2 \int_{Q_n}V(x)|w(x)|^2\,d\mu(x) ,\\
\textrm{Dom}(\mathcal{E}_{\mathcal{N}, 2V\mu, Q_n}) = W^1_2(Q_n)\cap L^2(Q_n, Vd\mu),\, w_{Q_n} = 0.
\end{eqnarray*}
Then we have the following Lemma.
\begin{lemma}\label{measlemma6}
{\rm There exists a constant $C_{11} > 0$ such that
\begin{equation}\label{meaeqn6}
N_-\left(\mathcal{E}_{\mathcal{N}, 2V\mu, Q_n}\right) \le C_{11} \|V\|^{(\textrm{av})}_{\mathcal{B}, Q_n ,\mu}\;,\;\;\;\forall V\ge 0.
\end{equation}
}
\end{lemma}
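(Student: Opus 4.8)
The plan is to reduce the statement for a general annulus $Q_n$ to the case $n=0$ by scaling, and then to run the argument of Lemmas \ref{measlemma5}--\ref{measlemma5*} with the unit square replaced by $Q_0$.

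\emph{Reduction to $n=0$.} Put $R:=(2c_1/c_0)^{1/\alpha}$, so that $Q_n=R^nQ_0$. Since the Dirichlet integral is scale invariant in two dimensions, the substitution $x=R^ny$ transforms $\mathcal E_{\mathcal N,2V\mu,Q_n}$ into $\mathcal E_{\mathcal N,2\widehat V\widehat\mu,Q_0}$, where $\widehat\mu$ is the pull-back of $\mu$ under $y\mapsto R^ny$ divided by $R^{n\alpha}$ (so that $\widehat\mu$ is again Ahlfors regular with the same constants $c_0,c_1$ and still has $0$ in its support) and $\widehat V(y):=R^{n\alpha}V(R^ny)$; note that $\widehat V\in L_{\mathcal B}(Q_0,\widehat\mu)$ since this class is preserved under such a change of variables. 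The constraint $w_{Q_n}=0$ is preserved, so $N_-(\mathcal E_{\mathcal N,2V\mu,Q_n})=N_-(\mathcal E_{\mathcal N,2\widehat V\widehat\mu,Q_0})$. Moreover, writing out \eqref{OrlAverage} and observing that after the $R^{-n\alpha}$-rescaling the constraint $\int\mathcal A(|g|)\,d\widehat\mu\le\widehat\mu(Q_0)$ is exactly the pull-back of $\int\mathcal A(|g|)\,d\mu\le\mu(Q_n)$, one gets $\|\widehat V\|^{(\mathrm{av})}_{\mathcal B,Q_0,\widehat\mu}=\|V\|^{(\mathrm{av})}_{\mathcal B,Q_n,\mu}$. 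Hence it suffices to prove $N_-(\mathcal E_{\mathcal N,2V\mu,Q_0})\le C_{11}\|V\|^{(\mathrm{av})}_{\mathcal B,Q_0,\mu}$ for an arbitrary Ahlfors regular $\mu$ with $0\in\operatorname{supp}\mu$, with $C_{11}=C_{11}(\alpha,c_0,c_1)$.

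\emph{The case $n=0$.} The annulus $Q_0$ is a bounded Lipschitz set with $0<\mu(Q_0)<\infty$, so Lemma \ref{measlemma4} applies with $G=Q_0$; let $C_7=C_7(\alpha,c_0,c_1)$ be the resulting constant and set $C_8:=2C_7$. Repeating verbatim the proof of Lemma \ref{measlemma5} (apply Lemma \ref{measlemma4} with $n=\left[C_8\|V\|^{(\mathrm{av})}_{\mathcal B,Q_0,\mu}\right]+1$; in any subspace $\mathcal L\subseteq\operatorname{Dom}(\mathcal E_{\mathcal N,2V\mu,Q_0})$ of dimension $>\left[C_8\|V\|^{(\mathrm{av})}_{\mathcal B,Q_0,\mu}\right]+2$, and since the domain already forces $w_{Q_0}=0$, one can pick $w\neq 0$ annihilating the $n_0\le n$ remaining mean-values, so that $\mathcal E_{\mathcal N,2V\mu,Q_0}[w]\ge\bigl(1-C_8n^{-1}\|V\|^{(\mathrm{av})}_{\mathcal B,Q_0,\mu}\bigr)\int_{Q_0}|\nabla w|^2\,dx\ge0$ by \eqref{maz9}) gives $N_-(\mathcal E_{\mathcal N,2V\mu,Q_0})\le\left[C_8\|V\|^{(\mathrm{av})}_{\mathcal B,Q_0,\mu}\right]+2$. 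To upgrade this to the clean bound, apply Lemma \ref{measlemma4} with $n=1$: since $Q_0\subseteq Q_0^{*}$ forces $\kappa_0\ge1$ and the Besicovitch constant satisfies $N\ge1$, we have $1\le\kappa_0N$, so one may take $n_0=0$, which yields the annulus analogue of Lemma \ref{measlemma3}, namely $\int_{\overline{Q_0}}V|w|^2\,d\mu\le C_7\|V\|^{(\mathrm{av})}_{\mathcal B,Q_0,\mu}\int_{Q_0}|\nabla w|^2\,dx$ for all $w$ with $w_{Q_0}=0$. Hence $N_-(\mathcal E_{\mathcal N,2V\mu,Q_0})=0$ whenever $\|V\|^{(\mathrm{av})}_{\mathcal B,Q_0,\mu}\le\frac{1}{2C_7}$, and combining the two cases exactly as in Lemma \ref{measlemma5*} gives $N_-(\mathcal E_{\mathcal N,2V\mu,Q_0})\le6C_7\|V\|^{(\mathrm{av})}_{\mathcal B,Q_0,\mu}$; setting $C_{11}:=6C_7$ and undoing the scaling completes the proof.

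The routine parts are the scaling bookkeeping and the large-norm estimate (a copy of Lemma \ref{measlemma5}); the one point that genuinely needs care is obtaining $N_-=0$ — not merely $N_-\le1$ — for small potentials, i.e. a Poincar\'e--Orlicz--Sobolev inequality on $Q_0$ with a constant depending only on $\alpha,c_0,c_1$. The device that makes this work cleanly is the freedom, built into Lemma \ref{measlemma4}, to take $n_0=0$ when $n$ is small; alternatively one could re-derive such an inequality directly from Theorem \ref{measthm2} applied on the domain $Q_0$.
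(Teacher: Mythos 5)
Your proof is correct. Both you and the paper reduce the case of general $n$ to a single fixed annulus via the scaling invariance of the Dirichlet integral and of $\|\cdot\|^{(\mathrm{av})}_{\mathcal B}$ (you pass to $Q_0$, the paper to $Q_1$ — a cosmetic difference), and both are ultimately driven by Lemma \ref{measlemma4}. The route differs in the middle: the paper first uses the Poincar\'e inequality on $Q_1$ to upgrade the Dirichlet integral to the full $W^1_2(Q_1)$ norm, extends by $T_1$ to the enclosing square $\Delta_1$ centred at $0$, invokes the square estimate of Remark \ref{nopoincare} on $\Delta_1$, and then controls $\|V_*\|^{(\mathrm{av})}_{\mathcal B,\Delta_1,\mu}$ by $\|V\|^{(\mathrm{av})}_{\mathcal B,Q_1,\mu}$ via Lemma \ref{lemma8}. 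You instead observe that Lemma \ref{measlemma4} is already formulated for an arbitrary bounded Lipschitz domain $G$, take $G=Q_0$, and simply re-run the Lemma \ref{measlemma5}--\ref{measlemma5*} mechanism directly on the annulus — no detour through a circumscribing square is needed. That is a slightly more economical presentation, and you correctly isolate the one delicate point, namely obtaining $N_-=0$ (not merely $N_-\le1$) for small $\|V\|^{(\mathrm{av})}_{\mathcal B,Q_0,\mu}$, which both proofs secure from the $n=1$, $n_0=0$ instance of Lemma \ref{measlemma4} (where $\kappa_0\ge1$, $N\ge1$ make this available). The explicit constant you obtain, $6C_7$, differs from the paper's, but the lemma asserts only existence of some $C_{11}$, so this is immaterial.
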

\begin{proof}
We start with the case $n =1$. Since $w_{Q_1} = 0$, then it follows from the Poincar\'e inequality that there is a constant $C_{12} > 0$ such that
$$
\|w\|^2_{W^1_2(Q_1)} \le C_{12} \int_{Q_1}|\nabla w(x)|^2\, dx.
$$ This implies
$$
\mathcal{E}_{\mathcal{N}, 2V\mu, Q_1}[w] \ge \frac{1}{C_{12}}\|w\|^2_{W^1_2(Q_1)} - 2\int_{Q_1}V(x)|w(x)|^2\,d\mu(x).
$$
Let $\Delta_1$ be the square of side of length $2\left(2\frac{c_1}{c_0}\right)^{\frac{1}{\alpha}}$ with the same centre as $Q_1$ and
\begin{eqnarray*}
V_{*}(x) := \left\{\begin{array}{l}
  V(x), \ \;\;\mbox{ if } x\in Q_1,   \\ \\
   0, \ \;\;\mbox{ if } x\notin Q_1.
\end{array}\right.
\end{eqnarray*} Let
$$
T_1 : W^1_2(Q_1) \longrightarrow W^1_2(\mathbb{R}^2)
$$ be a bounded linear operator such that $T_1w|_{Q_1} = w$ for all $w\in W^1_2(Q_1)$ (see Theorem \ref{ext}). Then
\begin{eqnarray*}
\mathcal{E}_{\mathcal{N}, 2V\mu, Q_1}[w] &\ge& \frac{1}{C_{12}\|T_1\|^2}\|T_1w\|^2_{W^1_2(\Delta_1)} - 2\int_{\Delta_1}V_{*}(x)|(T_1w)(x)|^2\,d\mu(x)\\
&\ge& \frac{1}{C_{12}\|T_1\|^2}\left(\|T_1w\|^2_{W^1_2(\Delta_1)} - 2C_{12}\|T_1\|^2\int_{\Delta_1}V_{*}(x)|(T_1w)(x)|^2\,d\mu(x)\right)
\end{eqnarray*}
Hence
\begin{eqnarray*}
N_-\left(\mathcal{E}_{\mathcal{N}, 2V\mu, Q_1}\right) &\le& N_-\left(\mathcal{E}_{ 2C_{12}\|T_1\|^2V_{*}\mu, \Delta_1}\right)\\
&\le& 2C'_9C_{12}\|T_1\|^2\|V_{*}\|^{(\textrm{av})}_{\mathcal{B}, \Delta_1, \mu}\;,
\end{eqnarray*}
where $C'_9$ is the constant in Remark \ref{nopoincare} and
\begin{eqnarray*}
\mathcal{E}_{ 2C_{12}\|T_1\|^2V_{*}\mu, \Delta_1}[w] = \|w\|^2_{W^1_2(\Delta_1)} - 2C_{12}\|T_1\|^2 \int_{\Delta_1}V_{*}(x)|(w)(x)|^2\,d\mu(x) ,\\
\textrm{Dom}(\mathcal{E}_{ 2C_{12}\|T_1\|^2V_{*}\mu, \Delta_1}) = \left\{w\in W^1_2(\Delta_1)\cap L^2(\Delta_1, V_{*}d\mu)\right\}.
\end{eqnarray*}
Now
$$
\mu(\Delta_1) \le c_1\left(\sqrt{2}\left(2\frac{c_1}{c_0}\right)^{\frac{1}{\alpha}}\right)^{\alpha} = \frac{c_1^2}{c_0}2^{\frac{\alpha}{2} + 1}
$$ and
$$
\mu(Q_1)= \mu\left(B\left(0, \left(2\frac{c_1}{c_0}\right)^{\frac{1}{\alpha}}\right)\right) - \mu\left(B(0, 1)\right) \ge c_02\frac{c_1}{c_0} - c_1 = c_1.
$$So
\begin{equation}\label{q}
\mu(\Delta_1) \le \frac{c^2_1}{c_0}2^{\frac{\alpha}{2} + 1} = \frac{c_1}{c_0}2^{\frac{\alpha}{2} + 1}\mu(Q_1).
\end{equation}
This implies
\begin{eqnarray*}
\|V_{*}\|^{(\textrm{av})}_{\mathcal{B}, \Delta_1, \mu} &=& \sup\left\{\left|\int_{\Delta_1}V_{*}u \,d\mu\right|\;:\;\int_{\Delta_1}\mathcal{A}(|u|)d\mu \le \mu(\Delta_1)\right\}\\ &=& \sup\left\{\left|\int_{Q_1}V u \,d\mu\right|\;:\;\int_{Q_1}\mathcal{A}(|u|)d\mu \le  \frac{c_1}{c_0}2^{\frac{\alpha}{2} + 1}\mu(Q_1)\right\}\\ &\le&\frac{c_1}{c_0}2^{\frac{\alpha}{2} + 1}\|V\|^{(\textrm{av})}_{\mathcal{B}, Q_1, \mu}
\end{eqnarray*}(see Lemma \ref{lemma8}).
Hence
\begin{equation}\label{fol}
N_-\left(\mathcal{E}_{\mathcal{N}, 2V\mu, Q_1}\right) \le C_{11} \|V\|^{(\textrm{av})}_{\mathcal{B}, Q_1, \mu}\;,\;\;\;\forall V\ge 0,
\end{equation} where
$$
C_{11} := 2C'_9C_{12}\|T_1\|^2\frac{c_1}{c_0}2^{\frac{\alpha}{2} + 1}.
$$
As far as the dependency on the measure $\mu$ is concerned, $C_{11}$ depends only on the ratio $\frac{c_1}{c_0}$.\\\\
Let $\xi : Q_1 \longrightarrow Q_n$ by given by $\xi(x) := x\left(2\frac{c_1}{c_0}\right)^{\frac{n -1}{\alpha}}$.  Let $\tilde{V} := V \circ \xi,\, \tilde{\mu} := \mu \circ \xi$ and $\tilde{w} := w \circ \xi$. Then it is easy to see that $\tilde{\mu}$ satisfies the following analogue of \eqref{Ahlfors}
$$
\tilde{c_0}r^{\alpha} \le \tilde{\mu}(B(x, r)) \le \tilde{c_1}r^{\alpha}
$$ for all $0 < r \le$ diam(supp $\tilde{\mu}$) and $x\in$ supp\,$\tilde{\mu}$,  where $\tilde{c_0} := c_0\left(2\frac{c_1}{c_0}\right)^{n-1}$, $\tilde{c_1} := c_1\left(2\frac{c_1}{c_0}\right)^{n - 1}$, and $\frac{\tilde{c_1}}{\tilde{c_0}} = \frac{c_1}{c_0}$.
Now
\begin{eqnarray*}
&&\int_{Q_n}|\nabla w(y)|^2 dy - 2\int_{Q_n}V(y)|w(y)|^2d\mu(y)\\&& = \int_{Q_1}|\nabla\tilde{w}(x)|^2dx - 2 \int_{Q_1}\tilde{V}(x)|\tilde{w}(x)|^2d\tilde{\mu}(x).
\end{eqnarray*}
It follows from \eqref{fol} that
\begin{eqnarray*}
N_-\left(\mathcal{E}_{\mathcal{N}, 2V\mu, Q_n}\right) = N_-\left(\mathcal{E}_{\mathcal{N}, 2\tilde{V}\tilde{\mu}, Q_1}\right) \le C_{11} \|\tilde{V}\|^{(\textrm{av})}_{\mathcal{B}, Q_1, \tilde{\mu}}\;,\;\;\;\forall \tilde{V}\ge 0.
\end{eqnarray*}
Similarly to \eqref{scale} with $c = 1$, we have $\|\tilde{V}\|^{(\textrm{av})}_{\mathcal{B}, Q_1, \tilde{\mu}} = \|V\|^{(\textrm{av})}_{\mathcal{B}, Q_n, \mu}$. Thus
$$
N_-\left(\mathcal{E}_{\mathcal{N}, 2V\mu, Q_n}\right) \le C_{11} \|V\|^{(\textrm{av})}_{\mathcal{B}, Q_n, \mu}\;\;\;\;\;\forall V\ge 0.
$$  Hence the scaling $x \longmapsto x\left(2\frac{c_1}{c_0}\right)^{\frac{n -1}{\alpha}}$ allows one to reduce the case of any $n\in\mathbb{Z}$ to the case $n =1$.
\end{proof}
Let $$\mathcal{D}_n := \|V\|^{(av)}_{\mathcal{B}, Q_n, \mu}.$$ Then for any $c < \frac{1}{C_{11}}$, the above Lemma  together with the variational principle  imply
\begin{equation}\label{meaeqn6*}
N_-\left(\mathcal{E}_{\mathcal{N}, 2V\mu}\right) \le C_{11}\underset{\{n\in\mathbb{Z},\;\mathcal{D}_n > c\}}\sum \mathcal{D}_n\;,\;\;\;\forall V\ge 0.
\end{equation}
Thus Theorem \ref{mainthm} follows from \eqref{meaeqn1},  \eqref{meaeqn5} and \eqref{meaeqn6*}.
\begin{theorem}\label{measthm3}
{\rm Let $V \ge 0$. If $N_-(\mathcal{E}_{\gamma V\mu, \mathbb{R}^2}) = O(\gamma)$ as $\gamma \longrightarrow + \infty$, then $\|G_n\|_{1,w} < \infty$.
}
\end{theorem}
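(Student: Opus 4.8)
The plan is to bound $N_-(\mathcal{E}_{\gamma V\mu,\mathbb{R}^2})$ from below by the contribution of the radial component — reusing the radial/exponential reduction from the proof of Theorem \ref{mainthm} — and then to establish a necessary-condition counterpart of the one–dimensional estimate \eqref{Est1}. First I would note that the radial subspace $PW^1_2(\mathbb{R}^2)$ is contained in $W^1_2(\mathbb{R}^2)$ and that the restriction $\mathcal{E}_{\mathcal{R},\gamma V\mu}$ of $\mathcal{E}_{\gamma V\mu,\mathbb{R}^2}$ to it satisfies $N_-(\mathcal{E}_{\gamma V\mu,\mathbb{R}^2})\ge N_-(\mathcal{E}_{\mathcal{R},\gamma V\mu})$. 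Repeating the computation preceding \eqref{meaeqn3}, but with the coupling constant $\gamma$ in place of $2$, identifies $N_-(\mathcal{E}_{\mathcal{R},\gamma V\mu})$ with $N_-(\mathcal{E}_{\mathbb{R},\lambda\tilde\nu})$, where $\lambda=\gamma/(2\pi)$ and $\tilde\nu$ is the push-forward of the measure $\nu$ of \eqref{1dmeas} under $r\mapsto\ln r$. Moreover the weights $\mathcal{A}_n$ of \eqref{calAn} associated with $\tilde\nu$ are exactly the numbers $G_n$ of \eqref{meaeqn4}: this is the identity $G_n=2\pi\mathcal{G}_n$ already used after \eqref{meaeqn3}, together with $J_n=\exp(I_n)$. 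Thus the hypothesis yields $N_-(\mathcal{E}_{\mathbb{R},\lambda\tilde\nu})=O(\lambda)$ as $\lambda\to+\infty$, and it remains to deduce $\{\mathcal{A}_n\}=\{G_n\}\in l_{1,w}$, i.e. $\|G_n\|_{1,w}<\infty$.

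For the one–dimensional step I would use a robust Bargmann-type lower bound. Fix $s>0$ and set $S:=\{n\in\mathbb{Z}:\mathcal{A}_n>s\}$. For $n\in S$ let $v_n\in W^1_2(\mathbb{R})$ be the piecewise linear function equal to $1$ on $I_n$, decreasing linearly to $0$ across each of the two neighbouring intervals $I_{n-1},I_{n+1}$ and vanishing outside $I_{n-1}\cup I_n\cup I_{n+1}$. Since $|I_{n\pm1}|\asymp|I_n|\asymp 2^{|n|}$ one gets $\int_{\mathbb{R}}|v_n'|^2\,dt\le C\,2^{-|n|}$, while $\int_{\mathbb{R}}|v_n|^2\,d\tilde\nu\ge\tilde\nu(I_n)\ge 2^{-|n|}\mathcal{A}_n$; hence $\mathcal{E}_{\mathbb{R},\lambda\tilde\nu}[v_n]\le 2^{-|n|}(C-\lambda\mathcal{A}_n)<0$ once $\lambda\mathcal{A}_n\ge c_0$ for a suitable absolute constant $c_0$. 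The supports of $v_n$ and $v_{n'}$ are disjoint whenever $|n-n'|\ge3$, so on the largest residue class mod $3$ inside $S$ the $v_n$ span a subspace of dimension $\ge\frac13\#\,S$ on which the form is negative (the cross terms in both integrals vanish by disjoint supports); consequently $N_-(\mathcal{E}_{\mathbb{R},\lambda\tilde\nu})\ge\frac13\#\{n:\lambda\mathcal{A}_n\ge c_0\}$ (finiteness of $N_-$ forces $S$ to be finite). Choosing $\lambda=c_0/s$ and invoking $N_-(\mathcal{E}_{\mathbb{R},\lambda\tilde\nu})=O(\lambda)$ gives $\#\{n:\mathcal{A}_n>s\}\le \mathrm{const}/s$ for all $s>0$, which is $\{\mathcal{A}_n\}\in l_{1,w}$ and finishes the proof.

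The delicate point is exactly this one–dimensional lower bound. One cannot simply quote Solomyak's characterization \cite{Sol2} (or \cite[Theorem 9.2]{Eugene}), because after the exponential substitution the rate in play is $O(\lambda)$, which for a one–dimensional operator is \emph{weaker} than the semi-classical rate $O(\sqrt\lambda)$; and the standard $\sqrt{\mathcal{A}_n}$–type lower bound breaks down when $\tilde\nu|_{I_n}$ is concentrated near an endpoint of $I_n$ — a situation that genuinely occurs here, since $\mu$ may charge a circle and then $\tilde\nu$ may even have atoms. Letting the test function $v_n$ spread into the adjacent intervals is precisely what makes the bound see the full mass $\tilde\nu(I_n)$ with a universal constant, at the harmless cost of a factor $3$ from the residue classes. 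Everything else — measurability, membership of $v_n$ in $\mathrm{Dom}(\mathcal{E}_{\mathbb{R},\lambda\tilde\nu})$, and the tracking of the constants through the $2\pi$-normalisation — is routine.
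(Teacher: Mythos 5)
Your proof is correct and, as far as can be told from the paper's one-line pointer, follows the same route (the proofs of \cite[Theorems 9.1 and 9.2]{Eugene} with the Lebesgue measure replaced by $\mu$): restrict to the radial subspace, pass to a one-dimensional Schr\"odinger form via $r=e^t$, and produce disjointly supported test functions --- one per heavy $G_n$, grouped by residue class modulo $3$ --- to obtain $\#\{n:G_n>s\}=O(1/s)$. The only cosmetic difference is your choice of a plateau-with-linear-ramps test function in place of the truncated-power functions the thesis uses in the strip analogue (Theorem \ref{sthm1}); both achieve the required cancellation of the $2^{-|n|}$ factors between the Dirichlet energy of $v_n$ and the lower bound $\int|v_n|^2\,d\tilde\nu\ge\tilde\nu(I_n)\ge 2^{-|n|}G_n$.
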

\begin{proof}
This follows by replacing the Lebesgue measure with $\mu$ in the proof of \cite[Theorem 9.1]{Eugene} and the proof of \cite[Theorem 9.2]{Eugene}.
\end{proof}
No estimate of the type
\begin{equation}\label{type}
N_-(\mathcal{E}_{V\mu, \mathbb{R}^2}) \le \textrm{const} + \int_{\mathbb{R}^2}V(x)W(x)\,d\mu(x) + \textrm{const}\|V\|_{\Psi, \mathbb{R}^2, \mu}
\end{equation}
can hold with norm $\|V\|_{\Psi, \mathbb{R}^2, \mu}$ weaker than $\|V\|_{\mathcal{B}, \mathbb{R}^2, \mu}$ provided the weight function $W$ is bounded in a neighbourhood of at least one point in the support of $\mu$ (see the next Theorem).
\begin{theorem}\label{notype}{\rm(cf. \cite[Theorem 9.4]{Eugene})
 Let $W \ge 0$ be bounded in a neighbourhood of at least point in the support of $\mu$ and $\Psi$ an a N-function such that
$$
\underset{s \longrightarrow \infty}\lim\frac{\Psi(s)}{\mathcal{B}(s)} = 0.
$$
Then there exists a compactly supported $V\ge 0$ such that
$$
\int_{\mathbb{R}^2}V(x)W(x)\,d\mu(x) + \|V\|_{\Psi, \mathbb{R}^2, \mu} < \infty
$$ and $N_-(\mathcal{E}_{V\mu, \mathbb{R}^2}) = \infty$.
}
\end{theorem}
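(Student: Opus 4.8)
The plan is to adapt the construction behind \cite[Theorem 9.4]{Eugene} to the measure $\mu$. Since $W$ is bounded, say by $M<\infty$, on some ball $B(x_0,r_0)$ with $x_0\in\textrm{supp}\,\mu$, it is enough to exhibit a non-negative $V$ with $\textrm{supp}\,V\subseteq\overline{B(x_0,r_0)}$ such that $\int_{\mathbb{R}^2}V\,d\mu<\infty$ (whence $\int_{\mathbb{R}^2}VW\,d\mu\le M\int_{\mathbb{R}^2}V\,d\mu<\infty$), $\|V\|_{\Psi,\mathbb{R}^2,\mu}<\infty$, and $N_-(\mathcal{E}_{V\mu,\mathbb{R}^2})=\infty$. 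The building block is a logarithmic ``tent'' on a thin annulus about $x_0$: for radii $a_k<c_k<d_k<b_k$ put $A_k:=\{a_k<|x-x_0|<b_k\}$, $B_k:=\{c_k<|x-x_0|<d_k\}$, let $\psi_k:=1$ on $B_k$, let $\psi_k$ interpolate logarithmically in $|x-x_0|$ from $1$ down to $0$ on $A_k\setminus\overline{B_k}$, and $\psi_k:=0$ off $A_k$. A computation in polar coordinates gives
\[
\int_{\mathbb{R}^2}|\nabla\psi_k|^2\,dx=2\pi\left(\frac{1}{\ln(c_k/a_k)}+\frac{1}{\ln(b_k/d_k)}\right),
\]
which is small when these ratios are large. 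Set $V_k:=h_k\chi_{B_k}$ with $h_k$ chosen so that $h_k\mu(B_k)$ strictly exceeds $\int_{\mathbb{R}^2}|\nabla\psi_k|^2\,dx$, and $V:=\sum_k V_k$, the annuli $A_k$ being arranged pairwise disjoint and inside $B(x_0,r_0)$.

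Since the $\psi_k$ then have pairwise disjoint supports and $\psi_k\equiv1$ on $B_k$, we obtain $\mathcal{E}_{V\mu,\mathbb{R}^2}[\psi_k]=\int_{\mathbb{R}^2}|\nabla\psi_k|^2\,dx-h_k\mu(B_k)<0$ and, again by disjointness of supports, $\mathcal{E}_{V\mu,\mathbb{R}^2}\bigl[\sum_k t_k\psi_k\bigr]=\sum_k|t_k|^2\,\mathcal{E}_{V\mu,\mathbb{R}^2}[\psi_k]<0$ for every non-trivial finite linear combination; hence $\mathcal{E}_{V\mu,\mathbb{R}^2}$ is negative on the infinite-dimensional span of the $\psi_k$ and $N_-(\mathcal{E}_{V\mu,\mathbb{R}^2})=\infty$. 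It remains to choose the parameters so that $\int V\,d\mu$ and $\|V\|_{\Psi,\mathbb{R}^2,\mu}$ stay finite. Split each annulus into three pieces of equal logarithmic length, so that $\ln(c_k/a_k)=\ln(d_k/c_k)=\ln(b_k/d_k)=\tfrac13 L_k$ with $L_k:=\ln(b_k/a_k)$, and arrange $b_{k+1}\le a_k$, $b_1\le r_0$; then $\ln(1/a_k)$ is comparable to $S_k:=\sum_{j=1}^k L_j$. By Ahlfors regularity \eqref{Ahlfors}, $\mu(B_k)\asymp a_k^{\alpha}e^{2\alpha L_k/3}$, so $\ln(1/\mu(B_k))$ is comparable to $\alpha S_k$; taking $h_k:=c_*\,\mu(B_k)^{-1}L_k^{-1}$ with a suitably large fixed constant $c_*$ forces $\mathcal{E}_{V\mu,\mathbb{R}^2}[\psi_k]<0$, and a direct estimate from \eqref{Ahlfors} gives $\ln h_k\le C_1 S_k$ for all large $k$, with $C_1=C_1(\alpha,c_0,c_1)$. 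Then $\int_{\mathbb{R}^2}V\,d\mu=c_*\sum_k L_k^{-1}$, while the disjointness of the $B_k$ gives $\int_{\mathbb{R}^2}\Psi(V)\,d\mu=\sum_k\mu(B_k)\Psi(h_k)=c_*\sum_k L_k^{-1}\,\Psi(h_k)/h_k$, so by \eqref{LuxNormPre} and \eqref{Luxemburgequiv} it suffices to make both $\sum_k L_k^{-1}$ and $\sum_k L_k^{-1}\Psi(h_k)/h_k$ finite.

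This is exactly where the hypothesis $\Psi(s)/\mathcal{B}(s)\to0$ enters. Since $\mathcal{B}(s)\sim s\ln s$, it gives $\Psi(e^{C_1 S})\,e^{-C_1 S}=o(S)$ as $S\to\infty$, and, as $\Psi(t)/t$ is increasing and $h_k\le e^{C_1 S_k}$, it follows that $\Psi(h_k)/h_k\le\Psi(e^{C_1 S_k})\,e^{-C_1 S_k}$. Hence for each $k$ one can pick an increasing threshold $T_k$ with $\Psi(e^{C_1 S})\,e^{-C_1 S}\le2^{-k-1}S$ for all $S\ge T_k$, and then define the logarithmic scales recursively by $S_0:=1$, $S_k:=\max\{2S_{k-1},\,T_k\}$ (equivalently $L_k:=S_k-S_{k-1}$). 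Then $S_k\ge2^k$ yields $\sum_k L_k^{-1}\le2\sum_k S_k^{-1}<\infty$, whereas $L_k\ge S_k/2$ together with $S_k\ge T_k$ yields $L_k^{-1}\Psi(h_k)/h_k\le(S_k/L_k)2^{-k-1}\le2^{-k}$, so the second series converges too, and we are done. The main obstacle is precisely this simultaneous balancing: confining the annuli to the fixed ball $B(x_0,r_0)$ forces $\mu(B_k)$, hence $h_k$, to grow exponentially in $S_k$, which inflates $\Psi(h_k)$; one must choose the scales $L_k$ growing fast enough to kill both the Dirichlet energies and $\int V\,d\mu$, yet slowly enough relative to the decay of $\Psi(e^{C_1 S})/(S\,e^{C_1 S})$ to keep the Orlicz norm finite. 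For $\Psi=\mathcal{B}$ itself one has $\Psi(h_k)/h_k\asymp\ln h_k\gtrsim S_k$ and $S_k/L_k\ge1$, so $\sum_k L_k^{-1}\Psi(h_k)/h_k$ diverges regardless of the choice of scales, in accordance with Theorem \ref{mainthm}.
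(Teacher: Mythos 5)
Your proposal is correct, but it uses a construction that is genuinely different from the paper's. The paper follows Grigor'yan--Nadirashvili and Shargorodsky's \cite[Theorem 9.4]{Eugene}: it chooses a sequence of \emph{distinct} support points $x^{(k)}\to 0$ with $|x^{(k+1)}|<\tfrac13|x^{(k)}|$, so that the disks $B(x^{(k)},\tfrac12|x^{(k)}|)$ are pairwise disjoint, sets $V=t_k$ on the tiny disk $B(x^{(k)},r_k^2)$ with $t_k=\tfrac{C_{13}}{\ln(1/r_k)}r_k^{-2\alpha}$, and uses log-tents $w_k$ supported on $B(x^{(k)},r_k)$; the convergence of $\int_{\mathbb{R}^2}\Psi(V)\,d\mu$ is then guaranteed by pre-selecting radii $\rho_k$ so that $\sum_k\beta(\rho_k^{-\alpha})<\infty$ where $\beta(s)=\sup_{t\ge s}\Psi(t)/\mathcal{B}(t)$. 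You instead work with concentric annuli $B_k\subset A_k$ about a \emph{single} support point, choose $V$ to be the constant $h_k\asymp(L_k\mu(B_k))^{-1}$ on $B_k$, and tune the logarithmic widths $L_k$ recursively so that both $\sum L_k^{-1}$ and $\sum L_k^{-1}\Psi(h_k)/h_k$ converge. The two routes buy different things: your single-point annular construction sidesteps the explicit recursive selection of points $x^{(k)}\in\text{supp}\,\mu$, but it relies on the two-sided Ahlfors bound to lower-estimate $\mu(B_k)$ for the thick annuli (which indeed works once $L_k$ is large enough, as you observe), whereas the paper's construction needs the lower Ahlfors bound only on balls around $x^{(k)}$ and never has to argue that an annulus has positive $\mu$-measure. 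The paper's reduction via the monotone majorant $\beta$ also decouples the choice of scales from the $\Psi$-asymptotics somewhat more cleanly than your recursive threshold $T_k$, but the two are essentially equivalent in effect. Your sanity check at the end — that for $\Psi=\mathcal{B}$ the second series diverges for every admissible choice of scales — is a nice consistency observation.

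One minor point worth flagging explicitly in a write-up: the lower estimate $\mu(B_k)\gtrsim a_k^\alpha e^{2\alpha L_k/3}$ requires $e^{\alpha L_k/3}>c_1/c_0$, i.e.\ that the annuli are taken thick enough from the start; your construction guarantees $L_k\to\infty$ but you should state that the estimate $\mu(B_k)\asymp a_k^\alpha e^{2\alpha L_k/3}$ holds only for $k$ past some $k_0$ depending on $c_0,c_1,\alpha$, and start the series at $k_0$.
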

\begin{proof}
Shifting the independent variable if necessary, we can assume that $0 \in$ supp $\mu$ and $W$ is bounded
in a neighborhood of $0$. Let $r_0 > 0$ be such that $W$ is bounded in the open ball
$B(0, r_0)$.

Let
$$
\beta(s) := \sup_{t \ge s}\, \frac{\Psi(t)}{\mathcal{B}(t)}\, .
$$
Then $\beta$ is a non-increasing function, $\beta(s) \to 0$ as $s \to \infty$, and
$\Psi(s) \le \beta(s) \mathcal{B}(s)$.
Since $\Psi$ is an $N$-function, $\Psi(s)/s \to \infty$ as $s \to \infty$
(see $\S$\ref{Orliczspaces}). Hence there exists $s_0 \ge e^{\frac{1}{\alpha}} > 1,\,0 < \alpha \le 2$ such that $\Psi(s) \ge s$ and
$\beta(s) \le 1$ for $s \ge s^{\alpha}_0$.
Choose $\rho_k \in (0, 1/s_0)$ in such a way that
$$
\sum_{k = 1}^\infty \beta\left(\frac1{\rho^{\alpha}_k}\right) < \infty .
$$
It follows from \eqref{Ahlfors} that $\forall r > 0$, the disk $B(0, r)$ contains points of the support of $\mu$ different from $0$. Let $x^{(1)}\in$ supp$\,\mu \setminus\{0\}$ be such that $|x^{(1)}| < \frac{2}{3}r_0$. We can choose $x^{(k)},\,k\in\mathbb{N}$ inductively as follows:\\ Suppose $x^{(1)}, ... , x^{(k)} \in$ supp$\,\mu \setminus\{0\}$ have been chosen. Take \\$x^{(k + 1)} \in$ supp $\mu\setminus\{0\}$ such that
$$
|x^{(k+1)}| < \min\left\{ \frac{1}{3}|x^{(k)}|, 2\rho_{k+1}\right\}.
$$
Since $|x^{(k+1)}| < \frac{1}{3}|x^{(k)}|$, it is easy to see that the open disks $B(x^{(k)}, \frac{1}{2}|x^{(k)}|)$, $k \in \mathbb{N}$ lie in $B(0, r_0)$
and are pairwise disjoint. Let $r_k := \frac{1}{2}|x^{(k)}|$. Then $r_k \le \rho_k,\,k\in\mathbb{N}$. For some constant $C_{13} > 0$, let
\begin{eqnarray*}
& t_k := \frac{C_{13}}{\ln\frac1{r_k}}\, r_k^{-2\alpha} & \\ \\
& V(x) := \left\{\begin{array}{ll}
  t_k ,   &  x \in B\left(x^{(k)}, r_k^2\right) , \  k \in \mathbb{N}, \\
    0 ,  &   \mbox{otherwise.}
\end{array}\right. &
\end{eqnarray*}
Since the function $r^{\alpha}_k\ln\frac1{r_k}$ has its maximum as $\frac{e}{\alpha} > 1,\,0 < \alpha \le 2$, then one can choose  $C_{13} > 0$ such that $C_{13}\frac{\alpha}{e} > 1$ and
$$
t_k = \frac{C_{13}}{\ln\frac1{r_k}}\, r_k^{-2\alpha} = \frac{C_{13}}{r_k^{\alpha} \ln\frac1{r_k}}\, r_k^{-\alpha} > \frac1{r^{\alpha}_k}
$$
and
\begin{eqnarray*}
\int_{\mathbb{R}^2}\Psi(V(x))\,d\mu(x) &=& \sum_{k= 1}^{\infty}\Psi(t_k)\mu\left(B(x^{(k)}, r^2_k)\right) \le \sum_{k= 1}^{\infty}\Psi(t_k)c_1 r^{2\alpha}_{k}\\ &\le& c_1\sum_{k =1}^{\infty}r^{2\alpha}_k\beta(t_k)\mathcal{B}(t_k) \\ &\le& c_1\sum_{k =1}^{\infty}r^{2\alpha}_k\beta(t_k)(1 + t_k)\ln(1 + t_k) \\&<& 4c_1\sum_{k =1}^{\infty}r^{2\alpha}_k\beta(t_k)t_k\ln t_k \\ &=& 4c_1\sum_{k =1}^{\infty}\beta(t_k)\frac{C_{13}}{\ln\frac{1}{r_k}}\ln\frac{C_{13}}{r^{2\alpha}_k\ln\frac{1}{r_k}} \\ &\le& 4c_1C_{13}\sum_{k =1}^{\infty}\beta\left(\frac{1}{r^{\alpha}_k}\right)\frac{1}{\ln\frac{1}{r_k}}\ln\frac{C_{13}}{r_k^{2\alpha}}\\&\le& \textrm{const} \sum_{k = 1}^{\infty} \beta\left(\frac{1}{r^{\alpha}_k}\right) \le \textrm{const} \sum_{k = 1}^{\infty} \beta\left(\frac{1}{\rho^{\alpha}_k}\right) < \infty.
\end{eqnarray*} Thus $\|V\|_{\Psi, \mathbb{R}^2, \mu} < \infty$. Since $t_k > \frac{1}{r^{\alpha}_k} \ge s^{\alpha}_0$, then $t_k \le \Psi(t_k)$ and
$$
\int_{\mathbb{R}^2}V(x)\,d\mu(x) \le \int_{\mathbb{R}^2}\Psi(V(x))\,d\mu(x) < \infty.
$$
By the assumption that $W$ is bounded in $B(0, r_0)$, we have
$$
\int_{\mathbb{R}^2}V(x)W(x)\,d\mu(x) < \infty\,.
$$
Let
$$
w_k(x) := \left\{\begin{array}{cl}
  1 ,   & \  |x - x^{(k)}| \le r_k^2 , \\ \\
    \frac{\ln (r_k/|x - x^{(k)}|)}{\ln(1/r_k)} ,  & \ r_k^2 <  |x - x^{(k)}| \le r_k  , \\ \\
 0 , & \  |x - x^{(k)}| > r_k
\end{array}\right.
$$
(cf. \cite{Grig}). Then
$$
\int_{\mathbb{R}^2} |\nabla w_k(x)|^2\, dx = \frac{2\pi}{\ln(1/r_k)}\,.
$$
Further,
\begin{eqnarray*}
\int_{\mathbb{R}^2} V(x) |w_k(x)|^2\, d\mu(x) &\ge& \int_{B\left(x^{(k)}, r_k^2\right)} V(x)\, d\mu(x) =
t_k \mu\left(B\left(x^{(k)}, r_k^2\right)\right)\\ &>& t_k c_0 r^{2\alpha}_k = c_0\frac{C_{13}}{\ln \frac{1}{r_k}}\, .
\end{eqnarray*}
Hence for any $C_{13}> \frac{2\pi}{c_0}$
$$
\mathcal{E}_{V\mu, \mathbb{R}^2}[w_k] < 0 , \ \ \ \forall k \in \mathbb{N}
$$
and $N_- (\mathcal{E}_{V\mu, \mathbb{R}^2}) = \infty$.

\end{proof}

\chapter{Two dimensional Schr\"odinger Operators on a Strip}\markboth{Chapter \ref{Twostrip}\label{Twostrip}.
Introduction}{}\label{Introduction}
\section{Introduction}
In this chapter, we establish upper estimates for the number of  eigenvalues below the essential spectrum of two dimensional Schr\"odinger operators on a strip in terms of weighted $L^1$ and Orlicz ($L\log L$) norms of the potential. Depending on the boundary conditions, the discrete spectrum of such operators might contain positive eigenvalues. However, we still use the strategy used in the previous chapter although some details are different.
Generally, we study the operator on $L^2(S)$
\begin{equation}\label{self-adj}
H_V = -\Delta - V\mu, \;\;\;V\ge 0,
\end{equation}
where $S := \{(x_1, x_2)\in\mathbb{R}^2\;:\;x_1\in\mathbb{R},\; 0 < x_2 < a\},\;a > 0$ is a strip, $V$ and $\mu$ are as defined in $\S$ \ref{index}. Like in the previous case, the problem is also split into two problems. The first one is defined by the restriction of the quadratic form associated with the operator \eqref{self-adj} to the subspace of functions of the form $w(x_1)u_1(x_2)$, where $u_1$ is the first eigenfunction of the one dimensional differential operator on $(0, a)$ and hence, is  reduced to a well studied one-dimensional Schr\"odinger operator with the potential equal to a weighted mean value $\widetilde{V}$ of $V$ over $(0, a)$. The second problem is defined by a class of functions orthogonal to $u_1$ in the $L^2([0, a])$ inner product.\\\\
As a motivation to our problem, we start by looking at a simple case where the operator \eqref{self-adj} is considered on $S$ with  Neumann boundary conditions and $\mu$ being the two dimensional Lebesgue measure (see the next section below). In this case, $\sigma_{\textrm{ess}}(H_V) = [0, \infty)$. Later, we consider $\eqref{self-adj}$ on $S$ with Robin boundary conditions and derive Dirichlet and Dirichlet-Neumann boundary conditions as particular cases.
\section{Motivation: The case of Neumann boundary conditions}\label{Neumann}
Let $S $  be the strip defined above and let $V$ be locally integrable on $S$. Consider the following Schr\"odinger operator on $L^2(S)$
$$H_V := -\Delta - V,\;\;\; V\geq 0  $$
with Neumann boundary conditions both at $x_2 = 0$ and $x_2 = a$.
In this case the first eigenvalue $\lambda_1$ of $-\Delta$ on $[0, a]$ is equal to zero  and the corresponding eigenfunction $u_1(x_2) = 1$. Thus $\sigma(H_V) = [0, \infty)$ (see Theorem\ref{spect}). Define $H_V$ via its quadratic form as follows:
\begin{eqnarray*}
\mathcal{E}^N_{V, S}[u] &:=& \int_S \mid\nabla u(x)\mid^2\, dx  - \int_S V(x)\mid u(x)\mid^2\, dx,\\  \textrm{ Dom}(\mathcal{E}^N_{V, S})&=& W^1_2(S)\cap L^2(S, Vdx).
\end{eqnarray*}Here the superscript $N$ represents the Neumann boundary conditions.
We denote by $N_-(\mathcal{E}^{N}_{V, S})$  the number of negative eigenvalues of $H_V$ counting multiplicities. The task is to find an upper estimate for $N_-(\mathcal{E}^N_{V, S})$ in terms of the norms of $V$.\\\\

Let $$L_1 := \{u\in L^2(S) \; : \; u(x) = u(x_1)\}$$ and  $P : L^2(S)\longrightarrow L_1$ be a projection defined by  $$Pv(x) := \frac{1}{a}\int^a_0 v(x)dx_2 = Pv(x_1).$$
Indeed, $P$ is a projection since $P^2 = P$. Let $L_2 := (I - P)L^2(S)$, then one can show that $L^2(S) = L_1 \oplus L_2$. Here and below $\oplus$ denotes a direct orthogonal sum.\\ Indeed for all $v\in L_2$ we have,
\begin{eqnarray*}
\int^a_0 v(x)dx_2 &=& \int^a_0 (I - P)v(x)dx_2\\ &=& \int^a_0 v(x)dx_2 - \int^a_0 Pv(x)dx_2\\ &=&\int^a_0 v(x)dx_2 - \int^a_0 Pv(x_1)dx_2\\&=& \int^a_0 v(x)dx_2 - \int^a_0 v(x)dx_2\\ &=& 0.
\end{eqnarray*} Now pick $w\in L_1$ and $v\in L_2$, then,
\begin{eqnarray*}
\langle v, w\rangle_{L^2(S)} = \int_S v(x)\overline{w(x_1)}\,dx &=&\int_{\mathbb{R}}\left(\int^a_0 v(x)\,dx_2\right)\overline{w(x_1)}dx_1 \\&=&0.
\end{eqnarray*}
Similarly, let $$\mathcal{H}_1 := \{u\in W^1_2(S) \; : \; u(x) = u(x_1)\} \; \textrm{and}\; \mathcal{H}_2 := (I- P)W^1_2(S),$$ then $$W^1_2(S) = \mathcal{H}_1 \oplus \mathcal{H}_2.$$ Indeed, for all $v\in\mathcal{H}_1$ and all $w\in\mathcal{H}_2$ we have $$<v, w>_{W^1_2(S)} =  \int_S \left(v(x_1)\overline{w(x)} +  v_{x_1}(x_1)\overline{w_{x_1}(x)} +  v_{x_2}(x_1)\overline{w_{x_2}(x)}\right)dx = 0.$$ This is so because $v, v_{x_1}\in L_1, \overline{w}, \overline{w_{x_1}}\in L_2$ and $v_{x_2} = 0$. To see this note that $v(x_1)$ and $v_{x_1}(x_1)$ do not depend on $x_2$ implying that $v_{x_1}\in L_1$. Also, $w\in L_2\Leftrightarrow \int^a_0w(x)dx_2 = 0$. So, $\frac{d}{dx_1}\int^a_0w(x)\,dx_2 = 0 \Rightarrow \int^a_0w_{x_1}(x)\,dx_2 = 0 \Rightarrow w_x\in L_2$. Hence $\int_Sv_x(x)\overline{w_x(x, y)}dxdy = 0$.\\\\ Now  for all $u\in W^1_2(S)$,  $u = v + w, \;\; v\in\mathcal{H}_1,\;\;w\in\mathcal{H}_2$ one has
\begin{eqnarray*}
\int_S \mid\nabla u(x)\mid^2\,dx &=& a\int_{\mathbb{R}}\mid v'(x_1)\mid^2\,dx_1 + \int_S\mid\nabla w(x)\mid^2\,dx\\ &+& \underbrace{\int_S\nabla v(x_1).\overline{\nabla w(x)}\,dx + \int_S\nabla w(x).\overline{\nabla v(x_1)}\,dx}_{= 0}\\ &=&a\int_{\mathbb{R}}\mid v'(x_1)\mid^2\,dx_1 + \int_S\mid\nabla w(x)\mid^2\,dx
\end{eqnarray*}and
\begin{eqnarray*}
\int_S V(x)\mid u(x)\mid^2\,dx &=& \int_SV(x)\mid v(x_1)\mid^2\,dx + \int_SV(x)\mid w(x)\mid^2\,dx \\&+& \underbrace{\int_S 2V(x)\textrm{Re}(v.w)dx}_{\textrm{possibly}\neq 0\; \textrm{because of V}}\\&\le& 2\int_{\mathbb{R}}\widetilde{V}(x_1)\mid v(x_1)\mid^2\,dx_1 + 2\int_SV(x)\mid w(x)\mid^2\,dx\,
\end{eqnarray*} where
$$\widetilde{V}(x_1) = \int^a_0V(x)dx_2\,.$$
So
\begin{eqnarray*}
&&\int_S \mid\nabla u(x)\mid^2 \,dx - \int_S V(x)\mid u(x)\mid^2 \,dx\\ &&\geq \int_\mathbb{R} \mid v'(x_1)\mid^2 \,dx_1 - 2\int_\mathbb{R} \widetilde{V}(x_1)\mid v(x_1)\mid^2\, dx_1 \\&&+ \int_S \mid\nabla w(x)\mid^2\, dx - 2\int_S V(x)\mid w(x)\mid^2\, dx\,.
\end{eqnarray*}  Hence
\begin{equation}\label{estimate}
N_-(\mathcal{E}^N_{V, S}) \leq N_-(\mathcal{E}_{1, 2\widetilde{V}}) + N_-(\mathcal{E}_{2, 2V}),
\end{equation}where $\mathcal{E}_{1, 2\widetilde{V}}$ and $\mathcal{E}_{2, 2V}$ denote the restrictions of the form $\mathcal{E}^N_{2V, S}$ to the spaces $\mathcal{H}_1$ and $\mathcal{H}_2$ respectively.\\Let
\begin{eqnarray*}
I_n := [2^{n - 1}, 2^n], \ n > 0 , \ \ \ I_0 := [-1, 1] , \ \ \
I_n := [-2^{|n|}, -2^{|n| - 1}], \ n < 0 ,
\end{eqnarray*}
\begin{equation}\label{calAn}
\mathcal{A}_n := \int_{I_n} |x_1| \widetilde{V}(x_1)\, dx_1 , \ n \not= 0 , \ \ \ \mathcal{A}_0 := \int_{I_0} \widetilde{V}(x_1)\, dx_1 .
\end{equation} Then similarly to \eqref{Est1} one has
\begin{equation}\label{Est1*}
N_-(\mathcal{E}_{1, 2\widetilde{V}})\leq 1 + 7.61\sum_{\{n\in\mathbb{Z},\; \mathcal{A}_n > 0.046\}}\sqrt{\mathcal{A}_n}\,.
\end{equation}
We write \eqref{Est1*} in terms of the original potential $V$.
\begin{eqnarray*}
\mathcal{A}_n := \int_{I_n} |x_1| \widetilde{V}(x_1)\, dx_1 &=& \int_{I_n}|x_1|\left(\int_0^a V(x)\,dx\right)\,dx_1\\ &=& \int_{I_n\times [0, a]}|x_1|V(x)\,dx =: A_n
\end{eqnarray*} and
\begin{eqnarray*}
\mathcal{A}_0 := \int_{I_0} \widetilde{V}(x_1)\, dx_1 &=& \int_{I_0}\left(\int_0^a V(x)\,dx\right)\,dx_1\\ &=& \int_{I_0\times [0, a]}V(x)\,dx =: A_0\,.
\end{eqnarray*}
Thus \eqref{Est1*} becomes
\begin{equation}\label{Est1**}
N_-(\mathcal{E}_{1, 2\widetilde{V}})\leq 1 + 7.61\sum_{\{n\in\mathbb{Z},\; A_n > 0.046\}}\sqrt{A_n}\,.
\end{equation}

It now remains to find an estimate for $N_-(\mathcal{E}_{2, 2V})$ in \eqref{estimate}.\\

Let $S_n := J_n\times I ,\; n\in\mathbb{Z}$, where $J_n := (n, n + 1)$ and $I := (0, a)$.  Then the variational principle (see \eqref{varstrip}) implies that
\begin{equation}\label{estimate2}
N_-(\mathcal{E}_{2, 2V})\leq \sum_{n\in\mathbb{Z}}N_-(\mathcal{E}_{2, 2V, S_n}),
\end{equation}
where
\begin{eqnarray*}
&& \mathcal{E}_{2, 2V, S_n}[w] := \int_{S_n} |\nabla w(x)|^2\, dx  -
2\int_{S_n} V(x) |w(x)|^2\, dx , \\
&& \mbox{Dom}\, (\mathcal{E}_{2, 2V, S_n}) =
\left\{w\in W^1_2(S_n)\cap L^2\left(S_n, V(x)dx\right) : \int_{S_n}w(x)\, dx = 0\right\}.
\end{eqnarray*}

\begin{lemma}\label{lemma6}{\rm (cf. \cite[Lemma 7.8]{Eugene})}
There exists $C_{14} > 0$ such that
\begin{equation}\label{CLRl4est}
N_- (\mathcal{E}_{2, 2V, S_n}) \le C_{14}
\|V\|_{L_1\left(J_{n}, L_{\mathcal{B}}(I)\right)} , \ \ \ \forall V \ge 0
\end{equation}(see \eqref{L1LlogLnorm}).
\end{lemma}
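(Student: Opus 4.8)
The strategy mirrors the treatment of $N_-(\mathcal{E}_{\mathcal{N},2V\mu,Q_n})$ in Chapter~3: reduce the cell $S_n$ to a fixed reference square, import the known $L\log L$ bound on a square, and then use the mean-value-zero constraint in $\mathrm{Dom}(\mathcal{E}_{2,2V,S_n})$ to remove the additive constant. Since $a$ is a fixed parameter of this chapter, $C_{14}$ is allowed to depend on $a$. First I would reduce to $n=0$: the translation $\xi_n(x):=x+(n,0)$ maps $S_0:=(0,1)\times(0,a)$ onto $S_n$, leaves $\int|\nabla w|^2\,dx$, the potential term and the constraint $\int w\,dx=0$ unchanged, and by Lemma~\ref{lemma1} (with $R_1=R_2=1$) satisfies $\|V\circ\xi_n\|_{L_1(J_0,L_{\mathcal B}(I))}=\|V\|_{L_1(J_n,L_{\mathcal B}(I))}$. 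Hence $N_-(\mathcal{E}_{2,2V,S_n})=N_-(\mathcal{E}_{2,2(V\circ\xi_n),S_0})$, and it suffices to prove \eqref{CLRl4est} for $n=0$.

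Next I would establish a bound with an additive $1$. Apply the anisotropic change of variables $\eta(z):=(z_1,az_2)$, mapping $Q:=(0,1)^2$ onto $S_0$; writing $u:=w\circ\eta$, $\widetilde V:=V\circ\eta$ one gets $\mathcal{E}_{2,2V,S_0}[w]=a\bigl(\int_Q[(\partial_{z_1}u)^2+a^{-2}(\partial_{z_2}u)^2]\,dz-2\int_Q\widetilde V|u|^2\,dz\bigr)$. Since $(\partial_{z_1}u)^2+a^{-2}(\partial_{z_2}u)^2\ge\min\{1,a^{-2}\}\,|\nabla u|^2$, the form on the right is bounded below by $\min\{1,a^{-2}\}\,\mathcal{E}_{W,Q}[u]$ with $W:=2\widetilde V/\min\{1,a^{-2}\}$, so that $N_-(\mathcal{E}_{2,2V,S_0})\le N_-(\mathcal{E}_{W,Q})$. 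Lemma~\ref{lemma5} together with Lemma~\ref{lemma1} (now $|I_1\times I_2|=a$, $|J_1\times J_2|=1$, which gives $\|\widetilde V\|_{L_1(\mathbb I,L_{\mathcal B}(\mathbb I))}=a^{-1}\|V\|_{L_1(J_0,L_{\mathcal B}(I))}$) then yields
\[
N_-(\mathcal{E}_{2,2V,S_0})\ \le\ C_2\,\|W\|_{L_1(\mathbb I,L_{\mathcal B}(\mathbb I))}+1\ =\ \frac{2C_2\max\{1,a^2\}}{a}\,\|V\|_{L_1(J_0,L_{\mathcal B}(I))}+1 .
\]

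Finally I would remove the $1$. If $N_-(\mathcal{E}_{2,2V,S_0})=0$ there is nothing to prove; otherwise there is $w\neq0$ in the form domain with $w_{S_0}=0$ and $\mathcal{E}_{2,2V,S_0}[w]<0$, i.e. $2\int_{S_0}V|w|^2\,dx>\int_{S_0}|\nabla w|^2\,dx$. Approximating $w$ in $W^1_2(S_0)$ by mean-zero functions $w_j\in C^\infty(\overline{S_0})$ (subtract the average from a smooth approximating sequence) and passing to the limit via Fatou's lemma on the left and continuity of $w\mapsto\int|\nabla w|^2$, Lemma~\ref{lemma3} with $I_1=J_0$, $I_2=I$, $R_1=1$, $R_2=a$ gives $2\int_{S_0}V|w|^2\,dx\le 2C_1\max\{a,a^{-1}\}\,\|V\|_{L_1(J_0,L_{\mathcal B}(I))}\int_{S_0}|\nabla w|^2\,dx$. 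Since $\int_{S_0}|\nabla w|^2\,dx>0$ (a mean-zero gradient-free function vanishes), comparing the two inequalities forces $\|V\|_{L_1(J_0,L_{\mathcal B}(I))}>c_0:=\bigl(2C_1\max\{a,a^{-1}\}\bigr)^{-1}$, hence $1<c_0^{-1}\|V\|_{L_1(J_0,L_{\mathcal B}(I))}$. Plugging this into the previous display and undoing the reduction to $n=0$ gives \eqref{CLRl4est} with $C_{14}:=2C_2a^{-1}\max\{1,a^2\}+c_0^{-1}$.

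\textbf{Main obstacle.} No step is deep; the effort lies in the bookkeeping of the $a$-dependent constants through the two affine substitutions, and in the routine density argument that lets Lemma~\ref{lemma3} (stated for functions continuous up to the boundary) act on an arbitrary element of the form domain. The only genuinely new feature compared with \cite[Lemma~7.8]{Eugene} is that the ambient domain is a strip rather than the plane, which is immaterial once one argues cell by cell.
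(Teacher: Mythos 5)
Your proof is correct and takes essentially the same route as the paper: the paper's one-line proof cites Lemma~\ref{lemma5} and the scaling $x_2\mapsto ax_2$, leaving implicit the standard removal of the additive $+1$ via Lemma~\ref{lemma3} and the mean-zero constraint in the form domain, which you have written out in detail (along with the $a$-dependent bookkeeping and the density step that Lemma~\ref{lemma3}, stated for continuous functions, requires).
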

\begin{proof}
This Lemma follows from Lemma \ref{lemma5} and the scaling $x_2 \longmapsto a x_2$.
\end{proof}
 Let $\mathcal{D}_n := \parallel V\parallel_{L_1(J_n, L_\mathcal{B}(I))}, \;\;\;\; n\in\mathbb{Z}$.
\begin{theorem}\label{thm1}
There exist   a constant $c > 0$ such that
\begin{equation}\label{Gest2}
N_-(\mathcal{E}_{V, S})\leq 1 + 7.61\sum_{\{n\in\mathbb{Z}, {A_n > 0.046\}}}\sqrt{A_n} + C_{14}\sum_{\{n\in\mathbb{Z}, \mathcal{D}_n > c\}}\mathcal{D}_n, \;\;\;\;\forall V\geq 0.
\end{equation}
\end{theorem}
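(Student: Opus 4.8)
The plan is to assemble the estimates already established in this section. Starting from the orthogonal decomposition $W^1_2(S) = \mathcal{H}_1 \oplus \mathcal{H}_2$ and the form inequality that produced \eqref{estimate}, namely $N_-(\mathcal{E}_{V,S}) \le N_-(\mathcal{E}_{1, 2\widetilde{V}}) + N_-(\mathcal{E}_{2, 2V})$, it suffices to bound the two contributions separately: the longitudinal part $N_-(\mathcal{E}_{1, 2\widetilde{V}})$ will supply the first sum in \eqref{Gest2}, and the transverse part $N_-(\mathcal{E}_{2, 2V})$ the second.

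For $N_-(\mathcal{E}_{1, 2\widetilde{V}})$ I would simply invoke \eqref{Est1**}: the restriction of the form to $\mathcal{H}_1$ reduces, via the one-dimensional reduction carried out above, to a one-dimensional Schr\"odinger operator with potential $2\widetilde{V}$, where $\widetilde{V}(x_1) = \int_0^a V(x)\,dx_2$, and the one-dimensional estimate \eqref{Est1} gives $N_-(\mathcal{E}_{1, 2\widetilde{V}}) \le 1 + 7.61 \sum_{\{n : A_n > 0.046\}}\sqrt{A_n}$ after $\mathcal{A}_n$ is rewritten in terms of $V$ as $A_n = \int_{I_n\times[0,a]}|x_1|V(x)\,dx$ for $n\neq 0$ and $A_0 = \int_{I_0\times[0,a]}V(x)\,dx$.

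For $N_-(\mathcal{E}_{2, 2V})$ I would use the partition $S = \bigcup_{n\in\mathbb{Z}} S_n$, $S_n = J_n \times I$, and the variational principle \eqref{estimate2} to get $N_-(\mathcal{E}_{2, 2V}) \le \sum_{n\in\mathbb{Z}} N_-(\mathcal{E}_{2, 2V, S_n})$, then bound each summand by $C_{14}\mathcal{D}_n$ via Lemma \ref{lemma6}. Since every $N_-(\mathcal{E}_{2, 2V, S_n})$ is a non-negative integer, choosing $c > 0$ with $C_{14} c < 1$ (for instance $c = 1/(2C_{14})$) forces $N_-(\mathcal{E}_{2, 2V, S_n}) = 0$ whenever $\mathcal{D}_n \le c$, so that $N_-(\mathcal{E}_{2, 2V}) \le C_{14}\sum_{\{n : \mathcal{D}_n > c\}}\mathcal{D}_n$. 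Adding this to the longitudinal bound through \eqref{estimate} yields \eqref{Gest2}.

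There is no serious obstacle: the analytic content is already contained in the one-dimensional estimate \eqref{Est1} and in Lemma \ref{lemma6} (which rests on Lemmas \ref{lemma3}--\ref{lemma5} and the scaling $x_2 \mapsto a x_2$). The only point requiring a remark is the thresholding that replaces the full series $\sum_n \mathcal{D}_n$ by the truncated sum over $\{n : \mathcal{D}_n > c\}$, and this is immediate from the integrality of the Morse index; one must only keep the indexing of the intervals $I_n$, the strips $S_n$ and the intervals $J_n$ consistent throughout.
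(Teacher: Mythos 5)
Your proposal is correct and follows the paper's own proof essentially line by line: decompose via \eqref{estimate}, bound the longitudinal part with \eqref{Est1**}, bound the transverse part by combining \eqref{estimate2} with Lemma \ref{lemma6}, and use the integrality of the Morse index to truncate the second sum to indices $n$ with $\mathcal{D}_n > c$ for any $c < 1/C_{14}$. The paper makes exactly this assembly; there is nothing to add.
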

\begin{proof}If $\mathcal{D}_n < \frac{1}{C_{14}}$ , then  $N_-(\mathcal{E}_{2, 2V, S_n}) = 0$ and one can drop this term in the sum \eqref{estimate2}. Hence for any $c < \frac{1}{C_{14}}$ , \eqref{estimate2}  and Lemma \ref{lemma6} imply that
$$
N_-(\mathcal{E}_{2, 2V})\leq C_{14}\sum_{\{n\in\mathbb{Z}, \mathcal{D}_n > c\}}\mathcal{D}_n  \;\;\;\;\; \; \forall V \geq 0.
$$ This together with \eqref{estimate} and \eqref{Est1**} imply  \eqref{Gest2}.
\end{proof}
One can easily show that \eqref{Gest2} is an improvement of the result  by A. Grigor'yan and N. Nadirashvili  (\cite[Theorem 7.9]{Grig}, see also \eqref{Nad*}) with a different $c$ and that \eqref{Gest2} is strictly sharper. Indeed, let
$B_{n}  := \parallel V\parallel_{\mathcal{B}, S_n}$. Then there is a constant $C(p),\; p > 1$ such that
$$
B_{n}  = \parallel V\parallel_{\mathcal{B}, S_n} \le C(p)\left(\int_{S_n}V(x)^p\,dx\right)^{\frac{1}{p}} = C(p)b_n(V),
$$ where $b_n(V) := \left(\int_{S_n}V(x)^p\,dx\right)^{\frac{1}{p}}$
(see \cite[Remark 6.3]{Eugene}).
Now suppose that $ \parallel V\parallel_{(\mathcal{B}, S_n)} = 1$. Since $\mathcal{B}(V)$ satisfies the $\Delta_2$-condition, then $\int_{S_n}\mathcal{B}(V(x))\,dx = 1$ (see (9. 21) in \cite{KR}). Using \eqref{Luxemburgequiv} and \eqref{LuxNormPre}, we have
\begin{eqnarray}\label{imp}
\mathcal{D}_n = \int_{J_n}\|V\|_{\mathcal{B}, I}\,dx_1 &\le& 2\int_{J_n}\|V\|_{(\mathcal{B}, I)}\,dx_1 \nonumber\\ &\le& 2\int_n^{n + 1}\left(1 +\int_0^a\mathcal{B}\left(V(x)\right)dx_2\right)dx_1\nonumber\\
&=& 2 + 2\int_{J_n}\int_0^a\mathcal{B}\left(V(x)\right)dx = 4 \nonumber\\ &=& 4\|V\|_{(\mathcal{B}, S_n)} \le 4\|V\|_{\mathcal{B}, S_n}\nonumber\\ &=& 4B_n \le \;4 C(p)b_n(V).
\end{eqnarray}Hence
\begin{equation}\label{equivnad}
N_- (\mathcal{E}_{2, 2V, S_n}) \le C_{15} b_n(V) , \ \ \ \forall V \ge 0\,,
\end{equation}where $C_{15} := 4C_{14}C(p)$.
The scaling $V \longmapsto t V,\; t > 0$, allows one to extend the above inequality to an arbitrary $V \geq 0$. Thus for any $c < \frac{1}{C_{15}}$, \eqref{Gest2} implies \eqref{Nad*}.\\\\
Next we will discuss different  forms of \eqref{Gest2}.
\begin{remark}
{\rm
Estimate \eqref{Gest2} implies the following estimate
\begin{equation}\label{Est2}
N_-(\mathcal{E}^N_{V, S})\leq 1 + C_{16}\left( \parallel(A_n)_{n\in\mathbb{Z}}\parallel_{1,w} + \|V\|_{L_1\left(\mathbb{R}\;, L_{\mathcal{B}}(I)\right)}\right), \;\;\;\ \forall V \geq 0.
\end{equation}
This follows from
$$
\sum_{\{n\in\mathbb{Z}, A_n > c\}}\sqrt{A_n} \leq \frac{2}{\sqrt{c}}\parallel(A_n)_{n\in\mathbb{Z}}\parallel_{1,w}
$$
(see (49) in \cite{Eugene}) and
 $$
\sum_{\{n\in\mathbb{Z}, \mathcal{D}_n > c\}}\mathcal{D}_n \leq \sum_{n\in\mathbb{Z} }\mathcal{D}_n = \int_\mathbb{R}\parallel V(x_1, .)\parallel_{\mathcal{B}, I}dx_1 = \|V\|_{L_1\left(\mathbb{R}, L_{\mathcal{B}}(I)\right)}.
$$
\eqref{Est2} in turn implies the following
\begin{equation}\label{Est3}
N_-(\mathcal{E}^N_{V, S})\leq 1 + C_{17}\left( \parallel(A_n)_{n\in\mathbb{Z}}\parallel_{1,w} + \int_{\mathbb{R}}\left(\int_{I}\mid V(x)\mid^p dx_2\right)^{\frac{1}{p}}dx_1\right), \;\;\;\ \forall V \geq 0,
\end{equation}
which is equivalent to

\begin{equation}\label{Est4}
N_-(\mathcal{E}^N_{V, S})\leq 1 + C_{18}\left( \parallel(A_n)_{n\in\mathbb{Z}}\parallel_{1,w} + \int_{\mathbb{R}}\left(\int_{I}\mid V_\ast(x )\mid^p dx_2\right)^{\frac{1}{p}}dx_1\right), \;\;\;\ \forall V \geq 0,
\end{equation}
where $V_{\ast}(x) = V(x) - \widetilde{V}(x_1), \;\; \widetilde{V}(x_1) = \int^a_0 V(x)dx_2$. \\Indeed,
\begin{eqnarray*}
&&\left|\int_{\mathbb{R}}\left(\int_{I}\mid V(x)\mid^p dx_2\right)^{\frac{1}{p}}dx_1 - \int_{\mathbb{R}}\left(\int_{I}\mid V_\ast(x)\mid^p dx_2\right)^{\frac{1}{p}}dx_1\right| \\&\leq& \int_{\mathbb{R}}\left(\int_{I}\mid \widetilde{V}(x_1)\mid^p dx_2\right)^{\frac{1}{p}}dx_1 = a^{\frac{1}{p}}\int_{\mathbb{R}}\widetilde{V}(x_1)dx_1\\ &=& a^{\frac{1}{p}}\sum_{n\in\mathbb{Z}}\int_{I_n\times [0, a]}V(x)dx \le  a^{\frac{1}{p}}\sum_{n\in\mathbb{Z}}2^{-|n| + 1}A_n\\&\leq&\textrm{const}\; \underset{{n\in\mathbb{Z}}}\sup A_n \leq \textrm{const}\parallel(A_n)_{n\in\mathbb{Z}}\parallel_{1,w}.
\end{eqnarray*}
Thus \eqref{Est3} and \eqref{Est4} are equivalent.\\\\Similarly,
\begin{eqnarray*}
&&\left| \|V\|_{L_1\left(\mathbb{R}, L_{\mathcal{B}}(I)\right)} - \|V_{\ast}\|_{L_1\left(\mathbb{R}, L_{\mathcal{B}}(I)\right)}\right|\\ &&= \left| \int_{\mathbb{R}}\parallel V(x_1, .)\parallel_{\mathcal{B},I}dx_1 - \int_{\mathbb{R}}\parallel V_{\ast}(x_1, .)\parallel_{\mathcal{B},I}dx_1\right|\\&&\le \int_{\mathbb{R}}\|\widetilde{V}(x_1)\|_{\mathcal{B}, I}dx_1 = \textrm{const}\int_{\mathbb{R}}|\widetilde{V}(x_1)|\,dx_1 \\&&\le \textrm{const}\; \underset{{n\in\mathbb{Z}}}\sup A_n \\&& \leq \textrm{const}\parallel(A_n)_{n\in\mathbb{Z}}\parallel_{1,w}.
\end{eqnarray*}
Hence \eqref{Est2} is equivalent to the following estimate
\begin{equation}\label{Est5}
N_-(\mathcal{E}^N_{V, S})\leq 1 + C_{19}\left( \parallel(A_n)_{n\in\mathbb{Z}}\parallel_{1,w} + \|V_{\ast}\|_{L_1\left(\mathbb{R}, L_{\mathcal{B}}(I)\right)}\right), \;\;\;\ \forall V \geq 0.
\end{equation}
Note the the last term in right hand side of \eqref{Est5} (and \eqref{Est4}) drops out if $V$ does not depend on $x_2$.
}
\end{remark}

\section{Robin boundary conditions}\label{Robin}
These conditions can be considered as a generalization or a linear combination of the Dirichlet and Neumann boundary conditions. Let $V : \mathbb{R}^2 \longrightarrow \mathbb{R}$ be a Borel measurable function and $\mu$ a positive Radon measure on $\mathbb{R}^2$. Consider the following operator
\begin{equation}\label{H}
H^R_V := -\Delta - V\mu\,,\;\;V\ge 0, \;\;\;\;\;\textrm{on}\;\;L^2(S)
\end{equation}
with the following the Robin boundary conditions
\begin{equation}\label{R}
u_{x_2}(x_1, 0) + \alpha u(x_1, 0) = u_{x_2}(x_1,a) + \beta u(x_1, a) = 0,
\end{equation}
 where $\alpha, \beta \in\mathbb{R}$. Here the superscript $R$ represents Robin boundary conditions and $u_{x_2}$ is the  derivative of $u$ with respect to $x_2$. When $\alpha = \beta = 0$ and $\mu$ is the Lebesgue measure, we have the case of Neumann boundary conditions discussed in the previous section.\\\\ As shown earlier  in $\S$ \ref{strip}, depending on the values of $\alpha$ and $\beta$ the negative spectrum of the  free Laplacian on $S$ is not necessarily empty. Let
$$S_n := (n, n + 1)\times (0, a), \; n\in\mathbb{Z}.$$ Throughout, the boundary conditions at $x_1 = n$ and $x_1 = n + 1$ are assumed to be Neumann.
Recall that $\sigma_{\textrm{ess}}(H^R_V) = [\lambda_1, \infty)$ (see Theorem \ref{spect}), where $\lambda_1$ is the first eigenvalue of $-\Delta$ on the rectangle $S_n$. So, instead of $H^R_V$, we consider the operator
\begin{equation}\label{newprob}
H^R_{\lambda_1,V} := -\Delta - \lambda_1 - V\mu\;\;\;\;\;  \textrm{on}\;\; L^2(S)
\end{equation}
with boundary conditions \eqref{R}.
We now have that $\sigma_{\textrm{ess}}(H^R_{\lambda_1,V}) = [0, \infty)$. Define $H^R_{\lambda_1,V}$ via its quadratic form
\begin{eqnarray}\label{S}
\mathcal{E}^R_{\lambda_1, V\mu, S}[u] &:=& \int_S |\nabla u(x)|^2\,dx - \lambda_1\int_{S}|u(x)|^2\,dx\nonumber\\ &-& \alpha\int_{\mathbb{R}}|u(x_1, 0)|^2\,dx_1 + \beta\int_{\mathbb{R}}|u(x_1, a)|^2\,dx_1\nonumber\\ &-& \int_S V(x)|u(x)|^2\,d\mu(x),\\
\textrm{Dom}\left( \mathcal{E}^R_{\lambda_1, V\mu, S}\right) &=& \left\{u\in W^1_2(S)\cap L^2(S, Vd\mu\right\}\nonumber.
\end{eqnarray}
Denote by $N_-\left(\mathcal{E}^R_{\lambda_1, V\mu, S} \right)$ the number of negative eigenvalues of $H^R_{\lambda_1, V}$ counting multiplicities. Then our aim in this section is to find upper estimates for $N_-\left(\mathcal{E}^R_{\lambda_1, V\mu, S} \right)$ in terms of  norms of $V$.\\\\
\begin{definition}{\rm (Local Ahlfors regularity)}
{\rm We say that the measure $\mu$ is locally Ahlfors regular on $S$ if the restriction of $\mu$ to $S_n$ for all $n\in\mathbb{Z}$ is Ahlfors regular (see \eqref{Ahlfors}) and there exist constants $c_2, c_3 > 0$ such that
\begin{equation}\label{locAhlfors}
c_2\mu(S_{n \pm 1}) \le \mu(S_n)\le c_3\mu(S_{n \pm 1}),\,\,\,\,\forall  n\in\mathbb{Z}\,.
\end{equation}
}
\end{definition}
Thus for each $n\in\mathbb{Z}$,
\begin{equation}\label{locAhlfors*}
c^k_2\mu(S_{n \pm k}) \le \mu(S_n)\le c^k_3\mu(S_{n \pm k}),\,\,\,\,\forall  k\in\mathbb{N}\,.
\end{equation}From now onwards, it will be assumed that $\mu$ is locally Ahlfors regular on $S$.
 Let
\begin{eqnarray*}
I_n := [2^{n - 1}, 2^n], \ n > 0 , \ \ \ I_0 := [-1, 1] , \ \ \
I_n := [-2^{|n|}, -2^{|n| - 1}], \ n < 0 ,
\end{eqnarray*}
\begin{eqnarray*}
\mathcal{F}_n &:=& \int_{I_n}\int_0^a|x_1|V(x)|u_1(x_2)|^2\,d\mu(x)\,\,\,\, n\neq 0\,,\\ \mathcal{F}_0 &:=& \int_{I_0}\int_0^aV(x)|u_1(x_2)|^2\,d\mu(x)\,,\\
M_n &:=& \|V\|_{\mathcal{B}, S_n,\mu}\,,
\end{eqnarray*}
where $u_1$ is the normalized eigenfunction of $-\Delta$ on $S_n$ corresponding to $\lambda_1$ (here the normalization is with respect to the Lebesgue measure).
\begin{theorem}\label{rbthm}
{\rm There exist constants $C, c >0$ such that
\begin{equation}\label{rbtheqn}
N_-\left(\mathcal{E}^R_{\lambda_1, V\mu, S} \right) \le 1 + C\left(\underset{\{\mathcal{F}_n > c,\,n\in\mathbb{Z}\}}\sum \sqrt{\mathcal{F}_n} + \underset{\{M_n >c,\,n\in\mathbb{Z}\}}\sum M_n \right).
\end{equation}
}
\end{theorem}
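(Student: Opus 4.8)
The plan is to follow the same two–step strategy used in Chapter 3 and in the Neumann case of Section \ref{Neumann}. First I would split $W^1_2(S)$ as $\mathcal{H}_1 \oplus \mathcal{H}_2$, where $\mathcal{H}_1 = \{u \in W^1_2(S) : u(x) = w(x_1)u_1(x_2)\}$ and $\mathcal{H}_2 = (I-P)W^1_2(S)$ with $P$ the orthogonal projection onto $\mathcal{H}_1$. Using $-u_1'' = \tau_1 u_1$ together with the Robin boundary conditions, one checks that the cross terms in $\int_S|\nabla u|^2\,dx$ and in the two boundary integrals vanish on $\mathcal{H}_1 \oplus \mathcal{H}_2$ (the boundary terms involving $\mathcal{H}_2$ drop out since functions in $\mathcal{H}_2$ are orthogonal to $u_1$ in $L^2([0,a])$, and on each fibre the boundary values are evaluated against $u_1(0)$, $u_1(a)$). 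After using $\int_S V|u|^2\,d\mu \le 2\int_S V|w(x_1)u_1(x_2)|^2\,d\mu + 2\int_S V|w_{\mathcal{N}}|^2\,d\mu$, this yields
\begin{equation}\label{rbsplit}
N_-\left(\mathcal{E}^R_{\lambda_1, V\mu, S}\right) \le N_-\left(\mathcal{E}_{1, 2\widetilde{V}}\right) + N_-\left(\mathcal{E}_{2, 2V\mu}\right),
\end{equation}
where $\mathcal{E}_{1, 2\widetilde{V}}$ is the restriction to $\mathcal{H}_1$ and reduces, after integrating out $x_2$ against $|u_1(x_2)|^2$, to a one–dimensional Schr\"odinger form on $L^2(\mathbb{R})$ with potential the weighted mean value $\widetilde{V}(x_1) = \int_0^a V(x)|u_1(x_2)|^2\,d\mu$; here the $-\lambda_1 = -\tau_1$ shift exactly cancels the contribution of $u_1$'s own eigenvalue together with the Robin boundary terms.

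For the first term I would apply the one–dimensional estimate \eqref{Est1} (with $\nu$ the measure $\widetilde{V}(x_1)\,dx_1$, or more precisely its pull-back under the partition \eqref{interval}), which gives a bound of the form $1 + \mathrm{const}\sum_{\mathcal{F}_n > c}\sqrt{\mathcal{F}_n}$ once one observes that $\mathcal{A}_n$ built from $\widetilde{V}$ coincides (up to the constant factors absorbed in $c$, $C$) with $\mathcal{F}_n = \int_{I_n}\int_0^a |x_1| V(x)|u_1(x_2)|^2\,d\mu(x)$, $n\ne 0$, and with $\mathcal{F}_0$ for $n = 0$. This handles the $\sqrt{\mathcal{F}_n}$ sum in \eqref{rbtheqn}.

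For the second term, split $S$ into the rectangles $S_n = (n,n+1)\times(0,a)$ and use the variational principle \eqref{varstrip}:
\begin{equation}\label{rbsecond}
N_-\left(\mathcal{E}_{2, 2V\mu}\right) \le \sum_{n\in\mathbb{Z}} N_-\left(\mathcal{E}_{2, 2V\mu, S_n}\right),
\end{equation}
where on each $S_n$ the test functions $w \in \mathcal{H}_2$ are orthogonal to $u_1$ over $[0,a]$. The aim is an estimate $N_-(\mathcal{E}_{2, 2V\mu, S_n}) \le \mathrm{const}\, \|V\|_{\mathcal{B}, S_n, \mu} = \mathrm{const}\, M_n$. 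To get this I would run the argument of Section \ref{mainresult}: apply Lemma \ref{measlemma5*} (the estimate $N_-(\mathcal{E}_{2V\mu, Q}) \le C_9\|V\|^{(\mathrm{av})}_{\mathcal{B}, Q, \mu}$ on a unit square), using local Ahlfors regularity of $\mu$ on each $S_n$ and an extension/Poincar\'e argument as in Lemma \ref{measlemma6}. The orthogonality to $u_1$ plays the role that $w_{S_n} = 0$ plays in the Neumann case, giving a Poincar\'e-type inequality on $S_n$ for functions in $\mathcal{H}_2$. One then needs the equivalence, via Corollary \ref{avequiv} and local Ahlfors regularity, between $\|V\|^{(\mathrm{av})}_{\mathcal{B}, S_n, \mu}$ and $\|V\|_{\mathcal{B}, S_n, \mu} = M_n$; since $\mu(S_n)$ is comparable across $n$ by \eqref{locAhlfors} but not uniformly bounded above or below globally, a scaling as in the last part of the proof of Lemma \ref{measlemma6} reduces each $n$ to a fixed reference rectangle. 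Discarding the terms with $M_n$ below a threshold $c < 1/\mathrm{const}$ (where the form is nonnegative) turns \eqref{rbsecond} into $\mathrm{const}\sum_{M_n > c} M_n$, and combining with \eqref{rbsplit} and the bound on the first term proves \eqref{rbtheqn}.

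The main obstacle I anticipate is the careful bookkeeping of the boundary terms in the orthogonal decomposition: unlike the Neumann case where $u_1 \equiv \mathrm{const}$, here $u_1$ has a nontrivial $x_2$-profile (given by \eqref{soln4} or its hyperbolic analogue when $\tau_1 < 0$), so one must verify that the Robin boundary integrals $-\alpha\int_{\mathbb{R}}|u(x_1,0)|^2\,dx_1 + \beta\int_{\mathbb{R}}|u(x_1,a)|^2\,dx_1$ decompose compatibly with the splitting $u = w(x_1)u_1(x_2) + w_{\mathcal{N}}$ — i.e. that the only surviving boundary contribution is the one already folded into the $-\lambda_1$ shift on $\mathcal{H}_1$, and that the $\mathcal{H}_2$ boundary contribution is controlled (in fact vanishes when integrated, since the trace of an $\mathcal{H}_2$-function on each fibre is orthogonal in spirit to the constants, but one must be precise). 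A secondary technical point is ensuring the embedding constants and the local Ahlfors constants $c_2, c_3$ propagate correctly through the scaling so that $C$ and $c$ in \eqref{rbtheqn} are genuinely uniform in $n$.
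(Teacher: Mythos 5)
Your overall architecture is the right one and matches the paper: split $W^1_2(S)=\mathcal{H}_1\oplus\mathcal{H}_2$ with $P$ projecting onto $w(x_1)u_1(x_2)$, check that the shifted Robin form decouples (Lemmas \ref{rblemma1}--\ref{rblemma2*}), reduce $\mathcal{H}_1$ to a one--dimensional problem with measure $d\nu = \int_0^a V|u_1|^2\,d\mu$ and apply \eqref{Est1}, and on $\mathcal{H}_2$ partition into $S_n$ and run the Orlicz/Ahlfors machinery of Chapter 3.

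The genuine gap is in your treatment of the boundary terms on $\mathcal{H}_2$. You assert that ``the $\mathcal{H}_2$ boundary contribution is controlled (in fact vanishes when integrated, since the trace of an $\mathcal{H}_2$-function on each fibre is orthogonal in spirit to the constants).'' This is false. For $u\in\mathcal{H}_2$, i.e.\ $\langle u(x_1,\cdot),u_1\rangle_{L^2(0,a)}=0$, the boundary traces $u(x_1,0)$ and $u(x_1,a)$ are in general nonzero (orthogonality to $u_1$ in $L^2$ is a condition on the interior, not on the endpoint values), so the Robin terms $-\alpha\int|u(x_1,0)|^2\,dx_1+\beta\int|u(x_1,a)|^2\,dx_1$ survive in $\mathcal{E}^R_{S_n}[u]$. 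Because of this the shifted form $\mathcal{E}^R_{S_n}[u]-\lambda_1\int_{S_n}|u|^2\,dx$ is \emph{not} simply $\int_{S_n}|\nabla u|^2\,dx$, and you cannot directly feed the orthogonality into the Poincar\'e inequality as in the Neumann case, where the boundary terms are absent and $u_1\equiv\mathrm{const}$.

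What is actually required --- and what the paper spends Lemmas \ref{rblemma0} and \ref{cor1} on --- is a coercivity estimate for the shifted Robin form on $\mathcal{H}_2$. One first uses the spectral gap to get
$\int_{S_n}|u|^2\,dx\le(\lambda_2-\lambda_1)^{-1}\bigl(\mathcal{E}^R_{S_n}[u]-\lambda_1\int_{S_n}|u|^2\,dx\bigr)$ for $u\perp u_1$ (this is \eqref{Rbeqn1}), then invokes the trace theorem combined with Ehrling's lemma (Lemma \ref{rbthm1}) to show that the boundary integrals themselves are bounded by $C_{20}\bigl(\mathcal{E}^R_{S_n}[u]-\lambda_1\int_{S_n}|u|^2\,dx\bigr)$, choosing $\varepsilon$ small enough to absorb the $\max\{|\alpha|,|\beta|\}$ factor. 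Only after these two steps does one get the key coercivity \eqref{W1}, $\|u\|^2_{W^1_2(S_n)}\le\bigl(\frac{1}{\lambda_2-\lambda_1}+C_{21}\bigr)\bigl(\mathcal{E}^R_{S_n}[u]-\lambda_1\int_{S_n}|u|^2\,dx\bigr)$, which is what replaces the Poincar\'e inequality and makes the Chapter 3 argument applicable on each $S_n$. Your proposal, as written, elides this entirely by claiming the boundary terms vanish; without the Ehrling/spectral-gap step the bound $N_-(\mathcal{E}_{2,2V\mu,S_n})\le\mathrm{const}\,M_n$ does not follow.
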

The rest of this section is devoted to the proof of the above Theorem. We begin with the necessary auxiliary results.
\begin{equation}\label{n}
\mathcal{E}^R_{S_n}[u] := \int_{S_n} \mid\nabla u(x)\mid^2 dx + \beta\int_n^{n + 1}\mid u(x_1, a)\mid^2 dx_1 - \alpha \int_n^{n + 1}\mid u(x_1, 0)\mid^2 dx_1,
\end{equation} for all $u\in W^1_2(S_n)$.
Then
$$
\mathcal{E}^R_{S_n}[u] = \lambda\int_{S_n} \mid u(x)\mid^2 \,dx \,,
$$  implying that
$$
\lambda = \frac{\mathcal{E}^R_{S_n}[u]}{\int_{S_n} \mid u(x)\mid^2\, dx}\,,
$$ where $\lambda$ is the  eigenvalue of $-\Delta$ on $S_n$.
By the Mini-Max principle, we have
\begin{eqnarray*}
\lambda_1 = \underset{\underset{u\neq 0}{u\in W^1_2(S_n)}}\min \frac{\mathcal{E}^R_{S_n}[u]}{\int_{S_n} \mid u(x)\mid^2\, dx},\\
\lambda_2 = \underset{\underset{u\neq 0, u \perp u_1}{u\in W^1_2(S_n)}}\min\frac{\mathcal{E}^R_{S_n}[u]}{\int_{S_n} \mid u(x)\mid^2\, dx},
\end{eqnarray*}
where $u_1$ is the normalized eigenfunction of $-\Delta$ on $S_n$ corresponding to $\lambda_1$.
Hence for all $u\in W^1_2(S_n),\;\; u \perp u_1$, one has
$$
 \lambda_2\int_{S_n}\mid u(x)\mid^2\,dx \le \mathcal{E}^R_{S_n}[u],
$$ which in turn implies
\begin{eqnarray*}
\mathcal{E}^R_{S_n}[u] - \lambda_1\int_{S_n}\mid u(x)\mid^2\,dx &=& \mathcal{E}^R_{S_n}[u] - \lambda_2\int_{S_n}\mid u(x)\mid^2\,dx + (\lambda_2 - \lambda_1)\int_{S_n}|u(x)|^2\,dx\\&\geq&(\lambda_2 - \lambda_1)\int_{S_n}|u(x)|^2\,dx .
\end{eqnarray*}
Hence,
\begin{equation}\label{Rbeqn1}
\int_{S_n}|u(x)|^2\,dx \le \frac{1}{\lambda_2 - \lambda_1}\left(\mathcal{E}^R_{S_n}[u] - \lambda_1\int_{S_n}|u(x)|^2\,dx\right),\;\;\;\forall u\in W^1_2(S_n),\;\;u\perp u_1.
\end{equation} Since $\lambda_1 < \lambda_2$, then \eqref{Rbeqn1} holds for all $u\in W^1_2(S_n),\;\;u \perp u_1$.
\begin{lemma}\label{rbthm1}{\rm [Ehrling's Lemma]}
{\rm Let $X_0,\;X_1\; \textrm{and} \;X_2$ be Banach spaces such that $X_2 \hookrightarrow X_1$ is compact and $X_1\hookrightarrow X_0$. Then for every $\varepsilon > 0$, there exists a constant $C(\varepsilon) > 0$ such that
\begin{equation}\label{eqthm1}
\|u\|_{X_1} \le \varepsilon\|u\|_{X_2} + C(\varepsilon)\|u\|_{X_0}, \;\;\;\;\; \forall u\in X_2\,.
\end{equation}
}
\end{lemma}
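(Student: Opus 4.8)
The plan is to argue by contradiction, using the compactness of the embedding $X_2 \hookrightarrow X_1$ to extract a convergent subsequence and then the continuity of $X_1 \hookrightarrow X_0$ to close the loop. This is the standard route for Ehrling-type inequalities, and no estimate from earlier in the excerpt is needed.

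First I would negate the conclusion: if \eqref{eqthm1} fails, then there is some $\varepsilon_0 > 0$ such that for every $n \in \mathbb{N}$ one can choose $u_n \in X_2$ with
$$
\|u_n\|_{X_1} > \varepsilon_0 \|u_n\|_{X_2} + n \|u_n\|_{X_0} .
$$
Each $u_n$ is nonzero (otherwise both sides vanish), so after rescaling we may assume $\|u_n\|_{X_1} = 1$. The displayed inequality then forces $\varepsilon_0 \|u_n\|_{X_2} < 1$ and $n \|u_n\|_{X_0} < 1$; hence $\{u_n\}$ is bounded in $X_2$ and $\|u_n\|_{X_0} \to 0$.

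Next, since $X_2 \hookrightarrow X_1$ is compact, a subsequence (not relabelled) satisfies $u_n \to u$ in $X_1$ for some $u \in X_1$. Continuity of $X_1 \hookrightarrow X_0$ gives $u_n \to u$ in $X_0$ as well; together with $\|u_n\|_{X_0} \to 0$ and uniqueness of limits this yields $u = 0$ in $X_0$. On the other hand $\|u\|_{X_1} = \lim \|u_n\|_{X_1} = 1$, so $u \neq 0$ in $X_1$. Because the embedding $X_1 \hookrightarrow X_0$ is injective, $u = 0$ in $X_0$ implies $u = 0$ in $X_1$, contradicting $\|u\|_{X_1} = 1$. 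This contradiction proves \eqref{eqthm1}, the constant $C(\varepsilon)$ being the infimum of all admissible constants for the fixed $\varepsilon$ (it depends only on $\varepsilon$ and the three spaces).

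The only genuinely delicate point — rather than a real obstacle — is that ``embedding'' must be read as an \emph{injective} bounded linear inclusion, so that the equality $u = 0$ transfers from $X_0$ back to $X_1$; without injectivity the final step would not yield a contradiction. Everything else is routine: normalisation, boundedness extraction, and passage to the limit.
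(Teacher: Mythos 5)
Your proof is correct and is the standard contradiction--compactness argument for Ehrling's lemma; the paper itself does not include a proof but simply refers the reader to \cite{MR}, where this same argument appears. Your remark about reading ``embedding'' as an injective continuous inclusion is apt, since that injectivity is precisely what lets you pass from $u=0$ in $X_0$ back to $u=0$ in $X_1$ and close the contradiction.
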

See, e.g., \cite{MR} for details and proof.
\begin{definition}
\rm {Let $S(\mathbb{R}^2)$ be the class of all functions $\varphi\in\mathcal{C}^{\infty}(\mathbb{R}^2)$ such that for any multi-index $\gamma$ and any $k\in\mathbb{N}$,
$$
\underset{x\in \mathbb{R}^2}\sup(1 + |x|)^k|\partial^{\gamma}\varphi(x)| < \infty .
$$Denote by $S'(\mathbb{R}^2)$ the dual space of $S(\mathbb{R}^2)$. For $s > 0$, let
$$
H^{s}(\mathbb{R}^2) := \left\{u\in S'(\mathbb{R}^2): \int_{\mathbb{R}^2}(1 + |\xi|^2)^{s}|\widehat{u}(\xi)|^2d\xi < \infty\right\},\;\;s\in\mathbb{R}.
$$Here, $\widehat{u}(\xi)$ is the Fourier image of $u(x)$ defined by
$$
\widehat{u}(\xi) = \frac{1}{2\pi}\int_{\mathbb{R}^2}e^{-ix\xi}u(x)dx.
$$}
\end{definition}
Let
$$
 H^s(S_n) := \left\{v = \tilde{v}|_{S_n}\;:\; \tilde{v}\in H^s(\mathbb{R}^2)\right\},
$$
$$
\|v\|_{H^s(S_n)} := \underset{\underset{\tilde{v}|_{S_n} = v}{\tilde{v}\in H^s\left({\mathbb{R}^2}\right)}}\inf \|\tilde{v}\|_{H^s(\mathbb{R}^2)}\;.
$$
Now, let $X_0 = L^2(S_n), X_1 = H^s(S_n)\; \textrm{for}\;\frac{1}{2} < s < 1 \; \textrm{and}\; X_2 = W^1_2 (S_n)$ in Theorem \ref{rbthm1}. That $X_2\hookrightarrow X_1$ is compact follows from the Sobolev compact embedding theorem (see, e.g., \cite[Ch. VII]{Ad} or  \cite[$\S$ 1.4.6]{Maz}). Thus we have the following lemma:
\begin{lemma}\label{rblemma0}
{\rm Let $\varepsilon > 0$ be given. Then there exists a constant $C_{20} >0$ such that
\begin{eqnarray}\label{rblemmaeqn}
&&\int_{n}^{n + 1}|u(x_1, a)|^2\,dx_1 + \int_n^{n + 1}|u(x_1, 0)|^2\,dx_1 \le C_{20}\left(\mathcal{E}^R_{S_n}[u] - \lambda_1\int_{S_n}|u(x)|^2\,dx\right),\nonumber\\
&&\forall u\in W^1_2(S_n),\; u\perp u_1.
\end{eqnarray}
}
\end{lemma}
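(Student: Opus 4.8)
<br>

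The goal is to prove Lemma~\ref{rblemma0}: for every $\varepsilon>0$ there is $C_{20}>0$ with
$$
\int_n^{n+1}|u(x_1,a)|^2\,dx_1 + \int_n^{n+1}|u(x_1,0)|^2\,dx_1 \le C_{20}\Bigl(\mathcal{E}^R_{S_n}[u] - \lambda_1\int_{S_n}|u(x)|^2\,dx\Bigr)
$$
for all $u\in W^1_2(S_n)$ with $u\perp u_1$. The plan is to combine a trace inequality on $S_n$ with the Ehrling-type interpolation of Lemma~\ref{rbthm1} and then absorb the resulting gradient term using the spectral gap inequality \eqref{Rbeqn1}.

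First I would recall the boundedness of the trace operator $W^1_2(S_n)\to L^2(\partial S_n)$, and more precisely that it factors through $H^s(S_n)$ for any $s>1/2$: there is a constant (independent of $n$ by translation invariance of $S_n$) such that $\|u(\cdot,0)\|^2_{L^2(n,n+1)}+\|u(\cdot,a)\|^2_{L^2(n,n+1)} \le C\,\|u\|^2_{H^s(S_n)}$ for $1/2<s<1$. Then I would apply Lemma~\ref{rbthm1} with $X_0=L^2(S_n)$, $X_1=H^s(S_n)$, $X_2=W^1_2(S_n)$ (the compactness $X_2\hookrightarrow X_1$ being the Rellich--Kondrachov embedding cited just before the lemma) to get, for any $\delta>0$,
$$
\|u\|_{H^s(S_n)} \le \delta\,\|u\|_{W^1_2(S_n)} + C(\delta)\,\|u\|_{L^2(S_n)}.
$$
Squaring and combining with the trace bound yields
$$
\int_n^{n+1}|u(x_1,0)|^2\,dx_1 + \int_n^{n+1}|u(x_1,a)|^2\,dx_1 \le \delta'\,\|u\|^2_{W^1_2(S_n)} + C(\delta')\,\|u\|^2_{L^2(S_n)}
$$
for an arbitrarily small $\delta'>0$, after renaming constants.

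Next, on the subspace $\{u\perp u_1\}$ I would use the spectral gap estimate \eqref{Rbeqn1}, namely $\|u\|^2_{L^2(S_n)} \le (\lambda_2-\lambda_1)^{-1}\bigl(\mathcal{E}^R_{S_n}[u]-\lambda_1\|u\|^2_{L^2(S_n)}\bigr)$, to replace the $\|u\|^2_{L^2(S_n)}$ term on the right. For the $\|u\|^2_{W^1_2(S_n)} = \|\nabla u\|^2_{L^2(S_n)}+\|u\|^2_{L^2(S_n)}$ term, I would write $\|\nabla u\|^2_{L^2(S_n)} = \mathcal{E}^R_{S_n}[u] - \beta\int|u(x_1,a)|^2 + \alpha\int|u(x_1,0)|^2$, so that the full-space $W^1_2$ norm is controlled by $\mathcal{E}^R_{S_n}[u]-\lambda_1\|u\|^2_{L^2(S_n)}$ plus a multiple of the boundary integrals and of $\|u\|^2_{L^2(S_n)}$ (using $\mathcal{E}^R_{S_n}[u] = (\mathcal{E}^R_{S_n}[u]-\lambda_1\|u\|^2)+\lambda_1\|u\|^2$ and again \eqref{Rbeqn1}). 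Substituting this back, the boundary integrals reappear on the right-hand side multiplied by the small factor $\delta'\cdot\max(1,|\alpha|,|\beta|)$, so choosing $\delta'$ small enough lets me absorb them into the left-hand side, leaving
$$
\int_n^{n+1}|u(x_1,0)|^2\,dx_1 + \int_n^{n+1}|u(x_1,a)|^2\,dx_1 \le C_{20}\Bigl(\mathcal{E}^R_{S_n}[u]-\lambda_1\int_{S_n}|u(x)|^2\,dx\Bigr),
$$
which is the claim.

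The main obstacle is bookkeeping rather than conceptual: I must make sure all constants (the trace constant, $C(\delta)$ from Ehrling, $\lambda_2-\lambda_1$, and the absorption threshold for $\delta'$) are uniform in $n\in\mathbb{Z}$. Uniformity of the trace and Ehrling constants follows because all $S_n$ are translates of the fixed rectangle $(0,1)\times(0,a)$; uniformity of $\lambda_2-\lambda_1$ is immediate since the spectrum of $-\Delta$ on $S_n$ with the given Neumann-in-$x_1$/Robin-in-$x_2$ conditions does not depend on $n$. The one genuinely delicate point is the circularity risk in the absorption step: the gradient term itself contains the boundary integrals through $\mathcal{E}^R_{S_n}$, so I must carry out the substitution carefully so that the coefficient in front of the boundary integrals on the right truly becomes small (proportional to $\delta'$) before absorbing — this is where choosing $\delta'$ depending on $\alpha,\beta$ but not on $n$ is essential.
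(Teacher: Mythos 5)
Your proposal is correct and follows essentially the same route as the paper: trace theorem factored through $H^s(S_n)$ for $\tfrac12<s<1$, Ehrling's lemma with $(X_0,X_1,X_2)=(L^2,H^s,W^1_2)$, rewriting $\|\nabla u\|^2_{L^2(S_n)}$ via $\mathcal{E}^R_{S_n}[u]$ to re-expose the boundary terms, absorbing those by taking the Ehrling parameter small relative to $\max\{|\alpha|,|\beta|\}$, and finally using the spectral-gap bound \eqref{Rbeqn1} to control the remaining $\|u\|^2_{L^2(S_n)}$. Your explicit squaring of the Ehrling estimate and your remark on uniformity of all constants in $n$ (via translation invariance of $S_n$) are minor tightenings of points the paper leaves implicit, but the argument is the same.
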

\begin{proof}
In this proof, we make use of \eqref{Rbeqn1} and Lemma \ref{rbthm1}. For $s > \frac{1}{2}$, the trace theorem  and Lemma \ref{rbthm1} imply
\begin{eqnarray*}
&&\int_n^{n + 1}|u(x_1, a)|^2dx_1 + \int_n^{n + 1}|u(x_1, 0)|^2dx_1 \le C_s\|u\|_{X_1}\\ && \le C_s\left(\varepsilon\left(\int_{S_n}|\nabla u(x)|^2 dx + \int_{S_n}|u(x)|^2dx\right) + C(\varepsilon)\int_{S_n}|u(x)|^2 dx\right)\\ &&=C_s\varepsilon\int_{S_n}|\nabla u(x)|^2dx + C_s(\varepsilon + C(\varepsilon))\int_{S_n}|u(x)|^2dx\\&& = C_s\varepsilon\Big(\mathcal{E}^R_n[u] - \lambda_1\int_{S_n}|u(x)|^2 dx - \beta\int_n^{n + 1}|u(x_1, a)|^2\,dx_1\\&& \,+ \alpha\int_n^{n + 1}|u(x_1, 0)|^2\,dx_1 + \lambda_1\int_{S_n}|u(x)|^2\,dx\Big)  +  C_s(\varepsilon + C(\varepsilon))\int_{S_n}|u(x)|^2\,dx\\&&\le C_s\varepsilon\left(\mathcal{E}^R_{S_n}[u] - \lambda_1\int_{S_n}|u(x)|^2\,dx\right)\\&&\, + C_s\varepsilon \;\textrm{max}\{|\beta|,|\alpha|\}\left( \int_n^{n + 1}|u(x_1, a)|^2\,dx_1 + \int_n^{n + 1}|u(x_1, 0)|^2\,dx_1\right)\\&&  + \; C_s\left(\varepsilon(\lambda_1 + 1) + C(\varepsilon)\right)\int_{S_n}|u(x)|^2\,dx\,.
\end{eqnarray*}Take $\varepsilon\le \frac{1}{2C_s\max\{|\beta|,|\alpha|\}}$. Then
\begin{eqnarray*}
&&\int_n^{n + 1}|u(x_1, a)|^2\,dx_1 + \int_n^{n + 1}|u(x_1, 0)|^2\,dx_1 \\&&\le C_s\varepsilon\left(\mathcal{E}^R_{S_n}[u] - \lambda_1\int_{S_n}|u(x)|^2\,dx\right) + \frac{1}{2} \int_n^{n + 1}|u(x_1, a)|^2\,dx_1\\&& + \frac{1}{2}\int_n^{n + 1}|u(x_1, 0)|^2\,dx_1   + C_s\left(\varepsilon(\lambda_1 + 1) + C(\varepsilon)\right)\int_{S_n}|u(x)|^2\,dx.
\end{eqnarray*}
Hence \eqref{Rbeqn1} yields
\begin{eqnarray*}
&&\int_n^{n + 1}|u(x_1, a)|^2\,dx_1 + \int_n^{n + 1}|u(x_1, 0)|^2\,dx_1\\ &&\le 2C_s\varepsilon\left(\mathcal{E}^R_{S_n}[u] - \lambda_1\int_{S_n}|u(x)|^2\,dx\right) + 2C_s\left(\varepsilon(\lambda_1 + 1) + C(\varepsilon)\right)\int_{S_n}|u(x)|^2\,dx\\&&\le C_s\left(2\varepsilon + \frac{2}{\lambda_2 - \lambda_1}(\varepsilon(\lambda_1 + 1) + C(\varepsilon))\right)\left(\mathcal{E}^R_{S_n}[u] - \lambda_1\int_{S_n}|u(x)|^2\,dx\right)\\&& = C_{20}\left(\mathcal{E}^R_{S_n}[u] - \lambda_1\int_{S_n}|u(x)|^2\,dx\right),
\end{eqnarray*}where
$$
C_{20} := C_s\left(2\varepsilon + \frac{2}{\lambda_2 - \lambda_1}(\varepsilon(\lambda_1 + 1) + C(\varepsilon))\right).
$$
\end{proof}As a consequence of Lemma \ref{rblemma0} and \eqref{Rbeqn1} we have the following Lemma
\begin{lemma}\label{cor1}
There exists a constant $C_{21} > 0$ such that
\begin{equation}\label{eqn0}
\int_{S_n}|\nabla u(x)|^2\,dx \le C_{21}\left(\mathcal{E}^R_{S_n}[u] - \lambda_1\int_{S_n}|u(x)|^2\,dx\right), \;\;\;\;\forall u\in W^1_2(S_n),\;u\perp u_1.
\end{equation}
\end{lemma}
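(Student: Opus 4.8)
The statement is essentially a corollary of Lemma \ref{rblemma0} together with \eqref{Rbeqn1}, so the plan is simply to combine them carefully. First I would isolate the Dirichlet integral in the definition \eqref{n} of the form:
\[
\int_{S_n}|\nabla u(x)|^2\,dx = \mathcal{E}^R_{S_n}[u] - \beta\int_n^{n+1}|u(x_1,a)|^2\,dx_1 + \alpha\int_n^{n+1}|u(x_1,0)|^2\,dx_1 ,
\]
which immediately gives
\[
\int_{S_n}|\nabla u(x)|^2\,dx \le \mathcal{E}^R_{S_n}[u] + \max\{|\alpha|,|\beta|\}\left(\int_n^{n+1}|u(x_1,a)|^2\,dx_1 + \int_n^{n+1}|u(x_1,0)|^2\,dx_1\right).
\]

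For $u\in W^1_2(S_n)$ with $u\perp u_1$, Lemma \ref{rblemma0} bounds the sum of the two boundary integrals by $C_{20}\bigl(\mathcal{E}^R_{S_n}[u] - \lambda_1\int_{S_n}|u(x)|^2\,dx\bigr)$, so it only remains to control the free term $\mathcal{E}^R_{S_n}[u]$ by the same quantity. Writing $\mathcal{E}^R_{S_n}[u] = \bigl(\mathcal{E}^R_{S_n}[u] - \lambda_1\int_{S_n}|u|^2\,dx\bigr) + \lambda_1\int_{S_n}|u|^2\,dx$ and using \eqref{Rbeqn1} (valid since $\lambda_1<\lambda_2$) to bound $\int_{S_n}|u|^2\,dx$, one obtains $\mathcal{E}^R_{S_n}[u] \le \bigl(1 + \tfrac{\max\{\lambda_1,0\}}{\lambda_2-\lambda_1}\bigr)\bigl(\mathcal{E}^R_{S_n}[u] - \lambda_1\int_{S_n}|u|^2\,dx\bigr)$. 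Collecting the two estimates yields \eqref{eqn0} with
\[
C_{21} := 1 + \frac{\max\{\lambda_1,0\}}{\lambda_2-\lambda_1} + C_{20}\max\{|\alpha|,|\beta|\} .
\]

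Since the substantive analysis — the use of the trace theorem and Ehrling's lemma to absorb the boundary terms into $\mathcal{E}^R_{S_n}[u] - \lambda_1\int_{S_n}|u|^2\,dx$ — has already been done in Lemma \ref{rblemma0}, I do not expect any genuine obstacle here. The only points needing a little care are the sign bookkeeping for $\lambda_1$ (which may be negative under Robin conditions, so that the case split above is needed) and the observation that $\mathcal{E}^R_{S_n}[u] - \lambda_1\int_{S_n}|u|^2\,dx \ge 0$ for every $u\in W^1_2(S_n)$, by the Rayleigh--Ritz characterisation of $\lambda_1$ (Theorem \ref{Ray-Litz}); this non-negativity is what makes the chain of inequalities legitimate.
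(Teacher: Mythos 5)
Your proof is correct and is essentially the same as the paper's: both isolate the Dirichlet integral from $\mathcal{E}^R_{S_n}[u]$, absorb the boundary integrals via Lemma \ref{rblemma0}, and handle the $\lambda_1\int_{S_n}|u|^2\,dx$ term via \eqref{Rbeqn1}, arriving at the identical constant $C_{21} = 1 + C_{20}\max\{|\alpha|,|\beta|\} + \frac{\max\{0,\lambda_1\}}{\lambda_2-\lambda_1}$. The only cosmetic difference is that you bound $\mathcal{E}^R_{S_n}[u]$ and the boundary terms separately, whereas the paper adds and subtracts $\lambda_1\int_{S_n}|u|^2\,dx$ at the start; your explicit remark that $\mathcal{E}^R_{S_n}[u]-\lambda_1\int_{S_n}|u|^2\,dx\ge 0$ (Rayleigh--Ritz) makes the sign bookkeeping slightly more transparent than the paper's.
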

\begin{proof}
\begin{eqnarray*}
\int_{S_n}|\nabla u(x)|^2\,dx &=& \mathcal{E}_{S_n}^R[u] - \lambda_1\int_{S_n}|u(x)|^2\,dx + \lambda_1\int_{S_n}|u(x)|^2\,dx \\&-& \beta \int_n^{n + 1}|u(x_1, a)|^2\,dx_1 + \alpha \int_n^{n + 1}|u(x_1, 0)|^2\,dx_1\\
&\le& \left(1 + C_{20}\max\{|\alpha|, |\beta|\} \right)\left(\mathcal{E}_{S_n}^R[u] - \lambda_1\int_{S_n}|u(x)|^2\,dx \right)\\ &+& \lambda_1\int_{S_n}|u(x)|^2\,dx \\
&\le& \left(1 + C_{20}\max\{|\alpha|, |\beta|\} \right)\left(\mathcal{E}_{S_n}^R[u] - \lambda_1\int_{S_n}|u(x)|^2\,dx \right)\\ &+& \frac{\max\{0,\lambda_1\}}{\lambda_2 - \lambda_1}\left(\mathcal{E}_{S_n}^R[u] - \lambda_1\int_{S_n}|u(x)|^2\,dx \right)\\
&=& \left(1 + C_{20}\max\{|\alpha|, |\beta|\} + \frac{\max\{0,\lambda_1\}}{\lambda_2 - \lambda_1} \right)\left(\mathcal{E}_{S_n}^R[u] - \lambda_1\int_{S_n}|u(x)|^2\,dx \right)\\&=& C_{21}\left(\mathcal{E}_{S_n}^R[u] - \lambda_1\int_{S_n}|u(x)|^2\,dx \right),
\end{eqnarray*}where
$$
C_{21} := 1 + C_{20}\max\{|\alpha|, |\beta|\} + \frac{\max\{0,\lambda_1\}}{\lambda_2 - \lambda_1}\,.
$$
\end{proof}
Let
 $\mathcal{H}_1 := P W^1_2(S)$ and $\mathcal{H}_2 := (I- P)W^1_2(S)$, where
\begin{equation}\label{projection}
Pu(x) := \left(\int_0^au(x)\overline{u_1}(x_2)\,dx_2\right)u_1(x_2) = w(x_1)u_1(x_2)\,, \;\;\;\;\;\;\; \forall u\in W^1_2(S)
\end{equation}
and
$$
w(x_1):= \int_0^au(x)\overline{u_1}(x_2)\,dx_2\,.
$$Then $P$ is a projection since $P^2 = P$.
\begin{lemma}\label{rblemma1}
{\rm For all $u\in W^1_2(S), \;\;\langle (I - P)u, u_1\rangle_{L^2[0, a]} = 0$.}
\end{lemma}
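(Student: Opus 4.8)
The plan is to verify the claim by a direct computation from the definition of the projection $P$ in \eqref{projection}, using only that $u_1$ is normalized in $L^2([0,a])$. First I would fix $x_1 \in \mathbb{R}$ and view both $(I-P)u(x_1,\cdot)$ and $u_1(\cdot)$ as elements of $L^2([0,a])$, so that the pairing $\langle\,\cdot\,,\,\cdot\,\rangle_{L^2[0,a]}$ is the integral over the transverse variable $x_2$ alone.

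Next I would expand $(I-P)u = u - Pu$ and substitute the explicit expression $Pu(x) = w(x_1)u_1(x_2)$ with $w(x_1) = \int_0^a u(x_1,x_2)\overline{u_1(x_2)}\,dx_2$. This gives
\begin{eqnarray*}
\langle (I-P)u, u_1\rangle_{L^2[0,a]}
&=& \int_0^a \big(u(x_1,x_2) - w(x_1)u_1(x_2)\big)\overline{u_1(x_2)}\,dx_2 \\
&=& \int_0^a u(x_1,x_2)\overline{u_1(x_2)}\,dx_2 - w(x_1)\int_0^a |u_1(x_2)|^2\,dx_2 \\
&=& w(x_1) - w(x_1) = 0,
\end{eqnarray*}
where in the last line I use $\int_0^a u(x_1,x_2)\overline{u_1(x_2)}\,dx_2 = w(x_1)$ by definition of $w$, and $\int_0^a |u_1(x_2)|^2\,dx_2 = 1$ since $u_1$ is the normalized eigenfunction of $-\Delta$ on $S_n$ (equivalently, the normalized first eigenfunction of the transverse problem on $[0,a]$).

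There is essentially no obstacle here: the statement is just the assertion that $P$ is the orthogonal projection in the $x_2$-variable onto the span of $u_1$, and it follows immediately from $P^2 = P$ together with the normalization. The only point requiring a word of care is to make clear that the inner product in the statement is taken fibrewise in $x_2$ for fixed $x_1$, and that $w\in W^1_{2,\mathrm{loc}}(\mathbb{R})$ so that the manipulations are legitimate for $u\in W^1_2(S)$; both are routine. This lemma will then be used to identify $\mathcal{H}_2 = (I-P)W^1_2(S)$ as the space of $W^1_2(S)$ functions orthogonal to $u_1$ in $L^2([0,a])$, which is exactly what is needed to apply \eqref{Rbeqn1} and Lemmas \ref{rblemma0}--\ref{cor1} on each rectangle $S_n$.
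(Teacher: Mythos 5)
Your computation is exactly the paper's: both proofs expand $(I-P)u = u - Pu$, substitute $Pu = \langle u, u_1\rangle_{L^2[0,a]}\,u_1 = w(x_1)u_1(x_2)$, and use $\|u_1\|_{L^2[0,a]} = 1$ to cancel the two copies of $w(x_1)$. The argument is correct.
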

\begin{proof}
Since $Pu = \langle u, u_1\rangle_{L^2[0, a] }u_1$, then
\begin{eqnarray*}
\langle (I - P)u, u_1\rangle_{L^2[0, a]} &=& \langle u - \langle u, u_1\rangle_{L^2[0, a]} u_1, u_1\rangle_{L^2[0, a]}\\&=&\langle u, u_1\rangle_{L^2[0, a]} - \langle u, u_1\rangle_{L^2[0, a]} \langle u_1, u_1\rangle_{L^2[0, a]}\\&=& \langle u, u_1\rangle_{L^2[0, a]} - \langle u, u_1\rangle_{L^2[0, a]}  = 0.
\end{eqnarray*}
\end{proof}
\begin{lemma}\label{rblemma2}
{\rm For all $v\in\mathcal{H}_1, \; \tilde{v}\in\mathcal{H}_2,\;\;\langle v, \tilde{v}\rangle_{L^2(S)} = 0$ and\\  $\langle v_{x_1}, \tilde{v}_{x_1}\rangle_{L^2(S)} = 0$.}
\end{lemma}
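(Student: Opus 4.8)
The plan is to exploit the explicit shape of the two subspaces: every element of $\mathcal{H}_1 = PW^1_2(S)$ is, by \eqref{projection} together with $P^2 = P$, of the form
$$
v(x) = w(x_1)\, u_1(x_2) , \qquad w \in W^1_2(\mathbb{R}) ,
$$
and, by Sobolev calculus on the strip, $v_{x_1}(x) = w'(x_1)\, u_1(x_2)$. Here $u_1$ may be taken real valued (see \S\ref{strip}) and $\|u_1\|_{L^2(0,a)} = 1$. Meanwhile, for $\tilde v \in \mathcal{H}_2 = (I-P)W^1_2(S)$, Lemma \ref{rblemma1} applied fibrewise (equivalently, the definition of $P$ and $\|u_1\|_{L^2(0,a)} = 1$) says that
$$
\phi(x_1) := \int_0^a \tilde v(x_1, x_2)\, u_1(x_2)\, dx_2 = \langle \tilde v(x_1, \cdot), u_1\rangle_{L^2(0,a)} = 0 \qquad \text{for a.e. } x_1 \in \mathbb{R} .
$$

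First I would upgrade this to the statement that $\phi \in W^1_{2, \mathrm{loc}}(\mathbb{R})$ with
$$
\phi'(x_1) = \int_0^a \tilde v_{x_1}(x_1, x_2)\, u_1(x_2)\, dx_2 = \langle \tilde v_{x_1}(x_1, \cdot), u_1\rangle_{L^2(0,a)} .
$$
Since $\tilde v \in W^1_2(S)$, this is the routine fact that one may differentiate under the $x_2$-integral: mollify $\tilde v$ in the $x_1$-variable, use the elementary identity for smooth functions, and pass to the limit in $W^1_{2, \mathrm{loc}}$. Because $\phi \equiv 0$, its derivative vanishes as well, so $\langle \tilde v_{x_1}(x_1, \cdot), u_1\rangle_{L^2(0,a)} = 0$ for a.e. $x_1$.

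With these two fibrewise orthogonality relations in hand, the conclusion follows by Fubini's theorem. Indeed,
$$
\langle v, \tilde v\rangle_{L^2(S)} = \int_{\mathbb{R}} w(x_1) \overline{\int_0^a \tilde v(x_1, x_2)\, u_1(x_2)\, dx_2}\, dx_1 = \int_{\mathbb{R}} w(x_1)\, \overline{\phi(x_1)}\, dx_1 = 0 ,
$$
and, in exactly the same way with $w$ replaced by $w'$ and $\tilde v$ by $\tilde v_{x_1}$,
$$
\langle v_{x_1}, \tilde v_{x_1}\rangle_{L^2(S)} = \int_{\mathbb{R}} w'(x_1)\, \overline{\phi'(x_1)}\, dx_1 = 0 .
$$

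The argument is otherwise pure bookkeeping; the one place where genuine analysis enters, and the point I would be most careful to write out, is the justification that $\phi$ is weakly differentiable with derivative obtained by differentiating inside the $x_2$-integral — i.e. the interchange of $\partial_{x_1}$ and $\int_0^a \cdot\, u_1(x_2)\,dx_2$ for functions in $W^1_2(S)$. Everything else is an application of Fubini and the two orthogonality identities.
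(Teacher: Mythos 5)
Your argument is correct and follows essentially the same route as the paper: both proofs reduce to the fibrewise orthogonality $\int_0^a \tilde v(x_1,\cdot)\,\overline{u_1}\,dx_2 = 0$ (which is exactly Lemma~\ref{rblemma1}) and then integrate out $x_1$ via Fubini to kill $\langle v,\tilde v\rangle_{L^2(S)}$; and both treat the derivative identity by observing that $\partial_{x_1}$ commutes with the fibre projection, so that $v_{x_1}$ and $\tilde v_{x_1}$ lie in the ranges of $P$ and $I-P$ on $L^2(S)$ respectively. The only substantive difference is one of emphasis: the paper simply asserts $v_{x_1}\in PL^2(S)$ and $\tilde v_{x_1}\in(I-P)L^2(S)$ without comment, whereas you isolate and flag the one non-trivial analytic step — namely that for $\tilde v\in W^1_2(S)$ the weak derivative of $\phi(x_1)=\int_0^a\tilde v\,u_1\,dx_2$ is $\int_0^a \tilde v_{x_1}\,u_1\,dx_2$, justified by mollification in $x_1$ — and this is precisely the point that makes the paper's unproved assertion true. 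Your write-up is therefore slightly more careful; no gap.
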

\begin{proof}
\begin{eqnarray*}
\langle \tilde{v}, v\rangle_{L^2(S)} &=& \int_S(I - P)u(x) . \overline{w(x_1)u_1({x_2)}}\,dx\\ &=&\int_{\mathbb{R}}\overline{w(x_1)}\left(\int_0^a u(x)\overline{u_1(x_2)}\,dx_2\right)dx_1\\&-&  \int_{\mathbb{R}}\overline{w(x_1)}\left[\left(\int_0^au\overline{u_1(x_2)}\,dx_2\right)\left(\int_0^a u_1(x_2)\overline{u_1(x_2)}\,dx_2\right)\right]dx_1\\&=&\int_{\mathbb{R}}\overline{w(x_1)}\left(\int_0^a u(x)\overline{u_1(x_2)}\,dx_2\right)dx_1\\ &-& \int_{\mathbb{R}}\overline{w(x_1)}\left[\left(\int_0^au(x)\overline{u_1(x_2)}\,dx_2\right)\|u_1\|^2\right]dx_1\\&=&\int_{\mathbb{R}}\overline{w(x_1)}\left(\int_0^a u(x)\overline{u_1(x_2)}\,dx_2\right)dx_1\\ &-& \int_{\mathbb{R}}\overline{w(x_1)}\left(\int_0^a u(x)\overline{u_1(x_2)}\,dx_2\right)dx_1 = 0.
\end{eqnarray*}
Since for all $v\in\mathcal{H}_1 \; \textrm{and}\; \tilde{v}\in\mathcal{H}_2,\;\; v_{x_1}\in P L^2(S), \; \tilde{v}_{x_1}\in (I - P)L^2(S)$, it follows that
$$\langle v_{x_1}, \tilde{v}_{x_1}\rangle_{L^2(S)} = 0.$$
\end{proof}
\begin{lemma}\label{rblemma2*}{\rm Let
$$
\mathcal{E}^R_S[u] := \int_S|\nabla u(x)|^2\,dx  + \beta \int_{\mathbb{R}}|u(x_1, a)|^2\,dx_1 - \alpha \int_{\mathbb{R}}|u(x_1, 0)|^2\,dx_1, \;\;\forall u \in W^1_2(S).
$$ Then
$$
\mathcal{E}^R_S[u] = \mathcal{E}^R_S[v] + \mathcal{E}^R_S[\tilde{v}],\;\;\;\forall u = v + \tilde{v},\,\,v\in\mathcal{H}_1\,,\;\;\tilde{v}\in\mathcal{H}_2\,.
$$
}
\end{lemma}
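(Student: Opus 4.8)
The plan is to polarize: writing $u = v + \tilde v$ with $v \in \mathcal{H}_1$, $\tilde v \in \mathcal{H}_2$, one has
\[
\mathcal{E}^R_S[u] = \mathcal{E}^R_S[v] + \mathcal{E}^R_S[\tilde v] + 2\,\mathrm{Re}\, q^R_S(v, \tilde v),
\]
where $q^R_S$ denotes the Hermitian sesquilinear form associated with $\mathcal{E}^R_S$, namely
\[
q^R_S(v, \tilde v) = \int_S \nabla v \cdot \overline{\nabla \tilde v}\, dx + \beta \int_{\mathbb{R}} v(x_1, a)\overline{\tilde v(x_1, a)}\, dx_1 - \alpha \int_{\mathbb{R}} v(x_1, 0)\overline{\tilde v(x_1, 0)}\, dx_1 .
\]
So it suffices to show $q^R_S(v, \tilde v) = 0$. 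Splitting $\nabla v \cdot \overline{\nabla \tilde v} = v_{x_1}\overline{\tilde v_{x_1}} + v_{x_2}\overline{\tilde v_{x_2}}$, the term $\int_S v_{x_1}\overline{\tilde v_{x_1}}\, dx$ vanishes by Lemma \ref{rblemma2}, so everything reduces to showing that $\int_S v_{x_2}\overline{\tilde v_{x_2}}\, dx$ exactly cancels the two boundary integrals.

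For this step I would use that every $v \in \mathcal{H}_1$ equals $w(x_1)u_1(x_2)$ with $w \in W^1_2(\mathbb{R})$, and that $u_1$, the first eigenfunction of the transverse Robin problem, may be chosen real and smooth on $[0, a]$ with $-u_1'' = \lambda_1 u_1$, $u_1'(0) + \alpha u_1(0) = 0$ and $u_1'(a) + \beta u_1(a) = 0$. Then $v_{x_2}(x) = w(x_1)u_1'(x_2)$, and for almost every $x_1$ (the slice $\tilde v(x_1, \cdot)$ lying in $W^1_2(0, a) \hookrightarrow C[0, a]$) integration by parts in $x_2$ gives
\[
\int_0^a u_1'(x_2)\overline{\tilde v_{x_2}(x_1, x_2)}\, dx_2 = u_1'(a)\overline{\tilde v(x_1, a)} - u_1'(0)\overline{\tilde v(x_1, 0)} - \int_0^a u_1''(x_2)\overline{\tilde v(x_1, x_2)}\, dx_2 .
\]
Since $u_1'' = -\lambda_1 u_1$ and $\langle (I - P)u, u_1\rangle_{L^2[0, a]} = 0$ by Lemma \ref{rblemma1} (hence $\int_0^a u_1(x_2)\overline{\tilde v(x_1, x_2)}\, dx_2 = 0$), the last integral drops; inserting $u_1'(a) = -\beta u_1(a)$ and $u_1'(0) = -\alpha u_1(0)$, multiplying by $w(x_1)$ and integrating over $\mathbb{R}$ (legitimate by Fubini and Cauchy--Schwarz since $v_{x_2}, \tilde v_{x_2} \in L^2(S)$) yields
\[
\int_S v_{x_2}\overline{\tilde v_{x_2}}\, dx = \int_{\mathbb{R}} w(x_1)\bigl(-\beta u_1(a)\overline{\tilde v(x_1, a)} + \alpha u_1(0)\overline{\tilde v(x_1, 0)}\bigr)\, dx_1 .
\]
Because $v(x_1, a) = w(x_1)u_1(a)$ and $v(x_1, 0) = w(x_1)u_1(0)$, the two boundary integrals in $q^R_S(v, \tilde v)$ equal $\int_{\mathbb{R}} w(x_1)\bigl(\beta u_1(a)\overline{\tilde v(x_1, a)} - \alpha u_1(0)\overline{\tilde v(x_1, 0)}\bigr)\, dx_1$, the negative of the preceding expression. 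Hence $q^R_S(v, \tilde v) = 0$, and the claimed decomposition follows.

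The routine parts are the measure-theoretic bookkeeping: Fubini, the existence of the $x_2$-slices and of the traces on $x_2 = 0, a$ for $\tilde v \in W^1_2(S)$, and the identification of $\mathcal{H}_1$ with $\{w(x_1)u_1(x_2) : w \in W^1_2(\mathbb{R})\}$. The one substantive point — and the reason this particular splitting is adapted to the Robin problem — is that the boundary terms produced by integrating $u_1'\overline{\tilde v_{x_2}}$ by parts cancel the $\alpha$- and $\beta$-terms of the form precisely because $u_1$ obeys $u_1'(0) = -\alpha u_1(0)$ and $u_1'(a) = -\beta u_1(a)$; this matching is where the computation needs to be carried out with care, while the rest is formal.
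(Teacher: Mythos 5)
Your proof is correct and follows essentially the same route as the paper: expand the cross term $q^R_S(v,\tilde v)$, observe that the $x_1$-gradient contribution vanishes by Lemma \ref{rblemma2}, integrate the $x_2$-gradient contribution by parts, kill the bulk term via $u_1'' = -\lambda_1 u_1$ together with the orthogonality of $\tilde v$ to $u_1$ (Lemma \ref{rblemma1}), and then use the Robin relations $u_1'(0)=-\alpha u_1(0)$, $u_1'(a)=-\beta u_1(a)$ to cancel the boundary integrals. This is precisely the computation carried out in the paper's proof.
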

\begin{proof}
 \begin{eqnarray*}
 \langle\tilde{v}_{x_2}, v_{x_2}\rangle_{L^2(S)} &=& \int_{S}\frac{\partial}{\partial x_2}(I - P)u(x) \frac{\partial}{\partial x_2}(\overline{w(x_1)u_1(x_2)})dx\\&=& \int_{\mathbb{R}}\overline{w(x_1)}\left[\int_0^a\frac{\partial}{\partial x_2}(I - P)u(x) \frac{\partial}{\partial x_2}(\overline{u_1(x_2)})\,dx_2\right]dx_1.
 \end{eqnarray*}Integration by parts and Lemma \ref{rblemma1} give
 \begin{eqnarray*}
 \langle v_{x_2}, \tilde{v}_{x_2}\rangle_{L^2(S)} &=& \int_{\mathbb{R}}\overline{w(x_1)}(I -P)u(x_1, a)\frac{\partial}{\partial x_2}\overline{u_1(a)}\,dx_1\\ &-& \int_{\mathbb{R}}\overline{w(x_1)}(I - P)u(x_1, 0)\frac{\partial}{\partial x_2}\overline{u_1(0)}\,dx_1 \\&+& \int_{\mathbb{R}}\overline{w(x_1)}\left(\underbrace{\lambda_1\int_0^a(I - P)u(x)\overline{u_1(x_2)}\,dx_2}_{= 0}\right)dx_1\\&=&-\beta\int_{\mathbb{R}}\overline{w(x_1)}(I - P)u(x_1, a)\overline{u_1(a)}dx_1\\ &+& \alpha\int_{\mathbb{R}}\overline{w(x_1)}(I - P)u(x_1, 0)\overline{u_1(0)}dx_1.
 \end{eqnarray*}Thus, this together with Lemma \ref{rblemma2} yield
 \begin{eqnarray*}
 \mathcal{E}^R_S( \tilde{v}, v)&=&\int_S\nabla \tilde{v}\overline{\nabla v}\,dx + \beta\int_{\mathbb{R}}\overline{w(x_1)}(I - P)u(x_1, a)\overline{u_1(a)}dx_1\\ &-& \alpha\int_{\mathbb{R}}\overline{w(x_1)}(I - P)u(x_1, 0)\overline{u_1(0)}dx_1\\ &=& - \beta\int_{\mathbb{R}}\overline{w(x_1)}(I - P)u(x_1, a)\overline{u_1(a)}dx_1\\ &+& \alpha\int_{\mathbb{R}}\overline{w(x_1)}(I - P)u(x_1, 0)\overline{u_1(0)}dx_1\\&+&  \beta\int_{\mathbb{R}}\overline{w(x_1)}(I - P)u(x_1, a)\overline{u_1(a)}dx_1\\ &-& \alpha\int_{\mathbb{R}}\overline{w(x_1)}(I - P)u(x_1, 0)\overline{u_1(0)}dx_1 = 0.
 \end{eqnarray*}
 This means that for all $ u \in W^1_2(S)$
 $$
 \mathcal{E}^R_S[u] = \mathcal{E}^R_S[v] + \mathcal{E}^R_S[\tilde{v}], \;\;\;\;\forall u = v + \tilde{v},\,\, v\in\mathcal{H}_1,\;\;\tilde{v}\in\mathcal{H}_2.
 $$
 \end{proof}
 Hence
 $$
 W^1_2(S) = \mathcal{H}_1 \oplus \mathcal{H}_2\,,
 $$
where $\oplus$ denotes the direct orthogonal sum.
\begin{proof}[Proof of Theorem \ref{rbthm}]
Let
\begin{eqnarray*}
\mathcal{E}^R_S[u] &:=& \int_S|\nabla u(x)|^2\,dx - \alpha\int_{\mathbb{R}}|u(x_1, 0)|^2\,dx_1 + \beta \int_{\mathbb{R}}|u(x_1, a)|^2\,dx_1,\\
\textrm{Dom}(\mathcal{E}^R_S) &=& W^1_2(S).
\end{eqnarray*}
 and
\begin{eqnarray*}
\mathcal{E}^R_{\lambda_1,V\mu, S}[u] &:=& \mathcal{E}^R_S[u] - \lambda_1 \int_{S}| u(x)|^2\,dx - \int_S V(x)|u(x)|^2\,d\mu(x),\\
\textrm{Dom} (\mathcal{E}^R_{\lambda_1, V\mu, S}) &=& W^1_2(S)\cap L^2\left(S, Vd\mu\right).
\end{eqnarray*} Then similarly to \eqref{estimate} one has
\begin{equation}\label{radstrip}
N_-\left(\mathcal{E}^R_{\lambda_1,V\mu, S}\right) \le N_-(\mathcal{E}_{1, 2V}) + N_-(\mathcal{E}_{2, 2V})\,
\end{equation}
where $\mathcal{E}_{1, 2V}$ and $\mathcal{E}_{2, 2V}$ are the restrictions of the form $\mathcal{E}^R_{\lambda_1,2V\mu, S}$ to the spaces $\mathcal{H}_1$ and $\mathcal{H}_2$ respectively. We start by estimating the first term in the right-hand side of \eqref{radstrip}.
\\\\
Recall that for all $ u\in\mathcal{H}_1\,, \; u(x) = w(x_1)u_1(x_2)$ (see \eqref{projection}). Let $I$ be an arbitrary interval in $\mathbb{R}$ and let
$$
\nu(I) := \int_I\int_0^a V(x)|u_1(x_2)|^2\,d\mu(x).
$$
Then
\begin{eqnarray*}
\int_S V(x)|u(x)|^2\,d\mu(x) &=& \sum_{k\in\mathbb{Z}}\int_{I_k}\int_0^a V(x)|w(x_1)u_1(x_2)|^2\,d\mu(x)\\&=& \sum_{k\in\mathbb{Z}}\int_{I_k}|w(x_1)|^2\,d\nu(x_1)\\
&=&\int_{\mathbb{R}}|w(x_1)|^2\,d\nu(x_1).
\end{eqnarray*}

 On the subspace $\mathcal{H}_1$, one has
 \begin{eqnarray*}
 &&\int_S\left(\mid\nabla u(x)\mid^2 - \lambda_1\mid u(x)\mid^2\right)dx + \beta\int_{\mathbb{R}}\mid u(x_1, a)\mid^2dx_1\\&& - \alpha\int_{\mathbb{R}}\mid u(x_1, 0)\mid^2dx_1 - 2\int_S V(x)\mid u(x)\mid^2d\mu(x)\\&&= \int_{\mathbb{R}}\mid w'(x_1)\mid^2\left(\int_0^a\mid u_1(x_2)\mid^2dx_2\right)dx_1\\&& + \int_{\mathbb{R}}\mid w(x_1)\mid^2\left(\int_0^a\mid u'_1(x_2)\mid^2dx_2\right)dx_1\\&& - \lambda_1\int_{\mathbb{R}}\mid w(x_1)\mid^2\left(\int_0^a\mid u_1(x_2)\mid^2dx_2\right)dx_1\\&& + \beta\int_{\mathbb{R}}\mid w(x_1)u_1(a)\mid^2dx_1 - \alpha\int_{\mathbb{R}}\mid w(x_1)u_1(0)\mid^2dx_1\\&& - 2\int_{\mathbb{R}}|w(x_1)|^2\,d\nu(x_1).
 \end{eqnarray*}But
 \begin{eqnarray*}
 &&\int_{\mathbb{R}}\mid w(x_1)\mid^2\left(\int_0^a\mid u'_1(x_2)\mid^2dx_2\right)dx_1\\&& = \lambda_1\int_{\mathbb{R}}\mid w(x_1)\mid^2\left(\int_0^a\mid u_1(x_2)\mid^2dx_2\right)dx_1 \\&& - \beta\int_{\mathbb{R}}\mid w(x_1)u_1(a)\mid^2dx_1 + \alpha\int_{\mathbb{R}}\mid w(x_1)u_1(0)\mid^2dx_1,
 \end{eqnarray*} which implies
 \begin{eqnarray}\label{stripH1}
 &&\int_S\left(\mid\nabla u(x)\mid^2 - \lambda_1\mid u(x)\mid^2\right)dx + \beta\int_{\mathbb{R}}\mid u(x_1, a)\mid^2dx_1 \nonumber\\&& - \alpha\int_{\mathbb{R}}\mid u(x_1, 0)\mid^2dx_1 - 2\int_S V(x)\mid u(x)\mid^2dx \nonumber\\&&= \|u_1\|^2\int_{\mathbb{R}}\mid w'(x_1)\mid^2dx_1 - \int_{\mathbb{R}}|w(x_1)|^2\,d\nu(x_1)\nonumber\\&&= \int_{\mathbb{R}}\mid w'(x_1)\mid^2dx_1 - 2\int_{\mathbb{R}}|w(x_1)|^2\,d\nu(x_1).
 \end{eqnarray}
 Hence, we have the following one dimensional Schr\"odinger operator
 $$
 - \frac{d^2}{dx^2_1} - 2\nu\;\;\;\;\; \textrm{on} \;\;L^2(\mathbb{R})\,.
 $$
 Let
\begin{eqnarray*}
\mathcal{E}_{1,2\nu}[w] &:=& \int_{\mathbb{R}}|w'(x_1)|^2\,dx_1 - 2\int_{\mathbb{R}}|w(x_1)|^2\,d\nu(x_1),\\
\textrm{Dom} (\mathcal{E}_{1, 2\nu}) &=& W^1_2(\mathbb{R})\cap L^2\left(\mathbb{R}, d\nu\right),
\end{eqnarray*}

\begin{eqnarray*}
F_n &:=& \int_{I_n}|x_1|\,d\nu(x_1),\;\; n\neq 0,\\ F_0 &:=& \int_{I_0}d\nu(x_1).
\end{eqnarray*}
Then
\begin{equation}\label{radest}
N_-\left(\mathcal{E}_{1,2\nu}\right) \le 1 + 7.16 \underset{\{F_n > 0.046,\;n\in\mathbb{Z}\}}\sum \sqrt{F_n}
\end{equation}(see \eqref{Est1}).
To write the above estimate in terms of the original measure, let
\begin{eqnarray*}
\mathcal{F}_n &:=& \int_{I_n}\int_0^a|x_1|V(x)|u_1(x_2)|^2\,d\mu(x),\;\; n\neq 0,\\ \mathcal{F}_0 &:=& \int_{I_0}\int_0^a V(x)|u_1(x_2)|^2\,d\mu(x).
\end{eqnarray*}
Then $F_n = \mathcal{F}_n$. Hence
\begin{equation}\label{radest1}
N_-\left(\mathcal{E}_{1,2V}\right) \le 1 + 7.16 \underset{\{\mathcal{F}_n > 0.046,\;n\in\mathbb{Z}\}}\sum \sqrt{\mathcal{F}_n}\,.
\end{equation}

Next, we consider the subspace $\mathcal{H}_2 \perp \mathcal{H}_1$ in $W^1_2(S)$. By \eqref{Rbeqn1} and \eqref{eqn0}, one has
\begin{equation}\label{W1}
\|u\|^2_{W^1_2(S_n)} \le \left(\frac{1}{\lambda_2 - \lambda_1} + C_{21}\right)\left(\mathcal{E}^R_{S_n}[u] - \lambda_1\int_{S_n}|u(x)|^2dx\right)
\end{equation}for all $u\in W^1_2(S_n), \; u \perp u_1$.\\
Let $S_n := (n , n+ 1)\times (0, a),\;\;n\in\mathbb{Z}$ be the set $G$ in Lemma \ref{measlemma4} and $S^*_n$ be defined similarly. For each $n$, $S^*_n$ intersects not more than $N_0$ rectangles to the left of $S_n$ and $N_0$ rectangles to right of $S_n$, where $N_0\in\mathbb{N}$ depends only on $a$ and $\theta_0$ in Corollary \ref{cor-direct}. Then \eqref{locAhlfors*} implies
\begin{eqnarray*}
\mu(S^*_n) &\le& \sum_{j = n - N_0}^{n + N_0}\mu({S_j})\\ &=&\mu(S_{n-N_0}) + ... + \mu(S_{n-1}) + \mu(S_n) + \mu(S_{n + 1}) + ... + \mu(S_{n + N_0})\\
&\le& \left(\frac{1}{c^{N_0}_2} + ... + \frac{1}{c_2}\right)\mu(S_n) + \mu(S_n) + \left(\frac{1}{c_2} + ... + \frac{1}{c^{N_0}_2}\right)\mu(S_n)\\
&=&\left(2\left(\frac{1}{c_2} + ... + \frac{1}{c^{N_0}_2}\right)+ 1\right)\mu(S_n)\\ &=&\kappa_0\mu(S_n)\,,
\end{eqnarray*}
where $$\kappa_0 := 2\left(\frac{1}{c_2} + ... + \frac{1}{c^{N_0}_2}\right) + 1\,.$$ Let  $$T: W^1_2(S_n) \longrightarrow W^1_2(\mathbb{R}^2)$$  be a bounded linear operator which satisfies
$$
Tu|_{S_n} = u \;\;\;\forall u \in W^1_2(S_n)\,.
$$
Then it follows from the proof of Lemma \ref{measlemma4} that for any $V \in L_{\mathcal{B}}(S_n, \mu), \, V \ge 0$ and any $n \in \mathbb{N}$
$$
\int_{\overline{S_n}}V(x)|u(x)|^2d\mu(x) \le C_6\frac{c_1}{c_0}2^{\alpha}\kappa_0N^2n^{-1}\|T\|^2\|V\|_{\mathcal{B}, S_n, \mu} \|u\|^2_{W^1_2(S_n)}
$$for all $u \in W^1_2(S_n) \cap C(\overline{S_n})$ satisfying the $n_0$ orthogonality conditions in Lemma \ref{measlemma4}.
Hence \eqref{W1} implies
\begin{equation}\label{W2}
\int_{\overline{S_n}}V(x)|u(x)|^2d\mu(x) \le C_{22}n^{-1} \|V\|_{\mathcal{B}, S_n, \mu}\left(\mathcal{E}^R_{S_n}[u] - \lambda_1\int_{S_n}|u(x)|^2dx\right)
\end{equation} for all $u \in W^1_2(S_n) \cap C(\overline{S_n}), \, u \perp u_1$ satisfying the $n_0$ orthogonality conditions, where
$$
C_{22}:=  C_6\frac{c_1}{c_0}2^{\alpha}\kappa_0N^2\|T\|^2\left(\frac{1}{\lambda_2 - \lambda_1} + C_{21}\right).
$$
Let
\begin{eqnarray}\label{S_n}
\mathcal{E}_{2,2V\mu, S_n}[u] &:=& \mathcal{E}_{S_n}^R[u] - \lambda_1\int_{S_n}| u(x)|^2\,dx - 2\int_{\overline{S_n}}V(x)|u(x)|^2\,d\mu(x),\nonumber\\
\textrm{Dom} (\mathcal{E}_{2, 2V\mu, S_n}) &=&  W^1_2(S_n)\cap L^2\left(S_n, V d\mu\right)\;\;\;\;\;(\textrm{see} \eqref{n}).
\end{eqnarray}
Taking $ n = \left[\|V\|_{\mathcal{B}, S_n, \mu}\right] + 1 $ in \eqref{W2}, one gets similarly to  Lemma \ref{measlemma5}
\begin{equation}\label{radest2}
N_-\left(\mathcal{E}_{2,2V\mu, S_n}\right) \le  C_{23}\|V\|_{\mathcal{B}, S_n, \mu} + 2,\;\;\;\;\;\forall V \geq 0
\end{equation}
where $C_{23} := 2C_{22}$ .
Again, taking $n = 1$ in \eqref{W2}, we get
$$
2\int_{\overline{S_n}}V(x)|u(x)|^2\,d\mu(x) \le C_{23}\|V\|_{\mathcal{B}, S_n, \mu}\left(\mathcal{E}^R_{S_n}[u] - \lambda_1\int_{S_n}|u(x)|^2dx\right)\,,
$$for all $u \in W^1_2(S_n) \cap C(\overline{S_n}), \, u \perp u_1$ satisfying the $n_0$ orthogonality conditions.  If $\|V\|_{\mathcal{B}, S_n, \mu} \le \frac{1}{C_{23}}$, then
$$
N_-\left(\mathcal{E}_{2,2V\mu, S_n}\right) = 0\,.
$$ Otherwise, \eqref{radest2} implies
$$
N_-\left(\mathcal{E}_{2,2V\mu, S_n}\right)\le C_{24}\|V\|_{\mathcal{B}, S_n, \mu}\,,
$$
where $C_{24} := 3 C_{23}$.\\\\ Let $M_n = \|V\|_{\mathcal{B}, S_n, \mu}$. Then for any $c \le \frac{1}{C_{23}}$, the variational principle (see \eqref{varstrip}) implies
\begin{equation}\label{radest3}
N_-\left(\mathcal{E}_{2,2V}\right)\le C_{24}\underset{\{M_n >\; c,\;n\in\mathbb{Z}\}}\sum M_n,\;\;\;\;\forall V \geq 0\,.
\end{equation}
 Thus \eqref{radstrip}, \eqref{radest1} and \eqref{radest3} imply \eqref{rbtheqn}.
 \end{proof}
The presence of the terms $\sqrt{\mathcal{F}_n}$ and $M_n$ in the estimate indicate that different parts of the potential contribute differently to number the $N_-(\mathcal{E}^R_{\lambda_1, V, S})$. Since for the terms $\sqrt{\mathcal{F}_n}$, $V$ is integrated over long rectangles, then the long range effect of $V$ in the $x_1$-direction becomes similar to that of one-dimensional potential.
\begin{theorem}\label{sthm1}{\rm(cf. Theorem \ref{measthm3})
Let $V \ge 0$. If $N_-\left(\mathcal{E}^R_{\lambda_1,\gamma V\mu, S}\right) = O(\gamma)\mbox{ as } \gamma \longrightarrow +\infty$, then $\|\mathcal{F}_n\|_{1, w} < \infty$.
}
\end{theorem}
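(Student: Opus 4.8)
The plan is to reduce the assertion to a one-dimensional necessity result by restricting the quadratic form to the ``radial-type'' subspace $\mathcal{H}_1 = PW^1_2(S)$ of \eqref{projection}, exactly as in the proof of Theorem \ref{rbthm}. First I would insert the coupling constant $\gamma$ into the computation leading to \eqref{stripH1}: for every $u = w(x_1)u_1(x_2) \in \mathcal{H}_1$ one has
$$
\mathcal{E}^R_{\lambda_1, \gamma V\mu, S}[u] = \int_{\mathbb{R}}|w'(x_1)|^2\,dx_1 - \gamma\int_{\mathbb{R}}|w(x_1)|^2\,d\nu(x_1), \qquad \nu(I) := \int_I\int_0^a V(x)|u_1(x_2)|^2\,d\mu(x),
$$
where $\nu$ is a positive $\sigma$-finite Radon measure on $\mathbb{R}$ because $V \in L^1_{\mathrm{loc}}(\overline{S},\mu)$ and $\mu$ is $\sigma$-finite. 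Thus the restriction of $\mathcal{E}^R_{\lambda_1, \gamma V\mu, S}$ to $\mathcal{H}_1$ is unitarily equivalent to the one-dimensional form $\mathcal{E}_{\mathbb{R}, \gamma\nu}$ of $\S$\ref{1-destimate}. Since $\mathcal{H}_1 \subseteq \textrm{Dom}(\mathcal{E}^R_{\lambda_1, \gamma V\mu, S})$, monotonicity of the Morse index under restriction to a subspace (see \eqref{hermitian}) gives
$$
N_-(\mathcal{E}_{\mathbb{R}, \gamma\nu}) \le N_-\big(\mathcal{E}^R_{\lambda_1, \gamma V\mu, S}\big) = O(\gamma), \qquad \gamma \to +\infty.
$$

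Next I would invoke the one-dimensional necessity statement: if $N_-(\mathcal{E}_{\mathbb{R}, \gamma\nu}) = O(\gamma)$ as $\gamma \to +\infty$ for a positive $\sigma$-finite Radon measure $\nu$ on $\mathbb{R}$, then $\{\mathcal{A}_n(\nu)\}_{n\in\mathbb{Z}} \in l_{1,w}$, where $\mathcal{A}_n(\nu) := \int_{I_n}|x_1|\,d\nu(x_1)$ for $n \neq 0$ and $\mathcal{A}_0(\nu) := \int_{I_0}\,d\nu(x_1)$ (see \eqref{interval}). This is the exact analogue for measures of \cite[Theorems 9.1 and 9.2]{Eugene}, and, just as in the proof of Theorem \ref{measthm3}, it is obtained by running the trial-function construction of \cite{Eugene} with the Lebesgue measure replaced by $\nu$; the only extra ingredient needed to accommodate atoms of $\nu$ is the mollification device of Lemma \ref{acmeasure}, which allows one to pass to absolutely continuous approximations $\nu_\varepsilon$ without losing the relevant quantities in the limit. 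Finally, comparing definitions one checks $\mathcal{A}_n(\nu) = F_n = \mathcal{F}_n$ for all $n \in \mathbb{Z}$ (this is precisely the passage from \eqref{radest} to \eqref{radest1}), so $\{\mathcal{F}_n\} \in l_{1,w}$, i.e.\ $\|\mathcal{F}_n\|_{1,w} < \infty$, which is the claim.

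The main obstacle is the one-dimensional necessity step. One must produce, for each index $n$ with $\mathcal{F}_n$ above a fixed threshold, a family of roughly $\sqrt{\gamma \mathcal{F}_n}$ mutually orthogonal trial functions supported in $I_n$ on which $\mathcal{E}_{\mathbb{R},\gamma\nu}$ is negative, sum these contributions over $n$, and then feed the linear bound $N_-(\mathcal{E}_{\mathbb{R},\gamma\nu}) = O(\gamma)$ into the resulting inequality to extract the weak-$l_1$ estimate for $\{\mathcal{F}_n\}$. Making this quantitative for a general Radon measure $\nu$ — in particular controlling the contribution of its atoms via Lemma \ref{acmeasure} and treating the endpoint interval $I_0$, where the Dirichlet-type constraint $u(0)=0$ of $\mathrm{Dom}(\mathcal{E}_{\kappa,\nu,0})$ enters — is where the technical work is concentrated; everything else is bookkeeping already carried out in the proof of Theorem \ref{rbthm} and in $\S$\ref{1-destimate}.
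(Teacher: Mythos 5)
Your reduction to the one-dimensional form via $\mathcal{H}_1$ is, in essence, what the paper does: its trial functions have the form $u_n(x)=w_n(x_1)u_1(x_2)\in\mathcal{H}_1$, and the form evaluated on them (via the identity behind \eqref{stripH1}) is exactly the one-dimensional form $\int_{\mathbb{R}}|w_n'(x_1)|^2\,dx_1-\int_{\mathbb{R}}|w_n(x_1)|^2\,d\nu(x_1)$, so the strategy agrees and your monotonicity inequality $N_-(\mathcal{E}_{\mathbb{R},\gamma\nu})\le N_-(\mathcal{E}^R_{\lambda_1,\gamma V\mu,S})$ is sound.

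What your final paragraph gets wrong is the nature of the technical work. For the necessity direction you need only \emph{one} trial function per dyadic index, not $\sqrt{\gamma\mathcal{F}_n}$ of them --- the $\sqrt{\cdot}$ count is a feature of the upper bound \eqref{rbtheqn}, not the lower bound. The paper takes a single explicit piecewise-polynomial $w_n$ (depending on a parameter $\sigma\ne\tfrac12$) supported on $\{|x_1|\in(2^{n-1},2^{n+2})\}$, shows
$$
\mathcal{E}^R_S[u_n]-\lambda_1\!\int_S|u_n|^2\,dx=C_{25}\,2^{(2\sigma-1)n},\qquad
\int_S V|u_n|^2\,d\mu\ge 2^{(2\sigma-1)n}\mathcal{F}_n,
$$
so that $\mathcal{E}^R_{\lambda_1,V\mu,S}[u_n]<0$ as soon as $\mathcal{F}_n>C_{25}$; since $u_n$ and $u_k$ have disjoint supports for $|n-k|\ge3$, this gives $N_-\ge\tfrac13\,\textrm{card}\{n:\mathcal{F}_n>C_{25}\}$, and replacing $V\mapsto\gamma V$ and setting $s=C_{25}/\gamma$ yields $\textrm{card}\{n:\mathcal{F}_n>s\}\le C_{26}s^{-1}$ directly. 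Likewise, the mollification device of Lemma \ref{acmeasure} is needed only on the sufficiency side (Lemma \ref{parlemma2} requires $\nu$ atom-free to produce partitions); the necessity construction never requires it, because $\int|w_n|^2\,d\nu$ is well defined for continuous $w_n$ and any Radon measure $\nu$. The Dirichlet-type constraint at $I_0$ is similarly an upper-bound concern. So your plan works, but the obstacles you anticipate are not the ones that actually arise, and the proof is considerably more elementary than you expect.
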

\begin{proof}
For $\sigma > 0$, consider the function
$$
w_n(x_1) := \left\{\begin{array}{cl}
  0 ,   & \  |x_1| \le 2^{n-1}\,\textrm{or}\, |x_1|\ge 2^{n+2} , \\ \\
  2^{(\sigma -1)(n + 1)}(x_1 - 2 ^{n-1}) ,  & \ 2^{n-1} <  |x_1| < 2^n  , \\ \\
 x_1^{\sigma} , & \ 2^n \le |x_1| \le  2^{n+1}, \\ \\
 2^{(\sigma -1)(n+1)}(2^{n+2} - x_1)  ,  & \ 2^{n+1} <  |x_1| < 2^{n+2}\,.
\end{array}\right.
$$ Let $u_n(x) = w_n(x_1)u_1(x_2)$.
Then by a computation similar to the one leading to \eqref{stripH1} we get
\begin{eqnarray*}
\mathcal{E}^R_S[u_n] &-& \lambda_1\int_{S}|u_n(x)|^2\,dx =\int_S\left(|\nabla u_n(x)|^2 - \lambda_1|u_n(x)|^2\right)dx\\ &-& \alpha \int_{\mathbb{R}}|u_n(x_1, 0)|^2dx_1 + \beta \int_{\mathbb{R}}|u_n(x_1, a)|^2dx_1 = \int_{\mathbb{R}}|w'_n(x_1)|^2dx_1\\&=& \int_{2^{n-1}}^{2^n}|w'_n(x_1)|^2dx_1 + \int_{2^{n}}^{2^{n+1}}|w'_n(x_1)|^2dx_1+\int_{2^{n+1}}^{2^{n+2}}|w'_n(x_1)|^2dx_1\\ &=& 2^{(2\sigma -1)n}\left( 2 + \frac{\sigma^22^{2\sigma -1}}{2\sigma - 1} + 2^{2\sigma -1}\right)\\ &=& C_{25}2^{(2\sigma -1)n}\,,
\end{eqnarray*}where
$$
C_{25} := \left( 2 + \frac{\sigma^22^{2\sigma -1}}{2\sigma - 1} + 2^{2\sigma -1}\right),\,\,\sigma \not= \frac{1}{2}\,.
$$
Now
\begin{eqnarray*}
\int_S V(x)|u_n(x)|^2\,d\mu(x) &\ge& \int_{2^n}^{2^{n+1}}x_1^{2\sigma}\int_0^a V(x)|u_1(x_2)|^2\,d\mu(x)\\ &=&\int_{2^n}^{2^{n+1}}x_1^{2\sigma -1}|x_1|\int_0^a V(x)|u_1(x_2)|^2\,d\mu(x)\\&\ge& 2^{(2\sigma -1)n}\mathcal{F}_n\\ &=& \frac{1}{C_{25}}\left(\mathcal{E}^R_S[v_n] - \lambda_1\int_S|v_n(x)|^2\,dx\right)\mathcal{F}_n\,.
\end{eqnarray*}
Hence $\mathcal{E}^R_{\lambda_1, V\mu, S}[u_n] < 0$ if $\mathcal{F}_n > C_{25}$. The fact that $u_n$ and $u_k$ have disjoint supports if $|m - k| \ge 3$ implies that
$$
N_-\left(\mathcal{E}^R_{\lambda_1, V\mu, S}\right) \ge \frac{1}{3}\textrm{card}\{n\in\mathbb{Z}\,:\, \mathcal{F}_n > C_{25}\}
$$ (see \cite[Theorem 9.1]{Eugene}). If $N_-\left(\mathcal{E}^R_{\lambda_1, \gamma V\mu, S}\right) \le C\gamma$, then
$$
\frac{1}{3}\textrm{card}\{n\in\mathbb{Z}\,:\, \gamma\mathcal{F}_n > C_{25}\} \le C\gamma\,,
$$which implies
$$
\textrm{card}\left\{n\in\mathbb{Z}\,:\, \mathcal{F}_n > \frac{C_{25}}{\gamma}\right\} \le 3C\gamma\,.
$$ With $s = \frac{C_{25}}{\gamma}$ we have
$$
\textrm{card}\{n\in\mathbb{Z}\,:\, \mathcal{F}_n > s\} \le C_{26}s^{-1},\,\,\,s > 0,
$$ where $C_{26}:= 3 C_{25}C$.
\end{proof}

Now, suppose that $\mu = |\cdot|$, the  Lebesgue measure . Then
\begin{eqnarray*}
 \mathcal{F}_n &=&   \int_{I_n}|x_1|\left(\int_0^aV(x)|u_1(x_2)|^2dx_2\right)dx_1,  \;\;\;\; n\neq 0, \\
 \mathcal{F}_0 &=&  \int_{I_0}\left(\int_0^aV(x)|u_1(x_2)|^2dx_2\right)dx_1.
 \end{eqnarray*} Let $J_n := (n, n + 1),\;\;I := (0, a)$ and
 $$
 \mathcal{D}_n := \|V\|_{L_1\left(J_n, L_{\mathcal{B}}(I)\right)}\,,
 $$ (see \eqref{L1LlogLnorm}). Then one has the following better estimate
 \begin{equation}\label{radest4}
N_-\left(\mathcal{E}^R_{\lambda_1,V, S}\right)\le 1 + 7.16 \underset{\{\mathcal{F}_n > \;0.046,\;n\in\mathbb{Z}\}}\sum \sqrt{\mathcal{F}_n} +  C_{27}\underset{\{\mathcal{D}_n >\; c,\;n\in\mathbb{Z}\}}\sum \mathcal{D}_n,\;\;\;\;\forall V \geq 0\,.
\end{equation}
Indeed, suppose that $ \parallel V\parallel_{\mathcal{B}, S_n, |\cdot|}  = 1$, similarly to \eqref{imp} one has
\begin{equation}\label{equivnad*}
\mathcal{D}_n = \int_{J_n}\|V\|_{\mathcal{B}, I}\,dx_1  \le 4\|V\|_{\mathcal{B}, S_n, |\cdot|} = 4M_n\,.
\end{equation}The scaling $V \longmapsto t V,\; t > 0$, allows one to extend the above inequality to an arbitrary $V \geq 0$.
 By the same procedure leading to estimate \eqref{Est5}  one has the following estimate
\begin{equation}\label{est14}
N_-(\mathcal{E}^R_{\lambda_1, V, S})\leq 1 + C_{28}\left( \parallel(\mathcal{F}_n)_{n\in\mathbb{Z}}\parallel_{1,w} + \|V_{*}\|_{L_1\left(\mathbb{R}, L_{\mathcal{B}}(I)\right)}\right), \;\;\;\ \forall V \geq 0.
\end{equation}
where $V_{*} := V(x) - G(x_1)$ and
$$
G(x_1) :=\int_0^aV(x)|u_1(x_2)|^2\,dx_2\,.
$$

The condition $\|\mathcal{F}_n\|_{{1, w}} < \infty$ is necessary and sufficient for the semi-classical behaviour of the estimate coming from the subspace $\mathcal{H}_1$ (see the above Theorem). Roughly speaking, the terms $\mathcal{F}_n$ are responsible for the negative eigenvalues of $H_{\lambda_1, V}$ in the $x_1$-direction. In addition, if $V_* \in L_1\left(\mathbb{R}, L_{\mathcal{B}}(I)\right)$, then one gets an analogue of Theorem 1.1 in \cite{LapSolo}, i.e.,
$$
N_-(\mathcal{E}^R_{\lambda_1, \gamma V, S}) = O(\gamma) \;\;\textrm{as}\;\;\gamma \longrightarrow\; +\infty
$$ if and only if $\mathcal{F}_n \in l_{1, w}$.
\begin{remark}
\rm {When $\alpha = 0 \,(\alpha \longrightarrow \pm\infty)$ we have Neumann- Robin (Dirichlet-Robin respectively) boundary conditions.\\
If $\alpha, \beta \longrightarrow \pm\infty$, we obtain Dirichlet boundary conditions. In this case, $\lambda_1 = \left(\frac{\pi}{a}\right)^2, \; \lambda_2 =\min\left\{ 4\left(\frac{\pi}{a}\right)^2, \left(\frac{\pi}{a}\right)^2 + \pi^2\right\}$ and $u_1(x_2) = \sqrt{\frac{2}{a}}\sin \frac{\pi}{a}x_2$. \\
If $\alpha \longrightarrow \infty, \; \beta = 0$, we obtain Dirichlet-Neumann boundary conditions. In this case, $\lambda_1 = \left(\frac{\pi}{2a}\right)^2, \; \lambda_2 = \min\left\{9\left(\frac{\pi}{2a}\right)^2, \left(\frac{\pi}{2a}\right)^2 + \pi^2\right\}$ and $u_1(x_2) = \sqrt{\frac{2}{a}}\sin \frac{\pi}{2a}x_2$.
}
\end{remark}

\chapter{Appendices}\markboth{Chapter \ref{appendix}.
Introduction}{}\label{Introduction}
\section{Besecovitch covering Lemma}\label{besico}
Let $A$ be a bounded set in $\mathbb{R}^n$ and $\Xi$ denote a covering of $\overline{A}$ by cubes $\Delta \subset\mathbb{R}^n$. Suppose $\Xi$ can be split into $r$ subsets $\Xi_1, ..., \Xi_r$ in such a way that for each $k = 1, ..., r$, the cubes $\Delta \in\Xi_k$ are pairwise disjoint. The smallest number $r$ for which such a split of $\Xi$ is possible is called the \textit{linkage} of $\Xi$ and we denote it by $\mbox{ link }(\Xi)$.
\begin{lemma}{\rm \cite[Theorem 1.1]{Guz}
 Let $\Delta_x\subset\mathbb{R}^n$ be a closed cube centred at $x$ for any $x\in \overline{A}$. Then a finite or countable subset $\Xi = \{\Delta_{x_j}\}$ can be chosen in  such a way that $\overline{A}\subset \underset{j}\cup\Delta_{x_j}$ and $\mbox{ link }(\Xi) \le N$, where $N$ is a number depending only on the dimension $n$. For $n =2$, the optimal bound for $\mbox{ link }(\Xi)$ is 19 (see, e.g., \cite[Theorem 2.7]{FM}).
}
\end{lemma}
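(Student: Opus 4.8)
The plan is to run a greedy, Vitali-type selection of cubes and then read the combinatorial bound off the geometry of the chosen family. Write $s(\Delta)$ for the side-length of a cube $\Delta$, and for $t>0$ let $t\Delta$ be the cube concentric with $\Delta$ of side $t\,s(\Delta)$. First dispose of the degenerate case: if $\sup\{s(\Delta_x):x\in\overline A\}=+\infty$, choose a single $x$ with $s(\Delta_x)>\mathrm{diam}\,\overline A$, so that $\{\Delta_x\}$ already covers $\overline A$ with linkage $1$. Otherwise set $R_0:=\sup\{s(\Delta_x):x\in\overline A\}<\infty$, pick $x_1\in\overline A$ with $s(\Delta_{x_1})>R_0/2$, and, having chosen $x_1,\dots,x_{j-1}$, stop if $\overline A\subseteq\bigcup_{i<j}\Delta_{x_i}$; if not, put $R_{j-1}:=\sup\{s(\Delta_x):x\in\overline A\setminus\bigcup_{i<j}\Delta_{x_i}\}$ and choose $x_j$ in that difference with $s(\Delta_{x_j})>R_{j-1}/2$. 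Let $\Xi:=\{\Delta_{x_j}\}$; it is finite or countable by construction.

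The first thing to verify is that $\Xi$ covers $\overline A$. The crucial sub-claim is that the shrunk cubes $\tfrac13\Delta_{x_j}$ are pairwise disjoint: for $i<j$ the center $x_j$ was an admissible choice at step $i$ and lies outside $\Delta_{x_i}$, so $s(\Delta_{x_j})\le R_{i-1}<2\,s(\Delta_{x_i})$ and the $\ell^\infty$-distance $\|x_i-x_j\|_\infty>\tfrac12 s(\Delta_{x_i})$, while the inradii of $\tfrac13\Delta_{x_i}$ and $\tfrac13\Delta_{x_j}$ add up to at most $\tfrac16 s(\Delta_{x_i})+\tfrac16 s(\Delta_{x_j})<\tfrac16 s(\Delta_{x_i})+\tfrac13 s(\Delta_{x_i})=\tfrac12 s(\Delta_{x_i})$. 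Since these disjoint cubes all lie in a fixed bounded neighbourhood of $\overline A$, their volumes sum to a finite number, so $s(\Delta_{x_j})\to0$, hence $R_j<2\,s(\Delta_{x_{j+1}})\to0$ (the $R_j$ are non-increasing). If some $y\in\overline A$ were never covered, then $y\in\overline A\setminus\bigcup_{i<j}\Delta_{x_i}$ for every $j$, forcing $s(\Delta_y)\le R_{j-1}\to0$, which is impossible; hence $\overline A\subseteq\bigcup_j\Delta_{x_j}$.

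The substantive part is the linkage estimate, and I would derive it from bounded point-multiplicity. Fix $p\in\mathbb R^n$ and let $\Delta_{x_{j_1}},\dots,\Delta_{x_{j_m}}$, with $j_1<\dots<j_m$, be the members of $\Xi$ containing $p$. For $a<b$ one has $\|x_{j_a}-x_{j_b}\|_\infty>\tfrac12 s(\Delta_{x_{j_a}})$ (later centers avoid earlier cubes), $\|x_{j_a}-p\|_\infty\le\tfrac12 s(\Delta_{x_{j_a}})$, $\|x_{j_b}-p\|_\infty\le\tfrac12 s(\Delta_{x_{j_b}})$, and, from the ``side $>$ half the running supremum'' rule together with monotonicity of $R_j$, $s(\Delta_{x_{j_b}})<2\,s(\Delta_{x_{j_a}})$. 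Passing between $\ell^\infty$ and $\ell^2$ up to the usual $\sqrt n$ factors, these inequalities force the directions $x_{j_a}-p$ and $x_{j_b}-p$ to make an angle bounded below by a dimensional constant $\theta_0(n)>0$ — this is Besicovitch's cone argument in its cube incarnation. Hence $m$ is at most the maximal number of $\theta_0(n)$-separated points on the unit sphere, a dimensional constant $N_1(n)$. The passage from this bounded overlap to a bound on the linkage is then the classical Besicovitch/Guzm\'an argument: one decomposes the selected cubes into dyadic size-classes $s(\Delta_{x_j})\in[2^{-k-1}R_0,2^{-k}R_0)$, notes that within one class mutually intersecting cubes have $\ell^\infty$-separated, comparably sized centers and hence bounded overlap with a good constant, and controls the cross-scale interaction via the point-multiplicity bound; colouring by scale yields a partition of $\Xi$ into $N(n)$ subfamilies of pairwise disjoint cubes, i.e. $\mathrm{link}(\Xi)\le N(n)$.

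The main obstacle is precisely this last geometric/combinatorial step: one must simultaneously control the sizes of the selected cubes through a common point — which is exactly what the ``$s(\Delta_{x_j})>R_{j-1}/2$'' rule buys — and the angular spread of their centers, and then turn a pointwise-multiplicity bound into a bound on the linkage without losing hold of the constant. All of the delicacy lives here; in particular the sharp planar value $\mathrm{link}(\Xi)\le19$ comes from a refined version of the same cap/packing count and is carried out in \cite[Theorem 2.7]{FM} (the abstract existence and the covering property of the first two paragraphs being routine, using only that $\overline A$ is bounded).
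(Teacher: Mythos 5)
The paper does not prove this lemma --- it is stated as a citation to de Guzm\'an, with the sharp planar constant referred to Morgan --- so there is no in-paper argument to compare against. The greedy selection, the pairwise disjointness of the one-third cubes, and the coverage argument in your first two paragraphs are correct and constitute the standard opening of the Besicovitch proof.

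The combinatorial step is where the gap is, and your sketch of it does not quite close it. First, the cone argument as phrased omits the case where $p$ lies very close to one of the centres $x_{j_a}$: the direction $x_{j_a}-p$ is then nearly degenerate and the angular separation bound fails. The classical proofs split into the case $\|x_{j_a}-p\|$ small relative to $s(\Delta_{x_{j_a}})$ --- where a volume count of the disjoint shrunk cubes bounds the number of such indices directly --- and the complementary case, where the cone argument applies. Second, and more fundamentally, bounded pointwise multiplicity does not by itself yield the linkage bound with a dimensional constant: one must bound the chromatic number of the intersection graph, not merely its clique number. The Besicovitch proof obtains this from a \emph{degree} bound along the greedy order: each newly selected cube meets at most a dimensional constant $M(n)$ of the previously selected ones (this is where the cone argument and the size control are actually used), and a greedy colouring in selection order then needs at most $M(n)+1$ colours. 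Your ``colouring by scale'' would, as stated, assign one colour per dyadic size class, of which there are countably many; the colour classes have to come instead from greedy colouring along the selection order. You rightly identify this step as where all the work lies, but the passage as sketched is not a proof.
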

\section{The classical Poincar\'e inequality}\label{poincare}
For reference, see, e.g.,\cite[1.1.11]{Maz} and the references therein.
Assume that $1 \le p < \infty$ and that $\Omega$ is a bounded connected domain in $\mathbb{R}^n$ with a Lipschitz boundary. Then there is a constant $C$, depending only on $\Omega$ and $p$ such that for all $u\in W^1_p(\Omega)$
$$
\|u - u_{\Omega}\|_{L^p(\Omega)} \le C\|\nabla u \|_{L^p(\Omega)}\,,
$$
where $u_{\Omega} := \frac{1}{|\Omega|}\int_{\Omega}u(x)\,dx$ is the average value of $u$ over $\Omega$.
\section{Proofs of \eqref{*}, \eqref{**} and \eqref{***}}\label{proof}
Let $\mathcal{B}(s) = (1 + s)\ln(1 + s) - s = \frac{1}{t}$, then $s = \mathcal{B}^{-1}\left(\frac{1}{t}\right)$. For small values of $s$ (large values of $t$), using
$$
\ln(1 + s) = s - \frac{s^2}{2} + \frac{s^3}{3} + O\left(s^4\right),
$$ we have
$$
(1 + s)\ln(1 + s) - s = \frac{s^2}{2} + O\left(s^3\right) = \frac{1}{t}.
$$ One can write this in the form
\begin{eqnarray*}
\frac{s^2}{2} + s^2g(s) &=& \frac{1}{t},\;\;\;\;g(0)= 0,\\
\frac{s^2}{2}\left(1 + 2g(s)\right) &=& \frac{1}{t},\\
s\left(1 + h(s)\right) &=& \sqrt{\frac{2}{t}},\;\;\;\;h(0) = 0,
\end{eqnarray*} where $g$ and $h$ are $C^{\infty}$ smooth functions in a neighbourhood of $0$.
Let $f(s) = s\left(1 + h(s)\right)$. Then $f(0)= 0, f'(0) = 1$ and $(f^{-1})'(0) = 1$, which means that both $f$ and $f^{-1}$ are invertible in a neighbourhood of $0$. So by Taylor series we have
$$
s = f^{-1}\left(\sqrt{\frac{2}{t}}\right) = \sqrt{\frac{2}{t}} + O\left(\frac{1}{t}\right).
$$
Thus
$$
\mathcal{B}^{-1}\left(\frac{1}{t}\right) = \sqrt{\frac{2}{t}}\left(1 + o(1)\right) \;\;as \;\; t\longrightarrow\infty
$$
and
$$
t\mathcal{B}^{-1}\left(\frac{1}{t}\right) = \sqrt{2t}\left( 1 + o(1)\right)\;\;as \;\; t\longrightarrow\infty.
$$
For large values of $s$ (small values of $t$), let $\rho = 1 + s$ and $r = \frac{1}{t}$, then
$$
\rho\ln\rho - \rho + 1 = r.
$$ Let $\rho = e^z$, then
\begin{equation}\label{asymp}
ze^z - r - e^z + 1 = 0.
\end{equation} This implies
\begin{eqnarray*}
(z -1)e^z &=& r -1, \\ (z -1 )e^{z -1} &=& \frac{r - 1}{e}.
\end{eqnarray*}
Let $w := z - 1 \,\,\,\,\,\, v:=  \frac{r - 1}{e}$. Then
\begin{equation}\label{asymp1}
we^w = v.
\end{equation}
The solution of \eqref{asymp1} is given by
$$
w = \ln v - \ln\ln v + \frac{\ln\ln v}{\ln v} + O\left(\left(\frac{\ln\ln v}{\ln v}\right)^2\right)
$$ (see (2.4.10) and the formula following (2.4.3) in \cite{DEB}). So
\begin{eqnarray*}
z &=& 1 + \ln\frac{r - 1}{e} - \ln\ln\frac{r - 1}{e} + \frac{\ln\ln \frac{r- 1}{e}}{\ln \frac{r-1}{e}}\\
&& + \, O\left(\left(\frac{\ln\ln \frac{r- 1}{e}}{\ln \frac{r-1}{e}}\right)^2\right).
\end{eqnarray*}
Since
\begin{eqnarray*}
&&\ln (r-1) = \ln r + O\left(\frac{1}{r}\right),\\&&\ln\left(\ln (r -1) - 1\right) = \ln\ln r + O\left(\frac{1}{\ln r}\right),
\end{eqnarray*}
we get
\begin{eqnarray*}
 z &=& \ln r - \ln\ln r + \frac{\ln\ln r}{\ln r} + O\left(\frac{1}{\ln r}\right)\\ &=&\ln\frac{1}{t} - \ln\ln\frac{1}{t} + \frac{\ln\ln\frac{1}{t}}{\ln\frac{1}{t}} + O\left(\frac{1}{\ln\frac{1}{t}}\right).
\end{eqnarray*}
This implies
$$
\rho = e^z =  \frac{1}{t\ln\frac{1}{t}}\left( 1 +  \frac{\ln\ln \frac{1}{t}}{\ln\frac{1}{t}} +  O\left(\frac{1}{\ln\frac{1}{t}}\right)\right).
$$
 Hence

$$
t\mathcal{B}^{-1}\left(\frac{1}{t}\right) = \frac{1}{\ln\frac{1}{t}}\left(1 +  \frac{\ln\ln \frac{1}{t}}{\ln\frac{1}{t}} +  O\left(\frac{1}{\ln\frac{1}{t}}\right)\right)
$$
implying
$$
t\mathcal{B}^{-1}\left(\frac{1}{t}\right) = \frac{1}{\ln\frac{1}{t}}\left( 1 + o(1)\right)\;\;as \;\; t\longrightarrow 0.
$$
Let
$$
\frac{1}{\ln\frac{1}{t}}\left( 1 + o(1)\right) = \tau,
$$ then
\begin{equation}\label{t1}
\ln\frac{1}{t} = \frac{1 + o(1)}{\tau}.
\end{equation}
From
$$
\frac{1}{\ln\frac{1}{t}}\left(1 +  \frac{\ln\ln \frac{1}{t}}{\ln\frac{1}{t}} +  O\left(\frac{1}{\ln\frac{1}{t}}\right)\right) = \tau,
$$
we get
\begin{equation}\label{t2}
\ln\frac{1}{t} = \frac{1 +  \frac{\ln\ln \frac{1}{t}}{\ln\frac{1}{t}} +  O\left(\frac{1}{\ln\frac{1}{t}}\right)}{\tau}.
\end{equation}
Now \eqref{t1} implies
\begin{eqnarray*}
\ln\frac{1}{t} &=& \frac{1 +  \frac{\ln\frac{1 + o(1)}{\tau}}{1 + o(1)}\tau + O\left( \frac{\tau}{1 + o(1)}\right)}{\tau} \\&=& \frac{1 +  (1 + o(1))\tau \ln \frac{1}{\tau} + O(\tau)}{\tau}\,.
\end{eqnarray*}
Substituting this into \eqref{t2}, one gets
\begin{eqnarray*}
\ln\frac{1}{t} &=& \frac{1 + \frac{\ln\frac{1 + (1 + o(1))\tau\ln\frac{1}{\tau} + O(\tau)}{\tau}}{1 + (1 + o(1))\tau\ln \frac{1}{\tau} + O(\tau)}\tau + O(\tau)}{\tau}\\ &=& \frac{1}{\tau} -  \ln\tau + O(1).
\end{eqnarray*}
Hence
\begin{equation}\label{large}
t = \tau e^{-\frac{1}{\tau}}e^{O(1)} =: \varphi(\tau).
\end{equation}
\section{An analogue of Lemma \ref{measlemma3}}\label{analogue}
Let $\Delta := I_1\times I_2$ be a rectangle of sides of lengths $R_1$ and $R_2$ respectively and $\Delta^*$ the rectangle with the same centre as $\Delta$ and with sides of lengths 3 times those of $\Delta$. Let
\begin{equation}\label{norm*}
\|V\|^{(*)}_{\Psi, \Delta, \mu} := \sup\left\{\left|\int_{\Delta}Vu\,d\mu\right| : \int_{\Delta}\Phi(|u|)\,d\mu \le \mu(\Delta^*)\right\}.
\end{equation}
Then we have the following Lemma.
\begin{lemma}\label{clemma1}
{\rm Let $\mu$ be a $\sigma$-finite positive Radon measure that is Ahlfors regular. Then for any $V\in L_{\Psi}(\Delta, \mu),\,V\ge 0$, there is a constant $d_1 > 0$ such that
\begin{equation}\label{ceqn1}
\int_{\Delta}V(x)|w(x)|^2 d\mu(x) \le d_1\|V\|^{(*)}_{\Psi, \Delta, \mu}\int_{\Delta}|\nabla w(x)|^2 dx
\end{equation} for all $w \in W^1_2(\Delta)\cap C(\overline{\Delta})$ with $w_{\Delta} = 0$.
}
\end{lemma}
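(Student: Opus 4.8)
The plan is to reproduce the scaling argument behind Lemma \ref{measlemma3}, but carried out on the enlarged rectangle $\Delta^{*}$ and with $w$ replaced by a Sobolev extension. The centring-in-the-support hypothesis of Lemma \ref{measlemma3} fails here (this is exactly what Example \ref{counter} exploits), but it is used there only to bound $\mu(\Delta)$ from below, and for $\Delta^{*}$ such a bound is available unconditionally. If $\textrm{supp}\,\mu\cap\overline{\Delta}=\emptyset$ the left-hand side of \eqref{ceqn1} vanishes and there is nothing to prove, so fix $p\in\textrm{supp}\,\mu\cap\overline{\Delta}$. In orthonormal coordinates aligned with the sides of $\Delta$ and centred at its centre, $\Delta=[-\tfrac{R_1}{2},\tfrac{R_1}{2}]\times[-\tfrac{R_2}{2},\tfrac{R_2}{2}]$ and $\Delta^{*}=[-\tfrac{3R_1}{2},\tfrac{3R_1}{2}]\times[-\tfrac{3R_2}{2},\tfrac{3R_2}{2}]$, so every point of $\overline{\Delta}$ lies at distance at least $\min\{R_1,R_2\}$ from $\partial\Delta^{*}$; hence $B\bigl(p,\min\{R_1,R_2\}\bigr)\subseteq\Delta^{*}$ and, by \eqref{Ahlfors}, $\mu(\Delta^{*})\ge c_0\min\{R_1,R_2\}^{\alpha}>0$. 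This lower bound is what replaces the centring hypothesis. Set $V_{*}:=V\chi_{\Delta}$; exactly as in \eqref{exteqn}, $\|V_{*}\|^{(\mathrm{av})}_{\mathcal B,\Delta^{*},\mu}=\|V\|^{(*)}_{\mathcal B,\Delta,\mu}$.

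Next I would extend $w$. Since $\partial\Delta$ is Lipschitz, Theorem \ref{ext} supplies a bounded operator $T:W^1_2(\Delta)\to W^1_2(\mathbb R^{2})$ with $Tw|_{\Delta}=w$; using the hypothesis $w_{\Delta}=0$ and the Poincar\'e inequality (Appendix \ref{poincare}),
\begin{equation*}
\int_{\Delta^{*}}|\nabla(Tw)|^{2}\,dx+\int_{\Delta^{*}}|Tw|^{2}\,dx\le\|T\|^{2}\|w\|_{W^1_2(\Delta)}^{2}\le\|T\|^{2}(1+C_P^{2})\int_{\Delta}|\nabla w|^{2}\,dx ,
\end{equation*}
with $C_P=C_P(\Delta)$. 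Let $\xi:Q\to\Delta^{*}$ be the affine map with matrix $\mathrm{diag}(3R_1,3R_2)$, put $\tilde\mu:=\mu\circ\xi$, $\tilde V_{*}:=V_{*}\circ\xi$, $\tilde w:=(Tw)\circ\xi$, and $c:=\bigl(c_1(3\max\{R_1,R_2\})^{\alpha}\bigr)^{-1}$, so that by \eqref{Ahlfors} the measure $c\tilde\mu$ satisfies \eqref{ball} on $\overline Q$. I would then apply the two ingredients of Lemma \ref{measlemma3*} — the H\"older inequality \eqref{Holder} for Orlicz spaces and the Maz'ya inequality of Lemma \ref{meascor} (i.e.\ \eqref{maz1} with $C_4\sim\alpha^{-1}$) — but omitting the Poincar\'e step, which is unavailable because $\tilde w$ need not be mean-zero on $Q$:
\begin{equation*}
\int_{\overline Q}\tilde V_{*}\,|\tilde w|^{2}\,d(c\tilde\mu)\le C_4\,\|\tilde V_{*}\|_{\mathcal B,Q,c\tilde\mu}\,\|\tilde w\|_{W^1_2(Q)}^{2}.
\end{equation*}

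Finally I would undo the scaling. By \eqref{push}, $\int_{\overline\Delta}V|w|^{2}\,d\mu=c^{-1}\int_{\overline Q}\tilde V_{*}|\tilde w|^{2}\,d(c\tilde\mu)$; by Corollary \ref{avequiv} and the scale invariance of the average norm (as in \eqref{scale}), $c^{-1}\|\tilde V_{*}\|_{\mathcal B,Q,c\tilde\mu}\le\|V\|^{(*)}_{\mathcal B,\Delta,\mu}\big/\min\{1,c\tilde\mu(Q)\}$, and the lower bound $\mu(\Delta^{*})\ge c_0\min\{R_1,R_2\}^{\alpha}$ gives $\min\{1,c\tilde\mu(Q)\}^{-1}\le\tfrac{c_1}{c_0}3^{\alpha}\max\{R_1/R_2,R_2/R_1\}^{\alpha}$. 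Combining this with the elementary identities $\int_Q|\nabla\tilde w|^{2}\le\max\{R_1/R_2,R_2/R_1\}\int_{\Delta^{*}}|\nabla(Tw)|^{2}$ and $\int_Q|\tilde w|^{2}=(9R_1R_2)^{-1}\int_{\Delta^{*}}|Tw|^{2}$ and with the extension estimate above yields \eqref{ceqn1} with $d_1$ depending only on $R_1,R_2,\alpha$, the Ahlfors ratio $c_1/c_0$, $\|T\|$ and $C_P(\Delta)$. The hard part — and the reason Lemma \ref{measlemma3} cannot simply be quoted — is the $L^{2}(Q)$-part of $\|\tilde w\|_{W^1_2(Q)}^{2}$: because $\tilde w$ is not mean-zero on $Q$ it cannot be discarded in favour of $\int_Q|\nabla\tilde w|^{2}$, and after rescaling it contributes a factor of order $(R_1R_2)^{-1}$ to $d_1$; this is harmless since $d_1$ is permitted to depend on $\Delta$, but it is why the constant here is not scale-uniform the way the one in \eqref{maz8} is. (The argument is written for the complementary pair $(\mathcal A,\mathcal B)$; verbatim the same reasoning gives the result for any $N$-function $\Psi$ whose Maz'ya constant $B$ in \eqref{B} is finite.)
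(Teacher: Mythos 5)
Your proof is correct but follows a genuinely different route from the paper's. The paper exploits the geometry directly: it picks any $x \in \mathrm{supp}\,\mu \cap \Delta$ and forms the rectangle $\Delta^o$ centred at $x$ with sides twice those of $\Delta$; then $\Delta \subseteq \Delta^o \subseteq \Delta^*$ and, crucially, $\Delta^o$ \emph{is} centred in $\mathrm{supp}\,\mu$, so Lemma \ref{measlemma3} applies verbatim to $\Delta^o$ (which has the same aspect ratio as $\Delta$), after which $\|V_*\|^{(\mathrm{av})}_{\Psi, \Delta^o, \mu} \le \|V\|^{(*)}_{\Psi, \Delta, \mu}$ simply because $\mu(\Delta^o) \le \mu(\Delta^*)$. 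You instead decline to invoke Lemma \ref{measlemma3} on any rectangle and re-run the Maz'ya scaling argument directly on $\Delta^*$, which forces the $L^2$-term into the estimate. Both routes work, and you have correctly isolated the one load-bearing consequence of the centring hypothesis --- a lower bound on the measure of the rectangle --- and shown it holds for $\Delta^*$ unconditionally via $B(p, \min\{R_1, R_2\}) \subseteq \Delta^*$. The cost is the factor $(R_1 R_2)^{-1}$ coming from $\int_Q |\tilde w|^2$, which cannot be traded for a gradient term because $\tilde w$ need not have zero mean over $Q$; this makes your constant depend on both the aspect ratio \emph{and} the area of $\Delta$, whereas the paper's $d_1$ (modulo the extension constant $d_2$) depends on $\Delta$ only through $\max\{R_1/R_2, R_2/R_1\}$, in the spirit of \eqref{maz8}. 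On the other hand, you handle explicitly a point the paper leaves tacit: quoting Lemma \ref{measlemma3} on $\Delta^o$ requires $w^*_{\Delta^o} = 0$, which does not follow from $w_\Delta = 0$ alone; by retaining the full $W^1_2(Q)$-norm and absorbing the zeroth-order term through Poincar\'e on $\Delta$ (where $w_\Delta = 0$ is available) rather than on $Q$, your argument sidesteps this cleanly.
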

\begin{proof}
If supp$\,\mu \cap \Delta = \emptyset$, the left-hand side of \eqref{ceqn1} is 0 and there is nothing to prove. Suppose that supp$\,\mu \cap \Delta \not= \emptyset$. Then $\forall x\in$ supp$\;\mu \cap \Delta$, there exists a rectangle $\Delta^o$ centred at $x$ with sides of lengths twice those of $\Delta$ such that $\Delta \subseteq \Delta^o\subseteq \Delta^*$. Let $w^*$ be the extension of $w$ outside $\Delta$. Then similarly to \eqref{ext1} there is a constant $d_2> 0$ such that
$$
\int_{\Delta^o}|\nabla w^*(x)|^2\,dx \le d_2\int_{\Delta}|\nabla w(x)|^2\,dx\,.
$$
Let $V_*$ be the extension by 0 of $V$ outside $\Delta$. Then Lemma \ref{lemma8} and Lemma \ref{measlemma3} imply
\begin{eqnarray*}
\int_{\Delta}V(x)|w(x)|^2d\mu(x) &=& \int_{\Delta^o}V_*(x)|w(x)|^2d\mu(x)\\ &\le& A_2 \frac{c_1}{c_0}2^{\alpha}\max\left\{\frac{R_1}{R_2}, \frac{R_1}{R_2}\right\}^{\alpha + 1}\|V_*\|^{(\textrm{av})}_{\Psi, \Delta^o, \mu}\int_{\Delta^o}|\nabla w^*(x)|^2dx\\&\le&  A_2 \frac{c_1}{c_0}2^{\alpha}\max\left\{\frac{R_1}{R_2}, \frac{R_1}{R_2}\right\}^{\alpha + 1}d_2\|V\|^{(*)}_{\Psi, \Delta, \mu}\int_{\Delta}|\nabla w(x)|^2dx \\
&=&d_1\|V\|^{(*)}_{\Psi, \Delta, \mu}\int_{\Delta}|\nabla w(x)|^2dx\,,
\end{eqnarray*}where
$$
d_1 :=  A_2d_2 \frac{c_1}{c_0}2^{\alpha}\max\left\{\frac{R_1}{R_2}, \frac{R_1}{R_2}\right\}^{\alpha + 1}\,.
$$
\end{proof}
One would ask whether norm \eqref{norm*} is superadditive, i.e. whether \eqref{bsr1} holds with $\|\cdot\|^{(*)}$ in place of $\|\cdot \|^{(\textrm{av})}$, which is an important property used in the proof of Lemma \ref{measlemma4}. It follows from Lemma \ref{a} that if pairwise disjoint subsets $\Omega_k$ of $\Omega$ satisfy the condition
\begin{equation}\label{newco}
\sum_k\mu(\Omega^*_k) \le \kappa\, \mu(\Omega^*)
\end{equation}
with some $\kappa > 0$, then
\begin{equation*}
\sum_k\|V\|^{(*)}_{\Psi, \Omega_k, \mu} \le \kappa\,\|V\|^{(*)}_{\Psi, \Omega, \mu}\,.
\end{equation*}

Let $\Delta_1, ..., \Delta_N \subseteq \mbox{ supp }\mu \cap \Delta$ be pairwise disjoint squares of sides of length $l_k,\, k = 1, ..., N$ centered in the support of $\mu$. Then $\Delta^*_k\,,\,k = 1, ..., N$ are also centred in the support of $\mu$. Let us show that \eqref{newco} holds in this case.
By \eqref{Ahlfors} one has
$$
\mu(\Delta^*_k) \le c_1 \left(\frac{3}{\sqrt{2}}\right)^{\alpha}l_k^{\alpha}
$$
and
$$
\mu(\Delta_k) \ge c_0 \left(\frac{1}{2}\right)^{\alpha}l_k^{\alpha}\,.
$$
This implies
$$
\mu(\Delta^*_k) \le \frac{c_1}{c_0}(3\sqrt{2})^{\alpha}\mu(\Delta_k).
$$
Hence
\begin{eqnarray*}
\sum_{k = 1}^N\mu(\Delta^*_k) &\le& \frac{c_1}{c_0}(3\sqrt{2})^{\alpha}\sum_{k = 1}^N\mu(\Delta_k)\\ &\le&  \frac{c_1}{c_0}(3\sqrt{2})^{\alpha}\mu(\Delta)\\ &=& \kappa \,\mu(\Delta) \le \kappa \,\mu(\Delta^*) < \infty,
\end{eqnarray*}where $\kappa := \frac{c_1}{c_0}(3\sqrt{2})^{\alpha}$ .\\\\
 Note that if $\Delta_k$'s are not centred in the support of $\mu$, then \eqref{newco} may fail. Indeed, consider the standard ternary Cantor set
$$
\mathcal{C} =  [0, 1]\setminus \bigcup_{n =1}^{\infty}\bigcup_{j = 1}^{2^{n - 1}}I_{n,j}\,,
$$
where $I_{n,j}$'s are the ``middle third'' intervals of the length $\frac{1}{3^n}$. Let $\mu$ be the Hausdorff measure of dimension $\alpha = \frac{\ln 2}{\ln 3}$ supported by $\mathcal{C}$. Let $\Delta_{n,j}$ be the closed square with the middle line $I_{n,j}$ and $\Delta$ be the closed square with the middle line $[0, 1]$. It is clear that $\Delta_{n,j}$'s are pairwise disjoint and
$$
\bigcup_{n =1}^{\infty}\bigcup_{j = 1}^{2^{n - 1}}\Delta_{n,j} \subset \Delta .
$$
On the other hand,
$$
\mathcal{C} \subset\bigcup_{j = 1}^{2^{n - 1}}\Delta^*_{n,j}\,, \,\,\,\,\,\forall n\in\mathbb{N}.
$$
Hence
$$
\sum_{n = 1}^{\infty}\sum_{j = 1}^{2^{n -1}}\mu(\Delta^*_{n,j}) \ge \sum_{n = 1}^{\infty} \mu(\mathcal{C}) = \infty,
$$
while
$$
\mu(\Delta^*) = \mu(\mathcal{C}) < \infty .
$$

\end{document}